\documentclass[11pt,a4paper,oneside]{amsart}
\usepackage[utf8]{inputenc}

\usepackage[foot]{amsaddr}

\usepackage{geometry}
\usepackage{amsfonts}
\usepackage{graphicx}
\usepackage{tcolorbox}
\usepackage[toc,page]{appendix}
\usepackage{amsthm, amsmath}
\usepackage{dsfont}
\usepackage{bm}
\usepackage{mathtools}
\usepackage{enumerate} 
\usepackage{algorithm}
\usepackage{algpseudocode}
\usepackage[colorlinks=true,linkcolor=blue,filecolor=green,citecolor=blue]{hyperref}


\DeclareMathOperator*{\argmax}{arg\,max}

\newtheorem{theorem}{Theorem}
\newtheorem{proposition}[theorem]{Proposition}
\newtheorem{lemma}[theorem]{Lemma}

\newtheorem{remark}[theorem]{Remark}
\theoremstyle{definition}
\newtheorem{definition}[theorem]{Definition}

\newenvironment{corollary}[1][]
{
\if\relax\detokenize{#1}\relax
\else
\ifcsname #1-used\endcsname
\expandafter\xdef\csname #1-used\endcsname{\the\numexpr\csname #1-used\endcsname+1}%
\else
\expandafter\gdef\csname #1-used\endcsname{1}%
\fi
\renewcommand{\thecorollaryinner}{\ref{#1}.\csname #1-used\endcsname}%
\fi
\corollaryinner
}
{\endcorollaryinner}

\newcounter{noteMCctr} \setcounter{noteMCctr}{1}

\newcounter{noteZZctr} \setcounter{noteZZctr}{1}

\def\to{\rightarrow}

\def\sN{{\mathbb{N}}}
\def\sP{\mathbb{P}}

\def\sR{{\mathbb R}}

\newcommand{\lf}{\lfloor}
\newcommand{\rf}{\rfloor}

\newcommand{\lc}
{\mathrel{\raise2pt\hbox{${\mathop<\limits_{\raise1pt\hbox
{\mbox{$\sim$}}}}$}}}

\newcommand{\gc}
{\mathrel{\raise2pt\hbox{${\mathop>\limits_{\raise1pt\hbox{\mbox{$\sim$}}}}$}}}

\newcommand{\ec}
{\mathrel{\raise2pt\hbox{${\mathop=\limits_{\raise1pt\hbox{\mbox{$\sim$}}}}$}}}

\def\bb{\begin{equation}} \def\ee{\end{equation}}
\def\bbn{\begin{equation*}} \def\een{\end{equation*}}

\def\beqn{\begin{eqnarray}}  \def\eqn{\end{eqnarray}}

\def\beqnx{\begin{eqnarray*}} \def\eqnx{\end{eqnarray*}}

\def\bn{\begin{enumerate}} \def\en{\end{enumerate}}

\def\bd{\begin{description}} \def\ed{\end{description}}

\makeatletter

\makeatother

\makeatletter
\@namedef{subjclassname@2020}{\textup{2020} Mathematics Subject Classification}
\makeatother

\begin{document}

\title[Log regret in an ergodic market making model]{Logarithmic regret in the ergodic Avellaneda--Stoikov market making model}

\author{Jialun Cao$^1$}
\email{Galen.Cao@ed.ac.uk}

\author{David \v{S}i\v{s}ka$^1$}
\email{D.Siska@ed.ac.uk}

\author{Lukasz Szpruch$^{1,2}$}
\email{L.Szpruch@ed.ac.uk}

\author{Tanut Treetanthiploet$^3$}
\email{tanut.t@infinitaskt.com}

\address{$^1$School of Mathematics, University of Edinburgh, United Kingdom}
\address{$^2$The Alan Turing Institute, London, United Kingdom}
\address{$^3$Infinitas by Krungthai, Bangkok, Thailand}

\date{\today}

\begin{abstract}
We analyse the regret arising from learning the price sensitivity parameter $\kappa$ of liquidity takers in the ergodic version of the Avellaneda--Stoikov market making model.
We show that a learning algorithm based on a maximum-likelihood estimator for the parameter achieves the regret upper bound of order $\ln^2 T$ in expectation.
To obtain the result we need two key ingredients.
The first is the twice differentiability of the ergodic constant under the misspecified parameter in the Hamilton--Jacobi--Bellman (HJB) equation with respect to $\kappa$, which leads to a second--order performance gap.
The second is the learning rate of the regularised maximum-likelihood estimator which is obtained from concentration inequalities for Bernoulli signals. 
Numerical experiments confirm the convergence and the robustness of the proposed algorithm. 
\end{abstract}

\keywords{Regret, Online learning, Adaptive control, Ergodic control, Market making, Maximum likelihood estimation}

\subjclass[2020]{Primary 93E35; Secondary 93C40, 93C41, 93E20, 91G80}


\maketitle

\section{Introduction}

Market makers are market participants who are willing to both buy and sell an asset at any time thus providing liquidity.
They aim to make a profit from the spread, i.e. buying at a lower price (bid) and selling at a higher price (ask) at the cost of carrying inventory risk.
While the principle is simple, executing this consistently profitably is not straightforward due to price volatility, various market micro-structure considerations, information asymmetry and other factors. 

Avellaneda and Stoikov~\cite{avellaneda2008high} have proposed a formulation of the market making task as a stochastic control problem within a parsimonious model.
Since then, the framework has been extensively studied and extended to incorporate various additional features, see \cite{gueant2013dealing, labadie2013high, cartea2015algorithmic, cartea2015risk, cartea2017algorithmic, cartea2023automated} and the references therein. 

In this paper we introduce the ergodic formulation of the model. 
We will establish an upper bound on regret of order $\ln^2 T$ arising from having to learn the key unknown parameter online (while executing a strategy) in the ergodic market making model.
In the remainder of the introduction we will briefly introduce the ergodic market making model, the concept of regret, provide a literature review and highlight the main contributions of this paper. 
In Section~\ref{notations and model} we will state all the assumptions and results in detail.

\subsection*{Ergodic formulation of the Avellaneda--Stoikov model}

The model was originally formulated in a finite-time-horizon setting, where the market maker's objective is to maximise the expected profit over a fixed time period.
In this paper we re-formulate the model in an ergodic setting.
To formulate a learning algorithm and its regret in the finite-time-horizon model we would have considered the episodic setting.
That is, the market maker runs with a fixed $\kappa$ until the time $T$ and then liquidates their inventory, updates their estimate of $\kappa$ and starts again. 
This feels unnatural as liquidating the entire inventory at $T$ with a market order would be costly and not the behaviour one would expect. 
It seems more realistic to assume that the market maker is continuously learning the parameter $\kappa$ and updating their strategy based on the new information while managing their inventory risk according to their risk appetite expressed via a quadratic penalty on the inventory and the inventory bounds.
The ergodic formulation allows us to capture learning and regret in this more natural setting.

The market maker places one buy/sell order at distances $\delta^-$, $\delta^+$ from the mid price denoted $S_t$ and updates these continuously as new information arrives.
These are the controls.
On average $\lambda^\pm$ per unit of time buy / sell market orders (orders from liquidity takers) arrive.
These hit the limit order posted by the market maker with probability of $e^{-\kappa \delta^\pm}$.
The system thus has the controlled dynamics given by
\begin{equation*}
\begin{split}
dS_t & = \sigma dW_t, \quad S_0 = s_0\,, \\
dQ_t^{\delta^{\pm}} & =  dN_t^{\delta, -} - dN_t^{\delta, +}, \quad Q_0 = q_0 \,,\\
dX^{\delta^{\pm}}_t & = (S_{t-} + \delta_t^{+}) dN_t^{\delta, +} - (S_{t-} - \delta_t^{-}) dN_t^{\delta, -}, \quad X_0 = x_0\,,
\end{split}	
\end{equation*}
where $(S_t)_{t\geq 0}$ is the exogenous mid-price process, $(Q_t^{\delta^{\pm}})_{t\geq 0}$ is the market maker's inventory and $(X^{\delta^{\pm}}_t)_{t\geq 0}$ is the market maker's cash balance. 
The inventory and cash processes are driven by $N_t^{\delta, \pm}$, two independent Poisson jump processes with intensities $\lambda^\pm e^{-\kappa \delta^\pm}$.   
The market maker wishes to maximise the long-run average reward which sums the earnings and changes to mark-to-market value of their holdings of the risky asset but is subject to a quadratic inventory penalty expressing their risk aversion: 
\begin{equation*} 
J(q, x,S;\delta^{\pm}) = \lim_{T \rightarrow + \infty}  \frac{1}{T} \mathbb{E}_{q,x,S} \bigg[  
\int_0^T \mathrm{d} (X^{\delta^{\pm}}_t  + S_t Q_t^{\delta^{\pm}}) - \phi \int_0^T (Q^{\delta^{\pm}}_t)^2 \, \mathrm{d} t 
\bigg]\,.
\end{equation*}
If the values of all the parameters are known then the market maker can solve the ergodic Hamilton--Jacobi--Bellman (HJB) equation associated to the problem and obtain the optimal strategy in closed form as we show in Section~\ref{Ergodic Analysis for Market Making Model}.
Under the optimal strategy, the market maker's reward, per of unit time, will be given by the ergodic constant
\begin{equation*}
\gamma(\kappa) = \sup_{\delta^\pm} J(x,S, q;\delta^{\pm}).
\end{equation*}

\subsection*{Online learning and regret}
The model parameters are: liquidity takers orders' arrival rates $\lambda^\pm$, the price sensitivity of the liquidity takers $\kappa$, the mid price volatility $\sigma$ (which actually plays no role as the mid price process is a martingale) and the risk aversion $\phi$. 
The market maker chooses their risk aversion and thus it's not a parameter that they would need to learn. 
The liquidity takers orders' arrival rates $\lambda^\pm$ can be observed and learned offline (without participating in the market) since, in the framework of the model where our market maker is assumed to provide a relatively small fraction of the overall liquidity, it is unlikely that the presence of their volume in the market would impact the rate of liquidity taking. 
This leaves $\kappa$ and this is the key parameter. 
Although exchanges may provide market participants with visibility of the order book and message-level  trades execution data, allowing them to estimate $\kappa$ without direct participation, this is insufficient for an accurate estimate for $\kappa$. 
A key challenge is that other market makers will react to the presence of the additional volume placed by our market maker at the distance $\delta^\pm$ thus potentially rendering any offline estimate of $\kappa$ inaccurate.
Indeed, some liquidity providers may choose to place their volume at better price (they want to trade) than the spread given by our market maker while others may wish to place the volume at worse price (they may think our market maker knows something about the price they don't).
The offline estimate of $\kappa$ can of course be used as the initial value in the learning algorithm.

The key parameter to learn online (i.e. while participating in the market) is thus $\kappa$. 
At each time $t\geq 0$ the market maker will have their estimate of the parameter denoted $\kappa_t$ while the true, unknown, value is $\kappa^\ast$. 
They can solve the ergodic control problem and obtain the strategy which would be optimal if $\kappa_t$ would be the true parameter. 
Let us denote this strategy by $\psi^{\kappa_t,\pm}$.

Our aim is to gain asymptotic understanding of the {\em regret} given by
\begin{equation} 
\label{original regret intro}
\mathcal{R}(T) = 
\gamma(\kappa^\ast) T - \mathbb{E}_{q, x, S} \bigg[ \int_0^T \mathrm{d} \big(X^{\psi^{\kappa_t,\pm}}_t  + S_t Q^{\psi^{\kappa_t,\pm}}_t\big) - \phi \int_0^T (Q^{\psi^{\kappa_t,\pm}}_t)^2 \, \mathrm{d} t \bigg]\,.
\end{equation}
This is the difference between the optimal, inaccessible, reward up to time $T$ and the reward the agent gains by following their chosen method of learning.

If the market maker would use a fixed $\kappa \neq \kappa^\ast$ then their expected regret would be roughly $(\gamma(\kappa^\ast)-\gamma(\kappa;\kappa^\ast))T$, i.e. linear. 
Any algorithm which achieves sub-linear regret is learning.
We construct a regularised maximum-likelihood estimator, see~\eqref{derivative of the log-likelihood function} and Algorithm~\ref{alg:cap}, to achieve the expected regret upper bound of order $\ln^2 T$.
See Theorem~\ref{regret theorem}. 

\subsection*{Existing literature}
Before we proceed to discussing online learning let us mention the ``offline'' learning approach in Cartea~\cite{cartea2017algorithmic}. 
There, parameter uncertainty for the finite-time-horizon market making model is accepted and robust controls which take model ambiguity into account are derived. 

Online learning and regret analysis in stochastic control has been studied in the context of adaptive control and reinforcement learning. 
Broadly, there are three relatively distinct areas. 

The first area is discrete and finite space and time Markov decision problems (either discounted or ergodic).
Here regret of order $\sqrt{T}$ is expected in the general setting and with additional structural assumptions regret of order $\ln T$ is achievable, see Auer and Ortner~\cite{auer2006logarithmic}, Auer et al.~\cite{auer2008near} and references therein. 

The second area is still discrete time with linear dynamics and convex cost / concave rewards.  
This makes the setting tractable even in the case of more general state spaces and action spaces.
This is the setting most explored in the literature over the years:
Kumar~\cite{kumar1983optimal},
Campi and Kumar~\cite{campi1998adaptive}, 
Abbasi-Yadkori~\cite{abbasi2011regret},
Abeille and Lazaric~\cite{abeille2018improved}, 
Agarwal et al.~\cite{agarwal2019logarithmic}
Dean et al.~\cite{dean2018regret},
Cohen et al.~\cite{cohen2019learning},
Cassel et al.~\cite{cassel2020logarithmic},
Faradonbeh et al.~\cite{faradonbeh2020adaptive},
Lale et al.~\cite{lale2020explore},
Simchowitz and Foster~\cite{simchowitz2020naive},
Hambly et al.~\cite{hambly2021policy} and undoubtedly some others. 
The theme is again that order $\sqrt{T}$ is achievable and if more can be assumed (e.g. ``identifiability conditions'' which imply ``self-exploration'') then regret upper bound of order $\ln T$ holds. 

Finally, the third which is the continuous-time, in linear-convex framework setting is the least explored.
Guo et al.~\cite{guo2023reinforcement} considers finite-time-horizon linear-convex episodic learning and propose algorithm which achieves order $\sqrt{N \ln N}$ regret (with $N$ being the episode number) under an identifiability assumption.
In Basei et al.~\cite{basei2022logarithmic} where, the episodic learning LQR is studied, regret bound of order $(\ln N)(\ln (\ln N))$ is obtained, again under an identifiability assumption.
Szpruch et al.~\cite{szpruch2024optimal} show that without the identifiability assumption it is possible to balance exploration and exploitation (by adding an entropic regularizer) to achieve order $\sqrt N$ regret. 
In Szpruch et al.~\cite{szpruch2021exploration} this is improved to $\ln^2 N$ by means of establishing stronger (2nd order) regularity result for the dependence of the problem value function on the unknown system parameters. 

Having reviewed existing results, we note that the study of regret in continuous-time ergodic control has been limited. 
Fruit and Lazaric~\cite{fruit2017exploration} derive regret bounds in semi-Markov decision processes (SMDP) within the ergodic setting and show that the regret of order $\sqrt{T}$ is achievable under certain assumptions (e.g. lump sum reward). 
In Gao and Zhou~\cite{gao2024logarithmicregretboundscontinuoustime}, the order of regret is improved to $\ln T$ by focusing on continuous-time Markov decision processes, a more specific case than SMDP. 
This represents a significant step forward, showing that logarithmic regret is achievable in continuous-time ergodic frameworks. 
Nevertheless jump diffusion dynamics and non-linear running rewards required in the Avellaneda--Stoikov model do not fit into the framework of any of the existing papers.
From other results in the literature we see that our result showing $\ln^2 T$ regret is nearly as good as it gets but a question remains whether this is optimal i.e. what is the regret lower bound in this setting. 
The numerical experiment shows regret of order $\ln^2 T$ is a good fit for what we observe, see Figure~\ref{regret}.

\subsection*{Our contributions} 
To the best of authors' knowledge this is the first paper on regret analysis for ergodic control of jump diffusions. 
The control problem we focus on is the ergodic version of the Avellaneda--Stoikov market making model and we show that the expected regret has an upper bound of order of $\ln^2 T$. 

There are three main ingredients which allow us to obtain this result. 
First, we prove existence of and convergence to an invariant measure in the ergodic Avellaneda--Stoikov market making model.
While the well-posedness of the ergodic problem follows mostly from the analysis carried out in Gu{\'e}ant and Manziuk~\cite{gueant2020optimal} the result on existence of and convergence to the invariant measure is new and relies on newly established explicit solution to the ergodic HJB corresponding to our problem. 

Second, we obtain bounds on the second--order derivative of the average earnings per unit time (i.e. the ergodic constant under a misspecified $\kappa$) with respect to the parameter $\kappa$ which has to be learned. This leads to a second-order performance gap in the regret analysis, which is crucial.

Finally, using concentration inequalities for Bernoulli random variables we show that a regularised maximum likelihood estimator yields a high probability bound of order $N^{-1/2}$ on the distance between the true value $\kappa^\ast$ and the estimate $\kappa_N$ obtained after $N$ market orders have arrived.

\section{Main results} 
\label{notations and model}
In this section, we will introduce the ergodic Avellaneda--Stoikov market making model and state the main results of the paper. 

The market maker (agent) proposes bid$(-)$ and ask$(+)$ depths (control) $(\delta^{\pm}_t)_{t \geq 0}$ pegged to the mid-price of an asset and wish to make profit from the spread. The space $(\Omega^{W}, \mathcal{F}^{W}, \mathbb{P}^{W})$ supports a Brownian motion $(W_t)_{t \geq 0}$ that describes the asset mid-price process $(S_t)_{t \geq 0}$, following the dynamics 
\begin{equation} \label{mid-price}
dS_t = \sigma dW_t, \quad S_0 = s_0 \, . 
\end{equation}
Apart from the market maker, there are liquidity takers sending market orders (MOs) at random times. The model assumes that the arrivals of buy$(+)$ and sell$(-)$ MOs, $(M_t^{\pm})_{t\geq 0}$, follow two independent Poisson processes with intensities $\lambda^{+}$ and $\lambda^{-}$ defined on $(\Omega^{M}, \mathcal{F}^{M}, \mathbb{P}^M)$. 
Given two independent IID sequences $U^\pm_i \sim U(0,1)$ defined on $(\Omega^U, \mathcal F^U, \mathbb P^U)$ 
an incoming market buy/sell order trades with the sell/buy volume posted by the market maker when $U^\pm_{M^\pm_t} \geq e^{-\kappa^\pm \delta_t^\pm}$.
The probability space for the model is thus
\begin{equation} \label{probability space}
\begin{split}
(\Omega, \mathcal{F}, \mathbb{P}) = \big( 
\Omega^{W} \times \Omega^{M} \times \Omega^{U},
\mathcal{F}^{W} \otimes \mathcal{F}^{M} \otimes \mathcal{F}^{U},  \mathbb{P}^{W} \otimes \mathbb{P}^{M} \otimes  \mathbb{P}^{U}
\big) \, .
\end{split}
\end{equation}
The filtration is $\mathbb F := (\mathcal F_t)_{t\geq 0}$, where $\mathcal F_t = \sigma(W_r,r\leq t) \vee \sigma(M_r^\pm,r\leq t) \vee \sigma(U^+_{M^+_r},r\leq t) \vee \sigma(U^-_{M^-_r},r\leq t)$.
Let $\underline q \in \mathbb Z^{-}$ and $\bar q \in \mathbb Z^{+}$ denote the market maker's inventory limits. 
The market maker will stop posting buy/sell orders when their inventory is at $\bar q$ and at $\underline q$ respectively.
At other times their strategy is to post at a distance $\delta^{\pm} \in \mathbb R$ from the mid-price $S_t$.
The reason for imposing inventory boundaries is that they reduce an infinite state-space control problem into a finite one, making it computationally tractable by leading to a matrix representation for an explicit solution.
Clearly, the strategy $\delta^\pm$ must be adapted to the filtration $\mathbb F$.
Let $(N_t^{\delta, \pm})_{t\geq0}$ be the controlled counting processes for the agent's filled buy/sell orders, i.e. 
\[
N_t^{\delta,\pm} = N_{t-}^{\delta,\pm} + (M^\pm_t - M^{\pm}_{t-})\mathds{1}_{\left\{U^\pm_{M^\pm_t} \geq e^{-\kappa^\pm \delta_{t-}^\pm}\right\}}\,.
\]
Hence the inventory process $(Q_t)_{t \geq 0}$ of the market maker is
\begin{equation} \label{inventory}
dQ_t^{\delta^{\pm}} =  dN_t^{\delta, -} - dN_t^{\delta, +}, \quad Q_0 = q_0 \, \text{ and } \, \underline{q} \leq Q_t \leq \bar{q} \,.
\end{equation}
Let us write $\Omega^{Q} = [\underline q, \bar q] \cap \mathbb Z$,
so that $Q_t$ takes values in $\Omega^{Q}$ for $t \geq 0$. 
Let $(X_t)_{t \geq 0}$ denote the market maker's cash balance, satisfying 
\begin{equation} \label{cash}
dX^{\delta^{\pm}}_t = (S_{t-} + \delta_t^{+}) dN_t^{\delta, +} - (S_{t-} - \delta_t^{-}) dN_t^{\delta, -}, \quad X_0 = x_0 \, . 
\end{equation}

Let us define the class of admissible policies as
\begin{equation} \label{admissible policies}
\begin{split}
\mathcal{A} = \big\{ (\delta^{\pm}_t)_{t\geq 0} & : \text{$\overline{\mathbb R}$-valued, bounded from below, progressively measurable }\\
& \text{w.r.t. $\mathbb F$ and s.t. for any $T>0$ we have } \mathbb E\int_0^T |\delta^\pm_t|^2\, \mathrm{d}t<\infty  \big\}\,.
\end{split}
\end{equation}

\subsection{The ergodic market making model} 
\label{Ergodic Analysis for Market Making Model}

In this section we will formulate the ergodic control problem, state key results connecting the control formulation with the ergodic HJB equation, provide explicit solution for the ergodic HJB and formulae for the Markovian ergodic optimal controls.

\subsubsection*{Control problem formulation} 

The market maker aims to maximise a long-run average reward of the accumulated PnL with a running inventory penalty. 
The quadratic penalty on running inventory plays a crucial role by providing a continuous incentive to steadily drive the inventory level toward zero. 
This is important in any volatile market, where the market maker seeks to minimise directional exposure to adverse price movements.

Let $J(x, S, q; \delta^{\pm})$ be the ergodic reward functional given by 
\begin{equation} \label{ergodic reward functional}
J(q,x,S;\delta^{\pm}) = \lim_{T \rightarrow + \infty}  \frac{1}{T} \mathbb{E}_{q, x, S} \bigg[  
\int_0^T \mathrm{d} (X^{\delta^{\pm}}_t  + S_t Q_t^{\delta^{\pm}}) - \phi \int_0^T (Q^{\delta^{\pm}}_t)^2 \, \mathrm{d} t 
\bigg]\, ,
\end{equation}
where the notation $\mathbb{E}_{q, x, S} [\cdot]$ represents expectation conditional on $Q_0 = q, X_0 = x, S_0 = S$ and $\phi \geq 0$ is the running inventory penalty parameter. 
For the optimal ergodic control problem, the purpose is to give a characterisation of the optimal long-run average reward, also known as the ergodic constant
\begin{equation*}
\gamma = \sup \Big\{ J(q,x,S;\delta^{\pm}): \delta^{\pm} \in \mathcal{A} \Big\}\,,
\end{equation*}
and to construct an optimal feedback (Markov) control $\psi^\pm$. We will later see that $\gamma$ is indeed independent of $q, x, S$ and thus calling it the ergodic constant is justified. Of course it still depends on all the model parameters, in particular on $\kappa$.

Let us define a running reward function $f: \Omega^{Q} \times \overline{\mathbb R}^2 \to \mathbb{R}$ as 
\begin{equation} \label{f function}
f(q;\delta^{\pm}) = \delta^{+} \lambda^{+} e^{- \kappa^{+} \delta^{+}} + \delta^{-} \lambda^{-} e^{- \kappa^{-} \delta^{-}} - \phi q^2 \, .    
\end{equation}
By~\eqref{mid-price}, \eqref{inventory} and~\eqref{cash}, we have 
\begin{equation*}
\begin{split}
& d(X^{\delta^{\pm}}_t + S_t Q^{\delta^{\pm}}_t) = dX^{\delta^{\pm}}_t + S_t dQ^{\delta^{\pm}}_t + Q^{\delta^{\pm}}_t dS_t + dS_t dQ^{\delta^{\pm}}_t \\
& = (\delta^{+} \lambda^{+} e^{- \kappa^{+} \delta^{+}} + \delta^{-} \lambda^{-} e^{- \kappa^{-} \delta^{-}}) dt 
+ \delta^{+} d\Tilde{N}_t^{\delta, +} + \delta^{-} d\Tilde{N}_t^{\delta, -} + \sigma Q^{\delta^{\pm}}_t dW_t \, , 
\end{split}
\end{equation*}
where $\Tilde{N}_t^{\delta, \pm}$ are independent compensated Poisson processes. As the intensities of $N_t^{\delta^{+}}$ and $N_t^{\delta^{-}}$ would be $0$ whenever $\delta^{+} = + \, \infty$ and $\delta^{-} = + \, \infty$ and otherwise $\delta^{\pm} \in \mathcal{A}$ is clearly square integrable, therefore $\mathbb{E} \big[ \int_0^T \delta^{+} d\Tilde{N}_t^{\delta, +} \big] = 0$ and $\mathbb{E} \big[ \int_0^T \delta^{-} d\Tilde{N}_t^{\delta, -} \big] = 0$. Moreover, $(Q_t^{\delta^{\pm}})_{t\geq0} \in \Omega^Q$ is $\mathcal{F}_t-$adapted and bounded and so $\mathbb{E} \big[ \int_0^T \sigma Q^{\delta^{\pm}}_t dW_t \big] = 0$.
Hence the ergodic market making control problem can be reduced from dimension of 3 to 1 by Fubini's theorem
\begin{equation} \label{reduced ergodic reward functional}
J(q; \delta^\pm) = \lim_{T \rightarrow + \infty}  \frac{1}{T} \mathbb{E}_{q} \Big[  
\int_0^T f(Q^{\delta^{\pm}}_t; \delta^{\pm}) \, \mathrm{d} t 
\Big] \, , \quad  \gamma = \sup \Big\{ J(q;\delta^{\pm}): \delta^{\pm} \in \mathcal{A} \Big\}\, . 
\end{equation}

To analyse the ergodic control problem~\eqref{reduced ergodic reward functional}, some preliminaries are required. 
We start with the existence and uniqueness analysis for the classical market making problem in the discounted finite and infinite-time-horizon settings.  

\subsubsection*{Key results for the discounted finite-time and infinite-time problems} \label{existence and uniqueness chapter}
We first define the unoptimised Hamiltonian function $H: \Omega^{Q} \times \mathbb{R}^2 \times \mathbb{R}^2 \to \mathbb{R}$ and the optimised Hamiltonian function $H: \Omega^{Q} \times \mathbb{R}^2 \to \mathbb{R}$ for the market making model as 
\begin{equation} \label{hamiltonian function}
\begin{split}
H(q, \delta^\pm, \boldsymbol{p}) & = \lambda^{+} e^{-\kappa^{+} \delta^{+}} (p_1 + \delta^{+}) \mathds{1}_{q > \underline{q}} + \lambda^{-} e^{-\kappa^{-} \delta^{-}} (p_2 + \delta^{-}) \mathds{1}_{q < \bar{q}} - \phi q^2 \,,\\
H(q, \boldsymbol{p}) & = \sup_{\delta^{\pm} \in \mathbb R^2} H(q, \delta^\pm, \boldsymbol{p})\,.
\end{split}
\end{equation}

Now we consider the optimal market making problem in the discounted finite-time-horizon setting. We assume that the market maker has a penalty $G(q)$ for any inventory $q \in \Omega^{Q}$ held at the terminal time $T > 0$
\begin{equation} \label{terminal condition}
G(q) =  - \alpha q^2,
\end{equation}
with $\alpha \geq 0$ the terminal inventory penalty parameter. 
Let $v_r(t,q; T)$ be the value function given by 
\begin{equation} \label{reduced finite-time value function}
\begin{split}
v_r(t, q; T) & = \sup_{\delta_u^{\pm} \in \mathcal{A}} \mathbb{E}_{t, q} \bigg[ \int_t^{T} e^{-r(u - t)} f(Q_u;\delta_u^{\pm}) \, \mathrm{d} u + e^{-r(T - t)} G(Q_T^{\delta^{\pm}}) \bigg] \, , 
\end{split}
\end{equation}
where $r \geq 0$ is the discounted factor, the running reward function $f$ is given by~\eqref{f function} and $\mathcal{A}$ denotes the class of admissible policies defined by~\eqref{admissible policies}.
The associated Hamilton–Jacobi–Bellman (HJB) equation to the value function~\eqref{reduced finite-time value function} is 
\begin{equation} \label{finite-time hjb}
0 = \partial_t u(t,q) - r u(t,q) + H \Big(q, (u(t, q') - u(t , q))_{q' \in \{q-1, q+1\}}\Big), \quad \forall q \in \Omega^{Q},
\end{equation}
subject to the terminal condition (\ref{terminal condition}). 

Theorem~\ref{exist and unique for finite-time} provides the existence and uniqueness for the optimal market making problem in the discounted finite-time-horizon setting. The proof is provided in Appendix \ref{proof theorem exist and unique finite}. We also recommend Gu\'{e}ant \textit{et al.} \cite{gueant2020optimal} for the proof of a more general stochastic control problem with a discrete state space. 

\begin{theorem}[Existence and uniqueness for discounted finite-time HJB]
\label{exist and unique for finite-time}
There exists a unique solution $u$ to the HJB equation~\eqref{finite-time hjb} on $t \in (-\infty, T]$ with the terminal condition~\eqref{terminal condition} such that for any $t' > 0$ we have $u \in C^1([-t', T]; \Omega^Q)$. Moreover, $u = v_r$. 
\end{theorem}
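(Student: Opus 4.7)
The plan is to exploit the fact that $\Omega^{Q}=\{\underline q,\underline q+1,\dots,\bar q\}$ is finite, so the HJB~\eqref{finite-time hjb} reduces to a finite-dimensional first-order ODE system in $t$ for the vector $(u(t,q))_{q\in\Omega^{Q}}$. First I would compute the supremum in~\eqref{hamiltonian function} explicitly: differentiating $\delta\mapsto\lambda^{\pm} e^{-\kappa^{\pm}\delta}(p+\delta)$ yields the maximiser $\delta^{\pm,\ast}=\tfrac{1}{\kappa^{\pm}}-p$ and the value $\tfrac{\lambda^{\pm}}{\kappa^{\pm}}e^{-1+\kappa^{\pm}p}$, so
$$H(q,\boldsymbol p)=\tfrac{\lambda^+}{\kappa^+}e^{-1+\kappa^+p_1}\mathds{1}_{q>\underline q}+\tfrac{\lambda^-}{\kappa^-}e^{-1+\kappa^-p_2}\mathds{1}_{q<\bar q}-\phi q^2,$$
which is $C^{\infty}$ (hence locally Lipschitz) in $\boldsymbol p$. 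Picard--Lindel\"of applied backwards from $t=T$ with terminal datum $-\alpha q^2$ then gives a unique maximal $C^1$ solution on some half-open interval $(T_\ast,T]$.

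Next I would identify any such local solution with the value function via stochastic verification, inheriting a priori bounds as a byproduct. For an arbitrary $\delta^\pm\in\cA$, applying It\^o's formula to $s\mapsto e^{-r(s-t)}u(s,Q_s^{\delta^\pm})$ on $[t,T]\subset(T_\ast,T]$, compensating the jump measures of $N^{\delta,\pm}$, and using the HJB together with the sup-inequality $H(q,\boldsymbol p)\ge H(q,\delta^\pm,\boldsymbol p)$ yields
$$u(t,q)\ge \mathbb E_{t,q}\!\left[\int_t^T e^{-r(s-t)}f(Q_s^{\delta^\pm};\delta^\pm)\,\mathrm ds+e^{-r(T-t)}G(Q_T^{\delta^\pm})\right].$$
Conversely, inserting the Markov feedback $\psi^{\pm,\ast}(s,q)=\tfrac{1}{\kappa^{\pm}}-(u(s,q\mp 1)-u(s,q))$ turns the inequality into equality. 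Admissibility of $\psi^{\pm,\ast}$ in the sense of~\eqref{admissible policies} follows from continuity of $u$ in $s$ and the finiteness of $\Omega^{Q}$, which bound $\psi^{\pm,\ast}$ uniformly on any $[t_0,T]$. Hence $u=v_r$ throughout the domain of existence.

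Finally, crude bounds on $v_r$ close the extension argument. Since $\delta\mapsto\delta\lambda^{\pm}e^{-\kappa^{\pm}\delta}$ is bounded above by $\lambda^{\pm}/(e\kappa^{\pm})$ while $-\phi q^2\le0$ and $G\le 0$, one has $v_r(t,q)\le (T-t)\bigl(\tfrac{\lambda^+}{e\kappa^+}+\tfrac{\lambda^-}{e\kappa^-}\bigr)$, and a matching lower bound is obtained by evaluating the functional at, e.g., $\delta^\pm\equiv 1/\kappa^\pm$. These bounds are finite on every closed sub-interval $[-t',T]$, so the identity $u=v_r$ rules out blow-up of the ODE solution at $T_\ast$ and the maximal solution extends to all of $(-\infty,T]$, giving a $C^1$ function on each $[-t',T]$.

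The main obstacle I anticipate is the verification step: one must handle the inventory-boundary indicators $\mathds{1}_{q>\underline q}$ and $\mathds{1}_{q<\bar q}$ carefully, so that $\psi^{\pm,\ast}$ effectively sets $\delta^{\pm}=+\infty$ at the respective boundaries (compatible with $\cA$'s requirement that controls be $\overline{\mathbb R}$-valued but bounded from below), and verify that the compensated jump integrals are genuine martingales — this uses boundedness of $u$ on each compact time-interval together with $|\Omega^{Q}|<\infty$. Once that identity is in place, the ODE extension is mechanical; compare the general discrete-state treatment in~\cite{gueant2020optimal}.
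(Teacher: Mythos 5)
Your proposal is correct and tracks the paper's strategy at the top level: the HJB reduces to a finite-dimensional ODE system because $\Omega^Q$ is finite, the optimised Hamiltonian is computed in closed form and is locally Lipschitz in $\boldsymbol p$, existence and uniqueness come from ODE theory, and a stochastic verification argument identifies the solution with $v_r$. Where you diverge is in how the solution is extended to all of $(-\infty,T]$: the paper records the local Lipschitz property of $H$ (its Lemma~\ref{property of hamiltonian}) and then appeals directly to~\cite[Theorem 3.3]{gueant2020optimal} for global existence and uniqueness on the half-line, whereas you invoke Picard--Lindel\"of only locally and close the extension yourself, by running verification on the maximal interval $(T_\ast,T]$, identifying $u=v_r$ there, and using the linear-in-$(T-t)$ a priori bounds on $v_r$ to preclude finite-time blow-up. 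Your route is therefore more self-contained on the global-existence step, at no real extra cost, since the explicit formula $H(q,\boldsymbol p)=\tfrac{\lambda^+}{\kappa^+}e^{-1+\kappa^+p_1}\mathds{1}_{q>\underline q}+\tfrac{\lambda^-}{\kappa^-}e^{-1+\kappa^-p_2}\mathds{1}_{q<\bar q}-\phi q^2$ (whose positive sign in the exponent is the one consistent with the strict monotonicity $p_i\mapsto H$ asserted in Lemma~\ref{property of hamiltonian}) makes the Lipschitz and a priori estimates transparent. The caveats you flag at the end — treating the boundary controls $\psi^{\pm,\ast}=+\infty$ consistently with $\mathcal{A}$, and checking that the compensated jump integrals are genuine martingales via boundedness of $u$ on compact time intervals together with $|\Omega^Q|<\infty$ — are indeed exactly the points that must be spelled out to make the verification step rigorous, and they go through as you describe.
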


It is well known~\cite{cartea2015algorithmic,gueant2016financial} that there is an explicit solution to $v_r$ satisfying~\eqref{reduced finite-time value function} in the case of $r=0$, denoted by $v_0$, given by the following theorem.
\begin{theorem} [Explicit solution of finite-time-horizon model] \label{solution of finite-time-horizon model}
Assume $\kappa^{\pm} = \kappa$ and $r = 0$. Let $\boldsymbol v_0(t; T) = [v_0(t, \bar{q} ;T), v_0(t, \bar{q}-1 ;T), ... , v_0(t, \underline{q} ;T)]^T$ be a $(\bar{q} - \underline{q} + 1)$-dim vector of the  solution to HJB equation (\ref{finite-time hjb}) with terminal condition (\ref{terminal condition}). Let $\boldsymbol z$ be the $(\bar{q} - \underline{q} + 1)$-dim vector with components $z_j = e^{-\alpha \kappa j^2}$ and $\boldsymbol A$ be the $(\bar{q} - \underline{q} + 1)-$square matrix
\scriptsize
\begin{align*}
\boldsymbol A = \begin{bmatrix}
- \phi \kappa \bar{q}^2 & \lambda^{+} e^{-1} & 0 &  ... & &  \\ 
\lambda^{-} e^{-1} & - \phi \kappa (\bar{q}-1)^2 & \lambda^{+} e^{-1 } & ...& & & \\ 
& &  & ... &  & \\
& ... &  \lambda^{-} e^{-1} & - \phi \kappa (\bar{q}-i)^2 & \lambda^{+} e^{-1} & ... & \\
& & & ...  & \\
& & & ... &  \lambda^{-} e^{-1} & - \phi \kappa (\underline{q}+1)^2 & \lambda^{+} e^{-1}\\ 
& & & ... & 0 & \lambda^{-} e^{-1} & - \phi \kappa \underline{q}^2
\end{bmatrix}
\end{align*}
\normalsize
Then the explicit solution is uniquely given by 
\begin{equation*}
\boldsymbol{v}_0(t; T) =  \frac{1}{\kappa} \ln ( e^{(T-t) \boldsymbol{A}} \cdot \boldsymbol{z} ) \, . 
\end{equation*}
\end{theorem}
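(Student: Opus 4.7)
The plan is to apply a Cole--Hopf--type exponential transformation $u(t,q) = \kappa^{-1}\ln w(t,q)$ which linearises the HJB equation~\eqref{finite-time hjb} into a finite-dimensional linear ODE, whose solution can then be written down as a matrix exponential. First I would carry out the pointwise maximisation in the Hamiltonian~\eqref{hamiltonian function}. With $\kappa^\pm = \kappa$, differentiating in $\delta^+$ gives the optimiser $\delta^{+,*} = \kappa^{-1} - p_1$ (used when $q > \underline q$) with corresponding value $\tfrac{\lambda^+}{\kappa} e^{-1} e^{\kappa p_1}$, and symmetrically for $\delta^-$. Setting $p_1 = u(t,q-1) - u(t,q)$, $p_2 = u(t,q+1) - u(t,q)$ and $r=0$, the HJB~\eqref{finite-time hjb} becomes
\begin{equation*}
0 = \partial_t u(t,q) + \tfrac{\lambda^+ e^{-1}}{\kappa} e^{\kappa(u(t,q-1)-u(t,q))}\mathds{1}_{q>\underline q} + \tfrac{\lambda^- e^{-1}}{\kappa} e^{\kappa(u(t,q+1)-u(t,q))}\mathds{1}_{q<\bar q} - \phi q^2 \, .
\end{equation*}

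Next I would substitute $w(t,q) = e^{\kappa u(t,q)}$, so that $\partial_t u = (\kappa w)^{-1}\partial_t w$ and $e^{\kappa(u(t,q')-u(t,q))} = w(t,q')/w(t,q)$. Multiplying through by $\kappa w(t,q)$ collapses the nonlinear HJB into the linear system
\begin{equation*}
-\partial_t w(t,q) = \lambda^+ e^{-1} w(t,q-1) \mathds{1}_{q>\underline q} + \lambda^- e^{-1} w(t,q+1) \mathds{1}_{q<\bar q} - \kappa\phi q^2 w(t,q) \, .
\end{equation*}
Stacking the components in the order $q = \bar q, \bar q - 1, \ldots, \underline q$ I would identify the right-hand side with $\boldsymbol{A}\boldsymbol{w}(t)$, where the indicator functions correctly kill the off-diagonal entries in the first and last rows, reproducing exactly the tridiagonal matrix $\boldsymbol{A}$ of the statement. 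The terminal condition $u(T,q) = -\alpha q^2$ transforms to $\boldsymbol{w}(T) = \boldsymbol{z}$ with $z_j = e^{-\alpha\kappa j^2}$. The unique solution of the terminal value problem $\tfrac{d}{dt}\boldsymbol{w} = -\boldsymbol{A}\boldsymbol{w}$, $\boldsymbol{w}(T) = \boldsymbol{z}$ is $\boldsymbol{w}(t) = e^{(T-t)\boldsymbol{A}}\boldsymbol{z}$, and taking logs componentwise yields the claimed formula.

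The main obstacle is to justify that the componentwise logarithm is well-defined, i.e., that each entry of $e^{(T-t)\boldsymbol{A}}\boldsymbol{z}$ is strictly positive, so that the inverse transformation $u = \kappa^{-1}\ln w$ is legitimate. Since $\boldsymbol{A}$ is a Metzler matrix (all off-diagonal entries are non-negative), writing $\boldsymbol{A} = (\boldsymbol{A} + c\boldsymbol{I}) - c\boldsymbol{I}$ with $c = \kappa\phi \max(\bar q^2, \underline q^2)$ shows that $\boldsymbol{A}+c\boldsymbol{I}$ is an entrywise non-negative irreducible tridiagonal matrix. Hence $e^{(T-t)\boldsymbol{A}} = e^{-c(T-t)} e^{(T-t)(\boldsymbol{A}+c\boldsymbol{I})}$ is entrywise non-negative, and for $t < T$ high enough powers of the irreducible nonnegative matrix $\boldsymbol{A}+c\boldsymbol{I}$ have strictly positive entries, so applied to the strictly positive $\boldsymbol{z}$ the resulting vector is componentwise strictly positive. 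Finally, existence and uniqueness of $u$ satisfying \eqref{finite-time hjb} with terminal condition \eqref{terminal condition} together with the identification $u = v_0$ are already guaranteed by Theorem~\ref{exist and unique for finite-time} applied with $r = 0$, so the explicit formula derived above must coincide with $\boldsymbol{v}_0(t;T)$.
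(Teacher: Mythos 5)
Your proof is correct. Note first that the paper does not actually prove this theorem in the appendix; it cites \cite{cartea2015algorithmic, gueant2016financial} and treats the explicit formula as known. Your Cole--Hopf substitution $u = \kappa^{-1}\ln w$ is precisely the standard route taken in those references and is consistent with the analogous transformation $\hat v = \kappa^{-1}\ln\hat\omega$ that the paper itself uses in Appendix~\ref{proof theorem solve ergodic control problem} to solve the ergodic HJB. The pointwise optimisation giving $\delta^{\pm,*} = \kappa^{-1} - p_{1,2}$ and optimised value $\tfrac{\lambda^\pm e^{-1}}{\kappa}e^{\kappa p_{1,2}}$ is correct, the indicators correctly produce the boundary structure of $\boldsymbol{A}$ (no superdiagonal in the last row, no subdiagonal in the first), the ordering $[\bar q,\dots,\underline q]$ places $\lambda^+e^{-1}$ on the superdiagonal and $\lambda^-e^{-1}$ on the subdiagonal as in the statement, and $z_j = e^{-\alpha\kappa j^2}$ follows from $w(T,q) = e^{\kappa G(q)}$. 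One small remark: your positivity argument is more elaborate than needed. Once $\boldsymbol{A}+c\boldsymbol{I}$ is entrywise non-negative you immediately have $e^{s(\boldsymbol{A}+c\boldsymbol{I})} \geq \boldsymbol{I}$ entrywise from the power series, hence $e^{s\boldsymbol{A}}\boldsymbol{z} \geq e^{-sc}\boldsymbol{z} > 0$ componentwise for all $s \geq 0$, without invoking irreducibility or primitivity.
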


Let us move to discussing the infinite-time-horizon problem.
The value function, in the discounted infinite-time-horizon setting, is
\begin{equation} \label{reduced infinite-time-horizon model}
v_r(q) = \sup_{\delta^{\pm} \in \mathcal{A}} \mathbb{E}_{q} \Big[
\int_0^{+\infty}e^{-rt} f(Q_t;\delta_t^{\pm}) \, \mathrm{d} t  \Big] \, ,
\end{equation}
where, in this case, the discount factor is strictly positive  $r > 0$. The associated HJB equation for the control problem~\eqref{reduced infinite-time-horizon model} is 
\begin{equation} \label{hjb for discounted infinite-time-horizon model}
0 =  - r u(q) + H \Big(q, (u(q') - u(q))_{q' \in \{q-1, q+1\}}\Big), \quad \forall q \in \Omega^{Q}\,.
\end{equation}

Theorem~\ref{existence of discounted infinite} gives the existence of the solution to the discounted infinite-time-horizon problem, the proof is provided in Appendix~\ref{proof theorem exist infinite}. 
\begin{theorem} [Existence for discounted infinite-time HJB]
\label{existence of discounted infinite}
Let $v_r(\cdot, \cdot; T)$ be the unique solution to the HJB equation (\ref{finite-time hjb}) with the terminal condition (\ref{terminal condition}) and $r > 0$. 
Then for $v_r: \Omega^Q \to \mathbb{R}$ given by~\eqref{reduced infinite-time-horizon model} we have $\forall q \in \Omega^{Q}$ and $\forall t \in \mathbb{R}^{+}$ that 
\begin{equation*}
v_r(q) = \lim_{T \rightarrow + \infty} v_r(t, q; T) \, . 
\end{equation*}
Moreover, $v_r$ is the unique solution to~\eqref{hjb for discounted infinite-time-horizon model}. 
\end{theorem}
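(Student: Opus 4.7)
The strategy splits into three parts: (i) proving pointwise convergence of $v_r(t,q;T)$ to $v_r(q)$ as $T\to\infty$, (ii) passing to the limit in the finite-time HJB to obtain~\eqref{hjb for discounted infinite-time-horizon model}, and (iii) a verification argument for uniqueness.

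First, I would exploit time-homogeneity. Since the dynamics, the running reward $f$ and the terminal penalty $G$ are time-homogeneous, $v_r(t,q;T)$ depends on $T$ and $t$ only through $s:=T-t$. Writing $V_s(q):=v_r(0,q;s)$ we have $v_r(t,q;T)=V_{T-t}(q)$, so the claimed limit reduces to $\lim_{s\to\infty}V_s(q)$ and is automatically independent of $t$. Next, since $\Omega^Q$ is finite and $\sup_\delta \delta e^{-\kappa \delta}=1/(e\kappa)$, the running reward is uniformly bounded above; together with the ``no-trade'' strategy $\delta^\pm\equiv+\infty$ (which yields a uniform lower bound on $v_r$), this gives that $V_s(q)$ is bounded uniformly in $s$.

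To upgrade boundedness to convergence I would sandwich $V_s$ between truncations of the infinite-horizon value. For the lower bound, restrict an arbitrary admissible $(\delta^\pm_t)$ from~\eqref{reduced infinite-time-horizon model} to $[0,s]$ and use $|e^{-rs}G(Q_s)|\leq C e^{-rs}\to 0$ (since $|Q_s|\leq\max(\bar q,|\underline q|)$), to get $v_r(q)\leq \liminf_s V_s(q)$. For the upper bound, take an $\varepsilon$-optimal control for $V_s$ and extend by any fixed admissible strategy (say $\delta^\pm\equiv 1/\kappa^\pm$) on $[s,\infty)$; the tail contribution is $O(e^{-rs})$ thanks to $r>0$ and the upper bound on $f$, giving $\limsup_s V_s(q)\leq v_r(q)+\varepsilon$. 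Letting $\varepsilon\downarrow 0$ yields $V_s(q)\to v_r(q)$.

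For the HJB in the limit, Theorem~\ref{exist and unique for finite-time} shows that $s\mapsto V_s(q)$ is $C^1$ and satisfies the autonomous ODE
\begin{equation*}
V_s'(q) \;=\; -rV_s(q) + H\bigl(q,(V_s(q')-V_s(q))_{q'\in\{q-1,q+1\}}\bigr),\qquad V_0(q)=G(q).
\end{equation*}
Since $V_s(q)$ converges on the finite set $\Omega^Q$ and $H$ is continuous in its $\boldsymbol p$-argument, the right-hand side converges to $-rv_r(q)+H\bigl(q,(v_r(q')-v_r(q))_{q'}\bigr)$. If this limit were nonzero, $V_s'(q)$ would be bounded away from zero for all large $s$, contradicting the uniform boundedness of $V_s$; hence the limit vanishes and $v_r$ solves~\eqref{hjb for discounted infinite-time-horizon model}. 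Finally, uniqueness follows by verification: for any solution $u:\Omega^Q\to\mathbb{R}$ of~\eqref{hjb for discounted infinite-time-horizon model}, the Hamiltonian admits a unique maximiser $\hat{\delta}^\pm(q,u)$ in closed form. Applying It\^o's formula to $e^{-rt}u(Q_t^{\delta^\pm})$ along an arbitrary admissible $\delta^\pm\in\mathcal{A}$, the drift is $-e^{-rt}\bigl(ru(Q_t)-H(Q_t,\delta^\pm_t,\cdot)\bigr)\leq 0$ by~\eqref{hjb for discounted infinite-time-horizon model}. Taking expectation, sending $t\to\infty$ (using boundedness of $u$ on $\Omega^Q$ and $r>0$), and rearranging gives $u(q)\geq \mathbb{E}_q\int_0^\infty e^{-rt}f(Q_t;\delta^\pm_t)\,\mathrm{d}t$; inserting the feedback $\hat{\delta}^\pm(Q_{t-},u)$ turns this into an equality, so $u=v_r$. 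The most delicate point I expect is the HJB-limit step: pointwise convergence of $V_s$ does \emph{not} automatically imply $V_s'\to 0$, and one must crucially combine the uniform boundedness from Step~2 with continuity of $H$ in its finite-difference argument to force the limit of the ODE right-hand side to vanish.
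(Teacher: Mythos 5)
Your proof is correct and follows the same high‑level strategy as the paper (bound the running reward $f$ uniformly, show the finite‑horizon value converges to the infinite‑horizon value as $T\to\infty$, then characterise $v_r$ as the unique solution of the stationary HJB), but you supply self‑contained arguments for exactly the two steps the paper delegates to citations. The paper's proof bounds $f$ via Lemma~\ref{property of hamiltonian} and then invokes Proposition~4.1 of Gu\'eant--Manziuk for the convergence $v_r(q)=\lim_{T\to\infty}v_r(t,q;T)$, and a ``standard verification argument'' for the stationary HJB. Your sandwich argument (restrict any admissible control to $[0,s]$ for the $\liminf$, extend an $\varepsilon$-optimal control for $V_s$ by a fixed finite depth for the $\limsup$, with both error terms $O(e^{-rs})$ since $f$ and $G$ are bounded) proves this convergence from scratch, and your passage to the limit in the autonomous ODE is a clean way to obtain the stationary HJB: you correctly note that pointwise convergence of $V_s$ does not by itself give $V'_s\to 0$, but if the continuous right‑hand side converged to a nonzero value then $V_s$ would diverge linearly, contradicting the convergence of $V_s(q)$. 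This is exactly the kind of argument the paper compresses into a citation, so it is a useful explicit account rather than a genuinely different route.

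One small caveat: the ``no‑trade'' policy $\delta^\pm\equiv+\infty$ does not literally satisfy the $L^2$ integrability condition in the admissibility set $\mathcal{A}$~\eqref{admissible policies} as written, so using it for a lower bound on $v_r$ is slightly awkward. The paper's lower bound on $f$ (using that $\mathcal{A}$ is bounded from below) avoids this, and so would choosing a fixed finite depth such as $\delta^\pm\equiv 1/\kappa^\pm$ (which you already use to extend the $\varepsilon$-optimal control). The same remark applies to the feedback control $\hat\delta^\pm(q,u)$ in your verification step, which is $+\infty$ at the inventory boundaries; this is the same convention the paper tacitly adopts and is harmless, but worth flagging if you wanted to be fully rigorous.
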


\subsubsection*{The ergodic HJB and its connection to the ergodic control problem}
\label{ergodic analysis}
In this section, we analyse the ergodic control problem~\eqref{reduced ergodic reward functional} by considering the asymptotic behaviour of $T \rightarrow + \, \infty$ in the finite-time-horizon model~\eqref{reduced finite-time value function} with $r= 0$ and $r \rightarrow 0$ in the discounted infinite-time-horizon model~\eqref{reduced infinite-time-horizon model}. We prove that $\lim_{r \rightarrow 0} r v_r(q)$ is equal to the ergodic constant $\gamma$ in~\eqref{reduced ergodic reward functional}. Then explicit solutions to the ergodic control problem are derived.

We start with Theorem~\ref{ergodic constant} that analyses the asymptotic behaviour of $r \rightarrow 0$ in the discounted infinite-time-horizon model~\eqref{reduced infinite-time-horizon model}, the proof of which is provided in Appendix~\ref{proof theorem ergodic control problem}. 
\begin{theorem} \label{ergodic constant}
For the value function $v_r$ given by~\eqref{reduced infinite-time-horizon model} there exists a constant $\hat \gamma \in \mathbb{R}$ such that 
\begin{equation*}
\lim_{r \rightarrow 0} r v_r(q) = \hat \gamma, \quad  \forall q \in \Omega^{Q} \, . 
\end{equation*}
Moreover, $\hat v(q) = \lim_{r \rightarrow 0} \Big( v_r(q) - v_r(0) \Big)$ is well defined for $\forall q \in \Omega^{Q}$. 
Finally, $\hat \gamma$ and $\hat v$ solve the ergodic HJB equation
\begin{equation} \label{ergodic hjb equation}
0 = - \hat \gamma + H \Big(q, (\hat v(q') - \hat v(q))_{q' \in \{q-1, q+1\}}\Big), \quad \forall q \in \Omega^{Q} \, , 
\end{equation}
where the Hamiltonian function $H$ is given by (\ref{hamiltonian function}). 
\end{theorem}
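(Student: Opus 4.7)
The plan is to use the classical vanishing discount method adapted to our finite state space $\Omega^Q$. I would write $v_r(q) = v_r(0) + w_r(q)$ with $w_r(q) := v_r(q) - v_r(0)$, and then establish (a) a uniform-in-$r$ bound on $r v_r(q)$, and (b) a uniform-in-$r$ bound on $w_r(q)$. Once (a) and (b) are in place, finiteness of $\Omega^Q$ lets me extract a subsequence $r_n\downarrow 0$ along which $r_n v_{r_n}(0)\to\hat\gamma$ and $w_{r_n}(q)\to \hat v(q)$ for every $q\in\Omega^Q$, and then pass to the limit in the discounted HJB~\eqref{hjb for discounted infinite-time-horizon model} rewritten as
\begin{equation*}
r v_r(0) + r w_r(q) = H\bigl(q,(w_r(q')-w_r(q))_{q'\in\{q-1,q+1\}}\bigr),
\end{equation*}
yielding~\eqref{ergodic hjb equation} in the limit.

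For (a), I would use that $\sup_\delta \delta e^{-\kappa\delta} = 1/(e\kappa)$ to obtain $f(q;\delta^\pm)\le (\lambda^++\lambda^-)/(e\kappa)$, and, for a lower bound, plug the constant policy $\delta^\pm \equiv 1/\kappa$ into~\eqref{reduced infinite-time-horizon model} using $|q|\le \max(|\underline q|,\bar q)$. This directly yields $|r v_r(q)|\le C_1$ independent of $r$ and $q$. Step (b) is the main technical point: I would appeal to the dynamic programming principle to write, for any stopping time $\tau$ and any admissible $\delta^\pm$,
\begin{equation*}
v_r(q)\ge \mathbb E_q\Bigl[\int_0^\tau e^{-rt}f(Q_t;\delta_t^\pm)\,\mathrm dt+e^{-r\tau}v_r(Q_\tau)\Bigr],
\end{equation*}
and choose $\delta^\pm\equiv 1/\kappa$ (so that under this policy the inventory is a continuous-time irreducible random walk on the finite set $\Omega^Q$) together with $\tau=\tau_{q'}$, the first hitting time of $q'$. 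Irreducibility and finiteness give $\sup_{q,q'}\mathbb E_q[\tau_{q'}]\le C_2$. Combined with $1-e^{-r\tau}\le r\tau$ and the bound from (a), this yields $v_r(q)-v_r(q')\ge -C_3$, and swapping the roles of $q,q'$ gives $|w_r(q)|\le C_3$ uniformly in $r$.

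To upgrade subsequential convergence to full convergence, I would use a verification argument: any pair $(\bar\gamma,\bar v)$ solving~\eqref{ergodic hjb equation} must have $\bar\gamma=\gamma$, the ergodic constant defined in~\eqref{reduced ergodic reward functional}. Applying Itô to $\bar v(Q_t)$ along an admissible $\delta^\pm$ (the state process is piecewise-constant with jumps driven by the compensated Poisson processes, so Itô reduces to the finite-difference form used in $H$), taking expectation, dividing by $T$ and letting $T\to\infty$ gives $\bar\gamma\ge J(q;\delta^\pm)$ for every admissible $\delta^\pm$, while the feedback maximiser of $H$ achieves equality. Hence $\bar\gamma=\gamma$, so every subsequential limit of $rv_r(q)$ equals $\gamma=:\hat\gamma$; this proves full convergence of $rv_r(q)$ to $\hat\gamma$. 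The analogous irreducibility-based uniqueness (up to additive constants) of $\bar v$, together with the normalisation $\hat v(0)=0$ built into the definition of $w_r$, yields full convergence of $w_r(q)$ to some $\hat v(q)$.

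The main obstacle is step (b): one needs a quantitative ergodicity statement that is uniform in $r$, and the cleanest route is exactly the hitting-time coupling under the explicit feedback $\delta^\pm\equiv 1/\kappa$, which trivialises because $\Omega^Q$ is finite and the induced chain is clearly irreducible (each state is accessible from its neighbours via a $+$ or $-$ trade that fills with strictly positive probability $e^{-1}$). All other steps (vanishing-discount passage to the limit, verification-type uniqueness of $\hat\gamma$) are then standard finite-dimensional manipulations.
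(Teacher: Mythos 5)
Your proposal is correct and follows the same overall architecture as the paper's proof in Appendix~\ref{proof theorem ergodic control problem}: a vanishing-discount argument with a uniform bound on $r v_r$ and a uniform Lipschitz bound on $v_r(\cdot)-v_r(0)$, followed by subsequence extraction (legitimate since $\Omega^Q$ is finite) and passage to the limit in~\eqref{hjb for discounted infinite-time-horizon model}. The paper proves the two uniform bounds as Lemma~\ref{boundness of rv_r(q)}, where the bound on $v_r(q)-v_r(\hat q)$ also comes from a hitting-time estimate; the paper applies the DPP with an $\varepsilon$-optimal policy on $[0,\tau]$, while you fix the constant feedback $\delta^\pm\equiv 1/\kappa$ throughout. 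Your version is a touch cleaner since it avoids the $\varepsilon$-argument, and both rest on the same observation that the controlled chain is irreducible on the finite set $\Omega^Q$, so $\sup_{q,q'}\mathbb E_q[\tau_{q'}]<\infty$.

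The one place you genuinely diverge from the paper is the step that upgrades subsequential convergence of $r_n v_{r_n}(q)$ to full convergence. The paper (following Gu\'eant--Manziuk) argues analytically: if two subsequences produced limits $\hat\gamma>\eta$ with ergodic potentials $\hat v,\hat w$, one sets $z=\hat w-\hat v$, $\varepsilon=(\hat\gamma-\eta)/(\sup z-\inf z+1)$, and derives a contradiction from the comparison principle for the discounted HJB. You instead use a verification argument: apply It\^o to $\bar v(Q_t)$ along an arbitrary admissible control, integrate, divide by $T$, let $T\to\infty$, and conclude that any pair $(\bar\gamma,\bar v)$ solving~\eqref{ergodic hjb equation} must satisfy $\bar\gamma=\gamma$, the ergodic value from~\eqref{reduced ergodic reward functional}. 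This is a correct and arguably more transparent route; it also pre-empts what the paper defers to Theorem~\ref{solve gamma}, namely the identification $\hat\gamma=\gamma$. The only place where your sketch is a little thin is the final claim that $\hat v$ is uniquely determined (up to the additive normalisation $\hat v(0)=0$): you wave at an ``analogous irreducibility-based uniqueness,'' whereas the paper proves this as a separate statement (Proposition~\ref{uniqueness of ergodic control}) again via the comparison principle. For your control-theoretic route you would want to flesh this out, e.g.\ by a coupling or Doeblin-type argument for the irreducible chain under the optimal feedback, or simply by falling back on the paper's comparison principle for this one step. That gap is a matter of detail rather than a flaw in the plan.
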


The next theorem, Theorem~\ref{solve gamma}, states that the constant $\hat \gamma$ from Theorem~\ref{ergodic constant} is equivalent to the optimal long-run average reward $\gamma$ in the ergodic control problem~\eqref{reduced ergodic reward functional}, which associates the equation~\eqref{ergodic hjb equation} with the ergodic control problem. Hence we call~\eqref{ergodic hjb equation} the ergodic HJB equation, which contains an unknown pair of the ergodic constant $\gamma$ and ergodic value function  $\hat v$. Theorem~\ref{solve gamma}, which will be proved in Appendix~\ref{proof theorem solve gamma}, is a first step towards obtaining an explicit solution to the equation~\eqref{ergodic hjb equation}. 

\begin{theorem} \label{solve gamma}
Let $\hat \gamma$ be the constant proposed in Theorem~\ref{ergodic constant}. Let $v_0(t, q; T)$ be the unique solution to the HJB equation~\eqref{finite-time hjb} with $r = 0$. Then
\begin{equation}
\lim_{T \rightarrow + \infty} \frac{1}{T} v_0(0, q; T) = \hat \gamma = \gamma, \quad \forall q \in \Omega^{Q} \, , 
\end{equation}
where $\gamma$ is the ergodic constant defined in~\eqref{reduced ergodic reward functional}. 
\end{theorem}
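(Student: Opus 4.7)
The plan is a verification-style argument based on the ergodic HJB \eqref{ergodic hjb equation}. The goal is to show
\[
\bigl|v_0(0,q;T)-T\hat\gamma-\hat v(q)\bigr|\ \le\ C,\qquad C\text{ independent of }T,
\]
which immediately gives the first equality $T^{-1}v_0(0,q;T)\to\hat\gamma$, and then to apply the same argument without the terminal term to identify $\hat\gamma$ with $\gamma$.

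The starting point is the observation that the ansatz $w(t,q):=(T-t)\hat\gamma+\hat v(q)$ solves the same PDE \eqref{finite-time hjb} with $r=0$ that $v_0(\cdot,\cdot;T)$ solves: writing $\Delta\hat v(q):=(\hat v(q-1)-\hat v(q),\hat v(q+1)-\hat v(q))$, we have $\partial_t w=-\hat\gamma$ and $\Delta w(t,q)=\Delta\hat v(q)$, so the ergodic HJB gives $\partial_t w+H(q,\Delta w)=-\hat\gamma+\hat\gamma=0$. Only the terminal data differ, $w(T,q)=\hat v(q)$ versus $v_0(T,q;T)=G(q)$. Probabilistically, Dynkin's formula applied to $\hat v(Q_t)$ under any admissible $\delta^\pm\in\mathcal A$ (valid because $\hat v$ is bounded on the finite state space $\Omega^Q$ and the compensated jump martingales are integrable) combines with the pointwise inequality $(\mathcal L^\delta\hat v)(q)+f(q,\delta)\le H(q,\Delta\hat v(q))=\hat\gamma$ to yield
\[
\mathbb E_q\!\int_0^T f(Q_t,\delta_t)\,\mathrm dt \ \le\ T\hat\gamma+\hat v(q)-\mathbb E_q[\hat v(Q_T)],
\]
with equality when $\delta^\pm$ is the feedback $\psi^\pm(q):=\argmax_{\delta^\pm} H(q,\delta^\pm,\Delta\hat v(q))$, obtainable in closed form from first-order conditions.

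Adding the terminal reward $G(Q_T)$ from \eqref{terminal condition} and taking the supremum over $\mathcal A$ yields the upper bound $v_0(0,q;T)\le T\hat\gamma+\hat v(q)+\|G-\hat v\|_\infty$, while plugging $\psi^\pm$ (as a specific admissible competitor) into the equality case gives the matching lower bound $v_0(0,q;T)\ge T\hat\gamma+\hat v(q)-\|G-\hat v\|_\infty$. Since $\Omega^Q$ is finite, $\|G-\hat v\|_\infty$ is a finite constant independent of $T$, so $T^{-1}v_0(0,q;T)\to\hat\gamma$. Applied without the terminal term, the same sandwich shows that for every admissible $\delta^\pm$ one has $\limsup_{T\to\infty} T^{-1}\mathbb E_q\!\int_0^T f(Q_t,\delta_t)\,\mathrm dt\le\hat\gamma$, with equality attained in the limit under $\psi^\pm$, giving $\gamma=\sup_{\delta^\pm\in\mathcal A} J(q;\delta^\pm)=\hat\gamma$ and, incidentally, confirming that $\gamma$ does not depend on $q$.

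The main obstacle is reconciling the reward $f$ in \eqref{f function}, which carries no boundary indicators, with the Hamiltonian $H$ in \eqref{hamiltonian function}, which does: this is handled by the convention that at the inventory boundary states $q=\underline q,\bar q$ the optimiser $\psi^\pm$ on the unavailable side is $+\infty$, so both the corresponding jump intensity and the term $\delta^\pm\lambda^\pm e^{-\kappa\delta^\pm}$ in $f$ vanish in that limit, ensuring $(\mathcal L^\delta\hat v)(q)+f(q,\delta)=H(q,\delta,\Delta\hat v(q))$ throughout $\Omega^Q$. Verifying that $\psi^\pm$ meets the lower-boundedness and $L^2$ requirements of $\mathcal A$ in \eqref{admissible policies} at interior states and then passing to this limit at the two boundary states is the only delicate bookkeeping; once done, Dynkin's formula and the ergodic HJB produce both sides of the sandwich directly.
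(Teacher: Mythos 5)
Your proposal is correct and proves the same sandwich
\[
\bigl|v_0(0,q;T)-T\hat\gamma-\hat v(q)\bigr|\le\|G-\hat v\|_\infty
\]
that the paper establishes, but by a different route. The paper passes to the forward-time function $\mu(t,q)=v_0(T-t,q;T)$, observes that $\varphi^c(t,q)=\hat\gamma t+\hat v(q)+c$ solves the same evolution equation, and then invokes the comparison principle of Gu\'eant--Manziuk (cited as Proposition~3.2 of that reference) with $c=\inf_q(G-\hat v)$ and $c=\sup_q(G-\hat v)$ to sandwich $\mu$ and hence $U(t,q)=\mu(t,q)-\hat\gamma t$. You instead apply Dynkin's formula to $\hat v(Q_t)$ under an arbitrary admissible control, combine it with the pointwise inequality $(\mathcal L^\delta\hat v)(q)+f(q,\delta)\le H(q,\Delta\hat v(q))=\hat\gamma$ (equality when $\delta=\psi^\pm$) and the terminal data, and obtain the same two-sided bound directly. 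This is more self-contained --- it avoids importing a PDE comparison lemma --- and in the finite-state setting the two tools carry essentially the same strength, so nothing is lost. Your treatment of the boundary states via the $\delta^\pm\to+\infty$ convention is the right way to reconcile the indicator-free $f$ with the indicator-bearing $H$; the paper handles the same point implicitly inside the Hamiltonian definition and the cited comparison result. For the identification $\hat\gamma=\gamma$, the paper uses the interchange $\sup_\delta\lim_T\le\lim_T\sup_\delta$ and then plugs in $\psi^\pm$ (via Proposition~\ref{Existence and Uniqueness for Ergodic Optimal Control}) for the reverse inequality; your argument gets both directions uniformly from the same Dynkin bound, which is slightly cleaner, and, as you note, it also shows that $\gamma$ is independent of $q$ along the way. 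The only loose end in your write-up --- the $L^2$-admissibility of $\psi^\pm$ when it equals $+\infty$ at the inventory boundaries --- is present in the paper as well, so it is a shared modelling convention rather than a gap specific to your argument.
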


So far we've established the connection between the ergodic constant $\gamma$ and the solution to the ergodic HJB equation~\eqref{ergodic hjb equation}.
Next, we are interested in how this constant depends on the model parameter $\kappa$.
This is best seen from an explicit formulation for $\gamma = \gamma(\kappa)$ given in the following theorem. 

\begin{theorem} \label{solve ergodic constant}
Assume $\kappa^{\pm} = \kappa > 0$. Let $\lambda_{max}(\kappa)$ be the largest eigenvalue of the matrix $\boldsymbol A$ given in Theorem \ref{solution of finite-time-horizon model}. Then the ergodic constant $\gamma$ in~\eqref{reduced ergodic reward functional} is given by     
\begin{equation} \label{explicit solution of gamma}
\gamma = \gamma(\kappa) = \frac{\lambda_{max}(\kappa)}{\kappa} \, . 
\end{equation}
\end{theorem}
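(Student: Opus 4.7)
The plan is to combine Theorem~\ref{solve gamma} with the explicit finite-horizon representation from Theorem~\ref{solution of finite-time-horizon model} and then extract the long-time growth rate of the matrix exponential $e^{T \boldsymbol{A}}$ via a Perron--Frobenius analysis of $\boldsymbol{A}$. By Theorem~\ref{solve gamma}, $\gamma = \lim_{T \to +\infty} T^{-1} v_0(0, q; T)$ for any $q \in \Omega^Q$, and substituting the formula from Theorem~\ref{solution of finite-time-horizon model} reduces the claim to showing
\[
\lim_{T \to +\infty} \frac{1}{\kappa T} \ln \big[ e^{T \boldsymbol{A}} \boldsymbol{z} \big]_q = \frac{\lambda_{\max}(\kappa)}{\kappa}.
\]

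To analyse the spectrum of $\boldsymbol{A}$, note it is tridiagonal with strictly positive off-diagonal entries $\lambda^{\pm} e^{-1}$, so for $c>0$ chosen large enough, $\boldsymbol{B} := \boldsymbol{A} + cI$ is an irreducible non-negative matrix (and in fact primitive, since its diagonal becomes strictly positive). The Perron--Frobenius theorem then gives that $\boldsymbol{B}$, and hence $\boldsymbol{A}$, has a simple real largest eigenvalue $\lambda_{\max}(\kappa)$ with a strictly positive eigenvector $\boldsymbol{w}_{\max}$, and every other eigenvalue has strictly smaller real part. Moreover, since the super- and sub-diagonal entries have positive product, a diagonal Jacobi-type similarity $D^{-1}\boldsymbol{A} D$ is symmetric, so $\boldsymbol{A}$ is diagonalisable over $\mathbb{R}$ and admits an orthogonal eigenbasis with respect to the associated weighted inner product $\langle \cdot, \cdot \rangle_D$.

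Expanding $\boldsymbol{z}$ in this eigenbasis as $\boldsymbol{z} = \alpha_1 \boldsymbol{w}_{\max} + \sum_{i\geq 2}\alpha_i \boldsymbol{w}_i$ yields
\[
[e^{T \boldsymbol{A}} \boldsymbol{z}]_q = e^{T \lambda_{\max}(\kappa)}\Big(\alpha_1 [\boldsymbol{w}_{\max}]_q + \sum_{i\geq 2}\alpha_i e^{T(\lambda_i - \lambda_{\max}(\kappa))}[\boldsymbol{w}_i]_q\Big).
\]
Since $\boldsymbol{z}$ has strictly positive entries ($z_j = e^{-\alpha \kappa j^2} > 0$) and $\boldsymbol{w}_{\max}$ is strictly positive, the coefficient $\alpha_1 = \langle \boldsymbol{z}, \boldsymbol{w}_{\max} \rangle_D / \langle \boldsymbol{w}_{\max}, \boldsymbol{w}_{\max} \rangle_D$ is strictly positive, and hence $\alpha_1 [\boldsymbol{w}_{\max}]_q > 0$. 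Taking logarithms, dividing by $\kappa T$ and letting $T \to +\infty$ yields $\gamma = \lambda_{\max}(\kappa)/\kappa$ since the subleading terms decay exponentially. The main obstacle of the argument lies precisely here: one has to rule out $[e^{T\boldsymbol{A}}\boldsymbol{z}]_q$ vanishing (which would send the $\ln$ to $-\infty$) and identify the leading prefactor as strictly positive; both are delivered by the Perron--Frobenius theorem combined with the strict positivity of $\boldsymbol{z}$.
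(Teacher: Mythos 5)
Your proposal is correct and follows the same overall strategy as the paper: substitute the explicit matrix-exponential formula for $v_0$ from Theorem~\ref{solution of finite-time-horizon model} into the limit provided by Theorem~\ref{solve gamma}, conjugate $\boldsymbol{A}$ to a real symmetric tridiagonal matrix via the Jacobi diagonal similarity to guarantee real, simple eigenvalues and an orthogonal eigenbasis (in the appropriate weighted inner product), diagonalise $e^{T\boldsymbol{A}}$, and read off the growth rate $\lambda_{\max}$ from the log. In that sense the route is the same as the paper's.

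There is, however, one place where your write-up genuinely tightens the argument relative to what the paper presents. The paper expands $[e^{T\boldsymbol{A}}\boldsymbol{z}]_q = \sum_i P_{qi} K_i e^{\lambda_i T}$ and immediately concludes the limit $\tfrac{1}{T}\ln(\cdot)\to\lambda_1$, implicitly assuming the coefficient of the dominant mode $e^{\lambda_1 T}$ is nonzero (indeed strictly positive, so the logarithm is well-defined as $T\to\infty$); no justification for that coefficient being nonzero is given. You close this gap explicitly: by adding $cI$ with $c$ large enough you make $\boldsymbol{A}+cI$ a primitive non-negative matrix, so Perron--Frobenius gives a simple top eigenvalue with a strictly positive eigenvector $\boldsymbol{w}_{\max}$; then, because $\boldsymbol{z}$ has strictly positive entries and the Jacobi similarity weights are positive, the $D$-weighted pairing $\langle\boldsymbol{z},\boldsymbol{w}_{\max}\rangle_D$ is strictly positive, hence so is $\alpha_1[\boldsymbol{w}_{\max}]_q$. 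This is precisely the ingredient needed to legitimately identify the leading exponential rate as $\lambda_{\max}$, and it is a welcome addition to the paper's proof.
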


The proof is given in Appendix~\ref{proof theorem solve ergodic constant} and is based on establishing the asymptotic behaviour as $T \rightarrow + \, \infty$ in $v_0(t, q; T)$.

The ergodic HJB equation (\ref{ergodic hjb equation}) can be solved once we obtain $\gamma$. Proposition \ref{uniqueness of ergodic control}, which will be proved in  Appendix \ref{proof prop uniquess of ergodic control}, analyses the uniqueness (defined up to a constant) for the solution $\hat v$ to the ergodic HJB equation~\eqref{ergodic hjb equation}. Then we can obtain the existence and uniqueness for the optimal control by Proposition~\ref{Existence and Uniqueness for Ergodic Optimal Control}. 
\begin{proposition} \label{uniqueness of ergodic control}
Let $v$ and $w$ be two solutions to the ergodic HJB equation (\ref{ergodic hjb equation}) with the same $\gamma$.
Then there exists a constant $\eta \in \mathbb{R}$ such that 
\begin{equation*}
v(q) = w(q) + \eta, \quad \forall q \in \Omega^Q \, . 
\end{equation*}
That is, the solution to equation (\ref{ergodic hjb equation}) is unique up to a constant. 
\end{proposition}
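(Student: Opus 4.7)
The plan is to prove the statement via a discrete maximum principle applied to $\Phi := v - w$. The key observation is that~\eqref{ergodic hjb equation} is a Bellman-type equation on the finite state space $\Omega^Q$, which lets us exploit the sub-optimality of the ``wrong'' Markov control to obtain two opposite elliptic-type inequalities on $\Phi$.

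First I would record that, for any $\boldsymbol p \in \mathbb{R}^2$, the inner supremum in the Hamiltonian~\eqref{hamiltonian function} is attained at a unique finite maximiser: differentiating gives $\delta^{\pm} = 1/\kappa^{\pm} - p_i$. Hence the Markov controls $\delta^{\pm,v}(q)$ and $\delta^{\pm,w}(q)$ induced by $v$ and $w$ are well defined and finite on $\Omega^Q$, so all the resulting jump intensities $\lambda^{\pm} e^{-\kappa^{\pm} \delta^{\pm,\cdot}(q)}$ are strictly positive on every edge allowed by the boundary indicators in~\eqref{hamiltonian function}.

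Next, I introduce the Markov-controlled infinitesimal generator
\[
\mathcal{L}^{\delta} \varphi(q) := \lambda^{+} e^{-\kappa^{+} \delta^{+}} (\varphi(q-1) - \varphi(q)) \mathds{1}_{q > \underline{q}} + \lambda^{-} e^{-\kappa^{-} \delta^{-}} (\varphi(q+1) - \varphi(q)) \mathds{1}_{q < \bar{q}},
\]
so that~\eqref{ergodic hjb equation} reads $\gamma = \sup_{\delta}\big[\mathcal{L}^{\delta} v(q) + R(q,\delta)\big]$ with the residual $R(q,\delta) = \lambda^{+} e^{-\kappa^{+}\delta^{+}}\delta^{+}\mathds{1}_{q>\underline q} + \lambda^{-} e^{-\kappa^{-}\delta^{-}}\delta^{-}\mathds{1}_{q<\bar q} - \phi q^2$ depending on neither $v$ nor $w$. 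Evaluating the HJB for $v$ with the sub-optimal choice $\delta^{\pm,w}$ and for $w$ at its own optimiser $\delta^{\pm,w}$ and subtracting, the $R$-terms cancel and I obtain $\mathcal{L}^{\delta^{w}} \Phi(q) \leq 0$ for every $q \in \Omega^Q$; exchanging the roles of $v$ and $w$ gives the reverse inequality $\mathcal{L}^{\delta^{v}} \Phi(q) \geq 0$.

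Finally, I apply the discrete maximum principle. Choose $q^{\ast} \in \argmax_{q \in \Omega^Q} \Phi$. At $q^{\ast}$ the relation $\mathcal{L}^{\delta^{v}} \Phi(q^{\ast}) \geq 0$ is a weighted sum of non-positive terms $\Phi(q^{\ast} \pm 1) - \Phi(q^{\ast}) \leq 0$ with strictly positive weights, so each term must vanish, giving $\Phi(q^{\ast} \pm 1) = \Phi(q^{\ast})$ on every neighbour allowed by the boundary indicators. Iterating along the finite nearest-neighbour chain $\Omega^Q$ propagates the equality to every state, yielding $\Phi \equiv \Phi(q^{\ast}) =: \eta$. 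The main obstacle is the bookkeeping at the inventory boundaries: when $q^{\ast} \in \{\underline q, \bar q\}$ the generator collapses to a single edge, and one must verify that the surviving coefficient is still strictly positive (this is where finiteness of the maximiser $1/\kappa^{\pm} - p$ on $\Omega^Q$ is needed) so that the inward propagation still reaches every state.
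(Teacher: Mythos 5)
Your proof is correct. You recast the ergodic HJB as $\gamma = \sup_{\delta}\{\mathcal{L}^{\delta}v(q) + R(q,\delta)\}$, cross-compare $v$ and $w$ at one another's optimisers to extract $\mathcal{L}^{\delta^v}\Phi \geq 0$ (and, redundantly, $\mathcal{L}^{\delta^w}\Phi \leq 0$), and then run a discrete strong maximum principle to force $\Phi = v - w$ to be constant. The crucial ingredient you correctly isolate is that the optimal depths $\delta^{\pm,\cdot}(q) = 1/\kappa^{\pm} - p_i$ are finite on a finite state space, so every jump rate $\lambda^{\pm}e^{-\kappa^{\pm}\delta^{\pm,\cdot}(q)}$ appearing in the generator (after the boundary indicators kill the forbidden transitions) is strictly positive. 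That strict positivity is what lets the maximum-principle argument propagate the equality $\Phi(q^*\pm1) = \Phi(q^*)$ along the nearest-neighbour chain to all of $\Omega^Q$; it is precisely the analytic counterpart of the strict monotonicity of $\boldsymbol{p} \mapsto H(q,\boldsymbol{p})$ recorded in Lemma~\ref{property of hamiltonian}\eqref{property of hamiltonian item 3}. The paper itself gives essentially no proof and instead defers to Proposition~4.7 of Gu\'{e}ant--Manziuk, whose proof (mirrored in this paper's treatment of Theorem~\ref{ergodic constant}) proceeds by a small-$\varepsilon$ perturbation of the HJB followed by a comparison principle for the resulting discounted-type equation. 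Your route is more elementary and fully self-contained: you trade the perturbation--comparison machinery for a direct one-shot argument at the $\argmax$ of $\Phi$, which is cleaner on a finite state space. Both approaches ultimately rest on the same structural fact (strict monotonicity of the Hamiltonian / strict positivity of the optimal jump rates), so the gain is one of transparency rather than generality. One small remark: only one of your two cross-inequalities is needed for the argmax argument (the reverse one would serve equally well for an argmin), so you could tighten the exposition slightly.
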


\begin{proposition}[Existence and uniqueness for ergodic optimal control] \label{Existence and Uniqueness for Ergodic Optimal Control}
The optimal feedback (Markov) control for the ergodic control problem $\psi = (\psi^+, \psi^-)$ is uniquely given by
\begin{equation} \label{feedback control}
\psi^+(q) = \begin{cases} \frac{1}{\kappa} + \hat v(q) - \hat v(q-1),  & q \neq \underline{q}, \\
\, + \infty,  & q = \underline{q}, \\
\end{cases}, \,
\psi^-(q) = \begin{cases} \frac{1}{\kappa} + \hat v(q) - \hat v(q+1), & q \neq \bar q, \\
\, + \infty,  & q = \bar{q}, \\
\end{cases} 
\end{equation}
where $\hat v$ is the solution to the ergodic HJB equation~\eqref{ergodic hjb equation}.
\end{proposition}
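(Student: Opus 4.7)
The approach is a standard verification argument based on the ergodic HJB equation~\eqref{ergodic hjb equation}. Observe first that the formulas in~\eqref{feedback control} only depend on the differences $\hat v(q)-\hat v(q\pm 1)$, hence are well defined independently of the additive-constant ambiguity in $\hat v$ resolved by Proposition~\ref{uniqueness of ergodic control}.

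First, I compute the pointwise argmax in the Hamiltonian~\eqref{hamiltonian function}. For $q>\underline q$, setting $p_1=\hat v(q-1)-\hat v(q)$, the first-order condition $\partial_{\delta^+}[\lambda^+ e^{-\kappa\delta^+}(p_1+\delta^+)] = \lambda^+ e^{-\kappa\delta^+}\bigl(1-\kappa(p_1+\delta^+)\bigr)=0$ yields $\delta^+=1/\kappa-p_1=1/\kappa+\hat v(q)-\hat v(q-1)$. The map $\delta\mapsto e^{-\kappa\delta}(p+\delta)$ has a unique critical point on $\mathbb R$, tends to $-\infty$ as $\delta\to-\infty$ and to $0^+$ as $\delta\to+\infty$, so this critical point is the unique global maximiser. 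The symmetric computation gives $\psi^-$ for $q<\bar q$. At the boundary $q=\underline q$ the indicator $\mathds 1_{q>\underline q}$ in~\eqref{hamiltonian function} suppresses the $\delta^+$ contribution entirely, so the Hamiltonian is independent of $\delta^+$; the convention $\psi^+(\underline q)=+\infty$ is the canonical representative that realises zero fill intensity and strictly enforces $Q_t\geq\underline q$. The boundary $q=\bar q$ is handled identically.

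Second, I carry out the verification step. For any $\delta^\pm\in\mathcal A$, boundedness of $\hat v$ on the finite set $\Omega^Q$ together with boundedness of the fill intensities by $\lambda^\pm$ justifies Dynkin's formula for the pure jump process $Q^{\delta^\pm}$, giving $\mathbb E_q[\hat v(Q_T^{\delta^\pm})]-\hat v(q)$ as the expected integral of $\mathcal L^{\delta^\pm}\hat v(Q_t)$, where $\mathcal L^{\delta^\pm}$ is the controlled generator. Adding $\mathbb E_q\int_0^T f(Q_t;\delta_t^\pm)\,\mathrm d t$ to both sides, the integrand becomes $H(Q_t,\delta_t^\pm,\boldsymbol p(Q_t))$ with $\boldsymbol p(q)=(\hat v(q-1)-\hat v(q),\hat v(q+1)-\hat v(q))$, which by~\eqref{ergodic hjb equation} satisfies
\begin{equation*}
H(Q_t,\delta_t^\pm,\boldsymbol p(Q_t))\;\leq\; H(Q_t,\boldsymbol p(Q_t))\;=\;\gamma,
\end{equation*}
with equality if and only if $\delta_t^\pm$ equals the pointwise argmax $\psi^\pm(Q_t)$. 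Dividing by $T$, letting $T\to\infty$ and using once more that $\hat v$ is bounded so that $(\hat v(q)-\mathbb E_q[\hat v(Q_T^{\delta^\pm})])/T\to 0$, yields $J(q;\delta^\pm)\leq\gamma$ with equality attained by $\psi^\pm$. Combined with the uniqueness of the argmax from the first step, this gives both existence and uniqueness of the Markov optimiser.

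The main technical obstacle is justifying that the compensated-jump local martingale appearing in the Dynkin step is a true martingale, since admissible $\delta^\pm$ may take the value $+\infty$ on sets of positive measure. The rescue is structural: jumps of $\hat v(Q)$ are uniformly bounded thanks to the finite state space $\Omega^Q$, and the compensator intensity $\lambda^\pm e^{-\kappa\delta^\pm_t}$ is uniformly bounded by $\lambda^\pm$ (with $\delta^\pm_t=+\infty$ contributing zero intensity). This gives square integrability of the stochastic integral against the compensated measure on any finite interval and hence a true martingale. Everything else is elementary given the explicit structure already established for the ergodic HJB.
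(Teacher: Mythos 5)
Your argmax computation and the invariance-under-constant-shifts observation match the paper's own proof exactly --- and that is in fact \emph{all} the paper records under this proposition. The paper never carries out the verification that $\psi^\pm$ actually attains $\gamma$ as a long-run average; that fact is invoked later (in the proof of Theorem~\ref{solve gamma}) but left implicit. Your second step supplies it explicitly via Dynkin's formula, and the martingale justification is sound: $\hat v$ is bounded because $\Omega^Q$ is finite, the controlled fill intensities are bounded by $\lambda^\pm$ (with $\delta^\pm=+\infty$ contributing zero intensity), so the compensated jump integral is a true martingale on each finite interval. Your proof is therefore strictly more self-contained than what the paper states. Two points worth spelling out: for uniqueness of the Markov optimiser, deviating from $\psi^\pm$ at any single state $q$ incurs a strictly positive long-run cost only because every state in $\Omega^Q$ is positive recurrent under any admissible Markov control (the inventory is an irreducible CTMC on a finite state space, cf.\ Lemma~\ref{ergodic control stability}), so the equilibrium distribution charges $q$; and your passage from the Dynkin inequality to $J(q;\delta^\pm)\leq\gamma$ should be routed through a $\limsup$, since the defining limit in $J$ need not exist for an arbitrary admissible $\delta^\pm$ (it does exist for the Markov control $\psi^\pm$, by ergodicity).
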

Obviously, $\psi$ given by~\eqref{feedback control} depends on the model parameters, in particular on $\kappa$.
We will denote the optimal feedback control for the ergodic problem with the parameter $\kappa$ as $\psi^{\kappa}$. 

Finally, we come to Theorem \ref{solve ergodic control problem} proved in Appendix~\ref{proof theorem solve ergodic control problem} that provides an explicit solution to the ergodic HJB equation (\ref{ergodic hjb equation}) . 
\begin{theorem} \label{solve ergodic control problem}
Assume that $\kappa^{\pm} = \kappa$. Let $\boldsymbol{\hat{v}} = [\hat{v}(\bar{q}), \hat{v}(\bar{q}-1), ... , \hat{v}(\underline{q})]^{\top}$ be a $(\bar{q} - \underline{q} + 1)$-dim vector of a solution to the ergodic HJB equation (\ref{ergodic hjb equation}) and $\boldsymbol{\hat{v}} = \frac{1}{\kappa} \ln \boldsymbol{\hat{\omega}}$. Let $\gamma$ be the ergodic constant from Theorem \ref{solve ergodic constant} and $\boldsymbol C$ be the $(\bar{q} - \underline{q} + 1)-$square matrix
\scriptsize
\begin{align*}
\boldsymbol C = \begin{bmatrix}
- \kappa (\phi \bar{q}^2 + \gamma) & \lambda^{+} e^{-1} & 0 &  &  & ... & \\ 
\lambda^{-} e^{-1} & - \kappa \big(\phi (\bar{q}-1)^2 + \gamma \big)& \lambda^{+} e^{-1 } & & & ... & \\ 
& & &  ... & \\
& ... & &  & \lambda^{-} e^{-1} & - \kappa \big(\phi (\underline{q}+1)^2 +\gamma \big)& \lambda^{+} e^{-1}\\ 
& ... & & & 0 & \lambda^{-} e^{-1} & - \kappa (\phi \underline{q}^2 + \gamma)
\end{bmatrix} \, .
\end{align*}
\normalsize
Then it holds that
\begin{equation} \label{solve hat omega}
\boldsymbol{C} \boldsymbol{\hat{\omega}} = 0 \, ,
\end{equation}
i.e. $\boldsymbol{\hat{\omega}}$ is the non-trivial solution to the homogeneous equation with coefficient $\boldsymbol{C}$. 
Moreover, $\boldsymbol{\hat{\omega}}$ can be chosen to be positive, and it is unique up to a scalar multiple. 
\end{theorem}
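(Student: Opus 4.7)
The plan is to verify the theorem by performing the substitution $\hat{v}(q) = \frac{1}{\kappa}\ln\hat{\omega}(q)$ directly in the ergodic HJB equation \eqref{ergodic hjb equation} and showing that the resulting linear system is exactly $\boldsymbol{C}\boldsymbol{\hat{\omega}} = 0$. The positivity and uniqueness (up to scalar) will then follow from Perron--Frobenius applied to the matrix $\boldsymbol{A}$ of Theorem~\ref{solution of finite-time-horizon model}.

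First I would make the optimised Hamiltonian explicit. Differentiating $\delta \mapsto \lambda^{\pm} e^{-\kappa \delta}(p + \delta)$ and setting the derivative to zero yields $\delta^{\pm,*} = \frac{1}{\kappa} - p$, so, for $\underline{q}<q<\bar{q}$,
\begin{equation*}
H(q,\boldsymbol{p}) \;=\; \frac{\lambda^{+}}{\kappa}\,e^{-1}\,e^{\kappa p_1} \;+\; \frac{\lambda^{-}}{\kappa}\,e^{-1}\,e^{\kappa p_2} \;-\; \phi q^2,
\end{equation*}
with the obvious modifications at $q = \underline{q}$ (no buy term) and $q = \bar{q}$ (no sell term) coming from the indicator functions in \eqref{hamiltonian function}.

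Next I would substitute $\hat{v}(q) = \frac{1}{\kappa}\ln \hat{\omega}(q)$ into \eqref{ergodic hjb equation}. Since
\begin{equation*}
e^{\kappa(\hat{v}(q-1) - \hat{v}(q))} = \frac{\hat{\omega}(q-1)}{\hat{\omega}(q)}, \qquad e^{\kappa(\hat{v}(q+1) - \hat{v}(q))} = \frac{\hat{\omega}(q+1)}{\hat{\omega}(q)},
\end{equation*}
equation \eqref{ergodic hjb equation} becomes, for each $q \in \Omega^Q$,
\begin{equation*}
0 \;=\; -\gamma + \frac{\lambda^{+}e^{-1}}{\kappa}\,\frac{\hat{\omega}(q-1)}{\hat{\omega}(q)}\mathds{1}_{q>\underline{q}} + \frac{\lambda^{-}e^{-1}}{\kappa}\,\frac{\hat{\omega}(q+1)}{\hat{\omega}(q)}\mathds{1}_{q<\bar{q}} - \phi q^2.
\end{equation*}
Multiplying by $\kappa\hat{\omega}(q)$ and rearranging gives
\begin{equation*}
\lambda^{+}e^{-1}\hat{\omega}(q-1)\mathds{1}_{q>\underline{q}} - \kappa(\phi q^2 + \gamma)\hat{\omega}(q) + \lambda^{-}e^{-1}\hat{\omega}(q+1)\mathds{1}_{q<\bar{q}} \;=\; 0,
\end{equation*}
which is precisely the system $\boldsymbol{C}\boldsymbol{\hat{\omega}} = 0$ once the entries are read off (the indicators handling the boundary rows at $q=\bar{q}$ and $q=\underline{q}$). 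Note that this change of variables is only legitimate after we know $\hat{\omega}$ is strictly positive; I will therefore treat it as an \emph{ansatz} and verify afterward that the $\hat{\omega}$ constructed is positive, so that $\hat{v} = \frac{1}{\kappa}\ln\hat{\omega}$ is well defined and genuinely solves \eqref{ergodic hjb equation}.

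For existence, positivity, and uniqueness of $\hat{\omega}$, I observe that $\boldsymbol{C} = \boldsymbol{A} - \kappa\gamma \boldsymbol{I}$, where $\boldsymbol{A}$ is the matrix of Theorem~\ref{solution of finite-time-horizon model}. By Theorem~\ref{solve ergodic constant}, $\kappa\gamma = \lambda_{\max}(\boldsymbol{A})$, so $0$ is an eigenvalue of $\boldsymbol{C}$ and a non-trivial $\hat{\omega}$ in the kernel exists. Because $\boldsymbol{A}$ is tridiagonal with strictly positive off-diagonal entries $\lambda^{\pm}e^{-1}$, for any sufficiently large $c>0$ the matrix $\boldsymbol{A} + c\boldsymbol{I}$ is nonnegative and irreducible, hence the classical Perron--Frobenius theorem applies: its Perron eigenvalue is simple and the corresponding eigenvector is (up to scalar) strictly positive. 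Since $\boldsymbol{A}$ and $\boldsymbol{A}+c\boldsymbol{I}$ share eigenvectors, the eigenvector of $\boldsymbol{A}$ associated with $\lambda_{\max}(\boldsymbol{A})$ has the same properties, and this eigenvector is exactly $\hat{\omega}$. The main (minor) obstacle is the careful bookkeeping of the boundary rows of $\boldsymbol{C}$ and the verification that the indicator-induced boundary behaviour of the HJB matches the first and last rows of the matrix; once that is checked, the Perron--Frobenius argument takes care of the remaining claims.
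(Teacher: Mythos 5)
Your proposal is correct and follows essentially the same route as the paper: substitute $\hat v = \tfrac{1}{\kappa}\ln\hat\omega$ into the ergodic HJB using the explicit form of the optimised Hamiltonian, read off the tridiagonal system $\boldsymbol{C}\boldsymbol{\hat\omega}=0$, and note $\boldsymbol{C}=\boldsymbol{A}-\lambda_{\max}\boldsymbol{I}$. The one place where you diverge is the uniqueness argument: the paper invokes the fact (established in the proof of Theorem~\ref{solve ergodic constant} via similarity to a symmetric tridiagonal matrix) that $\boldsymbol{A}$ has $n$ \emph{distinct} eigenvalues, so $\mathrm{rank}(\boldsymbol{C})=n-1$ and rank--nullity gives a one-dimensional kernel; you instead shift to $\boldsymbol{A}+c\boldsymbol{I}$, which for large $c$ is an irreducible nonnegative matrix, and extract both simplicity of the Perron root and positivity of the eigenvector from Perron--Frobenius in one stroke. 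Your route is slightly more self-contained (you do not need to appeal to the distinct-eigenvalue fact from the earlier appendix), and the two arguments are fully compatible. One small remark: your caveat that the change of variables ``is only legitimate after we know $\hat\omega$ is positive'' is unnecessary in the direction the theorem states --- the theorem starts from a solution $\hat v$ of the ergodic HJB (which exists by Theorem~\ref{ergodic constant}) and sets $\hat\omega = e^{\kappa\hat v}$ componentwise, which is automatically positive; the positivity/Perron--Frobenius part is needed for the converse, i.e.\ to conclude that any kernel vector of $\boldsymbol{C}$ can be taken positive and hence yields a valid $\hat v$.
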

Notice that once we've obtained $\boldsymbol{\hat \omega}$ by solving~\eqref{solve hat omega} we have an explicit formula for the optimal ergodic control $\psi$ uniquely given by~\eqref{feedback control}.

\subsection{Learning and regret}
In this section, we consider the parameter learning problem of the market making model in the ergodic setting, where the price sensitivity of the liquidity takers is unknown to the market maker. 
We assume that the parameter is equal on the bid/ask side $\kappa^{\ast} = \kappa^{\ast, \pm} \in \mathbb{R^+}$. 
The market maker does not observe $\kappa^\ast$, but works with the prior assumption that $\kappa^\ast$ must be in $[\underline K, \bar K]$ with $0 < \underline K < \bar K$.
At each time $t > 0$, the market maker generates the estimate of the parameter denoted $\kappa_t$ from the regularised maximum--likelihood estimator, see Algorithm~\ref{alg:cap} for more details.
Using $\kappa_t$ they can solve the ergodic control problem and obtain the policy $\psi^{\kappa_t}$ given by~\eqref{feedback control}.

\begin{remark}
In a general RL problem the agent aims to learn from data, e.g. states, actions and rewards, a policy that optimises the reward, see \cite{basei2022logarithmic, szpruch2021exploration, gao2024logarithmicregretboundscontinuoustime}.  
In this learning problem, we have derived the global optimal policy $\psi$ (see Section~\ref{ergodic analysis}).
The global optimal policy is attainable if the true $\kappa^\ast$ is known. Therefore, it is sufficient to define a learning algorithm to generate the parameter $\kappa$. 
\end{remark}

In view of this it is natural to define the learning algorithm as the function that generates $\kappa_t$ from all available information up to time $t > 0$.
\begin{definition} \label{admissible learning algorithm}
Let $(\Omega^\ast, \mathcal{F}^\ast, \mathbb{P}^\ast)$ be defined as
\begin{equation*} 
(\Omega^\ast, \mathcal{F}^\ast, \mathbb{P}^\ast) = \big( 
\Omega^{M} \times \Omega^{U},
\mathcal{F}^{M} \otimes \mathcal{F}^{U},  \mathbb{P}^{M} \otimes  \mathbb{P}^{U}
\big) \, , 
\end{equation*}
see details in~\eqref{probability space}, $\mathcal{N}$ be the $\sigma$-algebra generated by $\mathbb{P}^\ast-$null sets, and the continuous-time learning algorithm $\Psi = (\Psi_t)_{t}$ be some function $\Psi: \Omega^\ast \times \mathbb{R}^{+} \to [\underline K, \bar K]$. 
We say that $\Psi = (\Psi_t)_{t}$ is an admissible learning algorithm if $\Psi$ is $\big( \mathcal{G}^{\Psi}_{t^{-}} \otimes \mathcal{B}(\mathbb{R}^{+}) \big) /  \mathcal{B}([\underline K, \bar K])$ measurable  
with the $\sigma$-algebra $\mathcal{G}^{\Psi} = (\mathcal{G}^{\Psi}_t)_{t }$ defined as $\mathcal{G}^{\Psi}_t := \sigma\big\{
M_{s}^{\Psi; \kappa^\ast, \pm} \big | 0 < s \leq t \big \} \vee \sigma\big\{
U_{M^{\Psi; \kappa^\ast, \pm}_s}^{\Psi; \kappa^\ast, \pm} \big | 0 < s \leq t \big \} \vee \mathcal{N}$. 
\end{definition}

\begin{remark}
$\mathcal{G}_t^{\Psi}$ in Definition~\ref{admissible learning algorithm} describes the available and useful information for the agent to estimate $\kappa$ up to time $t$. 
Moreover, it is not hard to see \cite{jennrich1969asymptotic, wagner2006survey} that the learning algorithm $\kappa_t$ generated by a maximum likelihood estimator is $\mathcal{G}^{\Psi}_{t^{-}}$ measurable. 
\end{remark}

To measure the performance of a learning algorithm in the ergodic setting, we utilise the notion of regret proposed by \cite{auer2008near}.
\begin{definition} \label{regret def}
Given a learning algorithm $\Psi$ that generates $\kappa_t$ in $t \in [0,T]$, its expected regret up to time $T$ is defined as 
\begin{equation} \label{original regret}
\mathcal{R}^{\Psi}(T) = \gamma(\kappa^\ast) T - \mathbb{E}_q \Big[ \int_0^T f(Q_t^{\psi^{\kappa_t}; \kappa^\ast}, \psi^{\kappa_t}; \kappa^\ast) \, \mathrm{d} t \Big] \, ,
\end{equation}
where $\gamma(\kappa^\ast)$ is the optimal long-run average reward under the parameter $\kappa^\ast$, $f$ is the running reward function given by 
\begin{equation} \label{reward function}
f(q, \delta^{\pm}; \kappa^\ast) = \lambda^{+} \delta^{+} e^{- \kappa^\ast \delta^{+}} + \lambda^{-} \delta^{-} e^{- \kappa^\ast \delta^{-}} - \phi q^2 \, , 
\end{equation}
and $Q_t^{\psi^{\kappa_t}; \kappa^\ast}$ is the inventory process governed by  $\kappa^\ast$ but with the control $\psi^{\kappa_t}$, i.e. 
\begin{equation} \label{inventory under kappastar}
\begin{split}
dQ_t^{\psi^{\kappa}; \kappa^\ast} & =  dN_t^{\psi^{\kappa};\kappa^\ast, -} - dN_t^{\psi^{\kappa};\kappa^\ast, +} \\
& = \big( \lambda^{+} e^{-\kappa^\ast \psi^{\kappa, -}} - \lambda^{-} e^{-\kappa^\ast \psi^{\kappa, + }} \big)dt  + d\tilde{N}_{t}^{\psi^{\kappa};\kappa^\ast, -} - d\tilde{N}_{t}^{\psi^{\kappa};\kappa^\ast, +} \, , 
\end{split}
\end{equation}
with $N_t^{\psi^{\kappa};\kappa^\ast, \pm}$ the controlled counting processes for the market maker's filled buy/sell orders and $\tilde{N}_{t}^{\psi^{\kappa};\kappa^\ast, \pm}$ the corresponding compensated Poisson processes. 
\end{definition}

An alternative definition of the expected regret which is commonly seen in the finite-time-horizon RL problems,e.g. \cite{basei2022logarithmic, szpruch2021exploration}, is
\begin{equation} \label{another regret}
\begin{split}
\widehat{\mathcal{R}}^{\Psi}(T) & = J(\psi^{\kappa^\ast}; \kappa^\ast) - J(\psi^{\kappa_t}; \kappa^\ast) \\
& = \mathbb{E}_q \Big[ \int_0^T f(Q_t^{\psi^{\kappa^\ast}; \kappa^\ast}, \psi^{\kappa^\ast}; \kappa^\ast) \, \mathrm{d} t \Big] - \mathbb{E}_q \Big[ \int_0^T f(Q_t^{\psi^{\kappa_t}; \kappa^\ast}, \psi^{\kappa_t}; \kappa^\ast) \, \mathrm{d} t\Big] \, .
\end{split}
\end{equation}
The following Lemma will be proved Appendix~\ref{equivalent regret}.
\begin{lemma}
\label{connection to alternative regret}
There exists a constant $C$ independent of $T, q$ such that 
\begin{equation} \label{regret gap bounded}
\left| \gamma(\kappa^\ast) T - \mathbb{E}_q \Big[ \int_0^T f(Q_t^{\psi^{\kappa^\ast}; \kappa^\ast}, \psi^{\kappa^\ast}; \kappa^\ast) \, \mathrm{d} t \Big] \right| \leq C \, , 
\end{equation}

\end{lemma}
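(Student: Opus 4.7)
The plan is to exploit the ergodic HJB equation \eqref{ergodic hjb equation} in a standard verification-style argument and recognise the quantity inside the absolute value in \eqref{regret gap bounded} as a boundary term involving the ergodic value function $\hat v$, which is automatically bounded because $\Omega^Q$ is a finite set.

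First I would write down the infinitesimal generator $\mathcal L^{\psi^{\kappa^\ast}}$ of the jump process $(Q_t^{\psi^{\kappa^\ast};\kappa^\ast})_{t\geq 0}$ acting on functions $g : \Omega^Q \to \mathbb R$:
\begin{equation*}
\mathcal L^{\psi^{\kappa^\ast}} g(q) = \lambda^{+} e^{-\kappa^\ast \psi^{\kappa^\ast,+}(q)}\mathds 1_{q>\underline q}\,[g(q-1)-g(q)] + \lambda^{-} e^{-\kappa^\ast \psi^{\kappa^\ast,-}(q)}\mathds 1_{q<\bar q}\,[g(q+1)-g(q)].
\end{equation*}
Comparing this with the unoptimised Hamiltonian in \eqref{hamiltonian function} and the reward \eqref{reward function}, and using the convention that $\delta^\pm = +\infty$ at the inventory boundaries (so that $\delta^\pm e^{-\kappa^\ast \delta^\pm} \to 0$ and the corresponding jump intensities vanish), I obtain the key algebraic identity
\begin{equation*}
H\bigl(q,\delta^\pm,(\hat v(q')-\hat v(q))_{q'\in\{q-1,q+1\}}\bigr) = f(q,\delta^\pm;\kappa^\ast) + \mathcal L^{\delta^\pm}\hat v(q), \quad \forall q\in\Omega^Q.
\end{equation*}
Specialising $\delta^\pm$ to the optimiser $\psi^{\kappa^\ast}(q)$ and invoking the ergodic HJB equation \eqref{ergodic hjb equation} at the true parameter $\kappa^\ast$ yields the pointwise identity
\begin{equation*}
\gamma(\kappa^\ast) = f(q,\psi^{\kappa^\ast}(q);\kappa^\ast) + \mathcal L^{\psi^{\kappa^\ast}}\hat v(q), \quad \forall q\in\Omega^Q.
\end{equation*}

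Next I would apply Dynkin's formula to the bounded function $\hat v$ along the trajectory $(Q_t^{\psi^{\kappa^\ast};\kappa^\ast})_{t\geq 0}$. Since $\Omega^Q$ is finite, the compensated jump martingale is square integrable and a genuine martingale, so taking expectations gives
\begin{equation*}
\mathbb E_q\!\bigl[\hat v(Q_T^{\psi^{\kappa^\ast};\kappa^\ast})\bigr] - \hat v(q) = \mathbb E_q\!\left[\int_0^T \mathcal L^{\psi^{\kappa^\ast}}\hat v(Q_t^{\psi^{\kappa^\ast};\kappa^\ast})\,\mathrm d t\right].
\end{equation*}
Substituting $\mathcal L^{\psi^{\kappa^\ast}}\hat v = \gamma(\kappa^\ast) - f(\cdot,\psi^{\kappa^\ast};\kappa^\ast)$ from the previous display and rearranging,
\begin{equation*}
\gamma(\kappa^\ast) T - \mathbb E_q\!\left[\int_0^T f(Q_t^{\psi^{\kappa^\ast};\kappa^\ast},\psi^{\kappa^\ast};\kappa^\ast)\,\mathrm d t\right] = \mathbb E_q\!\bigl[\hat v(Q_T^{\psi^{\kappa^\ast};\kappa^\ast})\bigr] - \hat v(q).
\end{equation*}

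Since $\hat v : \Omega^Q \to \mathbb R$ is defined on a finite set, it is automatically bounded, so taking $C := 2\max_{q\in\Omega^Q}|\hat v(q)|$ yields the claim, with $C$ independent of both $T$ and $q$. There is no genuine obstacle here; the only subtle point is the consistent handling of the inventory boundaries, where the $+\infty$ convention for the bid/ask depths must kill the corresponding terms simultaneously in $f$, $H$ and $\mathcal L^{\psi^{\kappa^\ast}}\hat v$ so that the identity $H = f + \mathcal L \hat v$ holds uniformly on $\Omega^Q$. Given the definitions in \eqref{f function}, \eqref{hamiltonian function} and \eqref{inventory under kappastar}, this is routine.
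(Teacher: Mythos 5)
Your proof is correct, but it takes a genuinely different route from the paper's. The paper starts from the same ergodic HJB identity $\gamma(\kappa^\ast) = f(q,\psi^{\kappa^\ast}(q);\kappa^\ast) + h(\kappa^\ast,q)$ (with $h$ being exactly your $\mathcal L^{\psi^{\kappa^\ast}}\hat v$), but then, instead of invoking Dynkin's formula, it uses the equilibrium-distribution machinery: Proposition~\ref{expectation under equi is 0} shows $\int h \,\mathrm d\pi^{\kappa^\ast} = 0$, so the left-hand side of~\eqref{regret gap bounded} equals $\big|\int_0^T \int h(\kappa^\ast,q)(\mathrm d\pi^{\kappa^\ast}_t - \mathrm d\pi^{\kappa^\ast})\mathrm d t\big|$, which is then bounded by $\bar h\int_0^T \|\pi^{\kappa^\ast}_t - \pi^{\kappa^\ast}\|_{TV}\,\mathrm d t$ and controlled via the exponential convergence rate of Lemma~\ref{convergence theorem}. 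Your Dynkin-formula argument is more elementary: it only needs the ergodic HJB identity, the boundedness of $\hat v$ on the finite state space $\Omega^Q$, and the martingale property of the compensated jump integrals, sidestepping Lemma~\ref{ergodic control stability}, Lemma~\ref{convergence theorem} and Proposition~\ref{expectation under equi is 0} entirely. The trade-off is that the paper's approach runs in parallel with the proof of Theorem~\ref{performance gap}, where the control $\psi^{\kappa_t}$ is time-varying and the function $\hat v^{\psi^{\kappa_t}}$ changes over time; there the Dynkin trick does not apply directly (the process $\hat v^{\psi^{\kappa_t}}(Q_t)$ is not of the simple form $g(Q_t)$ for fixed $g$), and the equilibrium/TV-convergence argument is genuinely needed. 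So the paper pays a small price in this lemma for a uniform treatment later, whereas your route exploits the stationarity of $\kappa=\kappa^\ast$ for a shorter, self-contained proof.
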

Therefore $\mathcal{R}^{\Psi}(T)$ and $\widehat{\mathcal{R}}^{\Psi}(T)$ shares the same asymptotic growth rate, which means that the definitions of regret~\eqref{original regret} and~\eqref{another regret} are asymptotically equivalent.

\subsubsection*{The learning algorithm}
Whenever a MO arrives, the instantaneous fill probability of the market maker's limit order depends only on the depth (offset) relative to the midprice. 
The further the market maker's posted order is from the midprice, the less likely it is to be filled. 
When a buy or sell MO arrives, let $(Y_n)_{n=1}^N \in \{0, 1\}$ denote whether the market maker's order, posted at depth $(\delta_n)_{n=1}^N$, is filled ($Y_n = 1$) or not ($Y_n=0$). The conditional distribution of $Y_n$ given $\delta_n$ is modelled as $\mathcal{L} (Y_n| \delta_n) = \boldsymbol{B}(1, e^{-\kappa^{\ast} \delta_n})$, where $\boldsymbol{B}(1, p)$ denotes the Bernoulli distribution and $p= e^{-\kappa \delta_n}$ represents the instantaneous fill probability of the market maker's limit order given a MO arrives.

To learn $\kappa^\ast$ from the Bernoulli signals in an online manner, we can simply consider a maximum likelihood estimator~\cite[Example 7.2.7]{casella2024statistical}.
The log-likelihood of $\kappa$ given $(Y_n)_{n=1}^N$ and $(\delta_n)_{n=1}^N$ is
\begin{equation*}
\ell_N(\kappa) = \sum_{n=1}^N \left( - \kappa \delta_n Y_n + (1-Y_n) \log (1 - e^{- \kappa \delta_n}) \right) \,.
\end{equation*}
Clearly
\begin{equation} \label{derivative of the log-likelihood function without regularise}
\frac{\mathrm{d}}{\mathrm{d} \kappa}  \ell_N(\kappa) = \sum_{n=1}^N \left( -\delta_n Y_n + (1-Y_n)  \delta_n \frac{e^{- \kappa \delta_n}}{1 - e^{- \kappa \delta_n}} \right) \, ,
\end{equation}
and
\begin{equation*}
\frac{\mathrm{d}^2}{\mathrm{d} \kappa^2} \ell_N(\kappa) = -\sum_{n=1}^N \delta_n^2  (1-Y_n) \left( \frac{e^{- \kappa \delta_n}}{(1 - e^{- \kappa \delta_n})^2} \right) \, .
\end{equation*}

However, one may observe that solutions to $\frac{\mathrm{d}}{\mathrm{d} \kappa} \ell_N(\kappa_N) = 0$, given by~\eqref{derivative of the log-likelihood function without regularise}, do not necessarily exist. 
Indeed e.g. if all $Y_n=1$ for $n=1$ to $N$ then there is no solution.
Moreover, even when a solution $\kappa_N$ exists, it may be arbitrarily large making the second derivative $\frac{\mathrm{d}^2}{\mathrm{d} \kappa^2} \ell_N(\kappa_N)$ arbitrarily small. This is undesirable when quantifying the tail behaviours of the estimator, see also the discussion in Remark~\ref{rm2}. To address these issues, we define the regularised log-likelihood function for estimating $\kappa$ by
\begin{equation}
\label{regularised log-likelihood function}
\begin{split}
& \tilde \ell_N(\kappa)  = \big(\ell_N(\kappa)  + R(\kappa)\big) \mathds{1}_{\kappa \leq \bar K}  + \bigg( \ell_N(\bar K) + R(\bar K)  \\
& \quad  + (\kappa - \bar K)\Big( \tfrac{\mathrm{d}}{\mathrm{d} \kappa} \ell_N(\bar K) + \tfrac{\mathrm{d}}{\mathrm{d} \kappa}   R(\bar K) \Big)  + \tfrac{1}{2} (\kappa - \bar K)^2 \Big( \tfrac{\mathrm{d}^2}{\mathrm{d} \kappa^2} \ell_N(\bar K) + \tfrac{\mathrm{d}^2}{\mathrm{d} \kappa^2}   R(\bar K)  \Big)\bigg) \mathds{1}_{\kappa > \bar K} \, .
\end{split}
\end{equation}
Recall the assumption that $\bar K > \kappa^\ast$.
The regularisation term $R(\kappa)$ is defined as
\begin{equation*}
R(\kappa) = -\kappa \delta_0 + \log (1 - e^{- \kappa \delta_0}) \,, 
\end{equation*}
where $\delta_0 > 0$ is the regularisation parameter. 
Observe that as $\bar K \rightarrow + \infty$, the regularised log-likelihood  $\tilde \ell_N$ converges to $\ell_N + R(\kappa)$, i.e. the standard log-likelihood function plus strictly concave regularisation term for any $\delta_0 > 0$.
By~\eqref{regularised log-likelihood function}, we have
\small
\begin{equation} \label{derivative of the log-likelihood function}
\begin{split}
\frac{\mathrm{d}}{\mathrm{d} \kappa} \tilde \ell_N(\kappa) & = \left(\frac{\mathrm{d}}{\mathrm{d} \kappa} \ell_N(\kappa) + \frac{\mathrm{d}}{\mathrm{d} \kappa}   R(\kappa)\right) \mathds{1}_{\kappa \leq \bar K} \\
& \hspace{5pt} + \left( \Big(\frac{\mathrm{d}}{\mathrm{d} \kappa} \ell_N(\bar K) + \frac{\mathrm{d}}{\mathrm{d} \kappa}   R(\bar K) \Big) + (\kappa - \bar K) \Big( \frac{\mathrm{d}^2}{\mathrm{d} \kappa^2} \ell_N(\bar K) + \frac{\mathrm{d}^2}{\mathrm{d} \kappa^2}   R(\bar K)  \Big)\right) \mathds{1}_{\kappa > \bar K} \,,
\end{split}
\end{equation}
\normalsize
and
\small
\begin{equation} \label{second derivative of the log-likelihood function}
\frac{\mathrm{d}^2}{\mathrm{d} \kappa^2} \tilde \ell_N(\kappa) = \left(\frac{\mathrm{d}^2}{\mathrm{d} \kappa^2} \ell_N(\kappa) + \frac{\mathrm{d}^2}{\mathrm{d} \kappa^2}   R(\kappa)\right) \mathds{1}_{\kappa \leq \bar K} + \left( \frac{\mathrm{d}^2}{\mathrm{d} \kappa^2} \ell_N(\bar K) + \frac{\mathrm{d}^2}{\mathrm{d} \kappa^2}   R(\bar K)  \right) \mathds{1}_{\kappa > \bar K} \, .
\end{equation}
\normalsize

\begin{remark} \label{rm2}
\begin{enumerate}[(1)]
\item By considering the regularised likelihood function $\tilde \ell_N(\kappa)$~\eqref{regularised log-likelihood function}, we can show that the equation $\frac{\mathrm{d}}{\mathrm{d} \kappa} \tilde \ell_N(\kappa) =0$ always admits a unique solution $\kappa_N > 0$ for all $N  \in \mathbb{N}_+$, as stated in Proposition~\ref{prop: existence of estimator}. 
Moreover, we show that any solution $\kappa_N > 0$ to this equation has the property that $-\frac{\mathrm{d}^2}{\mathrm{d} \kappa^2} \tilde \ell_N(\kappa_N)$ is bounded from below, as stated in Proposition~\ref{prop: lower bound Fisher}.
The standard maximum-likelihood estimator does not possess these properties.

\item Note that the depth $\delta_n$ posted by the market maker from the ergodic optimal control~\eqref{feedback control} can take a value of $+ \infty$ when the inventory hits the boundary. In such cases, the market maker's order is filled with probability $0$, i.e. $Y_n = 0$ a.s. For the log-likelihood function, we adopt the convention that $0 \cdot \infty = 0$. 

\item Although the regularised estimator guarantees existence, uniqueness and a well-behaved second derivative, the solution to the equation $\frac{\mathrm{d}}{\mathrm{d} \kappa} \tilde \ell_N(\kappa_N) =0$ can still be extreme for some $N$. In such cases, the agent's posted depth $\delta$, determined by the ergodic optimal control~\eqref{feedback control} as a function of the current inventory, may take values outside of a predefined set $[\underline{\delta}, \bar{\delta}] \cup \{+\infty\}$ for some constants $\underline{\delta}, \bar{\delta} \in \mathbb{R}^+$. This boundedness is crucial for establishing the concentration inequality. 
Furthermore, the second derivative of $\kappa \mapsto \gamma(\kappa; \kappa)$ is not uniformly bounded when $\kappa$ becomes arbitrarily small or large. This property, see Lemma~\ref{gamma bounded by difference of kappa}, is essential for the second--order performance gap, which leads to a logarithmic regret.
Therefore, we impose a constraint on $\kappa_N$ in Algorithm~\ref{alg:cap} to ensure it remains within a compact set.
Corollary~\ref{cor2} then implies that, with high probability, $\kappa_N$ eventually stays within the compact set for all sufficiently large $N$. 
\end{enumerate}
\end{remark}

The learning algorithm is presented in Algorithm~\ref{alg:cap}. 
\begin{algorithm}
\caption{A regularised learning algorithm for ergodic market making $\hat \Psi$}
\label{alg:cap}
\begin{algorithmic}
\Require Choose a small regularisation parameter $\delta_0 > 0$, an initial guess $\kappa_0 >0$, 
a truncation function $\varrho(\kappa) = \kappa \mathds{1}_{[\underline K, \bar K]} (\kappa) + \underline K \mathds{1}_{(0, \underline K]} (\kappa)  + \bar K \mathds{1}_{[\bar K, +\infty)} (\kappa)$ with $\underline K < \kappa^\ast$ and $\bar K >  \kappa^\ast$, the total number $N$ of coming MOs up to time $T$ with the coming times of the MOs $(t_n)_{n=1}^N$ and the signals of filled LOs from the market maker $(Y_n)_{n=1}^N$, the market maker's inventory $(Q_t)_{t\in [0,T]}$ 

\If{$t = 0$} 
\State $\hat \kappa_0 = \varrho(\kappa_0)$
\State Choose the offset $\delta_1 = \psi^{\hat \kappa_0} (Q_0)$ using~\eqref{feedback control}. 
\EndIf

\For{$t = t_i$ with $i =1, 2, \dots N$}

\State Obtain $\kappa_i$ by numerically solving $\frac{\mathrm{d}}{\mathrm{d} \kappa} \tilde \ell_i (\kappa_i) = 0$ with $\frac{\mathrm{d}}{\mathrm{d} \kappa} \tilde \ell_i (\kappa_i)$ given by~\eqref{derivative of the log-likelihood function}.

\State $\hat \kappa_i = \varrho(\kappa_i)$

\State Update $\delta_{i+1} = \psi^{\hat \kappa_i} (Q_{t_i})$ using~\eqref{feedback control}. 

\EndFor

\end{algorithmic}
\end{algorithm}

\begin{remark}
In our setting, there is no trade-off between the exploration and exploitation and so Algorithm~\ref{alg:cap} does not require any exploration phase.
This is referred to as the self-exploration property. 
Even though the agent is exploiting the ``optimal'' control based on the current estimate of $\kappa$, learning still occurs: whenever a market order arrives, the agent can infer information based on whether its own quote was filled or not, since the agent always quotes on at least one of the buy side or the sell side, i.e. for any $\delta^\pm$ take values from the ergodic optimal control~\eqref{feedback control}, we have 
$\mathbb{ P} \left( \{\delta^+ = +\infty\} \cap \{\delta^- = + \infty\}\right) = 0 $,
ensuring that the agent receives informative feedback over time, which supports the convergence of the estimated parameter to $\kappa^\ast$.
Of course, for this the assumption that the fill probability parameter $\kappa^\ast$ is the same for both buy and sell sides is crucial.

Removing this assumption (that $\kappa^\ast$ is the same for both buy and sell sides) would be challenging for two reasons. 
First, the model would lack explicit solutions.
Second, the agent would not be able to keep finite inventory limits while learning.
\end{remark}

\subsubsection*{Regret upper bound}
We now state the main result of this section, which shows the logarithmic regret upper bound of Algorithm~\ref{alg:cap}. 

\begin{theorem} \label{regret theorem}
For the regret upper bound of Algorithm~\ref{alg:cap} $\hat{\Psi}$, there exist constants $C_1, C_2 > 0$ such that $\forall T > 0$, 
\begin{equation}
\mathcal{R}^{\hat{\Psi}}(T) \leq C_1 \ln^2 T + C_2 \, . 
\end{equation}
\end{theorem}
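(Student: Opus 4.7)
The plan is to combine an HJB-based value-function decomposition of the regret with a second-order performance gap and a concentration bound for the regularised MLE, then to integrate the resulting squared-error rate.

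\emph{Step 1: HJB decomposition and the second-order gap.} Let $\hat v(\cdot; \kappa^\ast)$ denote the solution to the ergodic HJB~\eqref{ergodic hjb equation} with parameter $\kappa^\ast$; it is bounded since $\Omega^Q$ is finite. Rearranging the HJB gives the pointwise identity, valid for any $\delta^\pm$,
\begin{equation*}
\gamma(\kappa^\ast) - f(q,\delta^\pm;\kappa^\ast) \;=\; \Delta(q,\delta^\pm) + \mathcal{L}^{\delta^\pm,\kappa^\ast}\hat v(q;\kappa^\ast),
\end{equation*}
where $\Delta(q,\delta^\pm) := \gamma(\kappa^\ast) - H(q,\delta^\pm,\boldsymbol{p}_{\hat v(\cdot;\kappa^\ast)};\kappa^\ast)\ge 0$ is the local sub-optimality and $\mathcal{L}^{\delta^\pm,\kappa^\ast}$ is the generator of~\eqref{inventory under kappastar}. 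Setting $\delta^\pm=\psi^{\kappa_t}(Q_t)$, integrating over $[0,T]$ and applying Dynkin's formula to collapse the generator term into $\mathbb{E}_q[\hat v(Q_T;\kappa^\ast) - \hat v(Q_0;\kappa^\ast)]$, I obtain
\begin{equation*}
\mathcal{R}^{\hat\Psi}(T) \;\leq\; \mathbb{E}_q\bigg[\int_0^T \Delta(Q_t,\psi^{\kappa_t}(Q_t))\,\mathrm{d}t\bigg] + 2\|\hat v(\cdot;\kappa^\ast)\|_\infty.
\end{equation*}
Next, because $\kappa\mapsto \psi^\kappa(q)$ is $C^2$ on $[\underline K,\bar K]$ (uniformly in $q\in\Omega^Q$) from the explicit formula~\eqref{feedback control} and $\psi^{\kappa^\ast}(q)$ is the first-order maximiser of $\delta^\pm\mapsto H(q,\delta^\pm,\cdot;\kappa^\ast)$, we have $\Delta(q,\psi^{\kappa^\ast}(q))=0$ and $\partial_\kappa \Delta(q,\psi^\kappa(q))|_{\kappa=\kappa^\ast}=0$. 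A second-order Taylor expansion, the pointwise analogue of Lemma~\ref{gamma bounded by difference of kappa}, then produces $\Delta(q,\psi^\kappa(q))\le C(\kappa-\kappa^\ast)^2$ uniformly in $q$ and $\kappa\in[\underline K,\bar K]$, whence
\begin{equation*}
\mathcal{R}^{\hat\Psi}(T) \;\leq\; C\,\mathbb{E}_q\bigg[\int_0^T (\kappa_t-\kappa^\ast)^2\,\mathrm{d}t\bigg] + C'.
\end{equation*}

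\emph{Step 2: Concentration and integration.} To bound the remaining integral, I would use two ingredients. First, MO arrivals form a Poisson process with rate $\lambda^++\lambda^-$, so $N_t\ge ct$ with probability $1-e^{-c't}$ by the Poisson tail. Second, on the event $\{N_t\ge ct\}$, the regularised MLE satisfies a sub-Gaussian concentration $\mathbb{P}(|\kappa_t-\kappa^\ast|>\varepsilon) \le c_1\exp(-c_2 t\varepsilon^2)$ (Corollary~\ref{cor2}), derived from a martingale concentration bound applied to $\partial_\kappa \tilde\ell_{N_t}(\kappa^\ast)$ together with the uniform lower bound on $-\partial_\kappa^2\tilde\ell_N$ in Proposition~\ref{prop: lower bound Fisher}. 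Choosing $\varepsilon_t^2 = a\ln t/(c_2 t)$ with $a>1$ and splitting on the event $\{|\kappa_t-\kappa^\ast|\le \varepsilon_t\}$, while using the truncation bound $(\kappa_t-\kappa^\ast)^2\le(\bar K-\underline K)^2$ on the complement, I obtain for $t$ large
\begin{equation*}
\mathbb{E}[(\kappa_t-\kappa^\ast)^2] \;\leq\; \frac{a\ln t}{c_2\, t} + (\bar K-\underline K)^2\bigl(e^{-c't} + c_1 t^{-a}\bigr).
\end{equation*}
Integrating from $t=1$ to $T$ gives $\int_1^T \ln t/t\,\mathrm{d}t=\tfrac12\ln^2 T$, while the remaining terms integrate to a bounded quantity; the $t\in[0,1]$ contribution is also bounded. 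Combined with Step~1, this yields $\mathcal{R}^{\hat\Psi}(T)\le C_1\ln^2 T + C_2$.

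\emph{Main obstacle.} The delicate part is the concentration inequality in Step~2. The depths $(\delta_n)$ are endogenous, chosen via $\psi^{\kappa_{n-1}}$ from the learning history, so the Bernoulli signals $(Y_n)$ are not i.i.d.\ but adapted to the learner's filtration; moreover when the inventory hits a boundary the posted depth is $+\infty$ and that sample carries no information. Without regularisation, the Fisher information $-\partial_\kappa^2\ell_N$ can collapse for extreme estimates, ruling out any uniform sub-Gaussian tail. The three devices in Algorithm~\ref{alg:cap}, namely the regularisation $R(\kappa)$ enforcing strong concavity, the truncation $\varrho$ confining $\kappa_t$ and hence $\psi^{\kappa_t}$ to a compact depth range, and the explicit lower bound on the regularised Fisher information, are precisely what make the concentration above hold uniformly in the random learning history, and it is in establishing this uniformity that the bulk of the technical work lies.
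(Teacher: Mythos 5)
Your proposal is correct, and Step~1 takes a genuinely different route from the paper's. The paper's proof of Theorem~\ref{performance gap} decomposes the regret integrand as $\big(\gamma(\kappa^\ast;\kappa^\ast)-\gamma(\kappa_t;\kappa^\ast)\big) - h(\kappa_t,Q_t)$, where $\gamma(\kappa;\kappa^\ast)$ is the ergodic constant under the misspecified policy and $h$ is the generator applied to the $\kappa_t$-dependent relative value function $\hat v^{\psi^{\kappa_t}}(\cdot;\kappa^\ast)$; because the function inside the generator moves with $\kappa_t$, Dynkin's formula does not telescope, and the paper instead shows $h$ is centred under the policy-dependent equilibrium (Proposition~\ref{expectation under equi is 0}) and controls the residual via exponential TV-convergence (Lemma~\ref{convergence theorem}). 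You decompose against the \emph{true} ergodic HJB, $\gamma(\kappa^\ast)-f(q,\delta^\pm;\kappa^\ast)=\Delta(q,\delta^\pm)+\mathcal{L}^{\delta^\pm,\kappa^\ast}\hat v(q;\kappa^\ast)$ with $\hat v(\cdot;\kappa^\ast)$ held fixed, so the generator term telescopes by Dynkin to $\mathbb{E}[\hat v(Q_T;\kappa^\ast)]-\hat v(Q_0;\kappa^\ast)$, a constant of size at most $2\|\hat v(\cdot;\kappa^\ast)\|_\infty$; the equilibrium machinery disappears entirely and you get a cleaner constant. The one thing your route requires is the pointwise second-order bound $\Delta(q,\psi^\kappa(q))\le C|\kappa-\kappa^\ast|^2$ uniformly on $\Omega^Q\times[\underline K,\bar K]$, which does hold: it follows from the $C^\infty$ dependence $\kappa\mapsto\psi^\kappa(q)$ established inside the proof of Lemma~\ref{second derivative of gamma is bounded} together with first-order optimality of $\psi^{\kappa^\ast}(q)$ in $H(q,\cdot,\boldsymbol{p};\kappa^\ast)$, with uniformity in $q$ free since $\Omega^Q$ is finite — so the regularity ingredient is the same as the paper's Lemma~\ref{gamma bounded by difference of kappa}, but you use it pointwise rather than in aggregate. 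Your Step~2 is essentially the paper's Step~3 (Theorem~\ref{thm: concentration}, Corollary~\ref{cor}, Proposition~\ref{X_NT bound in high prob}, then conditioning on $N_T$ and Jensen), just with looser bookkeeping: the sub-Gaussian tail you invoke is the content of Theorem~\ref{thm: concentration} and its uniform-in-$N$ version Corollary~\ref{cor}, not Corollary~\ref{cor2} (which only asserts eventual confinement to $[\underline K,\bar K]$), and a rigorous version must also track the validity window $N\ge N_0\ln(2/\varepsilon)$ and the $cN^{-1}$ correction term. Your identification of the main obstacle — uniformity of the concentration over the endogenous, history-dependent depth sequence and the $\delta=+\infty$ degeneracy at inventory boundaries — is accurate, and it is exactly what the regularisation and truncation in Algorithm~\ref{alg:cap} and Propositions~\ref{prop: lower bound Fisher}--\ref{prop: upper bound score} are there to handle.
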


It requires some effort to prove Theorem~\ref{regret theorem}, 
therefore we collect some key results needed for the proof. 

\subsubsection*{Step 1: Analysis of the Performance Gap}
In this section, we analyse the performance gap of the expected regret $\mathcal{R}^{\Psi}(T)$ defined in~\eqref{regret def}. 

We start with the ergodic analysis for the market making model with misspecified $\kappa$
due to the existence of the term, $\mathbb{E}_q \Big[ \int_0^T f(Q_t^{\psi^{\kappa_t}; \kappa^\ast}, \psi^{\kappa_t}; \kappa^\ast) \, \mathrm{d} t \Big]$, in regret. 

Let us define, for $q\in \Omega^Q$, $\delta^\pm \in \mathbb{\overline R}^2$, $\boldsymbol p \in \mathbb R^2$ and $\kappa^\ast \in [\underline K, \bar K]$, the Hamiltonian function
\begin{equation} \label{L function}
H(q, \delta^{\pm}, \boldsymbol{p};\kappa^\ast) =  \lambda^{+} e^{-\kappa^\ast \delta^{+}} (p_1 + \delta^{+}) \mathds{1}_{q > \underline{q}} + \lambda^{-} e^{-\kappa^\ast \delta^{-}} (p_2 + \delta^{-}) \mathds{1}_{q < \bar{q}} - \phi q^2 \, ,
\end{equation}
and the expected reward in the discounted finite-time-horizon setting under model misspecification, $v_r^{\psi^{\kappa}}(t, q; T; \kappa^\ast)$, as 
\begin{equation} \label{discounted finite-time-horizon misspcefication model}
v_r^{\psi^{\kappa}}(t, q; T;  \kappa^\ast) = \mathbb{E}_{q} \bigg[ \int_t^T e^{-r(u-t)} f(Q_u^{\psi^{\kappa}; \kappa^\ast}, \psi^{\kappa} ; \kappa^\ast) \, \mathrm{d} u + e^{-r(T-t)} G(Q_T^{\psi^{\kappa}; \kappa^\ast}) \bigg] \, ,
\end{equation}
where $f$ is given by \eqref{reward function} and $G$ is the terminal condition~\eqref{terminal condition}.
Then $v_r^{\psi^{\kappa}}(\cdot, \cdot; T; \kappa^\ast)$ satisfies the following linear ODE. See proof in Appendix~\ref{proof lemma: linear ode for discounted model}. 
\begin{lemma} \label{lemma: linear ode for discounted model}
The function $v_r^{\psi^{\kappa}}(\cdot, \cdot; T;  \kappa^\ast)$ given by~\eqref{discounted finite-time-horizon misspcefication model} satisfies the linear ODE
\begin{equation} \label{linear ode for discounted model}
0 = \partial_t v_r^{\psi^{\kappa}} - r v_r^{\psi^{\kappa}} + H \Big(q, \psi^{\kappa},  (v_r^{\psi^{\kappa}}(t, q';T; \kappa^\ast) - v_r^{\psi^{\kappa}}(t, q;T; \kappa^\ast))_{q' \in \{q-1, q+1\}} ; \kappa^\ast \Big)\,, 
\end{equation}
for all $q \in \Omega^{Q}$ subject to the terminal condition (\ref{terminal condition}).
\end{lemma}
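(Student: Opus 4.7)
\textbf{Proof proposal for Lemma~\ref{lemma: linear ode for discounted model}.} The plan is to treat this as a Feynman--Kac/Kolmogorov backward identity for a continuous-time Markov chain, since the feedback $\psi^{\kappa}$ is fixed (i.e.\ no supremum is being taken) and the state space $\Omega^Q$ is finite.

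First, I observe that once $\psi^{\kappa}$ is plugged in as a Markovian control and the ``true'' intensity parameter is $\kappa^{\ast}$, the inventory process $(Q_t^{\psi^{\kappa};\kappa^\ast})_{t\geq 0}$ defined by~\eqref{inventory under kappastar} is a CTMC on the finite set $\Omega^Q$ with jump rates $\lambda^{+} e^{-\kappa^\ast \psi^{\kappa,+}(q)} \mathds{1}_{q>\underline q}$ (jump $q\mapsto q-1$) and $\lambda^{-} e^{-\kappa^\ast \psi^{\kappa,-}(q)} \mathds{1}_{q<\bar q}$ (jump $q\mapsto q+1$); the indicator factors encode the convention $e^{-\kappa^\ast\cdot \infty}\cdot\infty=0$ that applies at the boundary states where $\psi^{\kappa,\pm}$ takes the value $+\infty$. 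In particular, all rates are bounded and the running reward $f(\cdot,\psi^{\kappa};\kappa^\ast)$ is bounded on $\Omega^Q$, so $v_r^{\psi^{\kappa}}(\cdot,q;T;\kappa^\ast)$ is well defined and, viewed as a map from $[0,T]$ into $\mathbb R^{|\Omega^Q|}$, is continuously differentiable in $t$ (this follows from the explicit representation of the semigroup of a CTMC on a finite state space applied to the bounded data $f$, $G$).

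Second, I would derive the ODE either via the dynamic programming identity or directly via a jump It\^o formula. I prefer the latter for transparency: apply the It\^o formula for pure-jump semimartingales to $M_u := e^{-r(u-t)} v_r^{\psi^{\kappa}}(u, Q_u^{\psi^{\kappa};\kappa^\ast}; T; \kappa^\ast)$ on $[t,T]$, decomposing each jump term as drift plus compensated-Poisson martingale. Because the integrands are bounded (finite state space, bounded rates), taking $\mathbb E_{q}[\cdot]$ annihilates the martingale pieces, and using the terminal identity $M_T = e^{-r(T-t)}G(Q_T^{\psi^{\kappa};\kappa^\ast})$ combined with the definition~\eqref{discounted finite-time-horizon misspcefication model} of $v_r^{\psi^{\kappa}}$ gives
\begin{equation*}
\mathbb E_{q}\!\left[\int_t^T e^{-r(u-t)}\Big(\partial_u v_r^{\psi^{\kappa}} - r v_r^{\psi^{\kappa}} + \mathcal L^{\psi^{\kappa};\kappa^\ast} v_r^{\psi^{\kappa}} + f(Q_u,\psi^{\kappa};\kappa^\ast)\Big)(u,Q_u)\,\mathrm du\right] = 0,
\end{equation*}
where $\mathcal L^{\psi^{\kappa};\kappa^\ast}\phi(q) = \lambda^+ e^{-\kappa^\ast \psi^{\kappa,+}(q)}(\phi(q-1)-\phi(q))\mathds 1_{q>\underline q} + \lambda^- e^{-\kappa^\ast \psi^{\kappa,-}(q)}(\phi(q+1)-\phi(q))\mathds 1_{q<\bar q}$ is the CTMC generator. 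Specialising this identity to arbitrary $T$ and $q$, and using continuity in $t$, I get the pointwise backward equation $\partial_t v_r^{\psi^{\kappa}} - r v_r^{\psi^{\kappa}} + \mathcal L^{\psi^{\kappa};\kappa^\ast} v_r^{\psi^{\kappa}} + f(q,\psi^{\kappa};\kappa^\ast)=0$.

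Finally, I would rearrange this into the form stated in~\eqref{linear ode for discounted model} by regrouping terms: from the definition of the Hamiltonian~\eqref{L function},
\begin{equation*}
H\big(q,\psi^{\kappa},(v(t,q')-v(t,q))_{q'\in\{q-1,q+1\}};\kappa^\ast\big) = \mathcal L^{\psi^{\kappa};\kappa^\ast} v(t,\cdot)(q) + f(q,\psi^{\kappa};\kappa^\ast),
\end{equation*}
since the $\delta^{+}\lambda^+ e^{-\kappa^\ast\delta^+}$ and $\delta^{-}\lambda^- e^{-\kappa^\ast\delta^-}$ pieces in $H$ together with the $-\phi q^2$ piece reconstitute $f(q,\delta^{\pm};\kappa^\ast)$ from~\eqref{reward function}, while the $p_1,p_2$ pieces reconstitute the generator acting on $v$. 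The main obstacle I expect is bookkeeping at the boundary: the admissibility class~\eqref{admissible policies} allows $\psi^{\kappa,\pm}=+\infty$, and one must verify that the indicator convention $\mathds 1_{q>\underline q}, \mathds 1_{q<\bar q}$ used in both $H$ and the CTMC generator is consistent with the degenerate limit $e^{-\kappa^\ast\delta}\delta\to 0$ as $\delta\to\infty$ so that the $f$-contribution from the closed side vanishes correctly. Once this is handled (which is essentially the same convention already adopted in Section~\ref{Ergodic Analysis for Market Making Model}), the identification with~\eqref{linear ode for discounted model} is immediate.
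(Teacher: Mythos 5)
Your proposal is correct, but it takes the direct (Dynkin/Feynman--Kac) direction rather than the verification direction that the paper uses. The paper starts by taking $w(t,q)$ to be \emph{some} solution of the linear ODE~\eqref{linear ode for discounted model} (existence being automatic for a linear ODE on the finite-dimensional space $\mathbb R^{|\Omega^Q|}$), applies It\^o's formula to the auxiliary process $X(s) = e^{-r(s-t)}w(s,Q_s) + \int_t^s e^{-r(u-t)}f(Q_u,\psi^\kappa;\kappa^\ast)\,\mathrm du$, uses the ODE for $w$ to annihilate the drift, and concludes $w = v_r^{\psi^\kappa}$. You instead apply It\^o directly to $v_r^{\psi^\kappa}$ and read off the ODE. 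The trade-off is clear: the paper's verification argument inherits the $t$-regularity of $w$ for free from ODE theory and never needs to differentiate the probabilistic object $v_r^{\psi^\kappa}$ a priori, whereas your route requires you to establish that $v_r^{\psi^\kappa}(\cdot,q;T;\kappa^\ast)$ is $C^1$ in $t$ first, which you justify by appealing to the matrix-exponential representation of the finite-state CTMC semigroup applied to bounded $f,G$ --- a perfectly serviceable justification. Both approaches use the same It\^o computation and the same identification $H(q,\psi^\kappa,(v(q')-v(q))_{q'};\kappa^\ast) = \mathcal L^{\psi^\kappa;\kappa^\ast}v(q) + f(q,\psi^\kappa;\kappa^\ast)$.

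One step in your write-up is slightly under-justified: after obtaining $\mathbb E_q\big[\int_t^T e^{-r(u-t)}\big(\partial_u v_r^{\psi^\kappa} - r v_r^{\psi^\kappa} + \mathcal L v_r^{\psi^\kappa} + f\big)(u,Q_u)\,\mathrm du\big]=0$, passing to the pointwise equation by ``using continuity in $t$'' is not purely a continuity fact. The standard way to localise is to apply the Markov property, i.e.\ replace $T$ by $t+h$ in the identity (which holds for all horizons and all starting states), divide by $h$, and send $h\downarrow 0$; then continuity of the integrand and the right-continuity of $Q$ give the pointwise equation. This is an easy fix, not a genuine gap, but it should be spelled out.
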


Next we focus on the long-term average reward of $v_0^{\psi^{\kappa}}$ given by~\eqref{discounted finite-time-horizon misspcefication model} with $r=0$. Proposition~\ref{linear PDE for gamma} provides the existence of $\gamma(\kappa; \kappa^\ast)$, i.e. the average reward per unit time with a misspecified $\kappa$. Moreover, $\gamma(\kappa; \kappa^\ast)$ with the ergodic value function under the model misspecification, $\hat{v}^{\psi^\kappa}(\cdot; \kappa^\ast): \Omega^Q \to \mathbb R$, solves the linear system~\eqref{linear system for gamma(kappa;kappastar)} below. 
Rigorous definition of $\hat{v}^{\psi^\kappa}(\cdot; \kappa^\ast)$ and proof of Proposition~\ref{linear PDE for gamma} are provided in Appendix~\ref{proof Proposition linear PDE for gamma}. 

\begin{proposition} \label{linear PDE for gamma}
There exists $\gamma(\kappa; \kappa^\ast) \in \mathbb{R}$ such that
\begin{equation} \label{definition gamma(kappa;kappastar)}
\gamma(\kappa; \kappa^\ast) = \lim_{T \rightarrow + \infty}\frac{1}{T} v_{0}^{\psi^{\kappa}}(0, q; T; \kappa^\ast) \, , 
\end{equation}
where $v_0^{\psi^{\kappa}}(\cdot, \cdot; T;  \kappa^\ast)$ is given by~\eqref{discounted finite-time-horizon misspcefication model} with $r =0$. 
Moreover, there exist $\gamma(\kappa; \kappa^\ast)$ and $\hat{v}^{\psi^\kappa}: \Omega^Q \to \mathbb{R}$ that solve the linear system 
\begin{equation} \label{linear system for gamma(kappa;kappastar)}
0 = - \gamma(\kappa; \kappa^\ast) + H \Big(q, \psi^{\kappa}, (\hat{v}^{\psi^\kappa}(q';  \kappa^\ast) - \hat{v}^{\psi^\kappa}( q;  \kappa^\ast))_{q' \in \{q-1, q+1\}} ; \kappa^\ast \Big) , \, \forall q \in \Omega^{Q}.
\end{equation}
\end{proposition}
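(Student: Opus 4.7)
The plan is to exploit the fact that, under the feedback control $\psi^\kappa$, the inventory process $(Q_t^{\psi^\kappa;\kappa^\ast})_{t\geq 0}$ is a continuous-time Markov chain on the finite state space $\Omega^Q$, and to then deduce both assertions from classical ergodic theory for irreducible finite-state generators together with the ODE from Lemma~\ref{lemma: linear ode for discounted model}.

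First I would specialise Lemma~\ref{lemma: linear ode for discounted model} to $r=0$ and reverse time by setting $\tau:=T-t$. Letting $\boldsymbol v(\tau)\in\mathbb R^{\bar q-\underline q+1}$ denote the vector with entries $v_0^{\psi^\kappa}(T-\tau,q;T;\kappa^\ast)$, equation~\eqref{linear ode for discounted model} becomes
\begin{equation*}
\partial_\tau \boldsymbol v \;=\; \boldsymbol L^{\kappa,\kappa^\ast}\boldsymbol v + \boldsymbol f^{\kappa,\kappa^\ast}\,,\qquad \boldsymbol v(0)=\boldsymbol G\,,
\end{equation*}
where $\boldsymbol L^{\kappa,\kappa^\ast}$ is the generator of the inventory chain with off-diagonal entries $\boldsymbol L^{\kappa,\kappa^\ast}(q,q-1)=\lambda^+ e^{-\kappa^\ast\psi^{\kappa,+}(q)}\mathds{1}_{q>\underline q}$ and $\boldsymbol L^{\kappa,\kappa^\ast}(q,q+1)=\lambda^- e^{-\kappa^\ast\psi^{\kappa,-}(q)}\mathds{1}_{q<\bar q}$ (and zero row sums), and where the running-reward vector reads
\begin{equation*}
\boldsymbol f^{\kappa,\kappa^\ast}_q = \lambda^+\psi^{\kappa,+}(q)e^{-\kappa^\ast\psi^{\kappa,+}(q)}\mathds{1}_{q>\underline q} + \lambda^-\psi^{\kappa,-}(q)e^{-\kappa^\ast\psi^{\kappa,-}(q)}\mathds{1}_{q<\bar q} - \phi q^2\,.
\end{equation*}
By~\eqref{feedback control}, $\psi^{\kappa,+}(q)$ is finite for $q>\underline q$ and $\psi^{\kappa,-}(q)$ is finite for $q<\bar q$, so from any interior state the chain can jump in both directions, from $\underline q$ it can jump up, and from $\bar q$ it can jump down. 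Hence $\boldsymbol L^{\kappa,\kappa^\ast}$ is the generator of an irreducible finite-state Markov chain and admits a unique stationary distribution $\pi^{\kappa,\kappa^\ast}$.

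Using Duhamel's formula
\begin{equation*}
\boldsymbol v(\tau) \;=\; e^{\tau\boldsymbol L^{\kappa,\kappa^\ast}}\boldsymbol G + \int_0^\tau e^{u\boldsymbol L^{\kappa,\kappa^\ast}}\boldsymbol f^{\kappa,\kappa^\ast}\,du\,,
\end{equation*}
the first term is bounded uniformly in $\tau$ since $e^{\tau\boldsymbol L^{\kappa,\kappa^\ast}}$ is a stochastic matrix, while the mean ergodic theorem yields $\tau^{-1}\int_0^\tau e^{u\boldsymbol L^{\kappa,\kappa^\ast}}\boldsymbol f^{\kappa,\kappa^\ast}\,du \to \langle \pi^{\kappa,\kappa^\ast},\boldsymbol f^{\kappa,\kappa^\ast}\rangle\mathbf 1$. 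Evaluating at $\tau=T$ and setting $\gamma(\kappa;\kappa^\ast):=\langle \pi^{\kappa,\kappa^\ast},\boldsymbol f^{\kappa,\kappa^\ast}\rangle$ establishes~\eqref{definition gamma(kappa;kappastar)}. For the second claim, expanding the Hamiltonian~\eqref{L function} shows that the linear system~\eqref{linear system for gamma(kappa;kappastar)} is exactly the Poisson equation $\boldsymbol L^{\kappa,\kappa^\ast}\boldsymbol{\hat v}^{\psi^\kappa} = \gamma(\kappa;\kappa^\ast)\mathbf 1 - \boldsymbol f^{\kappa,\kappa^\ast}$; the Fredholm alternative guarantees a solution precisely when the right-hand side is orthogonal to $\ker((\boldsymbol L^{\kappa,\kappa^\ast})^\top)=\mathrm{span}(\pi^{\kappa,\kappa^\ast})$, which holds by the very definition of $\gamma(\kappa;\kappa^\ast)$.

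The main technical point is verifying irreducibility at the inventory boundaries, where the control equals $+\infty$ on one side; but since the opposite-side jump rate at each endpoint is strictly positive, irreducibility is immediate. Beyond that, everything reduces to classical linear ODE and Markov-chain generator theory on the finite state space $\Omega^Q$, so no further technical machinery is required.
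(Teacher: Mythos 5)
Your proposal is correct, but it takes a genuinely different route from the paper. The paper's proof first introduces the discounted infinite--horizon value $v_r^{\psi^\kappa}(q;\kappa^\ast)$, shows it satisfies a linear system via It\^{o}'s formula, establishes uniform boundedness of $r v_r^{\psi^\kappa}$ and of the differences $v_r^{\psi^\kappa}(q)-v_r^{\psi^\kappa}(\hat q)$ (using a hitting-time argument), and then passes to the vanishing-discount limit $r\to 0$, mirroring the proofs of Theorems~\ref{ergodic constant} and~\ref{solve gamma}. It obtains $\gamma(\kappa;\kappa^\ast)=\lim_{r\to 0} r v_r^{\psi^\kappa}$ and $\hat v^{\psi^\kappa}(q;\kappa^\ast)=\lim_{r\to 0}\bigl(v_r^{\psi^\kappa}(q;\kappa^\ast)-v_r^{\psi^\kappa}(0;\kappa^\ast)\bigr)$ and passes to the limit in the linear system. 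You instead work directly with $r=0$: you cast the finite-horizon linear ODE of Lemma~\ref{lemma: linear ode for discounted model} in time-reversed matrix form, apply Duhamel's formula, identify the irreducible generator $\boldsymbol L^{\kappa,\kappa^\ast}$ (the paper's transition-rate matrix $\boldsymbol Q$ in~\eqref{transition matrix P}, or equivalently $\boldsymbol{\tilde A}_0$ of Proposition~\ref{analytical solution for gamma(kappastar)}), and invoke the mean ergodic theorem $\tau^{-1}\int_0^\tau e^{u\boldsymbol L}f\,du\to\langle\pi,f\rangle\mathbf 1$ to get~\eqref{definition gamma(kappa;kappastar)} and the formula $\gamma(\kappa;\kappa^\ast)=\langle\pi^{\kappa,\kappa^\ast},\boldsymbol f^{\kappa,\kappa^\ast}\rangle$. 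For the linear system~\eqref{linear system for gamma(kappa;kappastar)} you correctly recognise it as the Poisson equation $\boldsymbol L\boldsymbol{\hat v}=\gamma\mathbf 1-\boldsymbol f$ and settle solvability via the Fredholm alternative, the orthogonality condition being satisfied by the very definition of $\gamma$. Your approach is cleaner and more self-contained for this finite-state problem, making the explicit structure and the Poisson-equation interpretation transparent (which is essentially re-deriving what the paper separates into Propositions~\ref{analytical solution for gamma(kappastar)} and~\ref{expectation under equi is 0}); the paper's vanishing-discount approach is more uniform with the rest of its development and carries over to settings lacking a finite matrix representation.
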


The fact that $v_{0}^{\psi^{\kappa}}(\cdot, \cdot; T; \kappa^\ast)$ satisfies the linear ODE~\eqref{linear ode for discounted model} with $r=0, \, \forall q \in \Omega^{Q}$ allows us to solve it in a matrix form. 
Moreover, by analysing the case of $T \rightarrow + \infty$ in~\eqref{definition gamma(kappa;kappastar)},
we can obtain a closed-form expression for $\gamma(\kappa, \kappa^\ast)$ as shown in Proposition~\ref{analytical solution for gamma(kappastar)}. See proof in Appendix~\ref{proof Proposition analytical solution for gamma(kappastar)}.

\begin{proposition}
\label{analytical solution for gamma(kappastar)}
Let $\boldsymbol{\tilde{A}}_0$ be a $(\bar q - \underline q+1)-$square tridiagonal matrix whose rows are labelled from $\bar q$ to $\underline q$ and entries are given by
\begin{align*}
\boldsymbol{\tilde{A}}_{0}(i,q) = \begin{cases}
- (\lambda^{+} e^{- \kappa^\ast \psi^{\kappa, +}(q)} \mathds{1}_{q > \underline q} + \lambda^{-} e^{- \kappa^\ast \psi^{\kappa, -}(q)}\mathds{1}_{q < \bar q} ), & \text{if $i = q$}, \\
\lambda^{+} e^{- \kappa^\ast \psi^{\kappa, +}(q)}, & \text{if $i = q+1$}, \\
\lambda^{-} e^{- \kappa^\ast \psi^{\kappa, -}(q)}, & \text{if $i = q-1$}, \\
0, & \text{otherwise, }
\end{cases}
\end{align*}
Let $\boldsymbol{U}$ be the matrix whose columns are the eigenvectors of $\boldsymbol{\tilde{A}}_{0}$. Let $\boldsymbol{\tilde b}$ be a $(\bar q - \underline q +1)-$dim vector with each component given by 
\begin{equation*}
b_i = \lambda^{+} \psi^{\kappa, +}(i) e^{- \kappa^\ast \psi^{\kappa, +}(i)} \mathds{1}_{i > \underline q} + \lambda^{-} \psi^{\kappa, -}(i) e^{- \kappa^\ast \psi^{\kappa, -}(i)}\mathds{1}_{i < \bar q} - \phi i^2,
\end{equation*}
for $i = [\bar q, \bar q -1 , ... ,\underline q]$. 
Let $\boldsymbol{W}$ is the $(\bar q - \underline q +1)-$square matrix with the first diagonal element equal to $1$ and all other elements equal to $0$. 
Then $\gamma(\kappa; \kappa^\ast)$ given by~\eqref{definition gamma(kappa;kappastar)} satisfies
\begin{equation} \label{solution of gamma(kappa; kappaast)}
\gamma(\kappa; \kappa^\ast) \mathds{1}  = \boldsymbol{U} \boldsymbol{W} \boldsymbol{U}^{-1} \boldsymbol{\tilde b}, 
\end{equation}
where $\mathds{1}$ is a $(\bar q - \underline q +1)-$dim vector with entries $1$.
\end{proposition}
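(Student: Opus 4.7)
The strategy is to derive an explicit matrix representation of $\boldsymbol v_0^{\psi^{\kappa}}(0;T;\kappa^\ast)$ from the linear ODE in Lemma~\ref{lemma: linear ode for discounted model} and then read off its $O(T)$ growth by means of a spectral decomposition of $\boldsymbol{\tilde A}_0$.

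First, I would set $r=0$ in the ODE of Lemma~\ref{lemma: linear ode for discounted model} and expand the Hamiltonian $H(q,\psi^{\kappa},\cdot;\kappa^\ast)$ from~\eqref{L function}. Stacking the values $v_0^{\psi^\kappa}(t,q;T;\kappa^\ast)$ for $q\in\Omega^Q$ into the column vector $\boldsymbol v(t)$ and reading off coefficients of $v(t,q'),\, v(t,q)$ and the running reward $b(q)$ gives exactly
$$
\partial_t \boldsymbol v(t) + \boldsymbol{\tilde A}_0 \boldsymbol v(t) + \boldsymbol{\tilde b}=0, \qquad \boldsymbol v(T)= \boldsymbol G,
$$
with $\boldsymbol G=[G(\bar q),\dots,G(\underline q)]^\top$ from~\eqref{terminal condition}. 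The change of variable $\tau := T-t$ converts this into the forward Cauchy problem $\partial_\tau \boldsymbol v = \boldsymbol{\tilde A}_0 \boldsymbol v + \boldsymbol{\tilde b}$ with $\boldsymbol v(0)=\boldsymbol G$; variation of parameters at $\tau=T$ (i.e.\ $t=0$) then yields
$$
\boldsymbol v_0^{\psi^\kappa}(0;T;\kappa^\ast) = e^{T\boldsymbol{\tilde A}_0}\boldsymbol G + \int_0^T e^{u \boldsymbol{\tilde A}_0} \boldsymbol{\tilde b}\, \mathrm{d} u.
$$

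Next I would diagonalise $\boldsymbol{\tilde A}_0 = \boldsymbol U \boldsymbol D \boldsymbol U^{-1}$, with the leading eigenvalue (the one selected by $\boldsymbol W$) placed in the top-left slot of $\boldsymbol D$. Under the control $\psi^\kappa$ the inventory chain is a finite irreducible continuous-time Markov chain (every interior state communicates since $\lambda^\pm e^{-\kappa^\ast \psi^{\kappa,\pm}(q)}>0$ there), so the generator has $0$ as a simple eigenvalue with eigenvector $\mathds{1}$, all other eigenvalues $d_i$ having strictly negative real part. Substituting the decomposition and using that the diagonal of $\int_0^T e^{u\boldsymbol D}\mathrm{d} u$ is $T$ in the zero-eigenvalue slot and $(e^{Td_i}-1)/d_i$ elsewhere, I obtain
$$
\int_0^T e^{u \boldsymbol{\tilde A}_0} \boldsymbol{\tilde b}\, \mathrm{d} u = \boldsymbol U \left(\int_0^T e^{u\boldsymbol D}\,\mathrm{d} u \right)\boldsymbol U^{-1}\boldsymbol{\tilde b} = T\, \boldsymbol U \boldsymbol W \boldsymbol U^{-1}\boldsymbol{\tilde b} + O(1),
$$
while $e^{T\boldsymbol{\tilde A}_0}\boldsymbol G = O(1)$ for the same reason. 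Dividing by $T$, sending $T\to\infty$, and identifying the left-hand side with $\gamma(\kappa;\kappa^\ast)\mathds{1}$ via Proposition~\ref{linear PDE for gamma} gives~\eqref{solution of gamma(kappa; kappaast)}; one verifies consistency by noting that the rank-one matrix $\boldsymbol U \boldsymbol W \boldsymbol U^{-1}$ equals $\mathds{1}\pi^\top$ where $\pi^\top$ is the first row of $\boldsymbol U^{-1}$, i.e.\ the invariant distribution, so $\boldsymbol U \boldsymbol W \boldsymbol U^{-1}\boldsymbol{\tilde b} = (\pi^\top \boldsymbol{\tilde b})\mathds{1}$ is indeed a constant times $\mathds{1}$, the long-run average reward.

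The main obstacle is the spectral step: I need diagonalisability of $\boldsymbol{\tilde A}_0$ plus the simplicity and dominance of the leading eigenvalue that $\boldsymbol W$ selects, so that the matrix exponential $e^{T\boldsymbol{\tilde A}_0}$ truly exhibits one direction of linear growth in $T$ and bounded remainders in all others. This is the standard Perron--Frobenius-type statement for generators of finite irreducible continuous-time Markov chains, but a careful proof should verify irreducibility at the boundary inventories (where one of $\psi^{\kappa,\pm}$ is $+\infty$), exhibit the explicit $0$-eigenvector, and justify that the ordering convention in $\boldsymbol U$ indeed places that eigenvalue in the position picked out by $\boldsymbol W$. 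A minor technical point is controlling the $O(1)$ remainder uniformly so that the $1/T$ rescaling kills it; this follows immediately from $|e^{Td_i}|\leq 1$ when $\operatorname{Re} d_i <0$.
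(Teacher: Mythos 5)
Your proposal follows essentially the same path as the paper: rewrite the linear ODE from Lemma~\ref{lemma: linear ode for discounted model} in matrix form, solve by variation of parameters to get $\boldsymbol v_0^{\psi^\kappa}(0;T;\kappa^\ast) = e^{T\boldsymbol{\tilde A}_0}\boldsymbol G + \int_0^T e^{u\boldsymbol{\tilde A}_0}\boldsymbol{\tilde b}\,\mathrm{d} u$, diagonalise $\boldsymbol{\tilde A}_0$, and read off the $O(T)$ growth from the zero eigenvalue while the remaining modes contribute $O(1)$. The one point where your route genuinely differs is the justification of the spectral structure. You invoke the probabilistic Perron--Frobenius picture for the generator of a finite irreducible CTMC, concluding that $0$ is a simple eigenvalue with right eigenvector $\mathds{1}$ and all other eigenvalues have strictly negative real part; the paper instead argues algebraically, observing that $\boldsymbol{\tilde A}_0$ is a real tridiagonal matrix with positive off-diagonal entries (hence similar to a symmetric tridiagonal matrix, so real simple eigenvalues), singular (so some $\lambda_1=0$) and diagonally dominant with negative diagonal (so all eigenvalues $\leq 0$), yielding $\lambda_n<\dots<\lambda_2<\lambda_1=0$. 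Your probabilistic argument gives the same conclusion you actually need (one zero mode, others decaying), whereas the paper's linear-algebra route additionally delivers real eigenvalues and a strict ordering for free. Your closing observation that $\boldsymbol U\boldsymbol W\boldsymbol U^{-1} = \mathds{1}\pi^\top$ with $\pi$ the invariant distribution, so that the right-hand side of~\eqref{solution of gamma(kappa; kappaast)} is automatically a constant times $\mathds{1}$, is a nice sanity check that the paper leaves implicit. The only caveat, which you correctly flag yourself, is that a complete proof should verify that $\boldsymbol{\tilde A}_0\mathds{1}=0$ (row sums vanish, so the matrix is truly a generator acting on value functions) and that the chain is irreducible on $\Omega^Q$ even at the inventory boundaries where one of $\psi^{\kappa,\pm}$ is $+\infty$; these are straightforward from the definitions but must be spelled out for the Perron--Frobenius appeal to be rigorous.
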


\begin{remark}
Note that $\gamma(\kappa; \kappa^\ast)$, given by~\eqref{solution of gamma(kappa; kappaast)}, represents the long-term average reward under the optimal ergodic control with parameter $\kappa$, while the true market environment is $\kappa^\ast$.
This is different with $\gamma(\kappa)$, which is given by~\eqref{explicit solution of gamma}.
Clearly, $\gamma(\kappa; \kappa) = \gamma(\kappa)$ for any $\kappa \in [\underline K, \bar K]$, meaning that if agent uses the same $\kappa$ as the ``true'' market parameter, they achieve the optimal long-term average reward.
The key challenge is to prove that $[\underline K, \bar K] \ni \kappa \mapsto \gamma(\kappa; \kappa^\ast) \in \mathbb R$ is twice continuously differentiable. 
\end{remark}

Although Proposition~\ref{analytical solution for gamma(kappastar)} gives an expression for $\gamma(\kappa; \kappa^\ast)$, it is not trivial to prove the regularity of $\gamma(\kappa; \kappa^\ast)$ as $\boldsymbol{\tilde{A}}_0$ is not a self-adjoint or normal operator. We begin with the following lemma that establishes the regularity of $\gamma=\gamma(\kappa)$ given in Theorem~\ref{solve ergodic constant}. This result serves as a preliminary step toward proving Lemma~\ref{second derivative of gamma is bounded}. 
The proof is provided in Appendix \ref{continuity ergodic constant}.
\begin{lemma} \label{gamma kappa Lipschitz differentiable}
The ergodic constant $\gamma:  [\underline K, \bar K] \ni \kappa \mapsto \gamma(\kappa) \in \mathbb{R}$ given in Theorem~\ref{solve ergodic constant} is in $C^2([\underline K, \bar K])$. 
\end{lemma}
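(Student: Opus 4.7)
The plan is to establish that $\kappa\mapsto\lambda_{\max}(\kappa)$ is $C^\infty$ on $[\underline K,\bar K]$, from which $\gamma(\kappa)=\lambda_{\max}(\kappa)/\kappa\in C^2([\underline K,\bar K])$ follows immediately, since $\underline K>0$ ensures $\kappa\mapsto 1/\kappa$ is smooth there. First I would record that the entries of $\boldsymbol A(\kappa)$ depend polynomially (in fact linearly) on $\kappa$: the diagonals are $-\phi\kappa q^2$ for $q\in\Omega^Q$, and the off-diagonals $\lambda^\pm e^{-1}$ do not depend on $\kappa$. Hence $\kappa\mapsto \boldsymbol A(\kappa)$ is analytic as a map into the space of square matrices.

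Next I would show that $\lambda_{\max}(\kappa)$ is a \emph{simple} real eigenvalue of $\boldsymbol A(\kappa)$ for every $\kappa\in[\underline K,\bar K]$. Since $\lambda^+e^{-1}$ and $\lambda^-e^{-1}$ are both strictly positive, there exists a diagonal matrix $\boldsymbol D=\mathrm{diag}(d_{\bar q},\dots,d_{\underline q})$ with $d_{q-1}/d_q=\sqrt{\lambda^+/\lambda^-}$ such that $\boldsymbol D^{-1}\boldsymbol A(\kappa)\boldsymbol D$ is a real symmetric tridiagonal matrix with off-diagonal entries $e^{-1}\sqrt{\lambda^+\lambda^-}>0$, independent of $\kappa$, and diagonal entries $-\phi\kappa q^2$. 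By the spectral theorem, $\boldsymbol A(\kappa)$ has a full set of real eigenvalues. Choosing any constant $c>0$ with $c>\phi\bar K\max(\bar q^2,\underline q^2)$, the shifted matrix $\boldsymbol D^{-1}\boldsymbol A(\kappa)\boldsymbol D+c\boldsymbol I$ is non-negative and irreducible (tridiagonal with strictly positive sub- and super-diagonals). The Perron--Frobenius theorem then guarantees that its spectral radius is a simple eigenvalue, which, after undoing the shift, yields that $\lambda_{\max}(\kappa)$ is a simple eigenvalue of $\boldsymbol A(\kappa)$.

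I would then invoke the implicit function theorem on the characteristic polynomial $p(\lambda,\kappa)=\det(\lambda\boldsymbol I-\boldsymbol A(\kappa))$, which is jointly polynomial in $(\lambda,\kappa)$ hence $C^\infty$. Simplicity of $\lambda_{\max}(\kappa)$ as a root gives $\partial_\lambda p(\lambda_{\max}(\kappa),\kappa)\neq 0$, so the implicit function theorem provides a $C^\infty$ branch $\kappa\mapsto\lambda_{\max}(\kappa)$ in a neighbourhood of each $\kappa_0\in[\underline K,\bar K]$. A standard covering argument on the compact interval $[\underline K,\bar K]$ patches these local branches together (uniqueness of the largest eigenvalue avoids any monodromy). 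Consequently $\lambda_{\max}\in C^\infty([\underline K,\bar K])$ and hence $\gamma\in C^\infty([\underline K,\bar K])\subset C^2([\underline K,\bar K])$.

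The main obstacle, and the only step requiring care, is verifying that the Perron branch indeed coincides with the largest real eigenvalue uniformly in $\kappa$. The symmetrisation above is what makes this painless: once all eigenvalues are known to be real and the spectral radius of the (shifted) non-negative irreducible matrix is a simple isolated eigenvalue, it must be the right-most eigenvalue on the real axis for every $\kappa$, ruling out any eigenvalue crossings that would spoil smoothness. All other steps are routine applications of Perron--Frobenius and the implicit function theorem.
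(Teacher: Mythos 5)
Your proposal is correct and follows essentially the same route as the paper: symmetrise $\boldsymbol A(\kappa)$ to a real symmetric tridiagonal (Jacobi) matrix by a $\kappa$-independent diagonal similarity, deduce that $\lambda_{\max}(\kappa)$ is a simple eigenvalue, and then conclude smooth dependence of the eigenvalue on $\kappa$ before dividing by $\kappa>\underline K>0$. The only cosmetic difference is that you establish simplicity of the top eigenvalue via a Perron--Frobenius argument on a shifted matrix and make the smoothness conclusion explicit through the implicit function theorem applied to the characteristic polynomial, whereas the paper invokes the classical fact that symmetric tridiagonal matrices with nonzero off-diagonals have distinct eigenvalues and cites references for the smooth parameterisation of simple eigenvalues.
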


We next analyse the regularity of $\gamma(\kappa; \kappa^\ast)$. 
\begin{lemma} \label{second derivative of gamma is bounded}
$\kappa \mapsto \gamma(\kappa; \kappa^\ast)$ is in $C^2([\underline K, \bar K])$. 
\end{lemma}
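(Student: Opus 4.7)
The plan is to exploit the explicit representation $\gamma(\kappa;\kappa^\ast)\mathds{1}=\boldsymbol U\boldsymbol W\boldsymbol U^{-1}\boldsymbol{\tilde b}$ provided by Proposition~\ref{analytical solution for gamma(kappastar)} and to prove $C^2$ regularity of every factor on the right-hand side.

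First I would establish that $\kappa\mapsto\psi^{\kappa,\pm}(q)$ lies in $C^2([\underline K,\bar K])$ for each $q\in\Omega^Q$. By Proposition~\ref{Existence and Uniqueness for Ergodic Optimal Control} we have $\psi^{\kappa,\pm}(q)=\tfrac{1}{\kappa}+\hat v(q;\kappa)-\hat v(q\mp 1;\kappa)$, so it suffices to show $\hat v(\cdot;\kappa)\in C^2(\kappa)$. From Theorem~\ref{solve ergodic control problem}, $\hat v=\tfrac{1}{\kappa}\ln\hat{\boldsymbol\omega}(\kappa)$, where $\hat{\boldsymbol\omega}(\kappa)>0$ is the unique (up to a positive scalar) solution of $\boldsymbol C(\kappa)\hat{\boldsymbol\omega}=0$. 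The entries of $\boldsymbol C(\kappa)$ are polynomial in $\kappa$ and in $\gamma(\kappa)$, and Lemma~\ref{gamma kappa Lipschitz differentiable} gives $\gamma\in C^2([\underline K,\bar K])$, so $\boldsymbol C\in C^2$. Because $\boldsymbol C$ is tridiagonal with strictly positive off-diagonal entries, $0$ is a simple eigenvalue (Perron--Frobenius). Fixing the normalisation $\hat\omega_{\bar q}=1$ and discarding one equation reduces the homogeneous system to an invertible linear system with $C^2$ coefficients; Cramer's rule then yields $\hat{\boldsymbol\omega}\in C^2$. Since $\hat{\boldsymbol\omega}$ stays bounded away from zero by compactness and positivity, taking logarithms preserves $C^2$ regularity, establishing the claim for $\psi^{\kappa,\pm}$.

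With $\psi^\kappa\in C^2(\kappa)$ at hand, the quantities $e^{-\kappa^\ast\psi^{\kappa,\pm}(q)}$ and $\psi^{\kappa,\pm}(q)e^{-\kappa^\ast\psi^{\kappa,\pm}(q)}$ (interpreted as $0$ on the indicator-excluded boundary states via the convention $0\cdot\infty=0$) are $C^2$ in $\kappa$, and hence so are $\boldsymbol{\tilde A}_0(\kappa)$ and $\boldsymbol{\tilde b}(\kappa)$. The matrix $\boldsymbol{\tilde A}_0$ is, up to the transposition convention used in Proposition~\ref{analytical solution for gamma(kappastar)}, the infinitesimal generator of the irreducible jump chain $Q^{\psi^\kappa;\kappa^\ast}$ on $\Omega^Q$; thus $0$ is a simple, isolated eigenvalue of $\boldsymbol{\tilde A}_0$, and $\boldsymbol U\boldsymbol W\boldsymbol U^{-1}$ coincides with the rank-one Riesz projection
\begin{equation*}
\boldsymbol P(\kappa) \;=\; -\frac{1}{2\pi i}\oint_{\Gamma}\bigl(z\boldsymbol I-\boldsymbol{\tilde A}_0(\kappa)\bigr)^{-1}\,\mathrm{d}z,
\end{equation*}
where $\Gamma$ is a small circle around $0$. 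By continuity of the spectrum, such $\Gamma$ can be chosen uniformly on a neighbourhood of any $\kappa_0\in[\underline K,\bar K]$; differentiating twice under the integral sign then yields $\boldsymbol P\in C^2([\underline K,\bar K])$, and therefore $\gamma(\kappa;\kappa^\ast)=\bigl(\boldsymbol P(\kappa)\boldsymbol{\tilde b}(\kappa)\bigr)_{\bar q}\in C^2([\underline K,\bar K])$.

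The principal obstacle is controlling the spectral projection for the \emph{non}-self-adjoint matrix $\boldsymbol{\tilde A}_0(\kappa)$: standard perturbation arguments for symmetric matrices do not apply. The resolution comes from the simplicity and isolation of the zero eigenvalue, a consequence of Perron--Frobenius applied to the irreducible tridiagonal generator, which validate the Riesz-projection representation and allow us to transfer the $C^2$ regularity of the matrix entries directly to $\gamma(\kappa;\kappa^\ast)$.
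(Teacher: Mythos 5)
Your proof is correct, but the route is genuinely different from the paper's. The paper handles both the eigenvector $\hat{\boldsymbol\omega}$ and the projection $\boldsymbol U\boldsymbol W\boldsymbol U^{-1}$ by first \emph{symmetrising} the relevant matrices through explicit diagonal similarity transformations: a matrix $\boldsymbol D$ converts $\boldsymbol C$ into the real symmetric tridiagonal $\boldsymbol{\tilde C}=\boldsymbol D^{-1}\boldsymbol C\boldsymbol D$, and a matrix $\boldsymbol V$ (built from the optimal depths) converts $\boldsymbol{\tilde A}_0$ into the symmetric $\boldsymbol J=\boldsymbol V^{-1}\boldsymbol{\tilde A}_0\boldsymbol V$. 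Smooth dependence of the relevant eigenvector on $\kappa$ is then obtained from the smooth-parameterisation theorem for eigenvectors of symmetric matrices (Alekseevsky et al.), and $\gamma(\kappa;\kappa^\ast)$ is rewritten as $\boldsymbol V\,\mathrm{Diag}(\boldsymbol u_1)\boldsymbol W'\mathrm{Diag}(\boldsymbol u_1)\boldsymbol V^{-1}\boldsymbol{\tilde b}$ with $\boldsymbol u_1$ the zero-eigenvector of $\boldsymbol J$. You instead avoid symmetrisation altogether: for $\hat{\boldsymbol\omega}$ you normalise and exploit the tridiagonal/rank-$(n-1)$ structure to solve an invertible linear system with $C^2$ coefficients, and for $\boldsymbol U\boldsymbol W\boldsymbol U^{-1}$ you identify it as the Riesz projection onto the simple, isolated zero eigenvalue of $\boldsymbol{\tilde A}_0$, differentiating the contour integral under the sign. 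Your argument is more portable---it needs only simplicity and isolation of the eigenvalue and $C^2$ entries, not symmetrisability or the tridiagonal structure---whereas the paper's symmetrisation trick leans explicitly on the birth--death structure of the model but avoids functional-calculus machinery; both deliver the result (indeed both in fact deliver $C^\infty$). Two small points you should tidy if writing this up: (i) the sign of the Riesz projection should be $\boldsymbol P=\frac{1}{2\pi i}\oint_\Gamma(z\boldsymbol I-\boldsymbol{\tilde A}_0)^{-1}\,\mathrm dz$ with $\Gamma$ positively oriented; and (ii) in the $\hat{\boldsymbol\omega}$ step, say explicitly which $(n-1)\times(n-1)$ submatrix you invert---dropping the last row and last column of $\boldsymbol C$ gives a lower-triangular (after moving the fixed $\hat\omega_{\bar q}=1$ to the right-hand side) system with nonzero diagonal $\lambda^+e^{-1}$, which makes the invertibility manifest.
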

The key approach in the proof of Lemma~\ref{second derivative of gamma is bounded} (see Appendix \ref{proof lemma second derivative of gamma is bounded}) is to construct a self-adjoint operator similar to $\boldsymbol{\tilde{A}}_0$ and express $\gamma(\kappa;\kappa^\ast)$ in terms of the eigenvector of this self-adjoint operator, which is differentiable.

By Lemma~\ref{second derivative of gamma is bounded}, it is trivial to obtain the following lemma by the fact that $\kappa \mapsto \gamma(\kappa; \kappa^\ast)$ attains the maximum at $\kappa = \kappa^\ast$, because $\psi^{\kappa^\ast}$ is the optimal control for~\eqref{definition gamma(kappa;kappastar)}.

\begin{lemma} \label{gamma bounded by difference of kappa}
There exist a constant $C > 0$ depends on $\lambda^\pm, \underline K$ and $\bar K$ such that 
\begin{equation*}
0 \leq \gamma(\kappa^\ast; \kappa^\ast) - \gamma(\kappa; \kappa^\ast) \leq C \left| \kappa - \kappa^\ast \right|^2, \, \forall \kappa \in [\underline K, \bar K] \, . 
\end{equation*}
\end{lemma}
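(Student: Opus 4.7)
The plan is to combine the variational characterisation of $\kappa^\ast$ as an interior maximiser of $\kappa\mapsto \gamma(\kappa;\kappa^\ast)$ with the $C^2$ regularity from Lemma~\ref{second derivative of gamma is bounded} and read off the bound from a second-order Taylor expansion around $\kappa^\ast$.

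For the lower bound $\gamma(\kappa^\ast;\kappa^\ast) - \gamma(\kappa;\kappa^\ast) \geq 0$, I would invoke Proposition~\ref{linear PDE for gamma} to identify $\gamma(\kappa;\kappa^\ast)$ as the long-run average reward obtained by running the Markov control $\psi^\kappa$ when the true price-sensitivity is $\kappa^\ast$, and then note that by Proposition~\ref{Existence and Uniqueness for Ergodic Optimal Control} (applied with parameter $\kappa^\ast$) the control $\psi^{\kappa^\ast}$ is the optimal feedback control for the ergodic problem with the true parameter. Hence $\gamma(\kappa;\kappa^\ast) \leq \gamma(\kappa^\ast;\kappa^\ast)$ for every $\kappa \in [\underline K,\bar K]$, which gives the stated non-negativity.

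For the upper bound, since $\underline K < \kappa^\ast < \bar K$, the point $\kappa^\ast$ is an \emph{interior} maximiser of the $C^2$ map $\kappa\mapsto \gamma(\kappa;\kappa^\ast)$ on $[\underline K,\bar K]$, so Fermat's first-order condition yields $\partial_\kappa \gamma(\kappa;\kappa^\ast)|_{\kappa=\kappa^\ast}=0$. Applying Taylor's theorem with Lagrange remainder around $\kappa^\ast$ produces, for some $\xi$ between $\kappa$ and $\kappa^\ast$,
\begin{equation*}
\gamma(\kappa;\kappa^\ast) - \gamma(\kappa^\ast;\kappa^\ast) \;=\; \tfrac{1}{2}\,\partial^2_\kappa \gamma(\xi;\kappa^\ast)\,(\kappa-\kappa^\ast)^2.
\end{equation*}
By Lemma~\ref{second derivative of gamma is bounded} the function $\partial^2_\kappa \gamma(\cdot;\kappa^\ast)$ is continuous on the compact interval $[\underline K,\bar K]$, and therefore bounded; setting $C := \tfrac{1}{2}\sup_{\xi\in[\underline K,\bar K]} |\partial^2_\kappa \gamma(\xi;\kappa^\ast)|$ finishes the proof. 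Tracking the dependence through the entries of the linear system~\eqref{linear system for gamma(kappa;kappastar)} shows that this constant depends only on $\lambda^\pm$, $\underline K$, $\bar K$ (and the fixed parameters $\phi$, $\underline q$, $\bar q$), as claimed.

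The main substantive work has already been absorbed into Lemma~\ref{second derivative of gamma is bounded} (the symmetrisation argument that rewrites $\gamma(\kappa;\kappa^\ast)$ in terms of the eigenvector of a self-adjoint operator to get $C^2$ regularity). Given that lemma together with the optimality of $\psi^{\kappa^\ast}$, the present statement is genuinely a one-line consequence of Taylor's theorem, so no further obstacle is expected.
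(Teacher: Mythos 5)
Your proposal is correct and follows exactly the argument the paper intends: the paper states that the lemma is ``trivial'' given Lemma~\ref{second derivative of gamma is bounded} and the fact that $\kappa^\ast$ is the maximiser of $\kappa\mapsto\gamma(\kappa;\kappa^\ast)$ by the optimality of $\psi^{\kappa^\ast}$, and your write-up simply fills in the interior-maximiser / first-order-condition / Taylor-with-Lagrange-remainder details of that one-line observation.
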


\begin{remark} \label{remark regret bound dependency on model parameters}
The constant $C$ in Lemma~\ref{gamma bounded by difference of kappa} implicitly depends on the model parameters, such as  $\lambda^\pm, \underline K$ and $\bar K$.
Although we prove that $\kappa \mapsto \gamma(\kappa; \kappa^\ast)$ is twice continuously differentiable, we do not have an analytic expression as it depends on derivatives of the eigenvalues and eigenvectors of the matrix whose entries are functions of the model parameters. 
While this is no obstacle to asymptotic regret analysis it  may be interesting to quantify the dependence of $C$ on the model parameters. 
This has been done numerically, see Figure~\ref{dependency of C1}.
\end{remark}

So far we have performed the ergodic analysis for the market making model with the parameter $\kappa$ misspecified. 
Another key step towards quantifying the performance gap, see Theorem~\ref{performance gap}, is to analyse the ergodicity under the model misspecification, i.e. how fast the state process $\big(Q_t^{\psi^{\kappa}; \kappa^\ast}\big)_{t\geq0}$ following the dynamics~\eqref{inventory under kappastar} converges to the equilibrium distribution.

\begin{definition}[Equilibrium]
\label{equilibrium}
The distribution $\pi \in \mathcal P(\Omega^Q)$ is said to be an equilibrium distribution for the Markov control $\delta^{\pm}$ if, for any $t \geq 0$, it holds that  
$
\pi = \mathcal{L} (Q_t^{\pi, \delta^{\pm}}) \, , 
$
where $\mathcal{L}$ denotes the law and $Q_t^{\pi, \delta^{\pm}}$ is given by (\ref{inventory}) under control $\delta^{\pm}$ with $Q_0 \sim \pi$. 
\end{definition}

The following lemma is proved in Appendix~\ref{proof lemma ergodic control stability}. 
\begin{lemma} \label{ergodic control stability}
For any $\kappa \in [\underline K, \bar K]$, the controlled process 
$\big( Q_t^{\psi^{\kappa} ; \kappa^\ast} \big)_{t \geq 0}$, following the dynamics~\eqref{inventory under kappastar} under the control $\psi^{\kappa}$, admits a unique  equilibrium distribution, denoted by $\pi^{\psi^\kappa; \kappa^\ast}$.
\end{lemma}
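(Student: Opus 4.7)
The plan is to identify $(Q_t^{\psi^{\kappa};\kappa^\ast})_{t\ge 0}$ as a continuous-time Markov chain on the finite state space $\Omega^Q = [\underline q,\bar q]\cap\mathbb Z$ and then invoke the standard result that an irreducible CTMC on a finite state space admits a unique invariant distribution (cf.\ Definition~\ref{equilibrium}).

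First, I would read off the generator $\mathbf L^{\psi^\kappa;\kappa^\ast}$ from the dynamics~\eqref{inventory under kappastar}: the only possible transitions are $q\mapsto q-1$ at rate $\lambda^{+}e^{-\kappa^\ast\psi^{\kappa,+}(q)}\mathds{1}_{q>\underline q}$ and $q\mapsto q+1$ at rate $\lambda^{-}e^{-\kappa^\ast\psi^{\kappa,-}(q)}\mathds{1}_{q<\bar q}$, the indicators reflecting the convention $\psi^{\kappa,\pm}(\underline q)=\psi^{\kappa,\pm}(\bar q)=+\infty$ at the boundary states that stops the market maker from quoting on the side that would breach the inventory bounds. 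Because $\psi^\kappa$ is a Markov feedback control, $(Q_t^{\psi^\kappa;\kappa^\ast})$ is genuinely a time-homogeneous CTMC with bounded rates and finite state space, so the associated semigroup is well defined.

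Next I would verify irreducibility. By Proposition~\ref{Existence and Uniqueness for Ergodic Optimal Control} and the explicit formula~\eqref{feedback control}, for every $q\in(\underline q,\bar q)$ both depths $\psi^{\kappa,+}(q)$ and $\psi^{\kappa,-}(q)$ are finite real numbers, hence both the upward and downward jump intensities are strictly positive on the interior. At the boundary state $q=\underline q$ only the upward rate $\lambda^{-}e^{-\kappa^\ast\psi^{\kappa,-}(\underline q)}$ is positive (the downward one vanishes), and symmetrically at $q=\bar q$ only the downward rate survives. This gives a nearest-neighbour birth–death chain on $\{\underline q,\ldots,\bar q\}$ whose transition graph is a connected path, so for any pair of states $q,q'\in\Omega^Q$ one can reach $q'$ from $q$ in finitely many positive-probability jumps; equivalently, some power $\mathbf L^n$ of the embedded kernel has strictly positive $(q,q')$ entry.

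Finally, since $\Omega^Q$ is finite and the generator is irreducible, classical Markov chain theory (e.g.\ the Perron–Frobenius argument applied to the transition semigroup $e^{t\mathbf L^{\psi^\kappa;\kappa^\ast}}$, which is a positive matrix for every $t>0$) yields a unique probability vector $\pi^{\psi^\kappa;\kappa^\ast}$ satisfying $\pi^{\psi^\kappa;\kappa^\ast\,\top}\mathbf L^{\psi^\kappa;\kappa^\ast}=0$. This vector is exactly the unique equilibrium distribution in the sense of Definition~\ref{equilibrium}, completing the proof. I do not expect any genuine obstacle here; the only care needed is the boundary bookkeeping to confirm that, even though $\psi^{\kappa,\pm}$ blow up at the boundary, the chain can still move away from $\underline q$ and $\bar q$, which is precisely why the path graph remains connected.
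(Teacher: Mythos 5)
Your proposal is correct and follows essentially the same route as the paper: both recognise $(Q_t^{\psi^\kappa;\kappa^\ast})$ as a finite-state birth--death CTMC with strictly positive nearest-neighbour rates (except at the boundaries, where only the inward rate survives), and both conclude uniqueness of the equilibrium from the standard theory for irreducible finite CTMCs, the paper via Resnick's theorem on birth--death processes and you via Perron--Frobenius applied to the positive semigroup. The one small imprecision is the remark that ``the indicators reflect the convention $\psi^{\kappa,\pm}(\underline q)=\psi^{\kappa,\pm}(\bar q)=+\infty$''---in fact only $\psi^{\kappa,+}(\underline q)$ and $\psi^{\kappa,-}(\bar q)$ are $+\infty$, while $\psi^{\kappa,-}(\underline q)$ and $\psi^{\kappa,+}(\bar q)$ remain finite---but your subsequent irreducibility argument treats the boundary rates correctly, so this is a wording slip rather than a gap.
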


As we show in Appendix~\ref{proof lemma ergodic control stability}, $(Q_t)_{t\geq0}$--with superscripts omitted for brevity--can be equivalently represented as a continuous-time Markov chain (CTMC) with the transition rate matrix $\boldsymbol{Q}$ given by~\eqref{transition matrix P}. 
Since the transition rate matrix $\boldsymbol{Q}$ is tridiagonal, the CTMC is irreducible and recurrent. 
Therefore, the convergence of the distribution of $Q_t$ to the equilibrium distribution follows the Convergence Theorem~\cite[Theorem 3.6]{hairer2010convergence}.

\begin{lemma} [Convergence Theorem] \label{convergence theorem}
Let $\pi_t^{\psi^{\kappa}; \kappa^\ast}$ be the probability distribution of the random variables $Q_t$ that follows the controlled dynamics~\eqref{inventory under kappastar} with an initial state $Q_0 \sim \pi_0$ and the control $\psi^{\kappa}$. Let $\pi^{\psi^{\kappa}; \kappa^\ast}$ be the equilibrium distribution established by Lemma~\ref{ergodic control stability}. 
Then, there exists constants $C > 0$ and $0 < \alpha <1$, depending on $\kappa$, such that
\begin{equation*}
\big\| \pi_t^{\psi^{\kappa}; \kappa^\ast} - \pi^{\psi^{\kappa}; \kappa^\ast} \big\|_{TV} \leq C \alpha^{t}, \quad \forall t \geq 0 \, .
\end{equation*}
\end{lemma}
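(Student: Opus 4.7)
The plan is to verify the hypotheses of Hairer's convergence theorem~\cite[Theorem 3.6]{hairer2010convergence} for the continuous-time Markov chain (CTMC) representation of $(Q_t^{\psi^\kappa;\kappa^\ast})_{t\geq 0}$ already established in the proof of Lemma~\ref{ergodic control stability}. Since the chain lives on the finite state space $\Omega^Q$ with a tridiagonal transition rate matrix $\boldsymbol Q$ given by~\eqref{transition matrix P}, the entire proof reduces to checking a uniform Doeblin-type minorisation, after which geometric contraction in total variation is immediate from the cited theorem.

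First I would establish irreducibility on $\Omega^Q$. At any interior state $q \in \Omega^Q \setminus \{\underline q, \bar q\}$ both off-diagonal rates $\lambda^{\pm} e^{-\kappa^\ast \psi^{\kappa,\pm}(q)}$ are strictly positive because $\psi^{\kappa,\pm}(q)$ is finite by the formula~\eqref{feedback control}. At each boundary state exactly one of the two rates is strictly positive: the one pointing \emph{inward}, since the outward depth is conventionally set to $+\infty$. Consequently any pair $(q, q') \in (\Omega^Q)^2$ is joined by a sequence of at most $\bar q - \underline q$ nearest-neighbour unit jumps, so the chain is irreducible on $\Omega^Q$; aperiodicity is automatic in continuous time.

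Next I would upgrade irreducibility to a uniform Doeblin minorisation. Fix $t_0 > 0$; along any admissible nearest-neighbour path of length $k \leq \bar q - \underline q$ the jump rates are strictly bounded away from zero (by a constant depending on $\kappa$ through the finite values of $\psi^{\kappa,\pm}$ on $\Omega^Q$ and on $\lambda^\pm$), so the probability that such a fixed path is realised within time $t_0$ is bounded below by a constant $\eta = \eta(\kappa, t_0) > 0$. Taking a minimum over the finitely many pairs $(q,q')$ and their connecting paths yields $\inf_{q,q' \in \Omega^Q} \mathbb{P}\!\left(Q_{t_0}^{\psi^\kappa;\kappa^\ast} = q' \,\middle|\, Q_0 = q\right) \geq \eta$. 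This is precisely the minorisation hypothesis of \cite[Theorem 3.6]{hairer2010convergence}; iterating the resulting total variation contraction over intervals of length $t_0$ gives $\|\pi_t^{\psi^\kappa;\kappa^\ast} - \pi^{\psi^\kappa;\kappa^\ast}\|_{TV} \leq C (1-\eta)^{\lfloor t/t_0 \rfloor}$ for all $t \geq 0$, which is of the claimed form with $\alpha = (1-\eta)^{1/t_0} \in (0,1)$ after absorbing a factor of $(1-\eta)^{-1}$ into $C$.

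The only real subtlety, and it is minor, is the boundary behaviour where one jump rate vanishes because $\psi^{\kappa,\pm}$ equals $+\infty$ there. This does not break either irreducibility or the Doeblin bound: at each boundary state the single inward rate is strictly positive, and since $\Omega^Q$ is finite the infimum $\eta$ above is attained on a finite set of state pairs and admissible paths, hence strictly positive. Thus the lemma follows directly from the cited convergence theorem.
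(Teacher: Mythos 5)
Your proof takes essentially the same route as the paper: both reduce to the CTMC representation with tridiagonal rate matrix $\boldsymbol Q$ from Lemma~\ref{ergodic control stability}, note irreducibility (hence a uniform Doeblin minorisation on the finite state space $\Omega^Q$), and then invoke \cite[Theorem 3.6]{hairer2010convergence} to conclude geometric convergence in total variation. The paper states this justification in one sentence immediately before the lemma; you simply spell out the minorisation step in detail, which is a faithful expansion rather than a different argument.
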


We next state the following proposition, proved in Appendix~\ref{proof of Proposition expectation under equi is 0}, which establishes a key property of the equilibrium distribution $\pi^{\psi^\kappa; \kappa^\ast}$.
 
\begin{proposition} \label{expectation under equi is 0}
Let $\pi^{\psi^\kappa; \kappa^\ast}$ be the equilibrium distribution for the inventory process $Q_t$ following the controlled SDE~\eqref{inventory under kappastar}
\begin{equation*}
dQ_t^{\psi^\kappa; \kappa^\ast} = \big( \lambda^{+} e^{-\kappa^\ast \psi^{\kappa, -}} - \lambda^{-} e^{-\kappa^\ast \psi^{\kappa, +}} \big)dt  + d\tilde{N}_{t}^{\psi^{\kappa, -}} - d\tilde{N}_{t}^{\psi^{\kappa, +}}, \quad t \in [0, T], Q_0 \sim \pi^{\psi^\kappa; \kappa^\ast},
\end{equation*}
with $\psi^{\kappa}$ given by~\eqref{feedback control}. Then it holds that 
\begin{equation*}
\begin{split}
& \mathbb{E} \bigg[ \int_0^T  \lambda^{+} e^{-\kappa^\ast \psi^{\kappa, +}(Q_t^{\psi^{\kappa}; \kappa^\ast})} \Big( \hat{v}^{\psi^{\kappa}}(Q_t^{\psi^{\kappa}; \kappa^\ast}-1; \kappa^\ast) - \hat{v}^{\psi^{\kappa}}(Q_t^{\psi^{\kappa}; \kappa^\ast}; \kappa^\ast) \Big)  \mathds{1}_{Q_t^{\psi^{\kappa}; \kappa^\ast} > \underline q} \\
& + \lambda^{-} e^{-\kappa^\ast \psi^{\kappa, -}(Q_t^{\psi^{\kappa}; \kappa^\ast})} \Big( \hat{v}^{\psi^{\kappa}}(Q_t^{\psi^{\kappa}; \kappa^\ast}+1; \kappa^\ast) - \hat{v}^{\psi^{\kappa}}(Q_t^{\psi^{\kappa}; \kappa^\ast}; \kappa^\ast) \Big) \mathds{1}_{ Q_t^{\psi^{\kappa}; \kappa^\ast} < \bar q} \, \mathrm{d} t \bigg] = 0 \,,
\end{split}
\end{equation*}
where $\hat{v}^{\psi^{\kappa}}(q;\kappa^\ast)$ is defined in Proposition~\ref{linear PDE for gamma}. 
\end{proposition}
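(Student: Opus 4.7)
My plan is to identify the integrand as the action of the infinitesimal generator of the controlled inventory chain applied to $\hat v^{\psi^\kappa}(\cdot;\kappa^\ast)$, and then to exploit the invariance of $\pi^{\psi^\kappa;\kappa^\ast}$ together with the fact that the process is started in equilibrium. First I would write down explicitly the generator $\mathcal L^{\psi^\kappa;\kappa^\ast}$ of the continuous-time Markov chain described by~\eqref{inventory under kappastar}: for any $h:\Omega^Q\to\mathbb R$,
\[
\mathcal L^{\psi^\kappa;\kappa^\ast} h(q) = \lambda^+ e^{-\kappa^\ast\psi^{\kappa,+}(q)}\bigl(h(q-1)-h(q)\bigr)\mathds 1_{q>\underline q} + \lambda^- e^{-\kappa^\ast\psi^{\kappa,-}(q)}\bigl(h(q+1)-h(q)\bigr)\mathds 1_{q<\bar q}.
\]
Choosing $h(\cdot)=\hat v^{\psi^\kappa}(\cdot;\kappa^\ast)$ recovers the integrand in the statement; the bookkeeping to watch is that $dQ_t=dN_t^{-}-dN_t^{+}$ means a filled buy market order, whose intensity is $\lambda^+e^{-\kappa^\ast\psi^{\kappa,+}(q)}$ and which is available only when $q>\underline q$, decreases the inventory by one and therefore must be paired with the increment $\hat v^{\psi^\kappa}(q-1;\kappa^\ast)-\hat v^{\psi^\kappa}(q;\kappa^\ast)$.

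Next, since the initial law $Q_0\sim\pi^{\psi^\kappa;\kappa^\ast}$ coincides with the equilibrium distribution produced by Lemma~\ref{ergodic control stability}, Definition~\ref{equilibrium} yields $\mathcal L(Q_t)=\pi^{\psi^\kappa;\kappa^\ast}$ for every $t\geq 0$. Writing $\pi$ for $\pi^{\psi^\kappa;\kappa^\ast}$, this gives
\[
\mathbb E\bigl[\mathcal L^{\psi^\kappa;\kappa^\ast}\hat v^{\psi^\kappa}(Q_t;\kappa^\ast)\bigr] = \sum_{q\in\Omega^Q}\pi(q)\,\mathcal L^{\psi^\kappa;\kappa^\ast}\hat v^{\psi^\kappa}(q;\kappa^\ast),
\]
which is independent of $t$. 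The invariance $\pi^{\top}\boldsymbol Q=0$, with $\boldsymbol Q$ the tridiagonal transition-rate matrix appearing in the proof of Lemma~\ref{ergodic control stability}, is equivalent to $\sum_{q\in\Omega^Q}\pi(q)(\mathcal L^{\psi^\kappa;\kappa^\ast}h)(q)=0$ for every bounded $h:\Omega^Q\to\mathbb R$, so taking $h=\hat v^{\psi^\kappa}(\cdot;\kappa^\ast)$ makes the sum above vanish.

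Finally, since $\Omega^Q$ is finite the integrand is uniformly bounded, so Fubini's theorem permits swapping expectation and the time integral, and integrating an identically zero function over $[0,T]$ delivers the claim. I do not expect a genuine obstacle: the CTMC representation of the controlled dynamics, the existence of the equilibrium, and the boundedness on the finite state space $\Omega^Q$ are all in place earlier in the paper. The only step requiring any care is the identification of the integrand as $\mathcal L^{\psi^\kappa;\kappa^\ast}\hat v^{\psi^\kappa}(\cdot;\kappa^\ast)$ in the first step; once this is recognised, the proposition reduces to the classical identity $\sum_q(\mathcal L h)(q)\,\pi(q)=0$ for an invariant measure $\pi$, lifted from a pointwise-in-time statement to the integral-in-time form we need.
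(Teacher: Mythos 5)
Your proposal is correct and takes a cleaner, more direct route than the paper. The paper proves the proposition by writing out the linear system~\eqref{linear system for gamma(kappa;kappastar)} in terms of $\hat v^{\psi^\kappa}$ and the running reward, integrating against the invariant measure $\pi^{\psi^\kappa;\kappa^\ast}$, and then invoking the limit characterisation $\gamma(\kappa;\kappa^\ast)=\lim_{T\to\infty}\frac{1}{T}v_0^{\psi^\kappa}(0,q;T;\kappa^\ast)$ to identify $\gamma(\kappa;\kappa^\ast)$ with the invariant-measure average of the running reward; the desired vanishing then falls out by subtraction. You instead observe that the integrand is precisely $\mathcal L^{\psi^\kappa;\kappa^\ast}\hat v^{\psi^\kappa}(\cdot;\kappa^\ast)$, where $\mathcal L^{\psi^\kappa;\kappa^\ast}$ is the CTMC generator with rate matrix $\boldsymbol Q$ from~\eqref{transition matrix P}, and invoke the purely algebraic identity $\sum_q \pi(q)(\mathcal L h)(q)=(\pi^\top\boldsymbol Q)h=0$ valid for every $h$ on the finite state space. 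This bypasses the ergodic HJB system and the long-run-average argument entirely; it is more elementary and makes the mechanism transparent (invariance of $\pi$ kills the generator term, full stop), whereas the paper's route reinforces the thematic link between $\gamma(\kappa;\kappa^\ast)$ and the invariant-measure average of $f$, a connection they reuse elsewhere. One small bookkeeping point you handled correctly: the indicator functions $\mathds 1_{q>\underline q}$, $\mathds 1_{q<\bar q}$ in the generator are consistent with the boundary convention $\psi^{\kappa,+}(\underline q)=\psi^{\kappa,-}(\bar q)=+\infty$ (hence zero rate), so the generator you wrote down indeed matches $\boldsymbol Q$; and the pairing of the $\lambda^+$-rate with the $q\mapsto q-1$ move (filled sell limit order) is the right one.
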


With Proposition~\ref{linear PDE for gamma},~\ref{expectation under equi is 0} and Lemma~\ref{gamma bounded by difference of kappa} at hand, we finally obtain Theorem~\ref{performance gap}. 
\begin{theorem} \label{performance gap}
Given a continuous-time learning algorithm $\Psi$ that generates $\kappa_t$ up to time $T > 0$, let $\mathcal{R}^{\Psi}(T)$ be the regret given by \eqref{original regret}, then it holds that 
\begin{equation*}
\mathcal{R}^{\Psi}(T) \leq C_1 \mathbb E \Big[ \int_0^T \left| \kappa_t - \kappa^\ast \right|^2 \, \mathrm{d} t \Big] + \frac{C_2}{\ln (\alpha^{-1})}\,,
\end{equation*}    
with constants $C_1, C_2 > 0, 0 < \alpha <1$  independent of $T$.
\end{theorem}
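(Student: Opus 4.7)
The plan is to decompose the regret through the long--run reward $\gamma(\kappa_t;\kappa^\ast)$ attainable under the suboptimal policy $\psi^{\kappa_t}$ in the true environment $\kappa^\ast$, then bound each piece separately. Using $\gamma(\kappa^\ast) = \gamma(\kappa^\ast;\kappa^\ast)$, I would write
\begin{equation*}
\mathcal{R}^{\Psi}(T) = \underbrace{\mathbb{E}_q\!\left[\int_0^T \big(\gamma(\kappa^\ast;\kappa^\ast) - \gamma(\kappa_t;\kappa^\ast)\big)\,\mathrm{d} t\right]}_{\text{(I)}} + \underbrace{\mathbb{E}_q\!\left[\int_0^T \big(\gamma(\kappa_t;\kappa^\ast) - f(Q_t^{\psi^{\kappa_t};\kappa^\ast}, \psi^{\kappa_t};\kappa^\ast)\big)\,\mathrm{d} t\right]}_{\text{(II)}}.
\end{equation*}
Term (I) is dealt with immediately by the quadratic performance gap Lemma~\ref{gamma bounded by difference of kappa}, which gives $\text{(I)} \le C_1\, \mathbb{E}\big[\int_0^T |\kappa_t - \kappa^\ast|^2\,\mathrm{d} t\big]$.

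For term (II), I would substitute $\gamma(\kappa_t;\kappa^\ast)$ using the linear system~\eqref{linear system for gamma(kappa;kappastar)} from Proposition~\ref{linear PDE for gamma}. Comparing the definitions of $H$ in~\eqref{L function} and $f$ in~\eqref{reward function}, the depth--and--$\phi q^2$ terms cancel, leaving the pure generator contribution
\begin{equation*}
G(\kappa, q) := \lambda^+ e^{-\kappa^\ast \psi^{\kappa,+}(q)} \big[\hat v^{\psi^\kappa}(q-1;\kappa^\ast) - \hat v^{\psi^\kappa}(q;\kappa^\ast)\big] \mathds{1}_{q>\underline q} + \lambda^- e^{-\kappa^\ast \psi^{\kappa,-}(q)} \big[\hat v^{\psi^\kappa}(q+1;\kappa^\ast) - \hat v^{\psi^\kappa}(q;\kappa^\ast)\big] \mathds{1}_{q<\bar q},
\end{equation*}
so that $\text{(II)} = \mathbb{E}\big[\int_0^T G(\kappa_t, Q_t^{\psi^{\kappa_t};\kappa^\ast})\,\mathrm{d} t\big]$. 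Proposition~\ref{expectation under equi is 0} is precisely the assertion that $\int G(\kappa,q)\, \pi^{\psi^\kappa;\kappa^\ast}(\mathrm{d} q) = 0$ for every fixed $\kappa\in[\underline K,\bar K]$; subtracting this zero pointwise in $t$ recasts the integrand as the integral of $G(\kappa_t,\cdot)$ against $\mathcal{L}(Q_t) - \pi^{\psi^{\kappa_t};\kappa^\ast}$.

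Uniform bounds on $\hat v^{\psi^\kappa}$ and $\psi^{\kappa,\pm}$ over the finite state space $\Omega^Q$ and the compact set $[\underline K, \bar K]$ (after fixing the normalisation $\hat v^{\psi^\kappa}(0;\kappa^\ast)=0$) yield $\|G\|_\infty \le C$. Taking $\alpha := \sup_{\kappa \in [\underline K, \bar K]} \alpha(\kappa) < 1$ in Lemma~\ref{convergence theorem} provides a uniform geometric rate of convergence to equilibrium, so that after integrating in time $\int_0^T \alpha^t\,\mathrm{d} t \le 1/\ln(\alpha^{-1})$, giving the advertised $C_2 / \ln(\alpha^{-1})$ bound and closing the estimate.

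The chief obstacle is applying Lemma~\ref{convergence theorem} with a time--varying, data--driven parameter: that theorem is stated for a chain with a \emph{frozen} $\kappa$, whereas here $\kappa_t$ is $\mathcal G^\Psi_{t^-}$--measurable and correlated with the past of $Q_t$ through Algorithm~\ref{alg:cap}. I would handle this by conditioning on $\mathcal G^\Psi_{t^-}$ (which determines $\kappa_t$) and coupling the true non-homogeneous inventory chain with an auxiliary chain that is frozen at the current value $\kappa_t$ from the last update of the estimator, then invoking Lemma~\ref{convergence theorem} for the auxiliary chain together with the uniformity of $C,\alpha$ over $[\underline K,\bar K]$. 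Controlling the coupling discrepancy at the (random) update times of $\kappa_t$ so that no $T$--dependent factor accumulates is the delicate technical step.
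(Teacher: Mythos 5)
Your decomposition into (I) and (II), the use of Lemma~\ref{gamma bounded by difference of kappa} for (I), and the use of Proposition~\ref{expectation under equi is 0} together with Lemma~\ref{convergence theorem} for (II), is exactly the argument the paper gives in its proof of Theorem~\ref{performance gap}, up to an inconsequential sign convention (the linear system~\eqref{linear system for gamma(kappa;kappastar)} gives $\gamma(\kappa;\kappa^\ast) = f(\cdot,\psi^\kappa;\kappa^\ast) + h(\kappa,\cdot)$ with your $G=h$, so $\text{(II)}=\mathbb E\big[\int h\big]$; since both proofs immediately bound this by $\bar h \int_0^T \|\pi_t - \pi\|_{TV}\,\mathrm dt$ the sign does not matter). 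The uniformity of $\bar C,\bar\alpha$ over $\kappa\in[\underline K,\bar K]$ that you invoke is also what the paper establishes, via simplicity and continuity in $\kappa$ of the eigenvalues and eigenvectors of the tridiagonal rate matrix~\eqref{transition matrix P}.

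The one issue you flag as ``the delicate technical step'' --- applying the frozen-parameter mixing estimate of Lemma~\ref{convergence theorem} to a chain whose control parameter $\kappa_t$ is itself an adapted, possibly random, time-varying process --- is a genuine subtlety, but you should be aware that the paper's proof does \emph{not} carry out the conditioning/coupling argument you envision. It simply writes $\mathbb E_q\big[\int_0^T h(\kappa_t,Q_t)\,\mathrm dt\big]=\int_0^T\int_{\Omega^Q} h(\kappa_t,q)\,\mathrm d\pi_t^{\psi^{\kappa_t};\kappa^\ast}\,\mathrm dt$, treating $\kappa_t$ as if it were deterministic and $\pi_t^{\psi^{\kappa_t};\kappa^\ast}$ as if the chain had evolved under the single frozen policy $\psi^{\kappa_t}$ on the whole interval $[0,t]$, and then applies Lemma~\ref{convergence theorem} directly. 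So this is a point at which the paper's argument is looser than your own instincts; to make it fully rigorous one would indeed need something along the lines you sketch (restart the mixing estimate at each update time of the estimator and use the uniform $\bar C,\bar\alpha$ to control the accumulated discrepancy), but no such argument appears in the paper. For the purpose of comparing your proof to the paper's, you have not omitted anything the paper does.
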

The proof is provided in Appendix~\ref{proof theorem performance gap}.

\subsubsection*{Step 2: Concentration Inequality}
The next step towards Theorem~\ref{regret theorem} is to quantify the precise tail behaviour, also known as concentration inequality,  of the regularised maximum-likelihood estimator in Algorithm~\ref{alg:cap}. Recall that $(\Omega^\ast, \mathcal{F}^\ast, \mathbb{P}^\ast)$ is given in Definition~\ref{admissible learning algorithm}. 

We start with several significant propositions to the estimator. The proofs of the following propositions are provided in Appendix~\ref{proof of concentration inequality}. 
\begin{proposition} \label{prop: concentration rescaling sub-Gaussian}
Let $(\delta_n)_{n=1}^N$ be a collection of non-negative random variables taking values in $[\underline \delta, \bar \delta] \cup \{+ \infty\}$. Then for any $\varepsilon \geq 0$ and bounded function $f: [\underline \delta, \bar \delta] \cup \{+ \infty\} \rightarrow \mathbb R$, it holds that,
\begin{equation*}
\mathbb P^{\ast} \left( \left| \sum_{n=1}^{N} f(\delta_n) Y_n - \sum_{n=1}^{N}f(\delta_n)e^{-\kappa^{\ast} \delta_n} \right| \leq \left\| f\right\|_{\infty} \sqrt{2N\ln(\tfrac{2}{\varepsilon})} \right) \geq 1 - \varepsilon. 
\end{equation*}
\end{proposition}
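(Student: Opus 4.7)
\medskip

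\noindent\textbf{Proof proposal.} The plan is to reduce the inequality to an application of Azuma--Hoeffding to a bounded martingale. Let me sketch this.

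Working on $(\Omega^\ast,\mathcal F^\ast,\mathbb P^\ast)$, denote by $\tau_n$ the arrival time of the $n$-th market order and set
\[
\mathcal H_n := \sigma\bigl(\delta_1,Y_1,\ldots,\delta_n,Y_n\bigr)\vee\mathcal N.
\]
Because the depth used at the $n$-th arrival is determined by the algorithm from the data available strictly before that arrival, $\delta_n$ is $\mathcal H_{n-1}$-measurable (with the convention that $\delta_n=+\infty$ corresponds to the inventory-boundary case). The fill signal $Y_n$ is, conditionally on $\mathcal H_{n-1}$ and on $\delta_n$, Bernoulli with mean $e^{-\kappa^\ast\delta_n}$; when $\delta_n=+\infty$ we use the convention $e^{-\kappa^\ast\cdot\infty}=0$ so that $Y_n=0$ almost surely and the conditional mean identity $\mathbb E^\ast[Y_n\mid\mathcal H_{n-1}]=e^{-\kappa^\ast\delta_n}$ still holds.

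Define
\[
Z_n := f(\delta_n)\bigl(Y_n-e^{-\kappa^\ast\delta_n}\bigr),\qquad M_N := \sum_{n=1}^N Z_n.
\]
Since $\delta_n$ is $\mathcal H_{n-1}$-measurable and $\mathbb E^\ast[Y_n\mid\mathcal H_{n-1}]=e^{-\kappa^\ast\delta_n}$, we have $\mathbb E^\ast[Z_n\mid\mathcal H_{n-1}]=0$, so $(M_N)$ is an $(\mathcal H_N)$-martingale. For the increment bound, observe that conditionally on $\mathcal H_{n-1}$, the random variable $Y_n-e^{-\kappa^\ast\delta_n}$ takes at most two values, lying in an interval of length $1$, hence
\[
|Z_n|\;\le\;|f(\delta_n)|\cdot\max\bigl(e^{-\kappa^\ast\delta_n},\,1-e^{-\kappa^\ast\delta_n}\bigr)\;\le\;\|f\|_\infty
\]
almost surely, including the degenerate case $\delta_n=+\infty$ where $Z_n=0$.

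Apply the Azuma--Hoeffding inequality with constants $c_n=\|f\|_\infty$ to obtain, for any $t>0$,
\[
\mathbb P^\ast\bigl(|M_N|\ge t\bigr)\;\le\;2\exp\!\Bigl(-\tfrac{t^2}{2N\|f\|_\infty^2}\Bigr).
\]
Choosing $t=\|f\|_\infty\sqrt{2N\ln(2/\varepsilon)}$ makes the right-hand side equal to $\varepsilon$, and taking complements yields the claim. The main thing to be careful about is the filtration setup, namely that $\delta_n$ is predictable with respect to $\mathcal H_{n-1}$ and that the $+\infty$ convention is consistent with both the Bernoulli mean identity and the boundedness of $Z_n$; once these are in place, the concentration bound follows from a standard martingale argument with no further obstacles.
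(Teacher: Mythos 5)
Your proposal is correct and reaches the same bound as the paper, but it is set up more carefully. The paper's proof also defines $Z_n = f(\delta_n)(Y_n - e^{-\kappa^\ast\delta_n})$, invokes boundedness $|Z_n|\le \|f\|_\infty$ together with the claim $\mathbb E^\ast[Z_n\mid (\delta_n)_{n=1}^N]=0$, derives the sub-Gaussian MGF bound for each $Z_n$, and then applies a Chernoff bound conditionally on the full depth sequence before appealing to the tower property. In the adaptive setting relevant here, where $\delta_{n+1}$ is determined by the signals $Y_1,\dots,Y_n$, conditioning on the \emph{entire} vector $(\delta_n)_{n=1}^N$ can change the conditional law of $Y_n$ (knowing $\delta_{n+1}$ may reveal $Y_n$), so the identity $\mathbb E^\ast[Z_n\mid (\delta_n)_{n=1}^N]=0$ and the product-form MGF bound used by the paper are only valid when read through a sequential-conditioning lens. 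Your version makes this explicit: you take $\delta_n$ to be $\mathcal H_{n-1}$-predictable, show $(M_N)$ is a martingale with increments bounded by $\|f\|_\infty$, and invoke Azuma--Hoeffding, which is precisely the iterated-conditioning argument the paper's proof is implicitly relying on. The constants agree exactly, and your handling of the $\delta_n=+\infty$ case (so that $Z_n=0$ with both conventions consistent) is also correct. In short, same idea, but your formulation is the one that is airtight in the adaptive regime in which the proposition is actually used.
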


\begin{proposition} \label{prop: lower bound Fisher}
There exist constants $c,C >0$ depending on $\underline K, \bar{K}, \underline{\delta}$ and $\bar{\delta}$ such that for any policy $(\delta_n)_{n=1}^\infty$ taking values in $[\underline{\delta}, \bar{\delta}] \cup \{+ \infty\}$, it holds that for any $\varepsilon > 0$,
\begin{equation*}
\mathbb P^\ast \left(    \inf_{\kappa > 0}  \Big( -\frac{\mathrm{d}^2}{\mathrm{d} \kappa^2} \tilde{\ell}_N(\kappa) \Big) \geq c N -C\sqrt{N \ln \big(\tfrac{2}{\varepsilon} \big) }\right) \geq 1-\varepsilon.
\end{equation*}
\end{proposition}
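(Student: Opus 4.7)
The plan is to reduce the uniform-in-$\kappa$ infimum to an evaluation at the single point $\kappa = \bar K$ via a monotonicity argument, and then to apply the sub-Gaussian concentration of Proposition~\ref{prop: concentration rescaling sub-Gaussian} at that single point. The task then splits into (i) a deterministic lower bound of order $N$ on the conditional mean and (ii) controlling the fluctuation, which produces the $\sqrt{N\ln(2/\varepsilon)}$ term.

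First, setting $G(u) := e^{-u}/(1-e^{-u})^2$, one has
\[
-\frac{\mathrm{d}^2}{\mathrm{d}\kappa^2}\tilde\ell_N(\kappa) = \sum_{n=1}^{N} \delta_n^2(1-Y_n)\, G(\kappa\delta_n) + \delta_0^2\, G(\kappa\delta_0), \qquad \kappa \in (0,\bar K],
\]
where terms with $\delta_n = +\infty$ contribute $0$ under the convention of Remark~\ref{rm2}(2); by~\eqref{second derivative of the log-likelihood function} the expression is constant on $(\bar K,\infty)$, equal to its value at $\bar K$. The crucial elementary fact is that $\kappa \mapsto \delta^2 G(\kappa\delta)$ is strictly decreasing for every fixed $\delta > 0$, since direct differentiation gives
\[
\frac{\mathrm{d}}{\mathrm{d}\kappa}\bigl[\delta^2 G(\kappa\delta)\bigr] = -\delta^3 e^{-\kappa\delta}\, \frac{1+e^{-\kappa\delta}}{(1-e^{-\kappa\delta})^3} < 0.
\]
Hence $\kappa \mapsto -\tilde\ell_N''(\kappa)$ is non-increasing on $(0,\bar K]$ and constant on $(\bar K,\infty)$, so
\[
\inf_{\kappa > 0}\Bigl(-\tfrac{\mathrm{d}^2}{\mathrm{d}\kappa^2}\tilde\ell_N(\kappa)\Bigr) = -\tfrac{\mathrm{d}^2}{\mathrm{d}\kappa^2}\tilde\ell_N(\bar K).
\]

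Next, define $f(\delta) := \delta^2 G(\bar K \delta)$ with $f(+\infty) := 0$; continuity of $G$ on $(0,\infty)$ and compactness of $[\underline\delta,\bar\delta]$ yield $\|f\|_\infty \leq M$ for some $M = M(\bar K, \underline\delta, \bar\delta)$. Rearranging $\sum_n f(\delta_n)(1-Y_n) = \sum_n f(\delta_n) - \sum_n f(\delta_n) Y_n$ and applying Proposition~\ref{prop: concentration rescaling sub-Gaussian} to the centred sum gives, with probability at least $1-\varepsilon$,
\[
\sum_{n=1}^{N} f(\delta_n)(1-Y_n) \geq \sum_{n=1}^{N} f(\delta_n)\bigl(1-e^{-\kappa^\ast\delta_n}\bigr) - M\sqrt{2N\ln(2/\varepsilon)}.
\]
Since $(\delta, \kappa^\ast) \mapsto f(\delta)(1-e^{-\kappa^\ast\delta})$ is continuous and strictly positive on the compact rectangle $[\underline\delta, \bar\delta] \times [\underline K, \bar K]$, it is bounded below by some $c = c(\underline K, \bar K, \underline\delta, \bar\delta) > 0$, giving $\sum_n f(\delta_n)(1-e^{-\kappa^\ast\delta_n}) \geq cN$ (the convention makes the $\delta_n = +\infty$ terms contribute $0$, so in the edge case where many offsets are infinite the claimed bound $cN - C\sqrt{N\ln(2/\varepsilon)}$ either remains valid or becomes non-positive and trivially holds). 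Adding the non-negative regulariser $\delta_0^2 G(\bar K \delta_0)$ then yields the claim with $C := M\sqrt{2}$.

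The main anticipated obstacle is the uniform-in-$\kappa$ nature of the statement. A naive approach would discretise $(0, \bar K]$, invoke a union bound, and use a Lipschitz estimate of $\kappa \mapsto \delta^2 G(\kappa\delta)$; however, the Lipschitz constant blows up like $1/\kappa^3$ as $\kappa \to 0^+$, so such a discretisation would need a very fine mesh near zero and would degrade the $\sqrt{N}$ scaling through the logarithm of the grid size. The monotonicity observation in the first step is the essential conceptual ingredient, bypassing this entirely by reducing the infimum to a deterministic single-point evaluation.
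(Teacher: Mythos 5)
Your proof follows essentially the same route as the paper's: the monotonicity of $x \mapsto e^{-x}/(1-e^{-x})^2$ reduces the infimum over $\kappa$ to a single-point evaluation (effectively at $\kappa = \bar K$), and Proposition~\ref{prop: concentration rescaling sub-Gaussian} then controls the deviation of $\sum_n f(\delta_n)(1-Y_n)$ from its conditional mean; applying the concentration inequality with the non-constant weight $f(\delta) = \delta^2 e^{-\bar K\delta}/(1-e^{-\bar K\delta})^2$ rather than first bounding this weight below by a constant, as the paper does, is only a cosmetic variation.

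Your handling of the $\delta_n = +\infty$ terms, however, is not correct as written. You observe that such terms contribute $0$ to both $\sum_n f(\delta_n)(1-Y_n)$ and to the target lower bound $\sum_n f(\delta_n)(1-e^{-\kappa^\ast\delta_n})$, and then assert that the claimed bound $cN - C\sqrt{N\ln(2/\varepsilon)}$ ``either remains valid or becomes non-positive and trivially holds.'' This is false: with $\varepsilon$ fixed, $cN - C\sqrt{N\ln(2/\varepsilon)}$ is positive and grows linearly in $N$, whereas if a positive fraction of the $\delta_n$ equal $+\infty$ then $\sum_n f(\delta_n)(1-e^{-\kappa^\ast\delta_n}) \geq c\,\#\{n : \delta_n < +\infty\}$ only, and in the extreme case where all $\delta_n = +\infty$ the quantity $-\tfrac{\mathrm{d}^2}{\mathrm{d}\kappa^2}\tilde \ell_N(\kappa)$ is just the $O(1)$ regulariser term $\delta_0^2 e^{-\kappa\delta_0}/(1-e^{-\kappa\delta_0})^2$, which eventually falls far below the claimed bound. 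In fairness, the paper's own proof carries the same tacit gap --- the per-term inequality $\delta_n^2 e^{-\bar K\delta_n}/(1-e^{-\bar K\delta_n})^2 \geq \underline\delta^2 e^{-\bar K\bar\delta}/(1-e^{-\bar K\bar\delta})^2$ is false at $\delta_n = +\infty$ since the left side is $0$ there --- so you are not doing worse than the paper, but the parenthetical you added is not a fix. A correct treatment would require showing that a positive fraction of the signals arise from a finite offset with high probability (this is where the self-exploration observation, that the agent always posts on at least one side with a bounded offset, becomes essential), after which the $cN$ term is recovered with a smaller constant depending on $\lambda^\pm$.
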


\begin{proposition} \label{prop: existence of estimator}
There exists a unique $\kappa_N > 0$ such that $\frac{\mathrm{d}}{\mathrm{d} \kappa} \Tilde{\ell}_N (\kappa_N) = 0$, where $\frac{\mathrm{d}}{\mathrm{d} \kappa} \Tilde{\ell}_N (\kappa)$ is given by \eqref{derivative of the log-likelihood function}.
\end{proposition}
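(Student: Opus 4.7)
The plan is to show that $\tilde\ell_N$ is strictly concave and $C^1$ on $(0,\infty)$, and that $\tfrac{\mathrm d}{\mathrm d\kappa}\tilde\ell_N(\kappa)\to+\infty$ as $\kappa\to 0^+$ while $\tfrac{\mathrm d}{\mathrm d\kappa}\tilde\ell_N(\kappa)\to-\infty$ as $\kappa\to+\infty$. Strict concavity gives strict monotonicity of the derivative (hence uniqueness), and the two limit statements together with continuity deliver existence via the intermediate value theorem.

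First I would verify regularity. By construction \eqref{regularised log-likelihood function} is the standard log-likelihood plus regulariser on $(0,\bar K]$, and a second-order Taylor polynomial in $\kappa$ anchored at $\bar K$ on $(\bar K,\infty)$; matching values, first and second derivatives at $\bar K$ makes $\tilde\ell_N$ of class $C^2$ on $(0,\infty)$. In particular \eqref{derivative of the log-likelihood function} and \eqref{second derivative of the log-likelihood function} agree at $\kappa=\bar K$ from both sides. Terms with $\delta_n=+\infty$ force $Y_n=0$ almost surely and contribute zero under the convention $0\cdot\infty=0$, so they may be dropped throughout.

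Next I would establish strict concavity. On $\kappa\leq\bar K$ one has
\begin{equation*}
\tfrac{\mathrm d^2}{\mathrm d\kappa^2}\tilde\ell_N(\kappa) \;=\; -\sum_{n=1}^N \delta_n^2(1-Y_n)\,\frac{e^{-\kappa\delta_n}}{(1-e^{-\kappa\delta_n})^2} \;-\; \delta_0^2\,\frac{e^{-\kappa\delta_0}}{(1-e^{-\kappa\delta_0})^2}\,,
\end{equation*}
which is strictly negative because the regularisation contribution is strictly negative for every $\delta_0>0$ and every $\kappa>0$; the data sum could vanish (e.g.\ if all $Y_n=1$), but the regulariser salvages strict negativity. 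On $\kappa>\bar K$ the second derivative is constant and equal to this quantity at $\kappa=\bar K$, hence still strictly negative. Thus $\tfrac{\mathrm d}{\mathrm d\kappa}\tilde\ell_N$ is continuous and strictly decreasing on $(0,\infty)$, proving uniqueness.

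For existence I would analyse the boundary behaviour of the derivative. As $\kappa\to 0^+$, each data term $\delta_n(1-Y_n)\tfrac{e^{-\kappa\delta_n}}{1-e^{-\kappa\delta_n}}$ is non-negative, and the regularisation derivative $\delta_0\tfrac{e^{-\kappa\delta_0}}{1-e^{-\kappa\delta_0}}-\delta_0$ diverges to $+\infty$; consequently $\tfrac{\mathrm d}{\mathrm d\kappa}\tilde\ell_N(\kappa)\to+\infty$ regardless of the sample $(Y_n)_{n=1}^N$. As $\kappa\to+\infty$, the derivative equals $(\ell_N'(\bar K)+R'(\bar K))+(\kappa-\bar K)(\ell_N''(\bar K)+R''(\bar K))$, whose slope is strictly negative by the previous step, so it tends to $-\infty$. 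The intermediate value theorem then yields a (unique) $\kappa_N>0$ with $\tfrac{\mathrm d}{\mathrm d\kappa}\tilde\ell_N(\kappa_N)=0$.

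There is no real obstacle here; the essence is simply that the regulariser $R$ is designed precisely to fix the two pathologies of the raw likelihood noted in Remark~\ref{rm2}(1), namely (i) possible degeneracy of $\ell_N''$ when no unfilled orders are observed in a given sample and (ii) possible lack of a zero of $\ell_N'$ when all orders happen to be filled. Once strict concavity and the two infinite limits are in hand, uniqueness and existence are immediate.
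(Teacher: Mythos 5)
Your proof is correct and takes essentially the same approach as the paper's: show that $\frac{\mathrm{d}}{\mathrm{d}\kappa}\tilde\ell_N(\kappa)\to+\infty$ as $\kappa\to0^+$ and $\to-\infty$ as $\kappa\to+\infty$, then invoke continuity for existence and strict negativity of $\frac{\mathrm{d}^2}{\mathrm{d}\kappa^2}\tilde\ell_N$ (ensured by the regulariser) for uniqueness. You merely fill in the details the paper's three-line proof leaves implicit.
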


\begin{proposition} \label{prop: upper bound score}
There exists constants $C, c \geq 0$ depending on $\kappa^{\ast}$, $\delta_0$, and $\bar \delta$ such that for any policy $(\delta_n)_{n=1}^{N}$ taking values in $[\underline \delta, \bar \delta] \cup \{+ \infty\}$, it holds that for any $\varepsilon > 0$, 
\begin{equation*}
\mathbb P^\ast \left(  \Big| \frac{\mathrm{d}}{\mathrm{d} \kappa} \tilde{\ell}_N(\kappa^\ast) \Big| \leq C  \sqrt{N \ln \big( \tfrac{2}{\varepsilon}  \big)} + c \right) \geq 1-\varepsilon.
\end{equation*}
\end{proposition}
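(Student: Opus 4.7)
The plan is to split $\frac{\mathrm{d}}{\mathrm{d}\kappa}\tilde\ell_N(\kappa^\ast)$ into a deterministic piece coming from the regulariser and a mean--zero stochastic piece to which Proposition~\ref{prop: concentration rescaling sub-Gaussian} can be applied directly. Since $\kappa^\ast \in [\underline K, \bar K]$, the indicator $\mathds{1}_{\kappa \le \bar K}$ in~\eqref{derivative of the log-likelihood function} is active at $\kappa=\kappa^\ast$, so
\begin{equation*}
\frac{\mathrm{d}}{\mathrm{d}\kappa}\tilde\ell_N(\kappa^\ast) \;=\; \frac{\mathrm{d}}{\mathrm{d}\kappa}\ell_N(\kappa^\ast) \;+\; \frac{\mathrm{d}}{\mathrm{d}\kappa}R(\kappa^\ast),
\end{equation*}
and the second term is the deterministic constant $-\delta_0 + \delta_0 e^{-\kappa^\ast\delta_0}/(1-e^{-\kappa^\ast\delta_0})$, whose absolute value supplies the constant $c$ in the statement (depending only on $\kappa^\ast$ and $\delta_0$).

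Next, I would rewrite the score into a form ready for Proposition~\ref{prop: concentration rescaling sub-Gaussian}. Placing the two summands in~\eqref{derivative of the log-likelihood function without regularise} over the common denominator $1-e^{-\kappa^\ast\delta_n}$ and simplifying gives
\begin{equation*}
\frac{\mathrm{d}}{\mathrm{d}\kappa}\ell_N(\kappa^\ast) \;=\; -\sum_{n=1}^{N} f(\delta_n)\bigl(Y_n - e^{-\kappa^\ast \delta_n}\bigr), \qquad f(\delta) \;:=\; \frac{\delta}{1-e^{-\kappa^\ast\delta}}.
\end{equation*}
The summand corresponding to an unfilled inventory boundary with $\delta_n=+\infty$ is, by Remark~\ref{rm2} and the convention $0\cdot\infty=0$, defined to be zero; equivalently, we extend $f$ by $f(+\infty):=0$. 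On the compact interval $[\underline\delta,\bar\delta]$ the map $\delta\mapsto \delta/(1-e^{-\kappa^\ast\delta})$ is continuous and strictly positive (with limit $1/\kappa^\ast$ at $0$ if $\underline\delta=0$), hence
\begin{equation*}
\|f\|_\infty \;\le\; \frac{\bar\delta}{1-e^{-\underline K\,\underline\delta}},
\end{equation*}
a bound depending only on $\kappa^\ast$ (through $\underline K$) and on $\bar\delta,\underline\delta$.

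With this decomposition I would now apply Proposition~\ref{prop: concentration rescaling sub-Gaussian} to the bounded function $f$: rearranging its conclusion as $|\sum_{n=1}^N f(\delta_n)(Y_n-e^{-\kappa^\ast\delta_n})|\le \|f\|_\infty\sqrt{2N\ln(2/\varepsilon)}$ with $\mathbb P^\ast$-probability at least $1-\varepsilon$, we obtain
\begin{equation*}
\mathbb P^\ast\!\left(\Bigl|\tfrac{\mathrm{d}}{\mathrm{d}\kappa}\ell_N(\kappa^\ast)\Bigr| \le \|f\|_\infty\sqrt{2N\ln(2/\varepsilon)}\right) \;\ge\; 1-\varepsilon.
\end{equation*}
Combining this with the deterministic bound on $\frac{\mathrm{d}}{\mathrm{d}\kappa}R(\kappa^\ast)$ through the triangle inequality yields the claim with $C=\|f\|_\infty\sqrt{2}$ and $c=|\frac{\mathrm{d}}{\mathrm{d}\kappa}R(\kappa^\ast)|$.

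The only substantive obstacle I foresee is the bookkeeping around the value $\delta_n=+\infty$, which is a legitimate output of the Markov feedback policy $\psi^\kappa$ at the inventory boundaries $\underline q,\bar q$. I would have to verify carefully that the convention $0\cdot\infty=0$ makes both the original expression $-\delta_n Y_n+(1-Y_n)\delta_n e^{-\kappa^\ast\delta_n}/(1-e^{-\kappa^\ast\delta_n})$ and the rewritten summand $f(\delta_n)(Y_n - e^{-\kappa^\ast\delta_n})$ unambiguously zero on the event $\{\delta_n=+\infty\}\subseteq\{Y_n=0\}$, so that the two expressions for the score agree pathwise; once this is in place, the application of Proposition~\ref{prop: concentration rescaling sub-Gaussian} is immediate since it is stated precisely for policies valued in $[\underline\delta,\bar\delta]\cup\{+\infty\}$ and bounded $f$.
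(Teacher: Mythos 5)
Your proposal is correct and follows essentially the same route as the paper: isolate the deterministic regulariser contribution $\frac{\mathrm{d}}{\mathrm{d}\kappa}R(\kappa^\ast)$ as the constant $c$, rewrite the score as $\sum_n f(\delta_n)(Y_n - e^{-\kappa^\ast\delta_n})$ with $f(\delta) = \delta\mathds{1}_{\delta<+\infty}/(1-e^{-\kappa^\ast\delta})$ (which is exactly the paper's algebraic combination over the common denominator, using $Y_n=0$ a.s.\ when $\delta_n=+\infty$), and then apply Proposition~\ref{prop: concentration rescaling sub-Gaussian} with $\|f\|_\infty = f(\bar\delta;\kappa^\ast)$. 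The only cosmetic differences are the sign convention on $f$ and that you retain the factor $\sqrt{2}$ from Proposition~\ref{prop: concentration rescaling sub-Gaussian} whereas the paper's displayed bound uses $\sqrt{4}$, but both yield the same qualitative conclusion.
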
 

With the above propositions, we obtain Theorem~\ref{thm: concentration} with the proof provided in Appendix~\ref{proof of concentration inequality}, which quantifies the concentration inequality of the regularised maximum likelihood estimator in Algorithm~\ref{alg:cap}. 

\begin{theorem} \label{thm: concentration}
Let $\kappa_N > 0$ be the unique solution to $\frac{\mathrm{d}}{\mathrm{d} \kappa} \tilde{\ell}_N(\kappa_N)= 0$. There exists constants $C, c, N_0\geq 0$ such that for any $\varepsilon \geq 0$, if $N \geq N_0 \ln \big( \tfrac{2}{\varepsilon} \big)$, then
$$\mathbb P^\ast \left( |  \kappa_N - \kappa^\ast | \leq C N^{-1/2} \sqrt{ \ln \big( \tfrac{2}{\varepsilon} \big)} + cN^{-1}\right) \geq 1- 2 \varepsilon.$$
\end{theorem}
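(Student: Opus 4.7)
\textbf{Proof plan for Theorem~\ref{thm: concentration}.} The strategy is the classical ``sandwich'' argument for $M$-estimators: Taylor-expand the score around $\kappa^\ast$, control the score at $\kappa^\ast$ from above and the curvature from below uniformly in $\kappa$, then read off the rate.

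First, I would note that $\tilde{\ell}_N$ is $C^2$ on $(0,+\infty)$ by construction~\eqref{regularised log-likelihood function} (the piecewise definition at $\kappa=\bar K$ is precisely a second-order Taylor continuation, so the first and second derivatives match across the threshold). Since $\kappa_N>0$ solves $\frac{\mathrm{d}}{\mathrm{d}\kappa}\tilde{\ell}_N(\kappa_N)=0$ by Proposition~\ref{prop: existence of estimator}, the mean value theorem applied to $\frac{\mathrm{d}}{\mathrm{d}\kappa}\tilde{\ell}_N$ gives some $\xi_N$ between $\kappa_N$ and $\kappa^\ast$ with
\begin{equation*}
-\tfrac{\mathrm{d}}{\mathrm{d}\kappa}\tilde{\ell}_N(\kappa^\ast) \,=\, \tfrac{\mathrm{d}^2}{\mathrm{d}\kappa^2}\tilde{\ell}_N(\xi_N)\,(\kappa_N - \kappa^\ast)\,,
\end{equation*}
so that, using the \emph{uniform-in-$\kappa$} lower bound from Proposition~\ref{prop: lower bound Fisher} to control the denominator regardless of where $\xi_N$ sits,
\begin{equation*}
|\kappa_N - \kappa^\ast| \,\leq\, \frac{\bigl|\tfrac{\mathrm{d}}{\mathrm{d}\kappa}\tilde{\ell}_N(\kappa^\ast)\bigr|}{\inf_{\kappa>0}\bigl(-\tfrac{\mathrm{d}^2}{\mathrm{d}\kappa^2}\tilde{\ell}_N(\kappa)\bigr)}\,.
\end{equation*}

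Next I would invoke Propositions~\ref{prop: upper bound score} and~\ref{prop: lower bound Fisher} at the same confidence level $\varepsilon$ and combine them by a union bound to obtain an event of probability at least $1-2\varepsilon$ on which simultaneously
\begin{equation*}
\Bigl|\tfrac{\mathrm{d}}{\mathrm{d}\kappa}\tilde{\ell}_N(\kappa^\ast)\Bigr| \,\leq\, C_1\sqrt{N\ln(2/\varepsilon)}+c_1,\qquad \inf_{\kappa>0}\Bigl(-\tfrac{\mathrm{d}^2}{\mathrm{d}\kappa^2}\tilde{\ell}_N(\kappa)\Bigr) \,\geq\, c_2 N - C_2\sqrt{N\ln(2/\varepsilon)}\,.
\end{equation*}
These propositions require the depths $(\delta_n)_{n=1}^N$ to take values in $[\underline\delta,\bar\delta]\cup\{+\infty\}$; this is guaranteed along trajectories of Algorithm~\ref{alg:cap} because the truncation $\hat\kappa_i=\varrho(\kappa_i)\in[\underline K,\bar K]$ combined with the explicit feedback form~\eqref{feedback control} forces $\psi^{\hat\kappa_i}(q)$ into a compact set (the value $+\infty$ occurring only when the inventory sits at a boundary, which is consistent with the propositions' hypotheses).

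Finally, I would pick $N_0$ so that the Hessian lower bound is genuinely of order $N$: choosing $N_0\geq (2C_2/c_2)^2$ ensures that $N\geq N_0\ln(2/\varepsilon)$ implies $C_2\sqrt{N\ln(2/\varepsilon)}\leq (c_2/2) N$, hence the denominator is at least $(c_2/2)N$. Substituting into the sandwich inequality,
\begin{equation*}
|\kappa_N-\kappa^\ast|\,\leq\,\frac{C_1\sqrt{N\ln(2/\varepsilon)}+c_1}{(c_2/2)\,N}\,=\,\frac{2C_1}{c_2}\,N^{-1/2}\sqrt{\ln(2/\varepsilon)}\,+\,\frac{2c_1}{c_2}\,N^{-1}\,,
\end{equation*}
which yields the stated bound with $C=2C_1/c_2$ and $c=2c_1/c_2$.

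The heavy lifting is absorbed into the auxiliary propositions (themselves resting on the Bernoulli sub-Gaussian concentration in Proposition~\ref{prop: concentration rescaling sub-Gaussian}), so the main obstacle at the level of this theorem is largely bookkeeping: verifying (i) that the piecewise construction~\eqref{regularised log-likelihood function} is $C^2$ across $\kappa=\bar K$, so that a single Taylor expansion is legitimate regardless of whether $\kappa_N\lessgtr \bar K$, and (ii) that the depths produced by the feedback $\psi^{\hat\kappa_i}$ remain uniformly bounded away from degeneracy, which is ensured by the truncation step in Algorithm~\ref{alg:cap}.
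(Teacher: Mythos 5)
Your proposal is correct and follows essentially the same route as the paper: a Taylor/mean-value expansion of the score around $\kappa^\ast$, bounding $|\kappa_N-\kappa^\ast|$ by the ratio of $|\tilde\ell_N'(\kappa^\ast)|$ to the uniform Hessian infimum, a union bound over Propositions~\ref{prop: lower bound Fisher} and~\ref{prop: upper bound score}, and the choice $N_0\asymp(C/c)^2$ to make the denominator $\gtrsim N$. Your explicit checks of $C^2$-continuity at $\kappa=\bar K$ and boundedness of the depths are sound hygiene that the paper leaves implicit, but they do not change the argument.
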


We then introduce a corollary to Theorem~\ref{thm: concentration}. See proof in Appendix~\ref{proof corollary}. 
\begin{corollary}
\label{cor}
Let $\kappa_N > 0$ be the unique solution to $\frac{\mathrm{d}}{\mathrm{d} \kappa} \tilde{\ell}_N(\kappa_N)= 0$. Then there exists constants $C, c, N_0\geq 0$ such that for any $\varepsilon \geq 0$, 
$$\mathbb P^\ast \left( |  \kappa_N - \kappa^\ast | \leq C N^{-1/2} \sqrt{ \ln \big( \tfrac{2N}{\varepsilon} \big)} + cN^{-1} \quad \text{for all} \quad N \geq N_0 \ln \big( \tfrac{2}{\varepsilon} \big) \right) \geq 1- \varepsilon\, .$$
\end{corollary}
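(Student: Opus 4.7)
The plan is to upgrade Theorem~\ref{thm: concentration}, which is a fixed-$N$ concentration bound with failure probability $2\varepsilon$, into a uniform-in-$N$ statement via a standard union bound with a summable sequence of per-index failure probabilities. No new probabilistic ingredient is required; the only work is bookkeeping the resulting constants.

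First, I would apply Theorem~\ref{thm: concentration} at each index $N$ with level $\varepsilon_N := c_0 \varepsilon / N^2$, where the constant $c_0 > 0$ will be chosen at the end to make the total failure probability at most $\varepsilon$. The theorem tells us that, provided $N \geq N_0 \ln(2/\varepsilon_N)$, the event
\begin{equation*}
A_N := \left\{ |\kappa_N - \kappa^\ast| \leq C N^{-1/2} \sqrt{\ln(2/\varepsilon_N)} + c N^{-1} \right\}
\end{equation*}
has probability at least $1 - 2\varepsilon_N$. Since $\ln(2/\varepsilon_N) = 2\ln N + \ln(2/(c_0\varepsilon))$, there is a universal $\tilde C$ such that $\sqrt{\ln(2/\varepsilon_N)} \leq \tilde C \sqrt{\ln(2N/\varepsilon)}$, so up to enlarging $C$ to some $C'$ the event $A_N$ is contained in the target event $B_N := \{|\kappa_N - \kappa^\ast| \leq C' N^{-1/2}\sqrt{\ln(2N/\varepsilon)} + c N^{-1}\}$.

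Second, I would verify the threshold. The per-$N$ hypothesis $N \geq N_0(2\ln N + \ln(2/(c_0 \varepsilon)))$ is implied, for $N$ larger than an absolute constant depending only on $N_0$ (so that $2N_0 \ln N \leq N/2$), by a condition of the form $N \geq \tilde N_0 \ln(2/\varepsilon)$ with a suitably enlarged constant $\tilde N_0$. This is what allows me to preserve the form of the threshold stated in the corollary, merely at the price of relabelling $N_0$.

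Third, I would fix $c_0 = 3/\pi^2$ so that
\begin{equation*}
\sum_{N \geq 1} 2\varepsilon_N = \tfrac{6\varepsilon}{\pi^2}\sum_{N \geq 1}\tfrac{1}{N^2} = \varepsilon,
\end{equation*}
and take a union bound over the indices above threshold:
\begin{equation*}
\mathbb{P}^\ast\Big( \bigcup_{N \geq \tilde N_0 \ln(2/\varepsilon)} B_N^c \Big) \leq \sum_{N \geq \tilde N_0 \ln(2/\varepsilon)} 2\varepsilon_N \leq \varepsilon.
\end{equation*}
Passing to complements yields Corollary~\ref{cor} with the relabelled constants $C, c, N_0$. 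The only mildly delicate step is the bookkeeping that simultaneously absorbs the extra $\ln N$ coming from $\varepsilon_N \propto 1/N^2$ both into the $\sqrt{\ln(2N/\varepsilon)}$ factor and into the threshold condition; beyond this, the proof is a direct application of Theorem~\ref{thm: concentration} and the union bound.
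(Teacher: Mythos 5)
Your proposal is correct and takes essentially the same route as the paper's proof: apply Theorem~\ref{thm: concentration} at each $N$ with $\varepsilon_N = 3\varepsilon/(\pi^2 N^2)$, absorb the resulting $\ln N$ factor into the $\sqrt{\ln(2N/\varepsilon)}$ term and into an enlarged threshold constant, and conclude by a union bound. The paper's proof is terser on the bookkeeping but the decomposition and the summable choice of per-index failure probabilities are identical.
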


Another corollary to the above results, which implies that for sufficiently large $N$ the estimator will eventually remain within the compact set $[\underline{K}, \bar{K}]$, is stated below. See Appendix~\ref{proof of cor 2} for the proof.
\begin{corollary} \label{cor2}
Let $\kappa_N > 0$ be the unique solution to $\frac{\mathrm{d}}{\mathrm{d} \kappa} \tilde{\ell}_N(\kappa_N)= 0$. Then there exists constants $N_0, N_0' \geq 0 $  such that  for any $\varepsilon \geq 0$
\begin{equation*}
\mathbb P^\ast \left( \kappa_N \in [\underline K, \bar K] \quad \text{for all} \quad N \geq \max \left( N_0 \ln \big( \tfrac{2}{\varepsilon} \big), N_0'/ \ln \big( \tfrac{2}{\varepsilon} \big)\right)\right) \geq 1- \varepsilon\, .
\end{equation*}
\end{corollary}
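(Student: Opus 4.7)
The plan is to combine the high-probability concentration bound from Corollary~\ref{cor} with the elementary fact that $\kappa^\ast$ lies strictly inside the interval: by construction of Algorithm~\ref{alg:cap} we have $\underline K < \kappa^\ast < \bar K$, so
\[
\eta \;:=\; \min(\kappa^\ast - \underline K,\;\bar K - \kappa^\ast) \;>\; 0,
\]
and any $\kappa_N$ with $|\kappa_N - \kappa^\ast| \leq \eta$ automatically satisfies $\kappa_N \in [\underline K, \bar K]$. By Corollary~\ref{cor} there are constants $C, c, M_0\geq 0$ such that on some event $\Omega_\varepsilon$ with $\mathbb P^\ast(\Omega_\varepsilon) \geq 1-\varepsilon$ we have $|\kappa_N - \kappa^\ast| \leq C N^{-1/2}\sqrt{\ln(2N/\varepsilon)} + cN^{-1}$ for every $N \geq M_0 \ln(2/\varepsilon)$. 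The goal is therefore to choose $N_0 \geq M_0$ and $N_0' \geq 0$ so that the condition $N \geq \max\bigl(N_0\ln(2/\varepsilon),\, N_0'/\ln(2/\varepsilon)\bigr)$ forces this right-hand side below $\eta$.

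First I would split the error bound into two pieces and control each by $\eta/2$. The requirement $cN^{-1}\leq \eta/2$ is just $N \geq 2c/\eta =: M_1$. For $C N^{-1/2}\sqrt{\ln(2N/\varepsilon)} \leq \eta/2$, expanding $\ln(2N/\varepsilon) = \ln N + \ln(2/\varepsilon)$, it suffices to have both $\ln(2/\varepsilon) \leq \eta^2 N/(8C^2)$ and $\ln N \leq \eta^2 N/(8C^2)$. Enlarging $N_0$ so that $N_0 \geq \max(M_0,\,8C^2/\eta^2)$, the hypothesis $N \geq N_0\ln(2/\varepsilon)$ immediately gives $\ln(2/\varepsilon) \leq N/N_0 \leq \eta^2 N/(8C^2)$. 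The inequality $\ln N \leq \eta^2 N/(8C^2)$ is $\varepsilon$-independent and holds for every $N$ exceeding some fixed constant $M_2$ depending only on $C,\eta$.

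The remaining task is to absorb the $\varepsilon$-independent deterministic threshold $\tilde N := \max(M_1, M_2)$ into the second piece $N_0'/\ln(2/\varepsilon)$ of the $\max$. Here I would use the elementary inequality
\[
\max\bigl(N_0\ln(2/\varepsilon),\, N_0'/\ln(2/\varepsilon)\bigr) \;\geq\; \sqrt{N_0 N_0'}
\]
(valid for any $\varepsilon\in(0,1)$, since the product of the two arguments equals $N_0 N_0'$), and set $N_0' := \tilde N^2/N_0$. Then any $N$ satisfying $N \geq \max(N_0\ln(2/\varepsilon),\, N_0'/\ln(2/\varepsilon))$ automatically satisfies both $N \geq N_0\ln(2/\varepsilon) \geq M_0\ln(2/\varepsilon)$ (so Corollary~\ref{cor} applies) and $N \geq \tilde N$ (so $cN^{-1} \leq \eta/2$ and $\ln N \leq \eta^2 N/(8C^2)$).

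Putting the three steps together, on $\Omega_\varepsilon$ one obtains $|\kappa_N - \kappa^\ast| \leq \eta$, hence $\kappa_N \in [\underline K, \bar K]$, for all such $N$; the edge cases $\varepsilon = 0$ and $\varepsilon \geq 1$ are vacuous. The argument is essentially bookkeeping over Corollary~\ref{cor}; the only mildly delicate point is to notice that the error bound contains one $\varepsilon$-dependent term $\sqrt{\ln(2/\varepsilon)}$, controlled directly by the $N_0\ln(2/\varepsilon)$ piece, and two $\varepsilon$-independent terms ($\sqrt{\ln N}$ and $cN^{-1}$), which are absorbed via the $N_0'/\ln(2/\varepsilon)$ piece using the AM--GM-type bound above. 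I do not foresee any genuine obstacle beyond a careful choice of constants.
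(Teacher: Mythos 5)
Your proof is correct and follows essentially the paper's route: invoke Corollary~\ref{cor}, force $|\kappa_N - \kappa^\ast|$ below $\min(\kappa^\ast - \underline K,\ \bar K - \kappa^\ast)$, and absorb the $\varepsilon$-independent deterministic threshold into the $\max\big(N_0\ln(\tfrac{2}{\varepsilon}),\ N_0'/\ln(\tfrac{2}{\varepsilon})\big)$ form. Your AM--GM observation $\max\big(N_0\ln(\tfrac{2}{\varepsilon}), N_0'/\ln(\tfrac{2}{\varepsilon})\big) \geq \sqrt{N_0 N_0'}$ is a slightly cleaner way to absorb the $\varepsilon$-independent threshold than the paper's mechanism, and your choice of $\min$ corrects a $\max$/$\min$ slip in the paper's write-up (the relevant quantity $N_1'$ should be the smaller of the two distances to the boundary of $[\underline K, \bar K]$, not the larger).
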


\subsubsection{Step 3: Proof of Theorem~\ref{regret theorem}}

With Theorem~\ref{performance gap}, Theorem~\ref{thm: concentration} and Corollary~\ref{cor} at hand, we proceed to prove Theorem~\ref{regret theorem}

Let $\tau_n$ be the time when the $n-$th market order arrives. 
By the fact that the summation of two independent Poisson processes is a Poisson process, 
we have
$(\tau_{n+1} - \tau_n) \sim_{IID} \text{exponential} \left( \lambda^+ + \lambda^- \right)$
with the convention that $\tau_0 = 0$. 
Besides, let us define
$\kappa_t = \kappa_{N_t}$, 
where $N_t$ is the number of signals up to time $t$. 
By using the notation above, we have 
\begin{equation*}
\int_0^T |\kappa_t - \kappa^{\ast}|^2 \, \mathrm{d} t \leq \sum_{n=0}^{N_T} (\tau_{n+1} - \tau_n) |\kappa_n - \kappa^{\ast}|^2 =: X_{N_T} \, .
\end{equation*}
Clearly $X_{N_T}$ is a non-negative random variable. 

The following proposition, Proposition~\ref{X_NT bound in high prob}, which is proved in Appendix~~\ref{proof prop X_NT bound in high prob}, states that, given any $N_T$, i.e. the number of signals up to time $T$,  the random variable $X_{N_T}$ is bounded by $\mathcal{O}(\ln^2 N_T )$ with high probability. 
\begin{proposition} \label{X_NT bound in high prob}
There exist constants $C_1, C_2, C_3, C_4> 0$ such that for any  $\varepsilon > 0$, 
\begin{equation*}
\mathbb{P}^\ast \left( X_{N_T} \leq C_1 \ln^2 N_T + C_2 \ln N_T \ln (\tfrac{2}{\varepsilon}) + C_3 \ln^2 (\tfrac{2}{\varepsilon}) + C_4 \right) \geq 1 - 2 \varepsilon.  
\end{equation*}
\end{proposition}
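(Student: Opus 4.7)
The plan is to combine Corollary~\ref{cor}, which controls $|\kappa_n - \kappa^\ast|^2$ for large $n$, with a Bernstein-type concentration inequality for weighted sums of the IID exponential inter-arrival times $W_{n+1} := \tau_{n+1} - \tau_n$. Fix $\varepsilon>0$ and set $N_\ast := \lceil N_0 \ln(2/\varepsilon)\rceil$, where $N_0$ is the constant from Corollary~\ref{cor}. On the event $E_1$ (of probability at least $1-\varepsilon$) provided by that corollary,
\[
|\kappa_n - \kappa^\ast|^2 \;\le\; g_n(\varepsilon) \;:=\; 2C^2 n^{-1}\ln(2n/\varepsilon) + 2c^2 n^{-2}
\]
holds for all $n \ge N_\ast$. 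Since the truncation $\varrho$ in Algorithm~\ref{alg:cap} is the projection onto a set containing $\kappa^\ast$, we have $|\hat\kappa_n - \kappa^\ast| \le |\kappa_n - \kappa^\ast|$, so the bound transfers to the estimator the agent actually uses. Split
\[
X_{N_T} \;=\; \underbrace{\sum_{n=0}^{(N_\ast -1)\wedge N_T} W_{n+1}\,|\kappa_n - \kappa^\ast|^2}_{=:\,X^{(1)}} \;+\; \underbrace{\sum_{n=N_\ast}^{N_T} W_{n+1}\,|\kappa_n - \kappa^\ast|^2}_{=:\,X^{(2)}},
\]
with $X^{(2)}:=0$ on $\{N_T < N_\ast\}$.

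For $X^{(1)}$, the crude bound $|\hat\kappa_n - \kappa^\ast|^2 \le (\bar K - \underline K)^2$ enforced by $\varrho$ gives $X^{(1)} \le (\bar K - \underline K)^2 \, \tau_{N_\ast}$. As $\tau_{N_\ast}$ is a sum of $N_\ast = O(\ln(2/\varepsilon))$ IID $\mathrm{Exp}(\lambda^+ + \lambda^-)$ random variables, a standard Chernoff/sub-exponential tail bound produces an event $E_2$ with $\mathbb P^\ast(E_2) \ge 1-\varepsilon$ on which $\tau_{N_\ast} \le K\ln(2/\varepsilon)$ for a suitable $K$. Hence $X^{(1)}$ is absorbed into the $C_3 \ln^2(2/\varepsilon) + C_4$ part of the target bound.

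For $X^{(2)}$, on $E_1$ it is bounded by $S_{N_T}$, where for $M \ge N_\ast$ we set $S_M := \sum_{n=N_\ast}^{M} W_{n+1}\, g_n(\varepsilon)$. The weights are deterministic and the $W_{n+1}$'s are independent sub-exponentials of bounded norm. Splitting $\ln(2n/\varepsilon) = \ln(2n) + \ln(1/\varepsilon)$ and using $\sum_{n \le M} n^{-1}\ln(2n) = O(\ln^2 M)$, $\sum_{n=N_\ast}^{M} n^{-1} \le \ln M$ and summability of $n^{-2}$ yields
\[
\mathbb E[S_M] \;\le\; C_a \ln^2 M + C_b \ln M\,\ln(2/\varepsilon) + C_c,
\]
while the same type of estimate combined with $\sum_{n \ge N_\ast} n^{-2} = O(1/N_\ast) = O(1/\ln(2/\varepsilon))$ gives $\sum_{n \ge N_\ast} g_n(\varepsilon)^2 = O(\ln(2/\varepsilon))$ and $\sup_{n \ge N_\ast} g_n(\varepsilon) = O(1)$. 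Bernstein's inequality for weighted sums of centred sub-exponentials then delivers, for every $M \ge N_\ast$ and $t > 0$,
\[
\mathbb P^\ast\bigl(S_M - \mathbb E[S_M] \ge t\bigr) \;\le\; 2\exp\!\Bigl({-}c\min\bigl(t^2/\ln(2/\varepsilon),\, t\bigr)\Bigr).
\]
Choosing $t_M := K'(\ln M + \ln(2/\varepsilon))$ with $K'$ large enough makes this probability at most $\varepsilon/M^2$, and a union bound over $M \ge N_\ast$ produces an event $E_3$ with $\mathbb P^\ast(E_3) \ge 1 - \varepsilon$ on which $S_M \le \mathbb E[S_M] + t_M$ for all such $M$. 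Specialising to $M = N_T$ (the case $N_T < N_\ast$ being trivial) and absorbing lower-order terms via $\ln M \le \tfrac12(1+\ln^2 M)$ and the analogous bound for $\ln(2/\varepsilon)$ gives the required shape $C_1 \ln^2 N_T + C_2 \ln N_T\,\ln(2/\varepsilon) + C_3 \ln^2(2/\varepsilon) + C_4$.

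Putting the two parts together on $E_1 \cap E_2 \cap E_3$ yields the proposition with total failure probability at most $3\varepsilon$; rerunning with $\varepsilon/2$ in place of $\varepsilon$ (shifting only the universal constants) produces the stated $2\varepsilon$. The main technical obstacle is the uniform-in-$M$ concentration for $S_M$: the target $\ln^2 N_T$ scaling can be reached only because the variance proxy $\sum g_n(\varepsilon)^2$ is as small as $O(\ln(2/\varepsilon))$, which in turn relies on the $n^{-1}\ln n$ decay of the weights \emph{together} with the starting index $N_\ast \gtrsim \ln(2/\varepsilon)$. A naive variance bound of order $M$ would only yield $\sqrt{N_T}$-scale deviations and would destroy the logarithmic regret rate of Theorem~\ref{regret theorem}.
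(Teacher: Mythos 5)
Your proposal matches the paper's proof in all essentials: both split the sum at $N_\ast\sim\ln(2/\varepsilon)$, invoke Corollary~\ref{cor} for the tail indices and a crude bound (via the projection $\varrho$, or equivalently Corollary~\ref{cor2}) for the head indices, and then apply a sub-exponential concentration inequality for the weighted inter-arrival sum together with a union bound over $N$ to make the estimate uniform in $N_T$. The only cosmetic differences are that the paper absorbs the head sum into the tail sum algebraically (by enlarging $C$ so that $C'-C\,\rho_n\le 0$ for $n\le N'$) instead of separately concentrating $\tau_{N_\ast}$, and it cites Lemma~\ref{lem} from~\cite{szpruch2024optimal} rather than writing out Bernstein's inequality, but the variance-proxy accounting and the resulting $\ln^2 N_T + \ln N_T\ln(2/\varepsilon) + \ln^2(2/\varepsilon)$ shape are the same.
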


Let $r(N_T) :=  C_1 \ln^2 N_T + C_2 \ln N_T \ln (\tfrac{2}{\varepsilon}) + C_3 \ln^2 (\tfrac{2}{\varepsilon}) + C_4 $. Then, by using Proposition~\ref{X_NT bound in high prob}, we have 
\begin{equation*}
\begin{split}
\mathbb E [X_{N_T}] & = \mathbb E \Big[ \mathbb E \big[X_{N_T} \big| N_T \big] \Big] \\
& = \mathbb E \Big[ \mathbb E \big[X_{N_T}  \mathds{1}_{X_{N_T} \leq r(N_T) } \big| N_T \big] \Big] + \mathbb E \Big[ \mathbb E \big[X_{N_T}  \mathds{1}_{X_{N_T} > r(N_T) } \big| N_T \big] \Big] \\
& \leq \mathbb E \Big[ \mathbb E \big[r(N_T) \big| N_T \big] \mathbb{P} \big(X_{N_T} \leq r(N_T) \big| N_T \big) \Big]   \\
& \qquad + \mathbb E \Big[ \mathbb E \big[ X_{N_T} \big| N_T , X_{N_T} > r(N_T) \big]  \mathbb{P} \big(X_{N_T} > r(N_T) \big| N_T \big) \Big] \\
& \leq \mathbb E \big[   C_1 \ln^2 N_T + C_2 \ln N_T \ln (\tfrac{2}{\varepsilon}) + C_3 \ln^2 (\tfrac{2}{\varepsilon}) + C_4  \big]  \\ 
& \qquad +  (\bar K - \underline K)^2  \mathbb{E} \Big[ \sum_{n=0}^{N_T} (\tau_{n+1} - \tau_n) (2 \varepsilon) \Big] \, . 
\end{split}
\end{equation*}
Let us set $\varepsilon = \frac{2}{T}$ and we can take $\varepsilon$ out of the expectation. Besides, we know that $x \mapsto \ln x$ is concave and, for large $x$, i.e. $x \geq 3$, $x \mapsto \ln^2 x$ is concave, hence by Jensen's inequality, we have
\small
\begin{equation*}
\begin{split}
\mathbb{E} [X_{N_T}] & \leq C_1  \ln^2 (\mathbb E[N_T]) + C_2 \ln (\mathbb E[N_T]) \ln T + C_3 \ln^2 T + C_4 + 4 (\bar K - \underline K)^2  \frac{\mathbb{E} [ \tau_{N_T + 1}]}{T} \\
& \leq C_1  \ln^2 \big(T(\lambda^+ + \lambda^-)\big)  + C_2 \ln \big( T(\lambda^+ + \lambda^-) \big) \ln T + C_3 \ln^2 T + C_4 \\
& \qquad + 4 (\bar K - \underline K)^2 \left(1 + \frac{1}{T(\lambda^+ + \lambda^-)}\right) \\
& \leq C_1 \big(\ln^2 T   + \ln^2 (\lambda^+ + \lambda^-) \big)+ C_2 \ln^2 T + C_2 \ln^2 T \ln (\lambda^+ + \lambda^-) \\ 
& \qquad + C_3 \ln^2 T +  C_4 + 4 (\bar K - \underline K)^2 \left(1 + \frac{1}{T(\lambda^+ + \lambda^-)} \right)  \\
& \leq \left( C_1 + C_2 (1 + \ln (\lambda^+ + \lambda^-) + C_3 )\right) \ln^2 T  + \left( C_1 \ln^2 (\lambda^+ + \lambda^-) + C_4 + 4 (\bar K - \underline K)^2 \right)
\end{split}
\end{equation*}
\normalsize
where we use the fact that $\ln T \leq \ln^2 T$ for large $T$ and we ignore the term of order $\mathcal{O}(T^{-1})$. 
By using Theorem~\ref{performance gap}, we then have
\begin{equation*}
\begin{split}
\mathcal{R}^{\hat{\Psi}}(T) & \leq C_1' \mathbb E \Big[ \int_0^T \left| \kappa_t - \kappa^\ast \right|^2 \, \mathrm{d} t \Big] + C_2' \frac{e^{T \ln \alpha} -1 }{\ln \alpha} \\
& \leq C_1' \mathbb E \Big[ X_{N_T} \Big] + \frac{C_2'}{\ln (\alpha^{-1})} (1 - \alpha^T) \\
& \leq C_1' \left( C_1 + C_2 (1 + \ln (\lambda^+ + \lambda^-) + C_3 )\right) \ln^2 T  \\
& \qquad + C_1' \left( C_1 \ln^2 (\lambda^+ + \lambda^-) + C_4 + 4 (\bar K - \underline K)^2 \right) + \frac{C_2'}{\ln (\alpha^{-1})} \, ,
\end{split}
\end{equation*}
where $C_1', C_2' > 0$ and $0 < \alpha < 1$ are constants independent of $T$ in Theorem ~\ref{performance gap} and $C_1, C_2, C_3, C_4$ are constants independent of $\varepsilon$ and $T$ in Proposition~\ref{X_NT bound in high prob}, hence the result of Theorem~\ref{regret theorem}.

\section{Numerical experiments} \label{experimental analysis}

\subsection{Ergodic control and regret of Algorithm~\ref{alg:cap}}
In this section, we numerically simulate the ergodic market making model~\eqref{ergodic reward functional} and the achieved regret of Algorithm~\ref{alg:cap}. 
Code used to produce results in this section is available at~\texttt{\url{https://github.com/Galen-Cao/MM_parmater_learning}}.

Let us consider the following parameters in the simulation: $\lambda^{\pm}=1/s$, $\kappa^{\pm}=10\$^{-1}$, $\sigma=1.0 s^{-1/2}\$$, $S_0 = \$10 $, $\bar{q} = 30$, $\underline{q} = -30$ and $\phi = \$ 1 \times 10^{-5}$. 

By Theorem \ref{solution of finite-time-horizon model}, we can determine the square matrix $\boldsymbol{A}$ and the largest eigenvalue of $\boldsymbol{A}$ is $\lambda_{max} = 0.7297$. Then we have $\gamma = 0.07297$ by using Theorem \ref{solve ergodic constant}. 
Figure \ref{value of v/T} (left panel) plots the asymptotic behaviours of $v(0,q; T) / T$ as $T$ increases, 
which $v(t,q;T)$ is the value function in the discounted finite-time-horizon setting with the discount factor $r =0$ given by~\eqref{reduced finite-time value function}. We can see that, for any initial $q \in \Omega^{Q}$, the value $v(0,q; T) / T \rightarrow \gamma = \lambda_{max} / \kappa$ as $T \rightarrow + \infty$ as stated in Theorem \ref{solve gamma}. 
\begin{figure}
\centering
\includegraphics[width=1\textwidth]{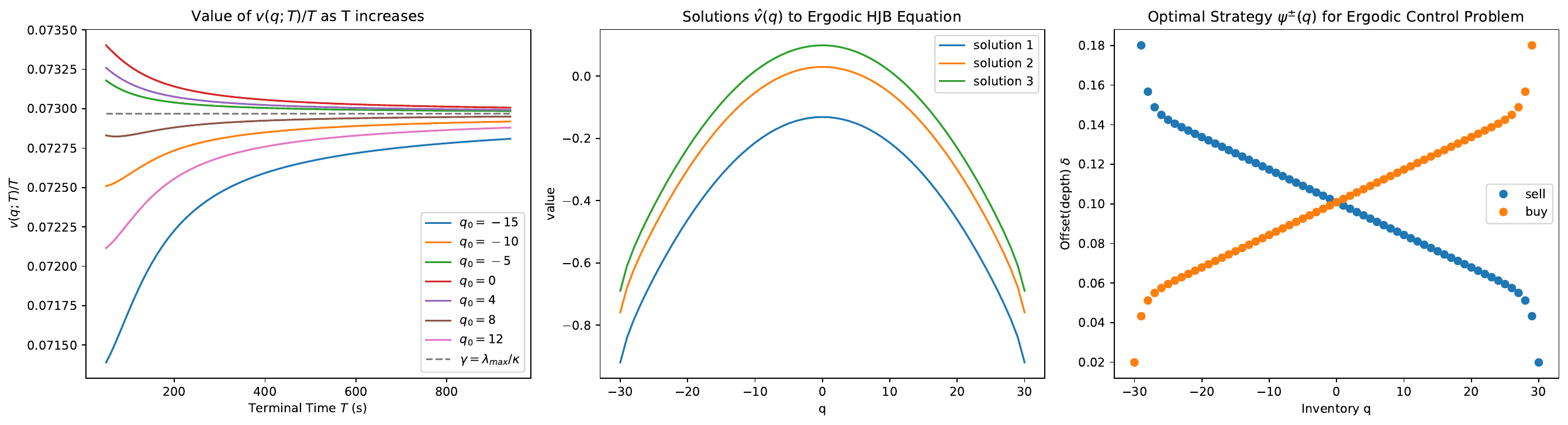}
\caption{Left: The asymptotic behaviours of $v(0,q; T) / T$ as $T$ increases. Middle: Several possible solutions $\hat{v}(q)$ to the ergodic HJB equation (\ref{ergodic hjb equation}). Right: The unique optimal control $\delta^{\ast}$.}
\label{value of v/T}
\end{figure}

We then solve the ergodic HJB equation (\ref{ergodic hjb equation}) and find the optimal feedback control~$\psi$. By Theorem \ref{solve ergodic control problem}, there exists the null space of the $n-$square matrix $\boldsymbol{C}$ with non-trivial solutions satisfying $\boldsymbol{C} \boldsymbol{\hat{\omega}} = 0$ with $\text{rank}(C)=n-1$. We consider the positive solutions in the null space, hence the solutions $\boldsymbol{\hat{v}} = \ln \boldsymbol{\hat{\omega}} / \kappa$ to the ergodic HJB equation can be well-defined. 
Figure \ref{value of v/T} (middle panel) represents several solutions $q \mapsto \hat{v}(q)$ to the ergodic HJB equation (\ref{ergodic hjb equation}). It can be seen that the solution $\hat{v}(q)$ is unique up to a constant.  For all possible solutions $\hat{v}(q)$, the optimal control $\psi^\pm(q)$ for the ergodic control problem is unique, as shown in Figure \ref{value of v/T} (right panel). 

\begin{figure}
\centering
\includegraphics[width=1\textwidth]{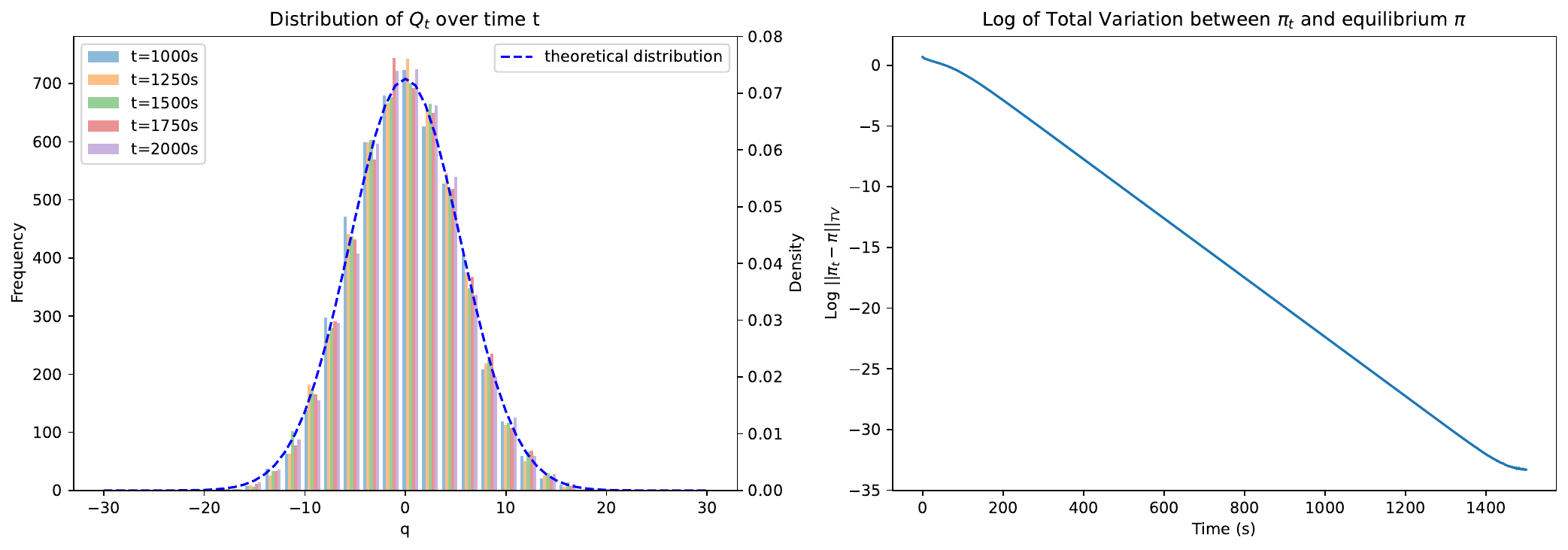}
\caption{Left: Histogram of $Q_t$ at $t = 1000s$, $1250s$, $1500s$, $1750s$ and $2000s$ under the ergodic optimal control $\psi^\pm$. Moreover, the dotted blue line plots the theoretical equilibrium distribution $\pi$ under the ergodic control $\psi^\pm$ as derived in Appendix \ref{proof lemma ergodic control stability}. Right: The log of total variation between the inventory distribution $\pi_t$ and the equilibrium $\pi$. }
\label{distribution}
\end{figure}

We continue to analyse the ergodicity of the market making system. Figure \ref{distribution} (left panel) plots the inventory distribution $\pi_t$ at time $t = 1000s$, $1250s$, $1500s$, $1750s$ and $2000s$ under the ergodic optimal control $\psi^\pm$. We can see that the distribution $\pi_t$ tends to converge to the dotted blue line over time, which is the theoretical equilibrium distribution $\pi$ of the inventory as derived in Appendix \ref{proof lemma ergodic control stability}. The right panel in Figure \ref{distribution} plots the log of the total variation between the distribution $\pi_t$ and the equilibrium $\pi$. We terminates the simulation at $t = 1500s$ as it reaches the machine precision. It shows that 
the convergence rate to the equilibrium is exponential. 

\begin{figure}
\centering
\includegraphics[width=1\textwidth]{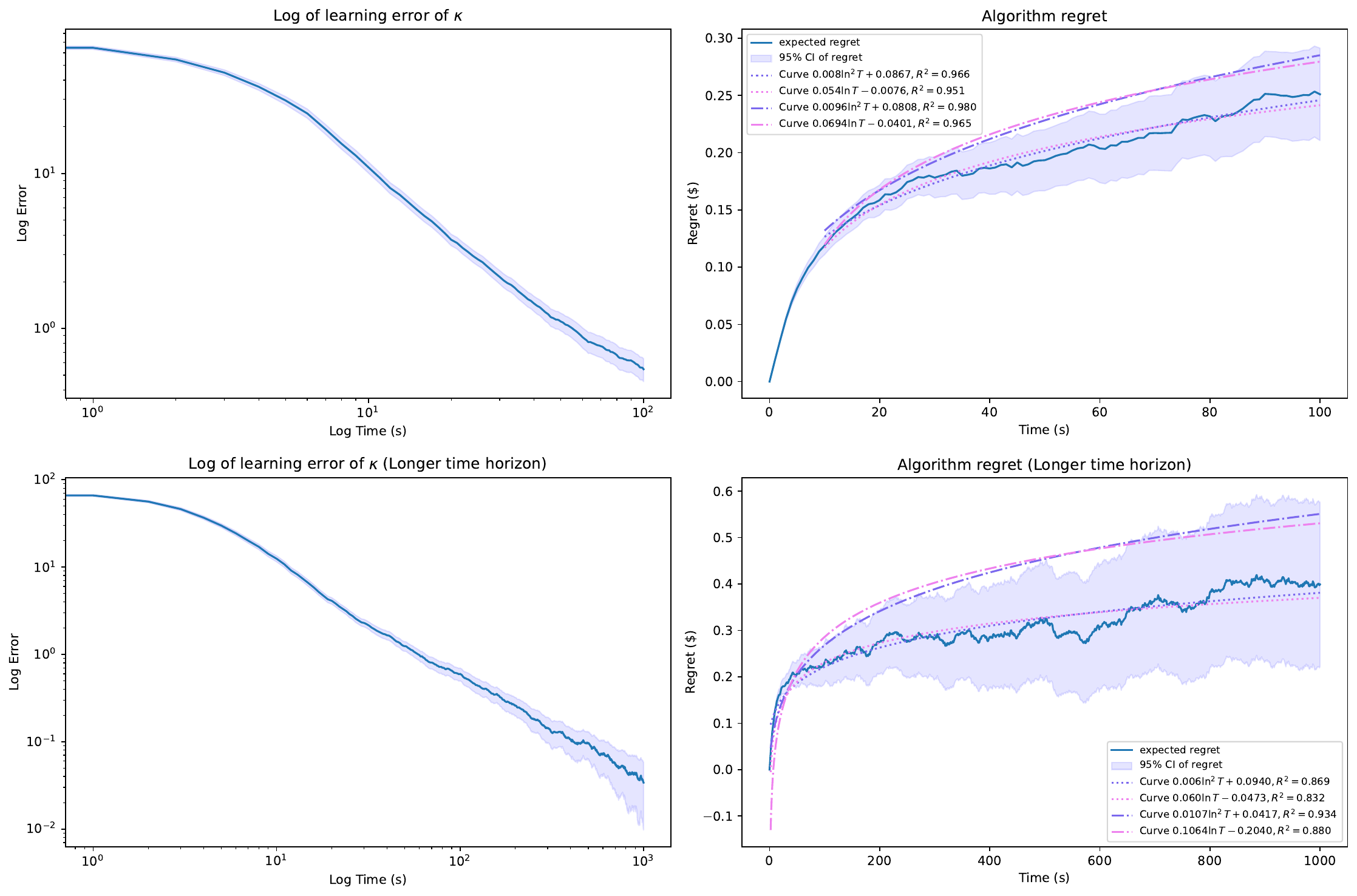}
\caption{Top left: Log-log plot of the learning error $\left|\kappa_t - \kappa^\ast\right|$ over time under Algorithm~\ref{alg:cap}. Top right: The regret of  Algorithm~\ref{alg:cap} over time. Bottom: The learning error and regret over a larger time horizon.}
\label{regret}
\end{figure}

Now we proceed to simulate learning and the regret of Algorithm~\ref{alg:cap}.
The results are in Figure~\ref{regret}. 
The experimental parameters are set as $\lambda^{\pm}=0.4/s$, $\kappa^{\pm\ast}=10\$^{-1}$, $\sigma=0.01 s^{-1/2}\$$, $\bar{q} = 30$, $\underline{q} = -30$, $\phi = \$ 1 \times 10^{-6}$, $\underline K = 1\$^{-1}$ and $\bar K = 100\$^{-1}$. 
We used $1000$ simulation scenarios, with time horizon $T=1000$ seconds. 
We include two plots, one with $T=100$ seconds and one with $T=1000$ seconds.
The left panels show the learning error \( \left| \kappa_t - \kappa^\ast \right| \) in the log-log scale.
Initially, the error decays slowly due to the limited number of Bernoulli signals.
However, as time increases, the estimate \( \kappa_t \) rapidly converges to the true value \( \kappa^\ast \), demonstrating the algorithm's consistency.
The right panels illustrate the Monte Carlo simulation of the regret achieved by Algorithm~\ref{alg:cap}.
Under both time horizons, the regret shows sublinear growth and is bounded by order $\mathcal{O}(\ln^2 T)$. 
Curve fitting analysis further confirms that,  especially at the longer time scale, the curve of order $\mathcal{O}(\ln^2 T)$ provides a better fit than that of order $\mathcal{O}(\ln T)$, which supports our theoretical regret analysis.

Furthermore, we compare Algorithm~\ref{alg:cap} with a myopic benchmark strategy that always posts at $\frac{1}{\kappa_t}$, where $\kappa_t$ is the current estimate of $\kappa^\ast$ at time $t$. 
As shown in Figure~\ref{fig:myopic_comparison}, while the myopic strategy is still able to learn the true parameter over time, the corresponding regret grows linearly with time. In contrast, Algorithm~\ref{alg:cap} achieves sublinear regret, implying the advantages of employing the ``optimal'' (computed from the estimate $\kappa_t$) policies in reducing regret. 
Note that this is the regret of a risk-averse market maker with risk aversion parameter $\phi=\$10^{-6}$.

\begin{figure}
    \centering
    \includegraphics[width=1\linewidth]{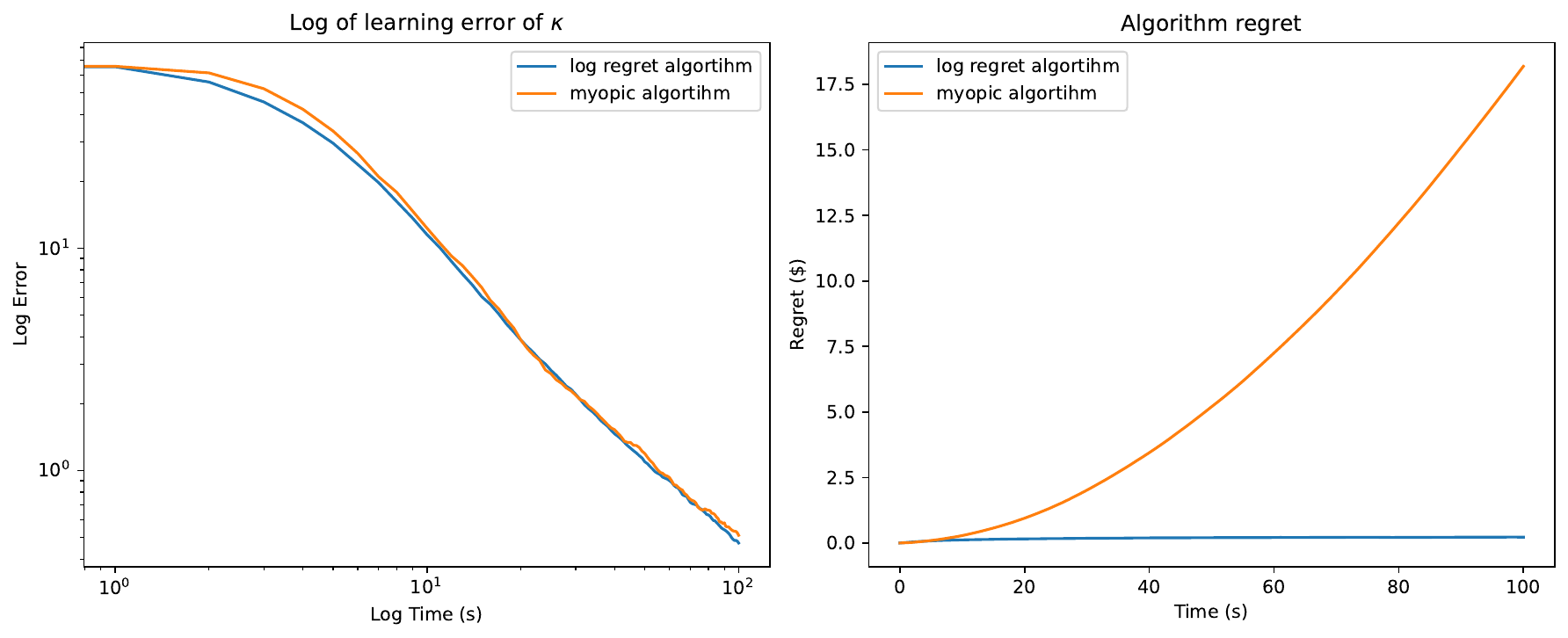}
    \caption{Performance comparison between Algorithm~\ref{alg:cap} and a myopic benchmark strategy that posts at \( 1 / \kappa_i \) using the current estimate. Left: Log-log plot of the estimation error \( |\kappa_t - \kappa^\ast| \). Right: Regret over time.}
    \label{fig:myopic_comparison}
\end{figure}

\subsection{Non-stationary market} 
Financial markets are typically non-stationary.
To handle the non-stationarity of $\kappa$, we incorporate two classical techniques in our learning algorithm: a sliding-window (SW) approach and an exponential-weighted-moving-average (EWMA) approach. 

The sliding window (SW) method is as follows.
The index set of recent data is defined as $\mathcal{I}_i := \{ j \leq i \mid t_i - t_j \leq w \}$. 
We then obtain $\kappa_i^{(w)}$ by numerically solving
$\frac{\mathrm{d}}{\mathrm{d} \kappa} \tilde \ell_i^{(w)} (\kappa) = 0$, where the expression for the derivative  $\frac{\mathrm{d}}{\mathrm{d} \kappa} \tilde \ell_i^{(w)}$ is given by~\eqref{derivative of the log-likelihood function}, but computed using only the data points indexed by  $\mathcal{I}_i$, i.e. from $j = \inf \mathcal{I}_i$ to $j = i$.
Otherwise, the algorithm is the same as Algorithm~\ref{alg:cap}.

The exponential-weighted-moving-average (EWMA) method uses the following log-likelihood 
\begin{equation*}
\ell_N^{EWMA}(\kappa) = \sum_{n=1}^N e^{- \alpha (t_N - t_n)}\left( - \kappa \delta_n Y_n + (1-Y_n) \log (1 - e^{- \kappa \delta_n}) \right) \, ,
\end{equation*}
where $\alpha$ is the weighting parameter.
This is then regularised as in~\eqref{regularised log-likelihood function} where $\ell_N$ is replaced by $\ell_N^{EWMA}$.
The algorithm incorporating the EWMA method simply replaces the log-likelihood function in Algorithm~\ref{alg:cap} with regularisation of $\ell_N^{EWMA}$.

Figure~\ref{non-stationary} illustrates the performance of the learning algorithms in a non-stationary market environment.
We use the same parameters as those used for Figure~\ref{regret}, except in this non-stationary setting, the true value of $\kappa$ changes every $50$ seconds, following the sequence $[20, 30, 10, 40, 25]$. 
The SW algorithm employs a sliding window of $30$ seconds, while the EWMA algorithm sets $\alpha=0.1$. 
The left panel shows how the estimated $\kappa$ (green and orange curves) tracks the true, piecewise constant $\kappa$ (blue dashed line) over time using each method. We observe that after each shift in the true value, the estimate gradually converge to the new value, with a short delay in both methods. 
The right panel presents the regrets of the two methods over time. As expected, the regret grows approximately at an order of $\ln^2 T$ in each regime where $\kappa$ is fixed. However, each change in $\kappa$ introduces a noticeable increase in the regret due to the lag in adaptation. Once the estimates converge to the new value, the growth of regret slows down again. 
Even though with our choice of window size and $\alpha$ the SW method achieves lower regret than the EWMA method this does not imply that the SW method is better; we expect there will be a value of $\alpha$ where the EWMA method achieves the same regret.

\begin{figure}
\centering
\includegraphics[width=1\textwidth]{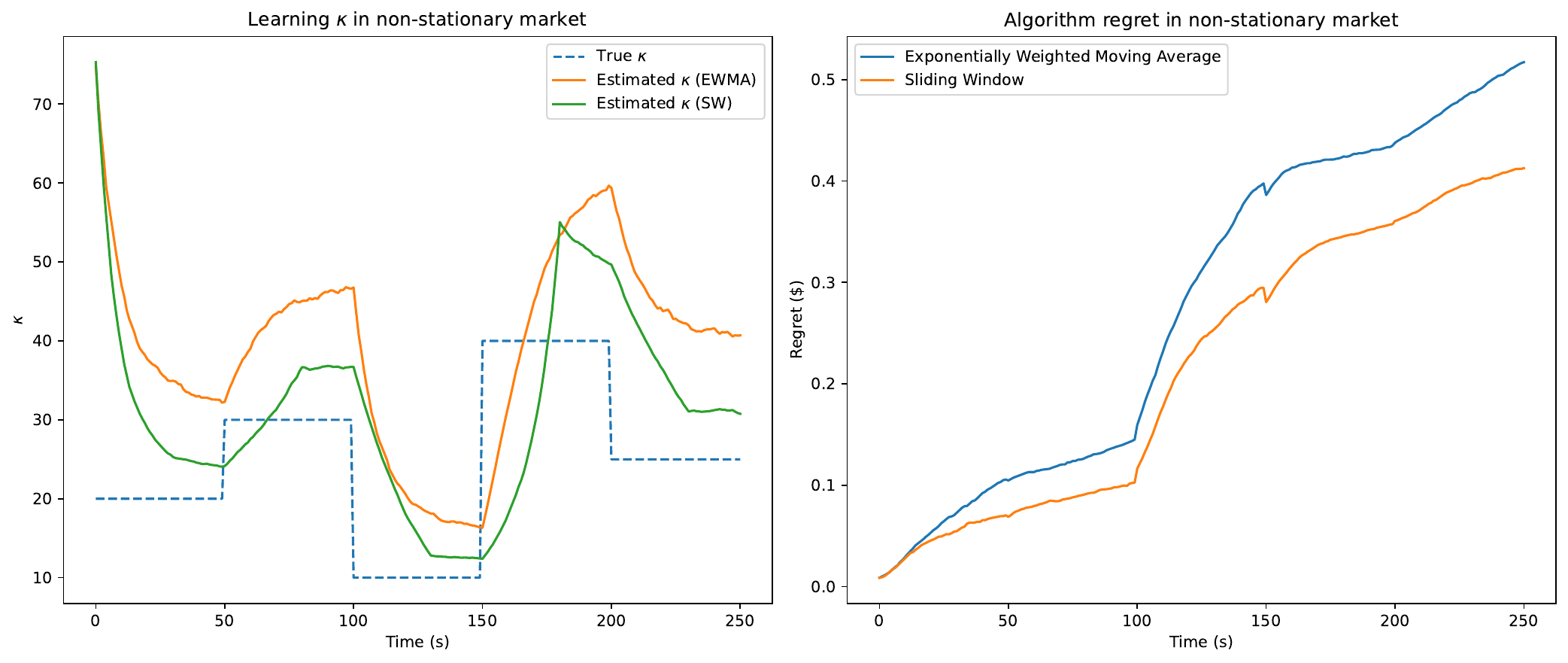}
\caption{Learning $\kappa$ in the non-stationary market}
\label{non-stationary}
\end{figure}

\subsection{The dependence of the regret bound on model parameters}
In this section, we implement numerical experiments to quantify the dependence of the regret constant $C_1$ given in Theorem~\ref{regret theorem} on the key model parameters, $\phi, \lambda^\pm, \bar K, \underline K$ and $\kappa_0 - \kappa^\ast$.
Figure~\ref{dependency of C1} (left panel) presents how $C_1$ varies with $\phi$ and $\lambda$, where we set $\lambda^\pm = \lambda$ in the simulation.
We used $500$ scenarios and $T=100$ seconds with the random seed fixed for each parameter combination.
We observe that $C_1$ increases as both $\phi$ and $\lambda$ increase.
This is expected as larger $\phi$ implies a higher penalty for the holding inventory, while a larger $\lambda$ corresponds to a higher frequency of incoming market orders.
The right panel shows the dependence of $C_1$ on $\bar{K}$, $\underline{K}:= \frac{1}{\bar{K}}$ and on $\kappa_0 - \kappa^\ast$.
As shown in the figure, a higher $\bar{K}$ or higher $\kappa_0 - \kappa^\ast$ leads to a larger constant $C_1$ in the asymptotic expression for regret.

\begin{figure}
\centering
\includegraphics[width=1\textwidth]{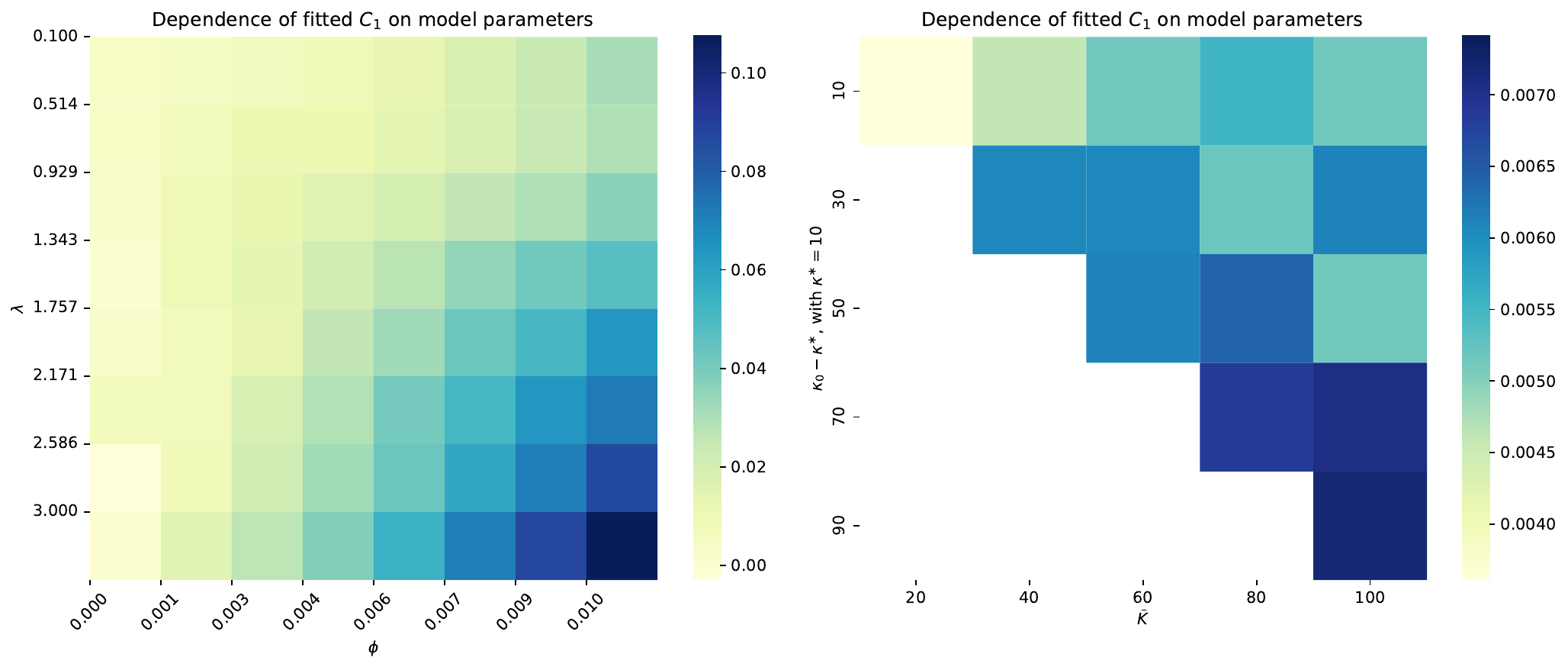}
\caption{Dependence of regret bound constant $C_1$ on model parameters}
\label{dependency of C1}
\end{figure}

\section{Conclusion}
In this paper, we introduced and analysed the ergodic formulation of the Avellaneda--Stoikov market making model.
We established explicit solutions to the ergodic Hamilton--Jacobi--Bellman (HJB) equation and thus derived the optimal ergodic Markov controls.
We've further shown that under the ergodic optimal control there is a unique invariant distribution for the market maker's inventory and that any initial distribution converges exponentially fast to the equilibrium one. 
This allowed us to establish the regret upper bound of $\mathcal{O}(\ln^2 T)$ for learning the unknown price sensitivity of liquidity takers $\kappa^\ast$.
Our work extends the known results on the market making model by providing a rigorous analysis of the ergodic setting and offering a robust solution for parameter learning.
The numerical experiments further validate the theoretical results, confirming the robustness of the proposed algorithm. 

A number of interesting questions have not been addressed in this paper and are left for future work. 
In particular, a key extension of the market making framework presented here accounts for adverse selection. 
Learning the parameters modelling adverse selection and establishing a regret bound would be interesting. 
Further, it would be interesting to compare this approach to a more classical RL algorithms where the optimal policy is learned directly. 
One would conjecture that as long as the market is behaving as the model postulates (up to the unknown parameter) using the optimal control derived from the maximum likelihood performs better. 
However, should the environment deviate from the model it's possible that the pure RL approach will outperform the method proposed here.

\appendix 
\section{Proofs} \label{appendix}

\subsection{Proof of Theorem \ref{exist and unique for finite-time}} \label{proof theorem exist and unique finite}

We first prove the properties of the Hamiltonian function $H$ given by~\eqref{hamiltonian function}.
\begin{lemma} [Hamiltonian function] \label{property of hamiltonian}
For the Hamiltonian function $H$, we have 
\begin{enumerate}[(i)]
\item \label{property of hamiltonian item 1} $\forall (q, \boldsymbol{p}) \in \Omega^{Q} \times \mathbb{R}^2$, $H(q, \boldsymbol{p})$ is finite. 

\item \label{property of hamiltonian item 2} $\forall \boldsymbol{p} \in \mathbb{R}^2$, $\exists \, \delta^{\pm, *} \in \mathbb R^2$ such that 
$$H(q, \boldsymbol{p}) = H(q, \delta^{\pm, *}, \boldsymbol{p}) \, .
$$

\item \label{property of hamiltonian item 3} $p_1 \mapsto H(q, (p_1,p_2)))$ is strictly increasing for any $q \in \Omega^{Q}$ and $p_2 \in \mathbb{R}$; and $p_2 \mapsto H(q, (p_1,p_2)))$ is strictly increasing for any $q \in \Omega^{Q}$ and $p_1 \in \mathbb{R}$.  

\item \label{property of hamiltonian item 4} $\boldsymbol{p} \mapsto H(q, \boldsymbol{p})$ is locally Lipschitz for any $q \in \Omega^{Q}$. 
\end{enumerate}    
\end{lemma}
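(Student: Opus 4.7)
The plan is to exploit the additive separability of $H$ in the two control variables. Writing $g^{\pm}(\delta; p) := \lambda^{\pm} e^{-\kappa^{\pm}\delta}(p+\delta)$, we have
\[
H(q, \delta^{\pm}, \boldsymbol p) = g^+(\delta^+; p_1)\,\mathds{1}_{q>\underline q} \;+\; g^-(\delta^-; p_2)\,\mathds{1}_{q<\bar q} \;-\; \phi q^2,
\]
so the supremum over $(\delta^+,\delta^-) \in \mathbb R^2$ decouples into two independent one-dimensional maximisations (each of which is either ``active'' or trivially suppressed by its indicator).

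Next I would explicitly maximise $g^+(\cdot\,;p_1)$ on $\mathbb R$. Differentiating yields $\partial_\delta g^+ = \lambda^+ e^{-\kappa^+\delta}\bigl(1-\kappa^+(p_1+\delta)\bigr)$, so the unique critical point is $\delta^{+,*}(p_1) = 1/\kappa^+ - p_1$; the second-derivative test shows it is a strict global maximum with value $g^+_\star(p_1) = (\lambda^+/(e\kappa^+))\exp(\kappa^+ p_1)$. The computation for $g^-$ is identical and gives $\delta^{-,*}(p_2) = 1/\kappa^- - p_2$ and $g^-_\star(p_2) = (\lambda^-/(e\kappa^-))\exp(\kappa^- p_2)$. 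Combining, for every $q \in \Omega^Q$ we obtain the closed form
\[
H(q,\boldsymbol p) \;=\; g^+_\star(p_1)\,\mathds{1}_{q>\underline q} \;+\; g^-_\star(p_2)\,\mathds{1}_{q<\bar q} \;-\; \phi q^2.
\]

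From this formula all four items follow directly. Item~(\ref{property of hamiltonian item 1}) is immediate as both exponentials are finite for $p_1,p_2 \in \mathbb R$; item~(\ref{property of hamiltonian item 2}) follows by taking $(\delta^{+,*}(p_1),\delta^{-,*}(p_2))$, with the understanding that the component whose indicator vanishes may be chosen arbitrarily in $\mathbb R$; item~(\ref{property of hamiltonian item 3}) is a consequence of strict monotonicity of $p \mapsto e^{\kappa^\pm p}$, with the caveat that at the inventory boundary $q=\underline q$ (resp.\ $q=\bar q$) the formula is independent of $p_1$ (resp.\ $p_2$), so the strict-increase claim is meaningful precisely on the interior where the relevant indicator equals~$1$; and item~(\ref{property of hamiltonian item 4}) is automatic since $\boldsymbol p \mapsto H(q,\boldsymbol p)$ is $C^\infty$. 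I do not anticipate a real obstacle: the whole argument reduces to a single elementary exercise on the concave function $\delta \mapsto \lambda e^{-\kappa\delta}(p+\delta)$, whose optimiser is found by solving one linear equation. The only delicate point is being careful with the inventory-boundary cases, which I would handle by splitting on the two indicators.
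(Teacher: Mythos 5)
Your proposal is correct and follows essentially the same route as the paper: both proofs identify the unique maximiser $\delta^{\pm,*}=1/\kappa^\pm-p$ of the strictly concave map $\delta\mapsto\lambda e^{-\kappa\delta}(p+\delta)$ via first-order conditions, and read off items (i)--(iv) from the resulting explicit expression. Your version is marginally cleaner in stating the closed form $H(q,\boldsymbol p)=\tfrac{\lambda^+}{e\kappa^+}e^{\kappa^+p_1}\mathds 1_{q>\underline q}+\tfrac{\lambda^-}{e\kappa^-}e^{\kappa^-p_2}\mathds 1_{q<\bar q}-\phi q^2$ once and reusing it (the paper's inline expression in its item~(iv) has a sign typo in the exponent, $e^{-\kappa^\pm p}$ instead of $e^{\kappa^\pm p}$, which you avoid), and you correctly flag that strict monotonicity in item~(iii) only holds off the relevant inventory boundary, a caveat the paper's proof silently glosses over.
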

\begin{proof}
\eqref{property of hamiltonian item 1} \eqref{property of hamiltonian item 2} Given $\forall (q, \boldsymbol{p}) \in  \Omega^{Q} \times \mathbb{R}^2$, we have 
\begin{equation*}
\begin{split}
H(q, \boldsymbol{p}) & = \sup_{\delta^{+} \in \mathbb R} \Big\{ \lambda^{+} e^{-\kappa^{+} \delta^{+}} (p_1 + \delta^{+}) \Big\} + \sup_{\delta^{-} \in \mathbb R} \Big\{ \lambda^{-} e^{-\kappa^{-} \delta^{-}} (p_2 + \delta^{-}) \Big\} - \phi q^2 \, . 
\end{split}
\end{equation*}

Consider the function $c(\delta): \delta \mapsto c(\delta) \in \mathbb{R}$ as $c(\delta) = \lambda e^{- \kappa \delta} (p + \delta)$,
where $\lambda, \kappa$ and $p$ are given. 
By letting the first derivative of $c(\delta)$ be $0$ and checking that the second derivative is less than $0$, we know that $c(\delta)$ attains its supremum at $\delta^{*} = \frac{1}{\kappa} - p$. 
Therefore, with the fact that $\phi q^2 \geq 0$, 
\begin{equation*}
H(q, \boldsymbol{p}) \leq  \lambda^{+} e^{-\kappa^{+} \delta^{+,*}} (p_1 + \delta^{+,*}) + \lambda^{-} e^{-\kappa^{-} \delta^{-, *}} (p_2 + \delta^{-, *}) < + \infty. 
\end{equation*}
Moreover, the supremum in the right hand side can be attained at $\delta^{\pm, *}$ given by the above expression, hence the results. 

\eqref{property of hamiltonian item 3} Given $q \in \Omega^{Q}$ and $p_2 \in \mathbb{R}$, consider $p_1$ and $p'_1$ such that $p_1 > p'_1$. Since $\lambda^{+}e^{-\kappa^{+} \delta^{*}} > 0$, we have 
\begin{equation*}
\begin{split}
\lambda^{+} e^{-\kappa^{+} \delta^{+}} (p_1 + \delta^{+}) \mathds{1}_{q > \underline{q}} + & \lambda^{-} e^{-\kappa^{-} \delta^{-}} (p_2 + \delta^{-}) \mathds{1}_{q < \bar{q}} - \phi q^2 \, > \, \\ & \lambda^{+} e^{-\kappa^{+} \delta^{+}} (p'_1 + \delta^{+}) \mathds{1}_{q > \underline{q}} + \lambda^{-} e^{-\kappa^{-} \delta^{-}} (p_2 + \delta^{-}) \mathds{1}_{q < \bar{q}} - \phi q^2 
\end{split}
\end{equation*}
By taking the supremum on both sides, we have $H(q, (p_1, p_2)) > H(q, (p'_1,p_2))$. Similarly, we have $H(q, (p_1, p_2)) > H(q, (p_1,p'_2))$ if $p_2 > p'_2$ given $q \in \Omega^{Q}$ and $p_1 \in \mathbb{R}$. 

\eqref{property of hamiltonian item 4} We consider $\boldsymbol{p} = (p_1, p_2)$ and $\boldsymbol{p'} = (p'_1, p'_2)$, then
\begin{equation*}
\begin{split}
\left| H(q, \boldsymbol{p}) - H(q, \boldsymbol{p'}) \right| & =  \bigg| \sup_{\delta^{+} \in \mathbb R}  \Big\{ \lambda^{+} e^{-\kappa^{+} \delta^{+}} (p_1 + \delta^{+}) \Big\} + \sup_{\delta^{-} \in \mathbb R} \Big\{ \lambda^{-} e^{-\kappa^{-} \delta^{-}} (p_2 + \delta^{-}) \Big\}  \\ 
& \quad - \sup_{\delta^{+} \in \mathbb R} \Big\{ \lambda^{+} e^{-\kappa^{+} \delta^{+}} (p'_1 + \delta^{+}) \Big\} - \sup_{\delta^{-} \in \mathbb R} \Big\{ \lambda^{-} e^{-\kappa^{-} \delta^{-}} (p'_2 + \delta^{-}) \Big\} \bigg| \\
& = \left| \frac{\lambda^{+} e^{-1}}{\kappa^{+}}  \big( e^{-\kappa^{+} p_1} - e^{-\kappa^{+} p'_1}  \big) + \frac{\lambda^{-} e^{-1}}{\kappa^{-}}  \big( e^{-\kappa^{-} p_2} - e^{-\kappa^{-} p'_2} \big) \right| \, .
\end{split}
\end{equation*}
With the fact that $x \mapsto e^{x}$ is locally Lipschitz, we conclude the result. 
\end{proof}

To prove the existence of the unique solution $u$ to the HJB equation~\eqref{finite-time hjb} on $(-\infty, T]$, we use Lemma~\ref{property of hamiltonian} (4) to prove the locally Lipschitz of the ODE and apply~\cite[Theorem 3.3]{gueant2020optimal}.  

Moreover, by a standard verification argument, we know that $u = v_r$, which $v_r$ is the value function of the discounted finite-time-horizon control problem~\eqref{reduced finite-time value function}, hence the result of Theorem~\ref{exist and unique for finite-time}.

\subsection{Proof of Theorem \ref{existence of discounted infinite}} \label{proof theorem exist infinite}
\begin{proof}
We first prove that $f(Q_t; \delta_t)$ for any $(Q_t)_{t \geq 0}$ taking values in $\Omega^Q$ and $\delta_t^\pm \in \mathcal{A}$ is bounded. 
We know that, by using Lemma~\ref{property of hamiltonian} (1), there exists a constant $\bar C \in \mathbb{R}$ such that, 
\begin{equation} \label{upper bound of f}
\begin{split}
f(Q_t; \tilde \delta_t^{\pm}) & = \tilde \delta_t^{+} \lambda^{+} e^{- \kappa^{+} \tilde \delta_t^{+}} + \tilde \delta_t^{-} \lambda^{-} e^{- \kappa^{-} \tilde \delta_t^{-}} - \phi (Q^{\tilde \delta^{\pm}}_t)^2 \\ 
& \leq \sup_{\delta^{\pm} \in \mathbb{R}^2}  \Big( \delta^{+} \lambda^{+} e^{- \kappa^{+} \delta^{+}} + \delta^{-} \lambda^{-} e^{- \kappa^{-} \delta^{-}} \Big) \\  
& \leq \bar C \, , 
\end{split}
\end{equation}
and by the boundedness from below of the admissible control set $\mathcal{A}$, there exists $\underline C \in \mathbb{R}$ such that 
\begin{equation} \label{lower bound of f}
\begin{split}
f(Q_t; \tilde \delta_t^{\pm}) & \geq \inf_{\delta^{\pm} \in \mathcal{A}}  \Big( \delta_t^{+} \lambda^{+} e^{- \kappa^{+} \delta_t^{+}} + \delta_t^{-} \lambda^{-} e^{- \kappa^{-} \delta_t^{-}} \Big)  - \phi (\max(\bar q, \underline q))^2 \\  
& \geq \underline C \, .
\end{split}
\end{equation}

To see that, $\forall q \in \Omega^{Q}$ and $t \in \mathbb{R}^{+}$, $v_r(q) = \lim_{T \rightarrow + \infty} v_r(t,q;T)$, we apply~\cite[Proposition 4.1]{gueant2020optimal} by using the running reward function $f$ is bounded. 
Moreover, by a standard verification argument, we know that $v_r$ is the solution to the HJB equation~\eqref{hjb for discounted infinite-time-horizon model}, hence the result of Theorem~\ref{existence of discounted infinite}.
\end{proof}

\subsection{Proof of Theorem \ref{ergodic constant}} \label{proof theorem ergodic control problem}
To prove Theorem~\ref{ergodic constant}, we first prove the following lemma. 
\begin{lemma} \label{boundness of rv_r(q)}
Let $v_r(q)$ be given by~\eqref{reduced infinite-time-horizon model}, we have
\begin{enumerate}[(i)]
\item \label{boundness of rv_r(q) item 1} $\exists \, C_1 \in \mathbb{R}^{+}$ such that $|r v_r(q)| \leq C_1$ for any $q \in \Omega^{Q}$ and $r \in \mathbb{R}^{+}$.

\item \label{boundness of rv_r(q) item 2} $\exists \, C_2 \in \mathbb{R}^{+}$ such that $| v_r(\hat{q}) - v_r(q) | \leq C_2 |\hat q - q |$ for any $q, \hat{q} \in \Omega^{Q}$ and $r \in \mathbb{R}^{+}$.
\end{enumerate}
\end{lemma}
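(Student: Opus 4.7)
The plan is to treat the two claims separately, with part (i) following quickly from the boundedness of $f$ and part (ii) requiring a coupling/sub-optimal-strategy argument together with (i).

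For part (i), I would invoke the uniform bounds $\underline C \le f(Q_t;\delta_t^\pm) \le \bar C$ already established in equations~\eqref{upper bound of f}--\eqref{lower bound of f} in the proof of Theorem~\ref{existence of discounted infinite}. Using these in the definition~\eqref{reduced infinite-time-horizon model} of $v_r$ gives $\underline C/r \le v_r(q) \le \bar C/r$ uniformly in $q\in\Omega^Q$ and $r>0$, so $|rv_r(q)|\le\max(|\underline C|,|\bar C|)=:C_1$.

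For part (ii), the strategy is a standard dynamic-programming coupling argument: given $q,\hat q\in\Omega^Q$, I construct a sub-optimal admissible control starting from $\hat q$ that first drives the inventory to $q$ in bounded expected time $\tau$, and then switches to the $\varepsilon$-optimal control for $v_r(q)$. To drive the inventory from $\hat q$ to $q$ I use a fixed depth $\delta_0>0$ on the side that moves inventory in the desired direction and $+\infty$ on the other side; this is admissible in the sense of~\eqref{admissible policies} and yields a minimum jump rate $\underline\lambda = \min(\lambda^+,\lambda^-)e^{-\max(\kappa^+,\kappa^-)\delta_0}>0$. Hence $\tau$ is stochastically dominated by a sum of $|q-\hat q|$ i.i.d.\ exponentials with rate $\underline\lambda$, giving $\mathbb E[\tau]\le |q-\hat q|/\underline\lambda$. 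The DPP then yields
\begin{equation*}
v_r(\hat q)\ \ge\ \mathbb E\Big[\int_0^\tau e^{-rt}f(Q_t;\delta_t^\pm)\,\mathrm dt\Big] + \mathbb E[e^{-r\tau}]\,v_r(q) - \varepsilon,
\end{equation*}
and using $|f|\le M:=\max(|\bar C|,|\underline C|)$ for the first term and $0\le 1-e^{-r\tau}\le r\tau$ together with part (i)'s bound $r|v_r(q)|\le C_1$ for the second, I obtain $v_r(q)-v_r(\hat q)\le (M+C_1)\mathbb E[\tau]\le (M+C_1)\underline\lambda^{-1}|q-\hat q|$. Swapping the roles of $q$ and $\hat q$ and sending $\varepsilon\to 0$ yields the claimed Lipschitz bound with $C_2:=(M+C_1)/\underline\lambda$, independent of $r$.

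The main obstacle is the second claim, and within it the uniform-in-$r$ handling of the discount factor $e^{-r\tau}$ for small $r$. This is precisely what part (i) is designed for: it converts the seemingly $r$-dependent term $(1-e^{-r\tau})v_r(q)$ into a quantity controlled by $\tau\cdot C_1$, which is what breaks the potential degeneracy as $r\to 0$ and ultimately enables the ergodic limit in Theorem~\ref{ergodic constant}. Everything else (admissibility of the driving control, finiteness of $\Omega^Q$, integer-step telescoping to get the $|q-\hat q|$ factor) is straightforward bookkeeping.
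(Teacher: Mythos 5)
Your argument is correct and follows the same overall strategy as the paper (dynamic programming together with a hitting-time / transport estimate), but the technical handling of part~(ii) differs in two ways worth noting. First, you construct an \emph{explicit} driving control from $\hat q$ to $q$ (a fixed depth $\delta_0$ on the side that moves inventory in the right direction, $+\infty$ on the other) and obtain the quantitative bound $\mathbb E[\tau]\le|q-\hat q|/\underline\lambda$ by stochastic domination with a Poisson clock of rate $\underline\lambda>0$. The paper instead starts from $q$, runs an $\varepsilon$-optimal control for $v_r(q)$ until the inventory hits $\hat q$, and asserts $\mathbb E[\tau]<\infty$ by appealing to irreducibility and recurrence of the induced CTMC; this appeal is a bit loose for a general admissible (possibly non-Markovian) $\varepsilon$-optimal control, so your explicit construction is actually tighter on this point, and it also yields an explicit $C_2$. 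Second, to handle the discount factor you bound $(1-\mathbb E[e^{-r\tau}])|v_r|\le\mathbb E[r\tau]\cdot C_1/r$ using part~(i), while the paper instead subtracts $\underline C/r$ throughout to make $f-\underline C\ge 0$ and $v_r-\underline C/r\ge 0$ and then uses only $\mathbb E[e^{-r\tau}]\le 1$. Both routes eliminate the $r$-dependence; the paper's avoids the elementary inequality $1-e^{-x}\le x$ at the cost of carrying the normalization through, yours is perhaps more transparent about why part~(i) feeds into part~(ii). Your version of part~(i) is essentially identical but self-contained rather than citing Gu\'eant--Manziuk.
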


\begin{proof}
By using the fact that the running reward function $f$ is bounded~\eqref{upper bound of f} and~\eqref{lower bound of f}, we can apply~\cite[Lemma 4.3(1)]{gueant2020optimal} to get statement~(\ref{boundness of rv_r(q) item 1}) of the lemma. 

To prove statement~\eqref{boundness of rv_r(q) item 2}, we first define the stopping time $\tau(q, \hat q)$ for the process $Q_t$ under a control $\delta^{\pm} \in \mathcal{A}$ with initial condition $Q_0 = q$ as 
\begin{equation*}
\tau := \inf\{ t \left. \right\vert Q_t^{\delta^{\pm},q} = \hat q\} \, . 
\end{equation*}
Since the dynamics of $Q_t^{\delta^{\pm},q}$ can be equivalently represented by a continuous-time Markov chain that is irreducible and recurrent (see the detailed discussion in Section~\ref{proof lemma ergodic control stability}, with $\psi$ replaced by $\delta^\pm$), it follows that $\mathbb{E}[\tau] < +\infty$. 

Let us consider $\delta^{\pm, \varepsilon} \in \mathcal{A}[0, \tau]$ with $\varepsilon > 0$ such that 
\begin{equation*}
v_r(q) - \varepsilon \leq \mathbb{E}_q \Big[
\int_0^{\tau} e^{-rt} f(Q_t;\delta^{\pm, \varepsilon}) \, \mathrm{d} t  + e^{-r \tau} v_r( \hat q) \Big] \, .
\end{equation*}

By~\eqref{upper bound of f} and~\eqref{lower bound of f}, there exists $\bar C, \underline C$ such that $0 \leq f(q; \delta^{\pm}) - \underline C \leq \bar C - \underline C$ for any $q \in \Omega^{Q}, \delta^{\pm} \in \mathcal{A}$. Therefore, with the fact that $e^{-rt} \leq 1$ for $t \in [0, \tau]$
\begin{equation*}
\begin{split}
v_r(q) - \varepsilon - \frac{\underline C}{r} & \leq  \mathbb{E}_q \Big[
\int_0^{\tau} e^{-rt} \big( f(Q_t;\delta^{\pm})  - \underline C \big)\, \mathrm{d} t  + e^{-r \tau} \big( v_r(\hat q) - \frac{\underline C}{r} \big)\Big] \\
& \leq \mathbb{E} \Big[ \int_0^{\tau} e^{-rt}  \big( f(Q_t;\delta^{\pm})  - \underline C \big) \, \mathrm{d} t \Big] +  \mathbb{E} \big[ e^{-r \tau } \big] \big( v_r(\hat q) - \frac{\underline C}{r} \big) \\
& \leq (\bar C  - \underline C ) \mathbb E[\tau] + v_r(\hat q) - \frac{\underline C}{r} \, .
\end{split}
\end{equation*}
Therefore, 
\begin{equation*}
\begin{split}
\frac{v_r(q) - v_r(\hat q)}{|q - \hat q|} & \leq  \frac{1}{|q - \hat q|}\Big( (\bar C  - \underline C ) \mathbb E[\tau] + \varepsilon \Big) \leq (\bar C  - \underline C ) \mathbb E[\tau] + \varepsilon, \quad q, \hat{q} \in \Omega^{Q}, q \neq \hat q \, . 
\end{split}
\end{equation*}
Since $\mathbb{E} [\tau] < + \infty$ and $q \in \Omega^{Q}$, by letting $\varepsilon \rightarrow 0$, we conclude that $v_r(q) - v_r(\hat q)$ is bounded from above. By simply changing the order of $q$ and $\hat q$, we conclude the lower boundedness. Hence, we can find $C_2 \in \mathbb R$ such that 
$$| v_r(\hat{q}) - v_r(q) | \leq C_2 |\hat q - q|, \quad \forall q, \hat{q} \in \Omega^{Q} , r \in \mathbb{R}^{+} \, .$$
\end{proof}

Now we are ready to prove Theorem~\ref{ergodic constant}. 

\begin{proof}
In the proof we follow the ideas from~\cite[Proposition 4.6, 4.7]{gueant2020optimal}.
As $\left|r v_r(q)\right| \leq C_1$ and $\left| v_r(q) - v_r(0) \right| \leq C_2 \bar q = C'_2, \, \forall q \in \Omega^{Q}$ by Lemma~\ref{boundness of rv_r(q)}, we can consider a sequence $(r_n)_{n \in \mathbb N}$ converging towards $0$ such that the sequences $\big(r_n v_{r_n}(q) \big)_{n \in \mathbb N}$ and $\big(v_{r_n}(q) - v_{r_n}(0) \big)_{n \in \mathbb N}$ are convergent for $q \in \Omega^{Q}$. Let $\hat \gamma(q)$ denote the limit of the sequence $\big(r_n v_{r_n}(q) \big)_{n \in \mathbb N}$, we have
\begin{equation*}
0 = \lim_{n \rightarrow +\infty} r_n \big( v_{r_n}(q) - v_{r_n}(0) \big) = \lim_{n \rightarrow +\infty} r_n v_{r_n}(q) - \lim_{n \rightarrow +\infty} r_n v_{r_n}(0) = \hat \gamma(q) - \hat \gamma(0) \, . 
\end{equation*}
Let $\hat \gamma \in \mathbb R$ be a constant, then $\hat \gamma(q) = \hat \gamma(0) = \hat \gamma$ for any $q \in \Omega^{Q}$.  

Next, we prove that $\hat \gamma$ is independent of the sequence $(r_n)_{n \in \mathbb N}$. From the HJB equation (\ref{hjb for discounted infinite-time-horizon model}), we have, for the sequence  $v_{r_n}(q)$,
\begin{equation*}
0 =  - r_n v_{r_n}(q) + H \Big(q, (v_{r_n}(q') - v_{r_n}(q))_{q' \in \{q-1, q+1\}}\Big), \quad \forall q \in \Omega^{Q} \, .
\end{equation*}
Let $\hat{v}(q) = \lim_{n \rightarrow +\infty} \big(v_{r_n}(q) - v_{r_n}(0) \big)$. As the sequence $\big(v_{r_n}(q) - v_{r_n}(0) \big)_{n \in \mathbb N}$ is convergent, we know that $\hat{v}(q)$ is well defined. Take $n \rightarrow + \infty$ on both sides, we have 
\begin{equation} \label{ergodic hjb equation proof}
0 = - \hat \gamma + H \Big(q, (\hat{v}(q') - \hat{v}(q))_{q' \in \{q-1, q+1\}}\Big), \quad \forall q \in \Omega^{Q} \, .
\end{equation}
We then consider another sequence $(r'_n)_{n \in \mathbb N}$ converging towards $0$ that leads to another limit $\eta \in \mathbb R$ for the sequence $\big(r'_n v_{r'_n}(q) \big)_{n \in \mathbb N}$, i.e. $\lim_{n \rightarrow + \infty} r'_n v_{r'_n}(q) = \eta, \, \forall q \in \Omega^{Q}$. Let $\hat w(q) = \lim_{n \rightarrow + \infty} \big(v_{r'_n}(q) - v_{r'_n}(0)\big)$, then we have 
\begin{equation*}
0 = - \eta + H \Big(q, (\hat{w}(q') - \hat{w}(q))_{q' \in \{q-1, q+1\}}\Big), \quad \forall q \in \Omega^{Q}. 
\end{equation*}
Let $z(q) = \hat w(q) - \hat v(q)$. Since the domain for $z(q)$ is bounded, we know that the supremum and infimum exist. Let us denote $\bar z = \sup_{q \in \Omega^{Q}} z(q)$, $\underline z = \inf_{q \in \Omega^{Q}} z(q)$ and
$\varepsilon = \frac{\hat \gamma - \eta}{ \bar z - \underline z + 1}$. 

Let us first assume $\hat \gamma > \eta$ and prove that $\hat \gamma \leq \eta$ by contradiction. By the definition of $\varepsilon$, we have, for $\forall q \in \Omega^{Q}$, 
\begin{equation*}
\begin{split}
0 \leq \varepsilon &  (\bar z - z(q) + 1) \leq \hat \gamma - \eta  \\
& = H \Big(q, (\hat{v}(q') - \hat{v}(q))_{q' \in \{q-1, q+1\}}\Big) - H \Big(q, (\hat{w}(q') - \hat{w}(q))_{q' \in \{q-1, q+1\}}\Big) \, .
\end{split}
\end{equation*}
Therefore, 
\begin{equation*}
\begin{split}
- \varepsilon \hat w(q) + H \Big(q, (\hat{w}(q') - & \hat{w}(q))_{q' \in \{q-1, q+1\}} \Big) \leq \\ & -\varepsilon(\hat v(q) + \bar z + 1) + H \Big(q, (\hat{v}(q') - \hat{v}(q))_{q' \in \{q-1, q+1\}}\Big) \, . 
\end{split}
\end{equation*}
By using the comparison principle, see~\cite[Lemma 4.4]{gueant2020optimal}, we know that $\hat v(q) + \bar z + 1 \leq \hat w(q)$, for $\forall q \in \Omega^{Q}$, which indicates a contradiction with the definition of $\bar z$. Hence, we have $\hat \gamma \leq \eta$. By simply changing the order of $\hat \gamma$ and $\eta$, we can obtain that $\hat \gamma \geq \eta$. Therefore, we conclude that $\hat \gamma = \eta$, i.e. $\hat \gamma$ is independent of the sequence $(r_n)_{n \in \mathbb N}$.
\end{proof}

\subsection{Proof of Theorem \ref{solve gamma}} \label{proof theorem solve gamma}
\begin{proof}
Let us define $\mu (t, q) = v_0(T-t, q; T)$, then $\mu (t, q)$ satisfies the following equation 
\begin{equation} \label{reverse time hjb equation}
- \partial_t \mu(t, q) + H \Big(q, (\mu(t, q') - \mu(t, q))_{q' \in \{q-1, q+1\}}\Big) = 0, \quad \forall (t, q) \in [0, +\infty) \times \Omega^{Q} \, ,
\end{equation}
subject to the initial condition $\mu(0, q) = G(q)$ with $G$ given by~\eqref{terminal condition}. We consider $U(t,q) = \mu(t,q) - \hat \gamma t$ for $(t, q) \in [0, +\infty) \times \Omega^{Q}$ with $\hat \gamma$ given in Theorem~\ref{ergodic constant}. We proceed to prove that $U(t,q)$ is bounded. 

Let us consider $\varphi^c (t, q) = \hat \gamma t + \hat v(q) + c, \, \forall (t,q) \in [0, +\infty) \times \Omega^{Q}$ with the constant $c \in \mathbb R$ and
$\hat \gamma$ given in Theorem \ref{ergodic constant} and $\hat v(q) = \lim_{n \rightarrow + \infty} \big(v_{r_n}(q) - v_{r_n}(0)\big)$, where $(r_n)_{n \in \mathbb N}$ is a sequence converging towards $0$ such that $\big(v_{r_n}(q) - v_{r_n}(0) \big)_{n \in \mathbb N}$ is convergent. 
From the ergodic HJB equation (\ref{ergodic hjb equation}), we have,
\begin{equation*}
\begin{split}
- \partial_t \varphi^c (t, q) + & H \Big(q, (\varphi^c(t, q') - \varphi^c(t, q))_{q' \in \{q-1, q+1\}} \Big) = \\ & - \hat \gamma + H \Big(q, (\hat v(q') - \hat v(q))_{q' \in \{q-1, q+1\}}\Big) = 0, \quad \forall (t,q) \in [0, +\infty) \times \Omega^{Q}  \, . 
\end{split}
\end{equation*}
Let us denote $c_1 = \inf_{q \in \Omega^{Q}} \big(G(q) - \hat v(q)\big)$, where $G(q)$ is the initial condition for the equation (\ref{reverse time hjb equation}). Then we have, $\forall q \in \Omega^{Q}$, 
\begin{equation*}
\varphi^{c_1} (0, q) = \hat v(q) + \inf_{q \in \Omega^{Q}} \big(G(q) - \hat v(q)\big) \leq G(q) = v(T, q;T) = \mu(0,q) \, . 
\end{equation*}
By using the comparison principle (see \cite[Proposition 3.2]{gueant2020optimal}), we know that 
$\varphi^{c_1} (t, q) \leq \mu(t,q)$ for $(t,q) \in [0, +\infty) \times \Omega^{Q}$. 
We then consider $c_2 = \sup_{q \in \Omega^{Q}} \big(G(q) - \hat v(q)\big)$, and clearly $\varphi^{c_2} (0, q) \geq \mu(0,q)$. By using the comparison principle again, we have $\varphi^{c_2} (t, q) \geq \mu(t,q)$. Therefore,
\begin{equation*}
\varphi^{c_1} (t, q) \leq  \mu(t,q) \leq \varphi^{c_2} (t, q), \quad \forall (t,q) \in [0, +\infty) \times \Omega^{Q} \, .
\end{equation*}
By the expression of $\varphi^{c_1}$ and $\varphi^{c_2}$, we have
\begin{equation*}
\hat v(q) + c_1 \leq \mu(t,q) - \hat \gamma t = U(t,q) \leq \hat v(q) + c_2 \, . 
\end{equation*}
As $q \mapsto \hat v(q)$ is well defined that has been proved in Appendix~\ref{proof theorem ergodic control problem} and $G(q)$ is bounded by definition, we can conclude that $U(t,q)$ is bounded on $(t,q) \in [0, +\infty) \times \Omega^{Q}$. 

Now let us consider $U(T, q) = \mu(T,q) - \hat \gamma T$ with $T \in [0, +\infty)$. Take the limit $T \rightarrow + \infty$, we have 
\begin{equation*}
\lim_{T \rightarrow + \infty}  \frac{1}{T} \mu (T, q) = \lim_{T \rightarrow + \infty}  \frac{1}{T} \big( U(T, q) + \hat \gamma T \big) \, . 
\end{equation*}
Since $U(T, q)$ is bounded, therefore
\begin{equation*}
\lim_{T \rightarrow + \infty}  \frac{1}{T} \mu (T, q) = \hat \gamma = \lim_{T \rightarrow + \infty}  \frac{1}{T} v_0(T-T, q; T) = \lim_{T \rightarrow + \infty}  \frac{1}{T} v_0(0, q; T) \, , 
\end{equation*}
where $v_0(0, q; T)$ satisfies the HJB equation (\ref{finite-time hjb}) with $r=0$ 

So far we've proved that, there exists a constant $\hat \gamma \in \mathbb{R}$ such that $\lim_{r \rightarrow 0} rv_r(q) = \hat \gamma = \lim_{T \rightarrow + \infty} \frac{1}{T} v_0(0, q; T)$ for any $q \in \Omega^Q$,
which addresses one of the challenges in the ergodic control problem \cite{arisawa1998ergodic}.
The next step is to prove that $\hat \gamma = \gamma$, where $\gamma$ is the ergodic constant defined in the ergodic control problem~\eqref{reduced ergodic reward functional}. 

By definition, we have
\begin{equation*}
\begin{split}
\gamma & = \sup_{\delta \in \mathcal{A}} \lim_{T \rightarrow + \infty}  \frac{1}{T} \mathbb{E}_{q} \Big[ \int_0^T f(Q^{\delta^{\pm}}_t; \delta^{\pm}) \, \mathrm{d} t \Big] \\
& \leq \lim_{T \rightarrow + \infty} \sup_{\delta \in \mathcal{A}} \frac{1}{T} \mathbb{E}_{q} \Big[ \int_0^T f(Q^{\delta^{\pm}}_t; \delta^{\pm}) \, \mathrm{d} t \Big] \\
& = \lim_{T \rightarrow + \infty}  \frac{1}{T} v_0(0, q; T) = \hat \gamma \, . 
\end{split}
\end{equation*}
By Theorem~\ref{solve ergodic control problem} and Proposition~\ref{Existence and Uniqueness for Ergodic Optimal Control}, we know that there actually exists an optimal Markov control $\psi^\pm \in \mathcal{A}$ such that 
\begin{equation*}
\begin{split}
\hat \gamma & = \lim_{T \rightarrow + \infty}  \frac{1}{T} \mathbb{E}_{q} \Big[ \int_0^T f(Q^{\psi^\pm}_t; \psi^\pm) \, \mathrm{d} t \Big] \\
& \leq \sup_{\delta \in \mathcal{A}} \lim_{T \rightarrow + \infty}  \frac{1}{T} \mathbb{E}_{q} \Big[ \int_0^T f(Q^{\delta^{\pm}}_t; \delta^{\pm}) \, \mathrm{d} t \Big] = \gamma \, , 
\end{split}
\end{equation*}
hence $\hat \gamma = \gamma$, and all $\hat \gamma$ will be substituted by $\gamma$ in the later context. 
\end{proof}

\subsection{Proof of Theorem \ref{solve ergodic constant}} \label{proof theorem solve ergodic constant}
\begin{proof}
By Theorem \ref{solution of finite-time-horizon model}, the solution to $\boldsymbol v_0(0; T)$ can be given by 
$\boldsymbol v_0(0;T) = \frac{1}{\kappa} \ln (e^{T \boldsymbol{A}} \cdot \boldsymbol z)$. 
As the subdiagonal and the superdiagonal elements of $\boldsymbol A$ are $\lambda^{-} e^{-1}$, $\lambda^{+} e^{-1} > 0$, we can find a real and symmetric tridiagonal matrix $\boldsymbol{J}$ whose entries are given by
\begin{align} \label{matrix J to prove gamma}
\boldsymbol J_{ij} = \begin{cases}
\boldsymbol{A}_{ij}, & \text{if $i = j$},\\
\sqrt{\lambda^{+} \lambda^{-}} e^{-1}, & \text{if i = j-1 or j+1}, \\
0, & \text{otherwise} \, 
\end{cases}
\end{align}
which is similar to $\boldsymbol A$. Hence, by \cite{parlett1998symmetric}, $\boldsymbol{J}$ (and $\boldsymbol{A}$) can be diagonalised with distinct eigenvalues. Let $n = \bar q - \underline q +1$, $\lambda_1, \lambda_2, ... ,\lambda_n$ be $n$ real eigenvalues of $\boldsymbol{A}$ with $\lambda_1 > \lambda_2 > ... > \lambda_n$ and $\boldsymbol{\Lambda}$ be the diagonal matrix of $\boldsymbol{A}$ such that $\boldsymbol{A} = \boldsymbol{P}^{-1} \boldsymbol{\Lambda} \boldsymbol{P}$, where $\boldsymbol{P}$'s columns are the corresponding eigenvectors. 

By Theorem \ref{solve gamma}, we have 
\begin{equation*}
\lim_{T \rightarrow + \infty} \frac{1}{T} v_0(0, q; T) = \gamma, \quad \forall q \in \Omega^{Q} \, . 
\end{equation*}
Therefore, by considering the vector form of $\boldsymbol{v}_0(0; T)$ and using Theorem~\ref{solution of finite-time-horizon model}, we obtain
\begin{equation*}
\begin{split}
\lim_{T \rightarrow \infty } \frac{\boldsymbol{v}_0(t=0;T)}{T} & = \frac{1}{\kappa} \lim_{T \rightarrow \infty } \frac{1}{T} \ln (e^{T \boldsymbol{A}} \cdot \boldsymbol z) = \frac{1}{\kappa} \lim_{T \rightarrow \infty } \frac{1}{T} \ln ( \sum_{i=0}^{\infty} \frac{(T\boldsymbol{A})^n}{n!}\cdot \boldsymbol z)  \\
& = \frac{1}{\kappa} \lim_{T \rightarrow \infty } \frac{1}{T} \ln (\boldsymbol P e^{T \boldsymbol \Lambda } \boldsymbol P^{-1} \cdot \boldsymbol z) \, .
\end{split}
\end{equation*}
Let us denote $\boldsymbol{P} = \begin{bmatrix}
P_{11}&  ... & P_{1n}\\ 
P_{21}&   ... &  P_{2n}\\ 
\vdots &   ...&  \vdots \\
P_{n1}& ... & P_{nn}
\end{bmatrix}_{n \times n}$ and $\boldsymbol{P}^{-1} \cdot \boldsymbol{z} = \begin{bmatrix}
K_{1}\\ 
\vdots \\ 
K_{n}
\end{bmatrix}_{n \times 1}$, then
\begin{equation*}
\begin{split}
\lim_{T \rightarrow \infty } \frac{\boldsymbol{v}_0(t=0;T)}{T} & = \frac{1}{\kappa} \lim_{T \rightarrow \infty } \frac{1}{T} \ln \begin{bmatrix}
\sum_{i=1}^n K_1 P_{1i} e^{\lambda_i T}\\ 
\sum_{i=1}^n K_2 P_{2i} e^{\lambda_i T}\\
\vdots \\ 
\sum_{i=1}^n K_n P_{ni} e^{\lambda_i T}
\end{bmatrix}_{n \times 1} \\
& = \frac{1}{\kappa} [\lambda_1, \lambda_1, ..., \lambda_1]^\top \, , 
\end{split}
\end{equation*}
hence the result. 

\end{proof}

\subsection{Proof of Lemma \ref{gamma kappa Lipschitz differentiable}} \label{continuity ergodic constant}
\begin{proof}
As discussed in Appendix~\ref{proof theorem solve ergodic constant}, we can find a real and symmetric tridiagonal matrix $\boldsymbol{J}$ that is similar to $\boldsymbol{A}$, whose eigenvalues are simple, i.e. the algebraic multiplicity is $1$. Moreover, it is obvious that $\kappa \mapsto \boldsymbol{J}(\kappa)$ given by~\eqref{matrix J to prove gamma} is $C^{\infty}([\underline K, \bar K])$. 
By \cite{horn2012matrix, kriegl2003differentiable, xu2016derivatives}, $\kappa \mapsto \lambda_{max}(\kappa)$ can be parameterised smoothly on $\kappa \in [\underline K, \bar K]$, i.e. $\lambda_{max}(\kappa)$ is $C^{\infty}([\underline K, \bar K])$. 
By Theorem \ref{solve ergodic constant}, we know that $\gamma(\kappa) = \frac{\lambda_{max}(\kappa)}{\kappa}$. 
Therefore, we can conclude that $\gamma(\kappa)$ is $C^2([\underline K, \bar K])$ and $\frac{\mathrm{d}^2}{\mathrm{d} \kappa^2}_{\kappa} \gamma(\kappa)$ is bounded on the compact set $\kappa \in [\underline K, \bar K]$. 
\end{proof}

\subsection{Proof of Proposition \ref{uniqueness of ergodic control}} \label{proof prop uniquess of ergodic control}
\begin{proof}
To prove Proposition \ref{uniqueness of ergodic control}, we recommend to follow the idea in \cite[Proposition 4.7]{gueant2020optimal} and use the properties of the Hamiltonian function $H$ in Lemma~\ref{property of hamiltonian}.  
\end{proof}

\subsection{Proof of Proposition \ref{Existence and Uniqueness for Ergodic Optimal Control}} \label{proof Existence and Uniqueness for Ergodic Optimal Control}
\begin{proof}
Notice that the right hand side of
\begin{equation} \label{solve optimal ergodic control}
\psi^\pm(q) \in \argmax_{\delta^{\pm}} H \Big(q, (\hat v(q') - \hat v(q))_{q' \in \{q-1, q+1\}}\Big), \quad \forall q \in \Omega^{Q}, 
\end{equation}
is invariant under constant shifts in the solution $\hat v$ and hence the optimal control $\psi^\pm$ for the ergodic control problem is uniquely given by expression~\eqref{feedback control} by simply solving the right hand side of~\eqref{solve optimal ergodic control} with the convention that $\psi^\pm (q) = + \, \infty$ for $q = \underline q, \bar q$, respectively. 
Moreover from~\eqref{hamiltonian function} it is easy to see that $\psi^\pm$ is single-valued and given by the result. 
\end{proof}

\subsection{Proof of Theorem \ref{solve ergodic control problem}} \label{proof theorem solve ergodic control problem}
\begin{proof}
By Proposition~\ref{Existence and Uniqueness for Ergodic Optimal Control}, the ergodic HJB equation (\ref{ergodic hjb equation}) can be rewritten as 
\begin{equation*} 
\begin{split}
0  =  - \phi q^2 - \gamma & + \frac{\lambda^{+}}{\kappa} \exp{\big(-1 - \kappa \hat v(q) + \kappa \hat v(q-1)\big)} \mathds{1}_{q > \underline{q}} \\ & + \frac{\lambda^{-}}{\kappa} \exp{\big(-1 - \kappa \hat v(q) + \kappa \hat v(q+1)\big)} \mathds{1}_{q < \bar{q}} \, . 
\end{split}
\end{equation*}

By using Theorem~\ref{solve ergodic constant}, we get an explicit solution for $\gamma = \frac{\lambda_{max}}{\kappa}$. Therefore, to solve the ergodic HJB equation, the next step is to solve $\hat{v}$.
Let $\hat v(q) = \frac{1}{\kappa} \ln \hat \omega(q)$, then
\begin{equation} \label{appendix equation}
- \kappa (\phi q^2 + \gamma) + \lambda^{+} e^{-1} \frac{\hat \omega(q-1)}{\hat \omega (q)} \mathds{1}_{q > \underline{q}} + \lambda^{-} e^{-1}\frac{\hat \omega(q+1)}{\hat \omega (q)} \mathds{1}_{q < \bar{q}} = 0 \, . 
\end{equation}

Let $n = (\bar{q} - \underline{q} + 1)$, $\boldsymbol{\hat \omega} = [\hat{\omega}(\bar{q}), \hat{\omega}(\bar{q}-1), ... , \hat{\omega}(\underline{q})]^\top$ be an $n$-dim vector and $\boldsymbol C$ be an $n$ - square matrix given by 
\scriptsize
\begin{align*}
\boldsymbol C = \begin{bmatrix}
- \kappa (\phi \bar{q}^2 + \gamma) & \lambda^{+} e^{-1} & 0 &  &  & ... & \\ 
\lambda^{-} e^{-1} & - \kappa \big(\phi (\bar{q}-1)^2 + \gamma \big)& \lambda^{+} e^{-1 } & & & ... & \\ 
& & &  ... & \\
& ... & &  & \lambda^{-} e^{-1} & - \kappa \big(\phi (\underline{q}+1)^2 +\gamma \big)& \lambda^{+} e^{-1}\\ 
& ... & & & 0 & \lambda^{-} e^{-1} & - \kappa (\phi \underline{q}^2 + \gamma)
\end{bmatrix}\,.
\end{align*}
\normalsize
Therefore, the equation (\ref{appendix equation}) can be written in a matrix form as
\begin{equation} \label{homogeneous}
\boldsymbol{C} \boldsymbol{\hat \omega} = 0 \, .
\end{equation}

Due to the fact of $\gamma = \frac{\lambda_{max}}{\kappa}$, we observe that $\boldsymbol C = \boldsymbol{A} - \lambda_{max} \boldsymbol{I}$, where the matrix $\boldsymbol{A}$ is given in Theorem \ref{solution of finite-time-horizon model}, $\lambda_{max}$ is the largest eigenvalue of $\boldsymbol{A}$ and  $\boldsymbol{I}$ is the identity matrix. As $\boldsymbol{A}$ has $n$ distinct eigenvalues as proved in Appendix~\ref{proof theorem solve ergodic constant}, hence $\text{rank} (\boldsymbol{C}) = n - 1$.  
Therefore, the null space of the matrix $\boldsymbol{C}$ has dimension $1$ by rank-nullity theorem, implying that the solution $\boldsymbol{\hat{\omega}}$ to the homogeneous equation~\eqref{homogeneous} is unique (up to multiplicative factors). 
Indeed, $\boldsymbol{\hat{\omega}}$ is the eigenvector corresponding the dominant eigenvalue of matrix $A$, which is a Metzler matrix (non-negative off-diagonal entries). By Perron–Frobenius theorem, the eigenvector to the dominant eigenvalue is positive, which completes the proof.
\end{proof}

\subsection{Proof of Lemma~\ref{connection to alternative regret}}
\label{equivalent regret}
\begin{proof}
The equation~\eqref{regret gap bounded} is a step in the proof of Theorem~\ref{performance gap} in a simpler case. 
We know that $\psi^{\kappa^\ast}$ is the optimal control for the ergodic market making model under $\kappa^\ast$ satisfying~\eqref{feedback control}. Hence, from the ergodic HJB equation~\eqref{ergodic hjb equation}, we have
\begin{equation*}
\begin{split}
0 = - \gamma(\kappa^\ast) - \phi q^2  +  & \lambda^{+} e^{-\kappa^\ast \psi^{\kappa^\ast, +}(q)} \Big( \hat{v}^{\kappa^\ast}(q-1) - \hat{v}^{\kappa^\ast}(q) + \psi^{\kappa^\ast, +}(q)\Big)  \mathds{1}_{q > \underline q}\\
+ & \lambda^{-} e^{-\kappa^\ast \psi^{\kappa^\ast, -}(q)} \Big( \hat{v}^{\kappa^\ast}(q+1) - \hat{v}^{\kappa^\ast}(q) + \psi^{\kappa^\ast, -}(q)\Big) \mathds{1}_{q < \bar q} \, . 
\end{split}
\end{equation*}
Therefore,
\begin{equation*}
\begin{split}
\gamma(\kappa^\ast) =  \lambda^{+} \psi^{\kappa^\ast, +}(q) e^{-\kappa^\ast \psi^{\kappa^\ast, +}(q)} & + \lambda^{-} \psi^{\kappa^\ast, -}(q) e^{-\kappa^\ast \psi^{\kappa^\ast, -}(q)}  - \phi q^2  \\ 
& + \lambda^{+} e^{-\kappa^\ast \psi^{\kappa^\ast, +}(q)} \big( \hat{v}^{\kappa^\ast}(q-1) - \hat{v}^{\kappa^\ast}(q) \big) \mathds{1}_{q > \underline q} \\ 
& + \lambda^{-} e^{-\kappa^\ast \psi^{\kappa^\ast, -}(q)} \big( \hat{v}^{\kappa^\ast}(q+1) - \hat{v}^{\kappa^\ast}(q) \big) \mathds{1}_{q < \bar q} \, , 
\end{split}
\end{equation*}
where we ignore the indicator functions in the first line 
and use $\psi^{\kappa^\ast, \pm}(q) e^{-\kappa^\ast \psi^{\kappa^\ast, \pm}(q)} = 0$ for $q = \bar q, \underline q$ respectively. Moreover, we notice that the first line satisfies~\eqref{reward function}, therefore
\begin{equation*}
\begin{split}
\bigg| \gamma(\kappa^\ast) T -  \mathbb{E}_q \Big[ & \int_0^T f(Q_t^{\psi^{\kappa^\ast}; \kappa^\ast}, \psi^{\kappa^\ast}; \kappa^\ast) \, \mathrm{d} t \Big] \bigg|  \\ 
& = \bigg| \mathbb{E}_q \Big[\int_0^T \big(\gamma(\kappa^\ast)  - f(Q_t^{\psi^{\kappa^\ast}; \kappa^\ast}, \psi^{\kappa^\ast}; \kappa^\ast) \big)\, \mathrm{d} t \Big] \bigg| \\ 
& = \bigg| \mathbb{E}_q \Big[\int_0^T \Big( \lambda^{+} e^{-\kappa^\ast \psi^{\kappa^\ast, +}(q)} \big( \hat{v}^{\kappa^\ast}(q-1) - \hat{v}^{\kappa^\ast}(q) \big) \mathds{1}_{q > \underline q} \\
& \qquad \qquad \qquad \quad + \lambda^{-} e^{-\kappa^\ast \psi^{\kappa^\ast, -}(q)} \big( \hat{v}^{\kappa^\ast}(q+1) - \hat{v}^{\kappa^\ast}(q) \big) \mathds{1}_{q < \bar q} \Big) \, \mathrm{d} t \Big] \bigg| \, .  
\end{split}
\end{equation*}
Let $\pi^{\kappa^\ast}$ denote the equilibrium of the optimal ergodic market making model under parameter $\kappa^\ast$ and function $h$ be
\begin{equation*}
\begin{split}
h(\kappa^\ast, q) = \lambda^{+} e^{-\kappa^\ast \psi^{\kappa^\ast, +}(q)} \big( \hat{v}^{\kappa^\ast}(q-1) & - \hat{v}^{\kappa^\ast}(q) \big) \mathds{1}_{q > \underline q} \\
&  + \lambda^{-} e^{-\kappa^\ast \psi^{\kappa^\ast, -}(q)} \big( \hat{v}^{\kappa^\ast}(q+1) - \hat{v}^{\kappa^\ast}(q) \big) \mathds{1}_{q < \bar q} \,. 
\end{split}
\end{equation*}
By Lemma~\ref{boundness of rv_r(q)} (2), $h$ is bounded by $\bar{h} \in \mathbb R^{+}$. 
From a simpler version of Proposition~\ref{expectation under equi is 0} by substituting $\psi^\kappa$ to $\psi^{\kappa^\ast}$, Lemma~\ref{convergence theorem} and Lemma~\ref{boundness of rv_r(q)} (2), we have 
\begin{equation*}
\begin{split}
\bigg| \gamma(\kappa^\ast) T -  \mathbb{E}_q \Big[ & \int_0^T f(Q_t^{\psi^{\kappa^\ast}; \kappa^\ast}, \psi^{\kappa^\ast}; \kappa^\ast) \, \mathrm{d} t \Big] \bigg| \\
& = \bigg| \int_0^T \int_{\Omega^Q} \Big( \lambda^{+} e^{-\kappa^\ast \psi^{\kappa^\ast, +}(q)} \big( \hat{v}^{\kappa^\ast}(q-1) - \hat{v}^{\kappa^\ast}(q) \big) \mathds{1}_{q > \underline q} \\
& \qquad \qquad \qquad \quad + \lambda^{-} e^{-\kappa^\ast \psi^{\kappa^\ast, -}(q)} \big( \hat{v}^{\kappa^\ast}(q+1) - \hat{v}^{\kappa^\ast}(q) \big) \mathds{1}_{q < \bar q} \Big) \, \mathrm{d} t \, \mathrm{d} \pi^{\kappa^\ast}_t \bigg| \\
& \leq \bigg| \bar h \int_0^T \int_{\Omega^Q} \frac{h(\kappa^\ast,q)}{\bar h}  (\mathrm{d} \pi^{\kappa^\ast}_t  - \mathrm{d} \pi^{\kappa^\ast}  )\, \mathrm{d} t\bigg| \\
& \leq \bar h \int_0^T  \left\| \pi^{\kappa^\ast}_t  - \pi^{\kappa^\ast} \right\|_{TV} \, \mathrm{d} t \leq \frac{\bar h C}{ - \ln \alpha}  \,  ,
\end{split}
\end{equation*}
with constants $C > 0$ and $0 < \alpha <1$ independent of $T$, hence the result. 
\end{proof}

\subsection{Proof of Lemma~\ref{lemma: linear ode for discounted model}} \label{proof lemma: linear ode for discounted model}
\begin{proof}
Let $w(t,q)$ satisfy the linear ODE~\eqref{linear ode for discounted model} subject to the terminal condition~\eqref{terminal condition}, i.e. 
\begin{equation*}
0 = \partial_t w(t,q) - r w(t,q) + H \Big(q, \psi^{\kappa},  (w(t,q') - w(t,q))_{q' \in \{q-1, q+1\}} ; \kappa^\ast \Big),\, \forall q \in \Omega^{Q},
\end{equation*}
and $w(T,q) = G(q)$. 
Clearly, the equation~\eqref{linear ode for discounted model} is a linear ODE, hence there exists $\boldsymbol{w} \in C^1([0,T]; \mathbb R^{n})$, which is a solution to~\eqref{linear ode for discounted model}. 

Let us consider the following stochastic process, and we omit the superscript for $Q_t^{\psi^{\kappa}; \kappa^\ast}$ for notational simplicity. 
$$
X(s) = e^{-r(s- t)} w(s, Q_s) + \int_t^{s} e^{-r (u -t)}f(Q_u, \psi^\kappa_u; \kappa^\ast) \, \mathrm{d} u 
$$
We know that $Q_t^{\psi^{\kappa}; \kappa^\ast}$ follows the SDE \eqref{inventory} with market parameter $\kappa^\ast$ and control $\psi^{\kappa}$, i.e. 
\begin{equation*}
\begin{split}
dQ_t^{\psi^{\kappa}; \kappa^\ast} & =  dN_t^{\psi^{\kappa}, -} - dN_t^{\psi^{\kappa}, +} \\
& = \big( \lambda^{+} e^{-\kappa^\ast \psi^{\kappa, -}} - \lambda^{-} e^{-\kappa^\ast \psi^{\kappa, + },} \big)dt  + d\tilde{N}_{t}^{\psi^{\kappa}, -} - d\tilde{N}_{t}^{\psi^{\kappa}, +}, 
\end{split}
\end{equation*}
where $\tilde{N}_{t}^{\psi^{\kappa}, \pm}$ are compensated Poisson processes.

By It\^{o}'s formula, 
\small
\begin{equation*}
\begin{aligned}
& d X(s) =  \, e^{-r(s- t)} \Big\{ \partial_s w(s, Q_s) - r w(s, Q_s) + \lambda^{+} e^{-\kappa^\ast \psi^{\kappa, +}} \left( w(s, Q_s-1) - w(s, Q_s) \right) \mathds{1}_{Q_s > \underline q} \\
& + \lambda^{-} e^{-\kappa^\ast \psi^{\kappa, -}} \left( w(s,Q_s+1) - w(s, Q_s) \right) \mathds{1}_{Q_s < \bar q} + f(Q_s, \psi^\kappa; \kappa^\ast) \Big\} ds \\
& + e^{-r(s- t)} \Big\{ \left( w(s, Q_s-1) - w(s, Q_s) \right) \mathds{1}_{Q_s > \underline q} d\tilde{N}_{s}^{+} + \left( w(s,Q_s+1) - w(s, Q_s) \right) \mathds{1}_{Q_s < \bar q} d\tilde{N}_{s}^{-} \Big\} \\
& =  \, e^{-r(s- t)} \Big\{ \partial_s w(s, Q_s) - r w(s, Q_s) + H \big(Q_s, \psi^\kappa, (w(s, Q_s') - w(s, Q_s))_{Q_s' \in \{Q_s-1, Q_s+1\}} \big) \Big\} ds \\
& + e^{-r(s- t)} \Big\{ \left( w(s, Q_s-1) - w(s, Q_s) \right) \mathds{1}_{Q_s > \underline q} d\tilde{N}_{s}^{+} + \left( w(s,Q_s+1) - w(s, Q_s) \right) \mathds{1}_{Q_s < \bar q} d\tilde{N}_{s}^{-} \Big\}.
\end{aligned}
\end{equation*}
\normalsize
where the last equality comes from the definition of $H(\cdot; \kappa^\ast)$~\eqref{L function} and $f(\cdot; \kappa^\ast)$ \eqref{reward function}. Take the integral and expectation for $X(s)$, we have
\small
\begin{equation*}
\begin{split}
\mathbb{E} \big[ & X(T) \big|  Q_t  = q \big]  = \mathbb{E} \big[ X(t) \big|  Q_t = q \big] \\ &
+ \int_t^T e^{-r(s - t)} \Big( \partial_s w - r w + H \big(Q_s,\psi^{\kappa}, (w(s, Q'_s) - w(s, Q_s))_{Q'_s \in \{Q_s-1, Q_s+1\}} \big) \Big) \, \mathrm{d} s \\
\end{split}
\end{equation*}
\normalsize
As $w(t,q)$ satisfies~\eqref{linear ode for discounted model} and the terminal condition~\eqref{terminal condition}, therefore, 
\begin{equation*}
\begin{split}
w(t,q) = \mathbb{E} \big[ X(t) \big|  Q_t = q \big] & = \mathbb{E} \big[ X(T) \big| Q_t = q \big] \\
& = \mathbb{E}_{q} \Big[ \int_t^T e^{-r(u-t)} f(Q_u, \psi^{\kappa}; \kappa^\ast) \, \mathrm{d} u + e^{-r(T-t)} G(Q_T) \Big].
\end{split}
\end{equation*}
Hence $w(t,q) = v_r^{\psi^\kappa}(t, q; T; \kappa^\ast)$ by~\eqref{discounted finite-time-horizon misspcefication model}.
\end{proof}

\subsection{Proof of Proposition~\ref{linear PDE for gamma}} \label{proof Proposition linear PDE for gamma}
\begin{proof}
First, let $v_r^{\psi^{\kappa}}(q; \kappa^\ast)$ be the expected reward in the discounted infinite-time-horizon setting, where the true price sensitivity parameter is $\kappa^\ast$ but the market maker uses the strategy $\psi^{\kappa}$ given by~\eqref{feedback control} with parameter $\kappa$, i.e.
\begin{equation} \label{discounted infinite horizon model with kappa misspecific}
v_r^{\psi^{\kappa}}(q; \kappa^\ast) = \mathbb{E}_{q} \Big[ \int_0^{+ \infty} e^{-rt} f(Q_t^{\psi^{\kappa}; \kappa^\ast}, \psi^{\kappa}; \kappa^\ast) \, \mathrm{d} t \Big],
\end{equation}
where $f(\cdot; \kappa^\ast)$ is given by~\eqref{reward function}. Then we claim that $v_r^{\psi^{\kappa}}(q; \kappa^\ast)$ satisfies the following linear system 
\begin{equation} \label{linear system discounted infinite}
0 = - r v_r^{\psi^{\kappa}}(q;\kappa^\ast) + H \Big(q, \psi^{\kappa}, (v_r^{\psi^{\kappa}}(q';\kappa^\ast) - v_r^{\psi^{\kappa}}(q; \kappa^\ast))_{q' \in \{q-1, q+1\}} ; \kappa^\ast \Big), \quad \forall q \in \Omega^{Q} \, . 
\end{equation}
We would like to provide a sketch of proof for the claim. First, the existence of $v_r^{\psi^\kappa}(q; \kappa^\ast)$ defined by~\eqref{discounted infinite horizon model with kappa misspecific} can follow the proof in Section~\ref{proof theorem exist infinite} for Theorem~\ref{existence of discounted infinite} but substituting the Hamiltonian function $H$~\eqref{hamiltonian function} for $H(\cdot; \kappa^\ast)$~\eqref{L function}. Let $w(q)$ be a solution for the linear system~\eqref{linear system discounted infinite}. Consider 
\begin{equation*}
X(s) = e^{-rs} w(Q_s) + \int_0^s e^{-rt} f(Q_t, \psi^\kappa; \kappa^\ast) \, \mathrm{d} t \, . 
\end{equation*}
By It\^{o}'s formula\
\small
\begin{equation*}
\begin{aligned}
d X(s) = & \, e^{-rs} \Big\{ - r w(Q_s) + \lambda^{+} e^{-\kappa^\ast \psi^{\kappa, +}_s} \left( w(Q_s-1) - w(Q_s) \right) \mathds{1}_{Q_s > \underline q} \\
& \quad + \lambda^{-} e^{-\kappa^\ast \psi^{\kappa, -}_s} \left( w(Q_s+1) - w(Q_s) \right) \mathds{1}_{Q_s < \bar q} + f(Q_s, \psi^\kappa; \kappa^\ast) \Big\} ds \\
& + e^{-rs} \Big\{ \left( w(Q_s-1) - w(Q_s) \right) \mathds{1}_{Q_s > \underline q} d\tilde{N}_{s}^{+} + \left( w(Q_s+1) - w(Q_s) \right) \mathds{1}_{Q_s < \bar q} d\tilde{N}_{s}^{-} \Big\} \\
= & \, e^{-rs} \Big\{ - r w(Q_s) + H \left( Q_s, \psi^\kappa, (w(Q'_s) - w(Q_s))_{Q'_s \in \{Q_s-1, Q_s+1\}} \right) \Big\} ds \\
& + e^{-rs} \Big\{ \left( w(Q_s-1) - w(Q_s) \right) \mathds{1}_{Q_s > \underline q} d\tilde{N}_{s}^{+} + \left( w(Q_s+1) - w(Q_s) \right) \mathds{1}_{Q_s < \bar q} d\tilde{N}_{s}^{-} \Big\}.
\end{aligned}
\end{equation*}
\normalsize
where the last equality comes from the definition of $H(\cdot; \kappa^\ast)$~\eqref{L function} and $f(\cdot; \kappa^\ast)$ \eqref{reward function}. Take the integral and expectation for $X(s)$, we have
\small
\begin{equation*}
\begin{split}
\mathbb{E} \big[ X(T)  \big| Q_0 = q \big] & = \mathbb{E} \big[ X(0) \big|  Q_0 = q \big] \\ &
+ \int_0^T e^{-rt} \Big( - r w(Q_t) + H \big(Q_t ,\psi^{\kappa}, (w(Q_t') - w(Q_t))_{Q_t' \in \{Q_t-1, Q_t+1\}} \big) \Big) \, \mathrm{d} t \\
\end{split}
\end{equation*}
\normalsize
As $w(q)$ satisfies~\eqref{linear system discounted infinite}, therefore, 
\begin{equation*}
\begin{split}
w(q) = \mathbb{E} \big[ X(0) \big|  Q_0 = q \big] & = \mathbb{E} \big[ X(T) \big| Q_0 = q \big] \\
& = \mathbb{E}_{q} \Big[ \int_0^T e^{-rt} f(Q_t, \psi^{\kappa}; \kappa^\ast) \, \mathrm{d} t + e^{-rT} w(Q_T) \Big].
\end{split}
\end{equation*}
Take limit $T \rightarrow +\infty$ on both sides, with the fact that the limit exists, i.e. $w(q) \in \mathbb R, \forall q \in \Omega^{Q}$
\begin{equation*}
w(q) = \mathbb{E}_{q} \Big[ \int_0^{+\infty} e^{-rt} f(Q_t, \psi^{\kappa}; \kappa^\ast) \, \mathrm{d} t \Big]. 
\end{equation*}
Hence $w(q) = v_r^{\psi^\kappa}(q; \kappa)$ defined by~\eqref{discounted infinite horizon model with kappa misspecific}.

Now, we would like to to show that given $\gamma(\kappa; \kappa^\ast)$ defined by~\eqref{definition gamma(kappa;kappastar)}, it holds that
\begin{equation} \label{gamma(kappa; kappastar) by discounted infinite}
\lim_{r \rightarrow 0} r v_r^{\psi^{\kappa}}(q;\kappa^\ast)  = \gamma(\kappa; \kappa^\ast), \quad \forall q \in \Omega^{Q} \, ,
\end{equation}
where $v_r^{\psi^{\kappa}}(q;\kappa^\ast)$ is defined by~\eqref{discounted infinite horizon model with kappa misspecific}.

Let us start with the following lemma. 
\begin{lemma} \label{bounded for rv_r under model mispecification}
\begin{enumerate} [(1)]
\item $\exists \, C_1 \in \mathbb{R}^{+}$ such that $\left|r v_r^{\psi^\kappa}(q; \kappa^\ast) \right| \leq C_1$ for $\forall q \in \Omega^{Q}$ and $r \in \mathbb{R}^{+}$.

\item $\exists \, C_2 \in \mathbb{R}^{+}$ such that $\left| v_r^{\psi^\kappa}(\hat{q}; \kappa^\ast) - v_r^{\psi^\kappa}(q; \kappa^\ast) \right| \leq C_2 |\hat q - q |$ for $\forall q, \hat{q} \in \Omega^{Q}$ and $r \in \mathbb{R}^{+}$.
\end{enumerate}
\end{lemma}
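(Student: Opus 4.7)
\textbf{Proposal for the proof of Lemma~\ref{bounded for rv_r under model mispecification}.} The plan is to mirror the strategy used for Lemma~\ref{boundness of rv_r(q)}, adapting it to the model-misspecified setting in which the control is the fixed Markov policy $\psi^{\kappa}$ given by~\eqref{feedback control} rather than an optimised one. The two items are proved independently.

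For item~(1), I would first argue that the integrand $f(Q_t^{\psi^\kappa;\kappa^\ast},\psi^\kappa;\kappa^\ast)$ is uniformly bounded. Note that by~\eqref{feedback control} the control $\psi^{\kappa,\pm}(q)$ takes finite values on interior states of $\Omega^Q$ (because $\hat{v}$ is defined on the finite set $\Omega^Q$ and hence bounded) and equals $+\infty$ only at the boundary states $\underline q$ and $\bar q$, where by the convention $0 \cdot \infty = 0$ (used earlier after Remark~\ref{rm2}) the contribution $\psi^{\kappa,\pm} e^{-\kappa^\ast \psi^{\kappa,\pm}}$ vanishes. Combined with $q \in \Omega^Q$ finite, this gives constants $\underline C \leq f(q,\psi^\kappa;\kappa^\ast)\leq \bar C$ uniformly. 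From the definition~\eqref{discounted infinite horizon model with kappa misspecific},
\begin{equation*}
\frac{\underline C}{r} \;=\; \underline C \int_0^\infty e^{-rt}\,\mathrm{d}t \;\leq\; v_r^{\psi^\kappa}(q;\kappa^\ast) \;\leq\; \bar C\int_0^\infty e^{-rt}\,\mathrm{d}t \;=\; \frac{\bar C}{r},
\end{equation*}
so $|r v_r^{\psi^\kappa}(q;\kappa^\ast)| \leq \max(|\underline C|,|\bar C|) =: C_1$ uniformly in $r>0$ and $q \in \Omega^Q$.

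For item~(2), I would use a stopping-time/coupling argument via the strong Markov property. Fix $q,\hat q \in \Omega^Q$ and let $\tau := \inf\{t\geq 0 : Q_t^{\psi^\kappa;\kappa^\ast,q} = \hat q\}$. Because $\psi^{\kappa,\pm}$ is finite on interior states, the controlled inventory process under $\kappa^\ast$ is an irreducible recurrent CTMC on the finite set $\Omega^Q$ (the same argument as in Appendix~\ref{proof lemma ergodic control stability}), so $\mathbb{E}[\tau] < +\infty$, and this expectation is uniform in $r$ (it does not depend on $r$ at all). Applying the strong Markov property to~\eqref{discounted infinite horizon model with kappa misspecific} yields
\begin{equation*}
v_r^{\psi^\kappa}(q;\kappa^\ast) \;=\; \mathbb{E}_q\!\left[\int_0^\tau e^{-rt} f(Q_t,\psi^\kappa;\kappa^\ast)\,\mathrm{d}t\right] + \mathbb{E}_q[e^{-r\tau}]\,v_r^{\psi^\kappa}(\hat q;\kappa^\ast).
\end{equation*}
Subtracting $\underline C/r$ from both sides and using the nonnegativity of $f - \underline C$ together with $v_r^{\psi^\kappa}(\hat q;\kappa^\ast) - \underline C/r \geq 0$,
\begin{equation*}
v_r^{\psi^\kappa}(q;\kappa^\ast) - v_r^{\psi^\kappa}(\hat q;\kappa^\ast)
\;\leq\; \mathbb{E}_q\!\left[\int_0^\tau e^{-rt}\,(f - \underline C)\,\mathrm{d}t\right]
\;\leq\; (\bar C - \underline C)\,\mathbb{E}_q[\tau],
\end{equation*}
and then swapping the roles of $q$ and $\hat q$ gives the matching lower bound. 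Taking $C_2 := (\bar C - \underline C)\,\max_{q,\hat q \in \Omega^Q}\mathbb{E}_q[\tau]/\min_{q\neq\hat q}|q-\hat q|$ (which is finite and $r$-independent) completes the proof.

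The main obstacle will be item~(2), specifically justifying the uniform-in-$r$ finiteness of the hitting-time expectation: we must confirm that the controlled process with feedback $\psi^\kappa$ and true parameter $\kappa^\ast$ has all jump rates $\lambda^\pm e^{-\kappa^\ast\psi^{\kappa,\pm}(q)}$ bounded away from zero on interior states (so the induced CTMC is irreducible) and is thus positive-recurrent on the finite state space $\Omega^Q$; this closely parallels the discussion carried out in Section~\ref{proof lemma ergodic control stability}, so it is essentially bookkeeping rather than a new idea, but it is the step that requires the most care.
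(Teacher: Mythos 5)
Your proof is correct and follows essentially the same strategy as the paper's: item~(1) via the uniform bounds $\underline C \le f(\cdot,\psi^\kappa;\kappa^\ast) \le \bar C$ integrated against $e^{-rt}$, and item~(2) via the strong Markov property at the hitting time $\tau$ of $\hat q$ together with the shift by $\underline C/r$ and the fact that the controlled inventory is an irreducible CTMC on the finite state space $\Omega^Q$ (so $\mathbb E_q[\tau]<\infty$ uniformly in $r$). The only cosmetic difference is that you divide by $\min_{q\neq\hat q}|q-\hat q|$ where the paper observes $1/|q-\hat q|\le 1$, which is the same thing on $\mathbb Z$.
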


As $\psi^\kappa \subset \mathcal{A}$, i.e. the collection of all Markov controls optimal for the ergodic control problem is a subset of the admissible control, the running reward function $f(\cdot; \kappa^\ast)$ is bounded. By~\eqref{reward function}, we have
\begin{equation} \label{upper bound for f}
\begin{split}
f(Q_t, \psi^\kappa; \kappa^\ast) & = \lambda^{+} \psi^{\kappa, +} e^{- \kappa^\ast \psi^{\kappa, +}} + \lambda^{-} \psi^{\kappa, -} e^{- \kappa^\ast \psi^{\kappa, -}} - \phi (Q_t)^2 \\
& \leq \sup_{\psi^\kappa \in \overline{\mathbb{R}}^2} \big( \lambda^{+} \psi^{\kappa, +} e^{- \kappa^\ast \psi^{\kappa, +}} + \lambda^{-} \psi^{\kappa, -} e^{- \kappa^\ast \psi^{\kappa, -}} \big) \\
& = \overline C \, 
\end{split}
\end{equation}
and by the boundedness from below of $\mathcal{A}$, 
\begin{equation} \label{lower bound for f}
\begin{split}
f(Q_t, \psi^\kappa; \kappa^\ast) & = \lambda^{+} \psi^{\kappa, +} e^{- \kappa^\ast \psi^{\kappa, +}} + \lambda^{-} \psi^{\kappa, -} e^{- \kappa^\ast \psi^{\kappa, -}} - \phi (Q_t)^2 \\
& \geq \inf_{\psi^\kappa} \big( \lambda^{+} \psi^{\kappa, +} e^{- \kappa^\ast \psi^{\kappa, +}} + \lambda^{-} \psi^{\kappa, -} e^{- \kappa^\ast \psi^{\kappa, -}} \big) - \phi \max(\bar q, \underline q)^2 \\
& = \underline C \, .
\end{split}
\end{equation}
To prove Lemma~\ref{bounded for rv_r under model mispecification} (1), we first consider that
\begin{equation*}
\begin{split}
v_r^{\psi^\kappa}(q; \kappa^\ast) & = \mathbb{E}_{q} \Big[ \int_0^{+ \infty} e^{-rt} f(Q_t^{\psi^{\kappa}; \kappa^\ast}, \psi^{\kappa}; \kappa^\ast) \, \mathrm{d} t \Big]  \geq \mathbb{E}_{q} \Big[
\int_0^{+\infty}e^{-rt} \underline C \, \mathrm{d} t \Big] = \frac{\underline C}{r} \, . 
\end{split}
\end{equation*}
On the other hand, 
\begin{equation*}
\begin{split}
v_r^{\psi^\kappa}(q; \kappa^\ast)  \leq \mathbb{E}_{q} \Big[
\int_0^{+\infty}e^{-rt} \overline C \, \mathrm{d} t  \Big]  = \frac{\bar C}{r} \, .
\end{split}
\end{equation*}
Hence 
\begin{equation*}
\left| r v_r(q) \right| \leq C_1, \, \forall q \in \Omega^{Q}, 
\end{equation*}
with $C_1 = \max(\left| \underline C \right|, \left| \overline C \right| )$. 

To prove Lemma~\ref{bounded for rv_r under model mispecification} (2), we define the stopping time $\tau(q, \hat q)$ for the process 
$Q_t^{\psi^\kappa;\kappa^\ast}$ with initial condition $Q_0 = q$ as 
\begin{equation*}
\tau := \inf\{ t \left. \right\vert Q_t^{\psi^\kappa;\kappa^\ast} = \hat q\} \, . 
\end{equation*}
Since the dynamics of $Q_t^{\psi^\kappa;\kappa^\ast}$ can be equivalently represented by a continuous-time Markov chain that is irreducible and recurrent (see the detailed discussion in Section~\ref{proof lemma ergodic control stability}), it follows that $\mathbb{E}[\tau] < +\infty$.

By~\eqref{upper bound for f} and~\eqref{lower bound for f}, there exists $\overline C, \underline C$ such that $0 \leq f(q; \psi^\kappa; \kappa^\ast) - \underline C \leq \overline C - \underline C$ for any $q \in \Omega^{Q}$. Therefore,
\begin{equation*}
\begin{split}
v_r^{\psi^\kappa}(q;\kappa^\ast) - \frac{\underline C}{r} & = \mathbb{E}_q \Big[
\int_0^{\tau} e^{-rt} \big( f(Q^{\psi^\kappa; \kappa^\ast}_t, \psi^\kappa; \kappa^\ast)  - \underline C \big)\, \mathrm{d} t  + e^{-r \tau} \big( v_r^{\psi^\kappa}(\hat q; \kappa^\ast) - \frac{\underline C}{r} \big)\Big] \\
& \leq \mathbb{E} \Big[ \int_0^{\tau} e^{-rt}  \big( \overline C  - \underline C \big) \, \mathrm{d} t \Big] +  \mathbb{E} \big[ e^{-r \tau } \big] \big( v^{\psi^\kappa; \kappa^\ast}_r(\hat q) - \frac{\underline C}{r} \big) \\
& \leq (\bar C  - \underline C ) \mathbb E[\tau] + v^{\psi^\kappa; \kappa^\ast}_r(\hat q) - \frac{\underline C}{r} \, . 
\end{split}
\end{equation*}
Therefore, 
\begin{equation*}
\begin{split}
\frac{v^{\psi^\kappa; \kappa^\ast}_r(q) - v^{\psi^\kappa; \kappa^\ast}_r(\hat q)}{|q - \hat q|} & \leq  \frac{1}{|q - \hat q|}\Big( (\bar C  - \underline C ) \mathbb E[\tau] \Big) \leq (\bar C  - \underline C ) \mathbb E[\tau], \, \forall q, \hat{q} \in \Omega^{Q}, q \neq \hat q. 
\end{split}
\end{equation*}
Since $\mathbb{E} [\tau] < + \infty$, we conclude that $\frac{v^{\psi^\kappa; \kappa^\ast}_r(q) - v^{\psi^\kappa; \kappa^\ast}_r(\hat q)}{|q - \hat q|}$ is bounded from above. By simply changing the order of $q$ and $\hat q$, we conclude the lower boundedness. Hence, we can find $C_2 \in \mathbb R^{+}$ such that 
$$\left| v^{\psi^\kappa; \kappa^\ast}_r(q) - v^{\psi^\kappa; \kappa^\ast}_r(\hat q) \right| \leq C_2 |\hat q - q|, \, \forall q, \hat{q} \in \Omega^{Q} , r \in \mathbb{R}^{+} \, .$$. 

With the fact that $\Big|r v_r^{\psi^\kappa}(q; \kappa^\ast) \Big|$ and $\Big| v_r^{\psi^\kappa}(\hat{q}; \kappa^\ast) - v_r^{\psi^\kappa}(q; \kappa^\ast) \Big|$ are bounded, we can follow the discussion in Appendix~\ref{proof theorem ergodic control problem}, i.e. consider a sequence $(r_n)_{n \in \mathbb N}$ converging towards $0$ such that the sequences $\big(r_n v_{r_n}^{\psi^\kappa; \kappa^\ast}(q) \big)_{n \in \mathbb N}$ and $\big(v_{r_n}^{\psi^\kappa; \kappa^\ast}(q) - v_{r_n}^{\psi^\kappa; \kappa^\ast}(0) \big)_{n \in \mathbb N}$ are convergent for $\forall q \in \Omega^{Q}$, then show that there exists $\gamma(\kappa; \kappa^\ast) \in \mathbb{R}$ such that $\gamma(\kappa; \kappa^\ast) = \lim_{r \rightarrow 0} r v_r^{\psi^{\kappa}}(q;\kappa^\ast)$. 
Again by substituting $H$~\eqref{hamiltonian function} for $H(\cdot; \kappa^\ast)$ in the proof of Theorem \ref{solve gamma} in Appendix \ref{proof theorem solve gamma}, we can easily conclude that $\gamma(\kappa; \kappa^\ast)$ given by \eqref{gamma(kappa; kappastar) by discounted infinite} also satisfies
\begin{equation*}
\gamma(\kappa; \kappa^\ast) = \lim_{T \rightarrow + \infty}\frac{1}{T} v_{0}^{\psi^{\kappa}}(t= 0, q; T;  \kappa^\ast) \, .
\end{equation*}

Let us define $\hat{v}^{\psi^\kappa}(q;  \kappa^\ast) = \lim_{r \rightarrow 0} \big( v_r^{\psi^{\kappa}}(q; \kappa^\ast) - v_r^{\psi^{\kappa}} (0;  \kappa^\ast)\big)$ for $q \in \Omega^{Q}$. By the convergent of $\big(v_{r_n}^{\psi^\kappa; \kappa^\ast}(q) - v_{r_n}^{\psi^\kappa; \kappa^\ast}(0) \big)_{n \in \mathbb N}$  under the sequence $(r_n)_{n \in \mathbb N}$ converging towards $0$, we know that $\hat{v}^{\psi^\kappa}(q;  \kappa^\ast)$ is well defined. 

By passing the limit~\eqref{gamma(kappa; kappastar) by discounted infinite} to the equation~\eqref{linear system discounted infinite}, we obtain that
\begin{equation*}
0 = - \gamma(\kappa; \kappa^\ast) + H \Big(q, \psi^{\kappa}, (\hat{v}^{\psi^\kappa, \kappa^\ast}(q') - \hat{v}^{\psi^\kappa, \kappa^\ast}(q))_{q' \in \{q-1, q+1\}} ; \kappa^\ast \Big) \, , \, \forall q \in \Omega^{Q}. 
\end{equation*}
\end{proof}

\subsection{Proof of Proposition~\ref{analytical solution for gamma(kappastar)}} \label{proof Proposition analytical solution for gamma(kappastar)}
\begin{proof}
By~\eqref{linear ode for discounted model}, the linear ODE for $t \mapsto v_r^{\psi^\kappa}(t,q;T; \kappa^\ast)$ can be written in a matrix form. Let $\boldsymbol{v}_r(t) = [v_r^{\psi^\kappa}(t, \bar q; T;\kappa^\ast), ..., v_r^{\psi^\kappa}(t, \underline q; T; \kappa^\ast)]$ be an $n-dim$ vector, where $n = \bar q - \underline q +1$. 
Now, let $\boldsymbol{\tilde{A}}_r$ denote an $n$-square matrix whose rows are labelled from $\bar q$ to $\underline q$ and entries are given by
\begin{align} \label{matrix tilde A}
\boldsymbol{\tilde{A}}_{r}(i,q) = \begin{cases}
- (r + \lambda^{+} e^{- \kappa^\ast \psi^{\kappa, +}(q)} \mathds{1}_{q > \underline q} + \lambda^{-} e^{- \kappa^\ast \psi^{\kappa, -}(q)}\mathds{1}_{q < \bar q} ), & \text{if $i = q$}, \\
\lambda^{+} e^{- \kappa^\ast \psi^{\kappa, +}(q)}, & \text{if $i = q+1$}, \\
\lambda^{-} e^{- \kappa^\ast \psi^{\kappa, -}(q)}, & \text{if $i = q-1$}, \\
0, & \text{otherwise. }
\end{cases}
\end{align}
Let $\boldsymbol{\tilde b}$ be an $n-dim$ vector where each component is 
\begin{equation} \label{vector tilde b}
b_i = \lambda^{+} \psi^{\kappa, +}(i) e^{- \kappa^\ast \psi^{\kappa, +}(i)} \mathds{1}_{i > \underline q} + \lambda^{-} \psi^{\kappa, -}(i) e^{- \kappa^\ast \psi^{\kappa, -}(i)}\mathds{1}_{i < \bar q} - \phi i^2,
\end{equation}
for $i = [\bar q, \bar q -1 , ... ,\underline q]$. 
Then $t \mapsto \boldsymbol{v}_r(t)$ satisfies
\begin{equation} \label{linear ode matrix form}
0 = \partial_t \boldsymbol{v}_r(t) + \boldsymbol{\tilde{A}}_r(\kappa) \boldsymbol{v}_r(t) + \boldsymbol{\tilde b}( \kappa), 
\end{equation}
with the terminal condition $\boldsymbol{v}_r(T; q) = [G(\bar q), \ldots,  G(\underline q)]^\top$, with $G$ given by~\eqref{terminal condition}. Let  $\boldsymbol{G}$ denote the vector $[G(\bar q), \ldots,  G(\underline q)]^\top$. We know that there exists a solution to the linear ODE~\eqref{linear ode matrix form} with the terminal condition on $t \in (- \infty, T]$. 

Now, let us consider the case of $r = 0$. We use $\boldsymbol{\tilde{A}}_{0}(, i)$ to denote the $i-$th column of the coefficient matrix $\boldsymbol{\tilde{A}}_{0}$ with $i \in \{ 1, 2, \dots, n \}$. With the entries given by~\eqref{matrix tilde A} under $r=0$, we can observe that 
\begin{equation*}
\sum_{i=1}^{n-1} \boldsymbol{\tilde{A}}_{0}(, i) = - \boldsymbol{\tilde{A}}_{0}(, n), 
\end{equation*}
which means $\boldsymbol{\tilde{A}}_{0}$ is singular. Hence the solution to~\eqref{linear ode matrix form} under $r=0$ can be given by 
\begin{equation}
\boldsymbol{v}_{0}(t) = e^{(T-t)\boldsymbol{\tilde{A}}_0(\kappa)} \boldsymbol{G} + \int_{t}^T e^{(s-t)\boldsymbol{\tilde{A}}_0(\kappa)} \boldsymbol{\tilde b}( \kappa) \, \mathrm{d} s. 
\end{equation}
By~\eqref{matrix tilde A}, $\boldsymbol{\tilde{A}}_{0}$ is a real tridiagonal matrix with all positive off-diagonal entries. Clearly the eigenvalues of $\boldsymbol{\tilde{A}}_{0}$ are simple, i.e. the algebraic multiplicity is 1. Furthermore, $\boldsymbol{\tilde{A}}_{0}$ is diagonally dominant matrix with all negative diagonal entries, i.e.  
\begin{equation*}
- \boldsymbol{\tilde{A}}_{0}(i, i) = \boldsymbol{\tilde{A}}_{0}(i, i-1) + \boldsymbol{\tilde{A}}_{0}(i, i-1) > 0, 
\end{equation*}
then $\boldsymbol{\tilde{A}}_{0}(i, i-1)$ is negative semi-definite. Let $\lambda_i, i = \{1, \dots, n\}$ be the eigenvalues of $\boldsymbol{\tilde{A}}_{0}$ with $\lambda_n < \lambda_{n-1} < \dots < \lambda_1 = 0$, $\boldsymbol U$ be the matrix whose columns are the corresponding eigenvectors and $\boldsymbol \Lambda$ be the diagonal matrix. Then
\small
\begin{equation*}
\begin{split}
\lim_{T \rightarrow +\infty} \frac{1}{T} \boldsymbol{v}_{0}(0, T) & =  \lim_{T \rightarrow +\infty} \frac{1}{T} e^{T \boldsymbol{\tilde{A}}_0(\kappa)} \boldsymbol{G} + \lim_{T \rightarrow +\infty} \frac{1}{T} \int_{0}^T e^{t\boldsymbol{\tilde{A}}_0(\kappa)} \boldsymbol{\tilde b}( \kappa) \, \mathrm{d} t  \\ 
& =  \lim_{T \rightarrow +\infty} \frac{1}{T} \boldsymbol{U} e^{T \boldsymbol{\Lambda}} \boldsymbol{U}^{-1} \boldsymbol{G} + \lim_{T \rightarrow +\infty} \frac{1}{T} \boldsymbol{U} \int_{0}^T  e^{t\boldsymbol{\Lambda}} \, \mathrm{d} t \, \boldsymbol{U}^{-1} \boldsymbol{\tilde b} \\
& = \lim_{T \rightarrow +\infty} \frac{1}{T} \boldsymbol{U} \begin{bmatrix}
e^{\lambda_1 T} &  &  &  & \\ 
& e^{\lambda_2 T} &  & & &  \\ 
& &  &  \dots & \\
& &  & & & e^{\lambda_n T} 
\end{bmatrix} \boldsymbol{U}^{-1} \boldsymbol{G}  \\ & \quad + \lim_{T \rightarrow +\infty} \frac{1}{T} \boldsymbol{U} \begin{bmatrix}
\int_0^T e^{\lambda_1 t} \, \mathrm{d} t&  &  &  & \\ 
& \int_0^T e^{\lambda_2 t} \, \mathrm{d} t &  & & &  \\ 
& &  &  \dots & \\
& &  & & & \int _0^T e^{\lambda_n t} \, \mathrm{d} t
\end{bmatrix} \boldsymbol{U}^{-1} \boldsymbol{\tilde b} \\ 
& = \lim_{T \rightarrow +\infty} \frac{1}{T} \boldsymbol{U} \begin{bmatrix}
T &  &  &  & \\ 
& \frac{1}{\lambda_2}(e^{\lambda_2 T} - 1) &  & & &  \\ 
& &  &  \dots & \\
& &  & & & \frac{1}{\lambda_n} (e^{\lambda_n T} - 1)
\end{bmatrix} \boldsymbol{U}^{-1} \boldsymbol{\tilde b} \\ 
& = \boldsymbol{U} \boldsymbol{W} \boldsymbol{U}^{-1} \boldsymbol{\tilde b} \, , 
\end{split}
\end{equation*}
\normalsize
where $\boldsymbol{W}$ is the $n-$square matrix with only $1$ on the first diagonal element and $0$ otherwise. By~\eqref{definition gamma(kappa;kappastar)}, we know $\gamma(\kappa; \kappa^\ast) \mathds{1} = \boldsymbol{U} \boldsymbol{W} \boldsymbol{U}^{-1} \boldsymbol{\tilde b}$ with $\mathds{1}$ the $n-$dim vector with all entries $1$.
\end{proof}

\subsection{Proof of Lemma~\ref{second derivative of gamma is bounded}} \label{proof lemma second derivative of gamma is bounded}
\begin{proof}
First, we would like to prove that $\kappa \mapsto \psi^{\kappa}$ with $\psi^\kappa$ given by~\eqref{feedback control} is $C^{\infty}([\underline K, \bar K])$. By Proposition~\ref{Existence and Uniqueness for Ergodic Optimal Control} and Theorem~\ref{solve ergodic control problem}, it is equivalent to show that $\kappa \mapsto \boldsymbol{\hat \omega}(\kappa)$ by~\eqref{solve hat omega} is $C^{\infty}([\underline K, \bar K])$. Let us consider a matrix $\boldsymbol{D}$ as 
\begin{equation} \label{matrix D}
\boldsymbol{D} = Diag(\, d_{\bar q}, d_{\bar q - 1}, \dots, d_{\underline q} \,), 
\end{equation}
with $d_q = \prod_{\bar q - 1, \dots, q} \sqrt{\frac{\lambda^{-}}{\lambda^{+}}} $ for $q \in \{\bar q-1, \bar q-2, \dots, \underline q \}$ and $d_{\bar q} = 1$. Then $\boldsymbol{C}$ given in Theorem~\ref{solve ergodic control problem} can be transformed into a real and symmetric tridiagonal matrix $\boldsymbol{\tilde{C}}$ by $\boldsymbol{\tilde{C}} = \boldsymbol{D}^{-1} \boldsymbol{C} \boldsymbol{D}$ with entries 
\begin{align} \label{matrix tilde C}
\boldsymbol{\tilde{C}}(i, q) = \begin{cases}
- \kappa \phi q^2 - \kappa \gamma(\kappa), & \text{if $i = q$},\\
\sqrt{\lambda^{+} \lambda^{-}} e^{-1}, & \text{if $i = q-1$ or $q+1$}, \\
0, & \text{otherwise.}
\end{cases}
\end{align}

As there exists an $\boldsymbol{\hat \omega}$ solves~\eqref{solve hat omega}, there must be an eigenvalue $\lambda_1 = 0$ of $\boldsymbol{C}$, or $\boldsymbol{\tilde{C}}$ as they are similar, with the corresponding eigenvector $\boldsymbol{\hat \omega}$ of $\boldsymbol{C}$ and $\boldsymbol{\tilde \omega}$ of $\boldsymbol{\tilde{C}}$. Therefore, 
\begin{equation}
0 = \lambda_1 \boldsymbol{\tilde \omega} = \boldsymbol{\tilde{C}} \boldsymbol{\tilde \omega} = \boldsymbol{D}^{-1} \boldsymbol{C} \boldsymbol{D} \boldsymbol{\tilde \omega}.
\end{equation}
As $\boldsymbol{D}$ is non-singular, we have $\boldsymbol{\hat \omega} = \boldsymbol{D} \boldsymbol{\tilde \omega}$. 
As shown in Appendix~\ref{continuity ergodic constant}, $\gamma(\kappa)$ is $C^{\infty}([\underline K, \bar K])$, therefore $\kappa \mapsto \boldsymbol{\tilde{C}}(\kappa)$ given by~\eqref{matrix tilde C} is $C^{\infty}([\underline K, \bar K])$. 
Hence, by \cite[Result 7.2; Theorem 7.6]{alekseevsky1998choosing}, the eigenvector $\boldsymbol{\tilde \omega}$ can be parameterised smoothly on $\kappa \in [\underline K, \bar K]$. Obviously, $\boldsymbol{D}$ is independent of $\kappa$. Therefore, we can conclude that $\boldsymbol{\hat \omega}(\kappa) = \boldsymbol{D} \boldsymbol{\tilde \omega}(\kappa)$ is $C^{\infty}([\underline K, \bar K])$.

Proposition~\ref{analytical solution for gamma(kappastar)} gives an analytical solution to $\gamma(\kappa;\kappa^\ast)$ but it is not enough to show $\kappa \mapsto \gamma(\kappa;\kappa^\ast)$ is twice differentiable, as $\boldsymbol{U}$'s columns are the eigenvectors of $\boldsymbol{\tilde{A}}_0$ whereas $\boldsymbol{\tilde{A}}_0$ is not a self-adjoint or normal operator. Let us consider the matrix $\boldsymbol{V}$ as 
\begin{equation} \label{matrix V}
\boldsymbol{V} = Diag(\, v_{\bar q}, v_{\bar q - 1}, \dots, v_{\underline q} \,), 
\end{equation}
with $v_q = \frac{\prod_{i=\bar q - 1}^{q} \sqrt{\lambda^{-}} e^{- \frac{1}{2}\kappa^\ast \psi^{\kappa, -}(i)}}{\prod_{i=\bar q}^{q+1} \sqrt{\lambda^{+}} e^{- \frac{1}{2}\kappa^\ast \psi^{\kappa, +}(i)} }$ for $q \in \{\bar q-1, \bar q-2, \dots, \underline q \}$ and $v_{\bar q} = 1$. Then $\boldsymbol{J} = \boldsymbol{V}^{-1} \boldsymbol{\tilde{A}}_0 \boldsymbol{V}$ is a real and symmetric tridiagonal matrix with the entries as 
\begin{align} \label{matrix J}
\boldsymbol{J}(i,q) = \begin{cases}
- (\lambda^{+} e^{- \kappa^\ast \psi^{\kappa, +}(q)} \mathds{1}_{q > \underline q} + \lambda^{-} e^{- \kappa^\ast \psi^{\kappa, -}(q)}\mathds{1}_{q < \bar q} ), & \text{if $i = q$}, \\
\sqrt{\lambda^{+} \lambda^{-}} e^{- \frac{\kappa^\ast}{\kappa}}, & \text{if $i = q+1$}, \\
\sqrt{\lambda^{+} \lambda^{-}} e^{- \frac{\kappa^\ast}{\kappa}}, & \text{if $i = q-1$}, \\
0, & \text{otherwise. }
\end{cases}
\end{align}
There exists an orthogonal matrix $\boldsymbol{U'}$ whose columns are the eigenvectors of $\boldsymbol{J}$ such that $\boldsymbol{\Lambda} = \boldsymbol{U'}^{-1} \boldsymbol{J} \boldsymbol{U'}$, where $\boldsymbol{\Lambda}$ is the diagonal matrix with eigenvalues of $\boldsymbol{\tilde{A}}_0$ since $\boldsymbol{J}$ is similar to $\boldsymbol{\tilde{A}}_0$. Moreover, we have
\begin{equation*}
\boldsymbol{\Lambda} = \boldsymbol{U'}^{-1} \boldsymbol{J} \boldsymbol{U'} = \boldsymbol{U'}^{-1} \boldsymbol{V}^{-1} \boldsymbol{\tilde{A}}_0 \boldsymbol{V} \boldsymbol{U'} =\boldsymbol{U}^{-1} \boldsymbol{\tilde{A}}_0 \boldsymbol{U}, 
\end{equation*}
hence $\boldsymbol{U} = \boldsymbol{V} \boldsymbol{U'}$. Therefore, 
\begin{equation*}
\gamma(\kappa; \kappa^\ast) \mathds{1} = \boldsymbol{V} \boldsymbol{U'} \boldsymbol{W} \boldsymbol{U'}^{-1} \boldsymbol{V}^{-1} \boldsymbol{\tilde b}. 
\end{equation*}
Let $\boldsymbol{U'} = [\boldsymbol{u}_1, \boldsymbol{u}_2, \dots, \boldsymbol{u}_n]$ where $\boldsymbol{u}_i$ is the corresponding eigenvector of $\boldsymbol{J}$ with the eigenvalue $\lambda_i$. Then
\begin{equation*}
\boldsymbol{U'} \boldsymbol{W} \boldsymbol{U'}^{-1} = Diag(\boldsymbol{u}_1) \boldsymbol{W'} Diag(\boldsymbol{u}_1 ) ,
\end{equation*}
where $\boldsymbol{W'}$ is the $n-$square matrix with all entries $1$ and $\boldsymbol{u}_1$ is the eigenvector of $\boldsymbol{J}$ with $\lambda_1 = 0$. 
\begin{equation} \label{closed-form gamma kappastar}
\gamma(\kappa; \kappa^\ast) \mathds{1} = \boldsymbol{V} Diag(\boldsymbol{u}_1) \boldsymbol{W'} Diag(\boldsymbol{u}_1 )  \boldsymbol{V}^{-1} \boldsymbol{\tilde b}. 
\end{equation}
Clearly, $\psi^{\kappa} \mapsto e^{-\kappa^\ast \psi^{\kappa}}$ is a smooth function and $\kappa \mapsto \psi^{\kappa}$ is $C^{\infty}([\underline K, \bar K])$. Therefore, $\boldsymbol{V}(\kappa)$~\eqref{matrix V}, $\boldsymbol{V}^{-1}(\kappa)$, $\boldsymbol{\tilde b}(\kappa)$~\eqref{vector tilde b} and $\boldsymbol{J}(\kappa)$ are $C^{\infty}([\underline K, \bar K])$. 
Moreover, $\boldsymbol{J}$ is a real and symmetric tridiagonal matrix with the simple eigenvalue $\lambda_1$. By \cite[Result 7.2; Theorem 7.6]{alekseevsky1998choosing}, we know that $\kappa \mapsto \boldsymbol{u}_1(\kappa)$ can be chosen to be parameterised smoothly in $\kappa$.  Hence, by~\eqref{closed-form gamma kappastar}, $\kappa \mapsto \gamma(\kappa; \kappa^\ast)$ is at least $C^{2}([\underline K, \bar K])$ and $\frac{\mathrm{d}^2}{\mathrm{d} \kappa^2} \gamma(\kappa; \kappa^\ast)$ is bounded on the compact set $\kappa \in [\underline K, \bar K]$.  
\end{proof}

\subsection{Proof of Lemma \ref{ergodic control stability}} \label{proof lemma ergodic control stability}
\begin{proof}
The state process $\big( Q_t^{\psi^{\kappa} ; \kappa^\ast} \big)_{t \geq 0}$~\eqref{inventory under kappastar} is driven by two independent Poisson jump processes with intensities $\lambda^+ e^{- \kappa^\ast \delta^+}$ and $\lambda^- e^{- \kappa^\ast \delta^-}$, respectively. 
The depths $\delta^\pm$ are uniquely and continuously determined by the ergodic optimal control $q \mapsto \psi^\kappa(q)$ given the agent's current position $q = Q_t$ at time $t$, where $\psi^\kappa(q)$ given by~\eqref{feedback control} is the ergodic optimal control under the misspecified parameter $\kappa$. 
As a result, the transition probability at any time $t > 0$ depends only on the current state, implying that the stochastic process $(Q_t)_{t \geq 0}$--where we omit the superscript for notational simplicity--satisfies the Markov property. Furthermore, the state space $\Omega^{Q} = [\underline q, \bar q] \cap \mathbb Z$ is discrete and finite. Hence $(Q_t)_{t \geq 0}$ can be equivalently represented as a continuous-time Markov chain with a finite state space. 

Let $\boldsymbol Q = (\boldsymbol{Q}_{ij})_{i,j \in \Omega^{Q}}$ denote the transition rate matrix, where the indices are labelled from $\bar q$ to $\underline q$. 
Each entry $\boldsymbol{Q}_{ij}$ represents the instantaneous transition rate of the process from state $i$ to state $j$, which can be derived from the infinitesimal generator of $(Q_t)_{t\geq0}$. 
Hence the entries of $\boldsymbol{Q}$ are
\begin{align} \label{transition matrix P} 
\boldsymbol Q_{ij} = \begin{cases}
- \big( \lambda^{+} e^{- \kappa^\ast \psi^{\kappa, +}(i)} \mathds{1}_{i > \underline q} + \lambda^{-} e^{- \kappa^\ast \psi^{\kappa, -}(i)} \mathds{1}_{i < \bar q} \big) , & \text{if $i = j$},\\
\lambda^{-} e^{- \kappa^\ast \psi^{\kappa, -}(i)}, & \text{if $i = j-1$}, \\
\lambda^{+} e^{- \kappa^\ast \psi^{\kappa, +}(i)}, & \text{if $i = j+1$}, \\
0, & \text{otherwise}.
\end{cases}
\end{align}
From~\eqref{transition matrix P}, one may notice that $\big( Q_t^{\psi^{\kappa} ; \kappa^\ast} \big)_{t \geq 0}$ is equivalent to a general Birth-Death process with a finite state space. Hence there exists a unique equilibrium distribution $\pi$, for $\big( Q_t^{\psi^{\kappa} ; \kappa^\ast} \big)_{t \geq 0}$ when $t$ goes to infinity \cite[Theorem 5.5.3]{resnick1992adventures}. Moreover, $\pi$ is uniquely determined by 
\begin{equation*}
\pi \boldsymbol{Q} = 0, \quad \text{subject to } \sum_{q \in \Omega^Q} \pi_q = 1\, .
\end{equation*}
\end{proof}

\subsection{Proof of Proposition~\ref{expectation under equi is 0}} \label{proof of Proposition expectation under equi is 0}
\begin{proof}
By Proposition \ref{linear PDE for gamma} and by~\eqref{L function}, the equation \eqref{linear system for gamma(kappa;kappastar)} can be expressed as
\small
\begin{equation*}
\begin{split}
\gamma(\kappa; \kappa^\ast) = - \phi q^2  +  & \lambda^{+} e^{-\kappa^\ast \psi^{\kappa, +}(q)} \Big( \hat{v}^{\psi^\kappa}(q-1; \kappa^\ast) - \hat{v}^{\psi^\kappa}(q; \kappa^\ast) + \psi^{\kappa, +}(q)\Big)  \mathds{1}_{q > \underline q}\\
+ & \lambda^{-} e^{-\kappa^\ast \psi^{\kappa, -}(q)} \Big( \hat{v}^{\psi^\kappa}(q+1; \kappa^\ast) - \hat{v}^{\psi^\kappa}(q; \kappa^\ast) + \psi^{\kappa, -}(q)\Big) \mathds{1}_{q < \bar q} \, . 
\end{split}
\end{equation*}
\normalsize
Take integral from  $0$ to $T$ and then take the expectation with respect to the probability measure $\pi^{\psi^\kappa; \kappa^\ast}$ under the controlled SDE~\eqref{inventory under kappastar} with $Q_0 \sim \pi^{\psi^\kappa; \kappa^\ast}$, we have
\small
\begin{equation*}
\begin{aligned}
& \int_{\Omega^{Q}} \int_0^T \gamma(\kappa; \kappa^\ast) \, \mathrm{d} t \, \mathrm{d} \pi^{\psi^\kappa; \kappa^\ast} = \int_{\Omega^{Q}} \int_0^T \Big( \lambda^{+} \psi^{\kappa, +}(q) e^{-\kappa^\ast \psi^{\kappa, +}(q)} + \lambda^{-} \psi^{\kappa, -}(q) e^{-\kappa^\ast \psi^{\kappa, -}(q)} \\
& - \phi q^2 + \lambda^{+} e^{-\kappa^\ast \psi^{\kappa, +}} \big( \hat{v}^{\psi^\kappa}(q-1; \kappa^\ast) - \hat{v}^{\psi^\kappa}(q; \kappa^\ast) \big) \mathds{1}_{q > \underline q} \\
& + \lambda^{-} e^{-\kappa^\ast \psi^{\kappa, -}} \big( \hat{v}^{\psi^\kappa}(q+1; \kappa^\ast) - \hat{v}^{\psi^\kappa}(q; \kappa^\ast) \big) \mathds{1}_{q < \bar q} \Big) \, \mathrm{d} t \, \mathrm{d} \pi^{\psi^\kappa; \kappa^\ast} \, ,
\end{aligned}
\end{equation*}
\normalsize
where we omit the indicator function in the first line since $\psi^{\kappa, \pm}(q)  e^{-\kappa^\ast \psi^{\kappa, \pm}(q)} = 0$ when $q = \bar q, \underline q$, respectively. 
As $\gamma(\kappa; \kappa^\ast)$ is independent of $q$ and $t$ by Proposition \ref{linear PDE for gamma}, dividing by $T > 0$ we get
\small
\begin{equation*}
\begin{split}
& \gamma  (\kappa; \kappa^\ast) \\
&  =  \frac1T  \mathbb{E}_{\pi^{\psi^\kappa; \kappa^\ast}}  \Big[ \int_0^T   \lambda^{+} \psi^{\kappa, +} e^{-\kappa^\ast \psi^{\kappa, +}(Q_t^{\psi^{\kappa}; \kappa^\ast})}  + \lambda^{-} \psi^{\kappa, -} e^{-\kappa^\ast \psi^{\kappa, -}(Q_t^{\psi^{\kappa}; \kappa^\ast})} - \phi (Q_t^{\psi^{\kappa}; \kappa^\ast})^2 \, \mathrm{d} t \Big] \\
& \qquad + \frac1T\int_{\Omega^Q} \int_0^T \Big( \lambda^{+} e^{-\kappa^\ast \psi^{\kappa, +}(q)} \big( \hat{v}^{\psi^\kappa}(q-1; \kappa^\ast) - \hat{v}^{\psi^\kappa}(q; \kappa^\ast ) \big)  \mathds{1}_{q > \underline q}  \\
& \qquad \qquad + \lambda^{-} e^{-\kappa^\ast \psi^{\kappa, -}(q)} \big( \hat{v}^{\psi^\kappa}(q+1; \kappa^\ast) - \hat{v}^{\psi^\kappa}(q; \kappa^\ast) \big) \mathds{1}_{q < \bar q} \Big) \, \mathrm{d} t \, \mathrm{d} \pi^{\psi^\kappa; \kappa^\ast}. 
\end{split}
\end{equation*}
\normalsize
Moreover, for the initial distribution $\pi^{\psi^\kappa; \kappa^\ast}$, we have 
\begin{equation*}
\begin{split}
& \gamma(\kappa; \kappa^\ast)  = \lim_{T \rightarrow + \infty}\frac{1}{T} v_{0}^{\psi^{\kappa}}(0, q; T; \kappa^\ast) 
\\
& = \lim_{T \rightarrow + \infty} \frac{1}{T} \mathbb{E}_{\pi^{\psi^\kappa; \kappa^\ast}} \Big[  
\int_0^T \Big(\lambda^{+} \psi^{\kappa, +} e^{- \kappa^{\ast} \psi^{\kappa, +}} +  \lambda^{-} \psi^{\kappa, -} e^{- \kappa^{\ast} \psi^{\kappa, -}} - \phi (Q^{\psi^{\kappa}; \kappa^\ast}_t)^2 \Big) \, \mathrm{d} t \Big] 
\end{split}
\end{equation*}
Hence
\begin{equation*}
\begin{split}
0 = \int_{\Omega^{Q}}  \lambda^{+} e^{-\kappa^{\ast} \psi^{\kappa, +}} \Big( & \hat{v}^{\psi^\kappa}(q-1; \kappa^\ast) - \hat{v}^{\psi^\kappa}(q; \kappa^\ast) \Big)  \mathds{1}_{q > \underline q}  \\ 
& + \lambda^{-} e^{-\kappa^\ast \psi^{\kappa, -}} \Big( \hat{v}^{\psi^\kappa}(q+1; \kappa^\ast) - \hat{v}^{\psi^\kappa}(q; \kappa^\ast) \Big) \mathds{1}_{q < \bar q}  \, \mathrm{d} \pi^{\psi^\kappa; \kappa^\ast},
\end{split}
\end{equation*}
which concludes the proof.
\end{proof}

\subsection{Proof of Theorem~\ref{performance gap}} \label{proof theorem performance gap}
\begin{proof}
By~\eqref{linear system for gamma(kappa;kappastar)} in Proposition \ref{linear PDE for gamma} and~\eqref{L function}, we have, 
\begin{equation*}
\begin{split}
\lambda^{+}  \psi^{\kappa, +} (q) e^{-\kappa^\ast \psi^{\kappa, +}(q)} + \lambda^{-} \psi^{\kappa, -}(q)  & e^{-\kappa^\ast \psi^{\kappa, -} (q)}  - \phi q^2 = \\ 
\gamma(\kappa; \kappa^\ast) & + \lambda^{+} e^{-\kappa^\ast \psi^{\kappa, +}} \big( \hat{v}^{\psi^\kappa}(q-1; \kappa^\ast) - \hat{v}^{\psi^\kappa}(q; \kappa^\ast) \big)  \mathds{1}_{q > \underline q} \\
& + \lambda^{-} e^{-\kappa^\ast \psi^{\kappa, -}} \big( \hat{v}^{\psi^\kappa}(q+1; \kappa^\ast) - \hat{v}^{\psi^\kappa}(q; \kappa^\ast) \big) \mathds{1}_{q < \bar q},
\end{split}
\end{equation*}
where we ignore the indicator functions in the first line since $\psi^{\kappa, \pm}(q)  e^{-\kappa^\ast \psi^{\kappa, \pm}(q)} = 0$ for $q = \bar q, \underline q$, respectively. 
Also, by definition, we know that
\begin{equation*}
f(t, q, \delta^{\pm}; \kappa^\ast) = \lambda^{+} \delta^{+} e^{- \kappa^\ast \delta^{+}} + \lambda^{-} \delta^{-} e^{- \kappa^\ast \delta^{-}} - \phi q^2.
\end{equation*}
Therefore, by Definition~\ref{regret def} of regret,
\small
\begin{equation} \label{regret equation}
\begin{split}
& \mathcal{R}^{\Psi}(T)  = \gamma(\kappa^\ast) T - \mathbb{E}_q \Big[ \int_0^T f(t, Q_t^{\psi^{\kappa_t}; \kappa^\ast}, \psi^{\kappa_t}; \kappa^\ast) \, \mathrm{d} t \Big] \\
& = \mathbb{E} \Big[\int_0^T \big( \gamma(\kappa^\ast; \kappa^\ast) - \gamma(\kappa_t; \kappa^\ast) \big) \, \mathrm{d} t \Big] \\
& \,\, \, \, - \mathbb{E}_q \Big[ \int_0^T  \lambda^{+} e^{-\kappa^\ast \psi^{\kappa_t, +}(Q_t^{\psi^{\kappa_t}; \kappa^\ast})} \Big( \hat{v}^{\psi^{\kappa_t}}(Q_t^{\psi^{\kappa_t}; \kappa^\ast}-1; \kappa^\ast) - \hat{v}^{\psi^{\kappa_t}}(Q_t^{\psi^{\kappa_t}; \kappa^\ast}; \kappa^\ast) \Big)  \mathds{1}_{Q_t^{\psi^{\kappa_t}; \kappa^\ast} > \underline q} \\
& \quad \quad + \lambda^{-} e^{-\kappa^\ast \psi^{\kappa_t, -}(Q_t^{\psi^{\kappa_t}; \kappa^\ast})} \Big( \hat{v}^{\psi^{\kappa_t}}(Q_t^{\psi^{\kappa_t}; \kappa^\ast}+1; \kappa^\ast) - \hat{v}^{\psi^{\kappa_t}}(Q_t^{\psi^{\kappa_t}; \kappa^\ast}; \kappa^\ast) \Big) \mathds{1}_{ Q_t^{\psi^{\kappa_t}; \kappa^\ast} < \bar q} \, \mathrm{d} t \Big] \, . \\
\end{split}
\end{equation}
\normalsize
Let us define
\begin{equation} \label{h function}
\begin{split}
h(\kappa_t, q) = \lambda^{+} e^{-\kappa^\ast \psi^{\kappa_t, +}(q)} & \big( \hat{v}^{\psi^{\kappa_t}}(q-1; \kappa^\ast)  - \hat{v}^{\psi^{\kappa_t}}(q; \kappa^\ast) \big) \mathds{1}_{q > \underline q} \\ & + \lambda^{-} e^{-\kappa^\ast \psi^{\kappa_t, -}(q)} \big(\hat{v}^{\psi^{\kappa_t}}(q+1; \kappa^\ast) - \hat{v}^{\psi^{\kappa_t}}(q; \kappa^\ast) \big) \mathds{1}_{q < \bar q}.
\end{split}
\end{equation}
Then
\begin{equation*}
\begin{split}
\mathcal{R}^{\Psi}(T) &  = \mathbb E \Big[ \int_0^T \big( \gamma(\kappa^\ast; \kappa^\ast) - \gamma(\kappa_t; \kappa^\ast) \big) \, \mathrm{d} t \Big]  - \mathbb{E}_q \Big[ \int_0^T h(\kappa_t, Q_t^{\psi^{\kappa_t}, \kappa^\ast}) \, \mathrm{d} t  \Big] \\
& \leq C \mathbb E \Big[ \int_0^T \left| \kappa_t - \kappa^\ast \right|^2 \, \mathrm{d} t \Big] - \int_0^T \int_{\Omega} h(\kappa_t, q) \, d \pi^{\psi^{\kappa_t}; \kappa^\ast}_t \, \mathrm{d} t \,,
\end{split}
\end{equation*}
where the last inequality comes from Corollary~\ref{gamma bounded by difference of kappa}. Moreover, $\pi^{\psi^{\kappa_t}; \kappa^\ast}_t$ is the probability measure evolves under control $\psi^{\kappa_t}$ and parameter $\kappa^\ast$ with $Q_0 \sim q$. By Lemma~\ref{bounded for rv_r under model mispecification} (2), there exists a constant $\bar h > 0$ such that $ \left| h(\kappa_t, q) \right| \leq \bar h$ for
$\kappa_t \in [\underline K, \bar K]$. Therefore, 
\small
\begin{equation*}
\begin{split}
\mathcal{R}^{\Psi}(T) & \leq C \mathbb E \Big[ \int_0^T \left| \kappa_t - \kappa^\ast \right|^2 \, \mathrm{d} t \Big] + \left| \int_0^T \int_{\Omega} h(\kappa_t, q) \, \mathrm{d} \pi^{\psi^{\kappa_t}; \kappa^\ast}_t \, \mathrm{d} t \right| \\ 
& \leq C \mathbb E \Big[ \int_0^T \left| \kappa_t - \kappa^\ast \right|^2 \, \mathrm{d} t \Big] + \left| \bar h \int_0^T \int_{\Omega} \frac{h(\kappa_t, q)}{\bar h} (\, \mathrm{d} \pi^{\psi^{\kappa_t}; \kappa^\ast}_t - \mathrm{d} \pi^{\psi^{\kappa_t}; \kappa^\ast} )\, \mathrm{d} t \right| \\ 
& \quad + \left| \int_0^T \int_{\Omega} h(\kappa_t, q) \, \mathrm{d} \pi^{\psi^{\kappa_t}; \kappa^\ast} \, \mathrm{d} t \right| \\
& \leq C \mathbb E \Big[ \int_0^T \left| \kappa_t - \kappa^\ast \right|^2 \, \mathrm{d} t \Big] + \left| \bar h \int_0^T \int_{\Omega} \frac{h(\kappa_t, q)}{\bar h} (\, \mathrm{d} \pi^{\psi^{\kappa_t}; \kappa^\ast}_t - \mathrm{d} \pi^{\psi^{\kappa_t}; \kappa^\ast} )\, \mathrm{d} t \right|, 
\end{split}
\end{equation*}
\normalsize
where $\pi^{\psi^{\kappa_t}; \kappa^\ast}$ is the probability measure of equilibrium distribution under control $\psi^{\kappa_t}$ and parameter $\kappa^\ast$, and the last step uses Proposition \ref{expectation under equi is 0}. Then, 
\begin{equation*}
\begin{split}   
\mathcal{R}^{\Psi}(T) & \leq C \mathbb E \Big[ \int_0^T \left| \kappa_t - \kappa^\ast \right|^2 \, \mathrm{d} t \Big] + \bar h \int_0^T \int_{\Omega} \left| \mathrm{d} \pi^{\psi^{\kappa_t}; \kappa^\ast}_t  - \mathrm{d} \pi^{\psi^{\kappa_t}; \kappa^\ast} \right| \, \mathrm{d} t \\
& = C \mathbb E \Big[ \int_0^T \left| \kappa_t - \kappa^\ast \right|^2 \, \mathrm{d} t \Big] + 2 \bar h \int_0^T \left\| \pi^{\psi^{\kappa_t}; \kappa^\ast}_t - \pi^{\psi^{\kappa_t}; \kappa^\ast} \right\|_{TV} \, \mathrm{d} t ,\\
\end{split}
\end{equation*}
where $\| \cdot\|_{TV}$ denotes the total variation.
By Lemma \ref{convergence theorem}, we have
\begin{equation*}
\begin{split}   
\mathcal{R}^{\Psi}(T)
& \leq C \mathbb E \Big[ \int_0^T \left| \kappa_t - \kappa^\ast \right|^2 \, \mathrm{d} t \Big] + 2 \bar h \int_0^T C(\kappa_t) \alpha(\kappa_t)^t \, \mathrm{d} t  \\
& \leq C \mathbb E \Big[ \int_0^T \left| \kappa_t - \kappa^\ast \right|^2 \, \mathrm{d} t \Big] + 2 \bar h \bar C \int_0^T \bar{\alpha}^{t} \, \mathrm{d} t \\ 
& = C \mathbb E \Big[ \int_0^T \left| \kappa_t - \kappa^\ast \right|^2 \, \mathrm{d} t \Big] + 2 \bar h \bar C \frac{ 1 - e^{T \ln \bar \alpha}  }{\ln (\bar \alpha^{-1})} \\
& \leq C \mathbb E \Big[ \int_0^T \left| \kappa_t - \kappa^\ast \right|^2 \, \mathrm{d} t \Big] + \frac{2 \bar h \bar C }{\ln (\alpha^{-1})}\,,
\end{split}
\end{equation*}
where $\bar C = \sup_{\kappa_t} C(\kappa_t) > 0$ and $0 < \bar{\alpha} = \sup_{\kappa_t} \alpha(\kappa_t) < 1$ for $\kappa_t \in [\underline K, \bar K]$. The existence of $\bar C$ and $\bar{\alpha}$ can be concluded by the following discussion. 
As shown in Appendix~\ref{proof lemma ergodic control stability}, the state dynamics can be represented by a continuous-time Markov chain. 
Therefore, the convergence rate $\alpha$ is bounded by the exponential of the second largest eigenvalue of the transition rate matrix $\boldsymbol{Q}$ by the Kolmogorov forward equation~\cite[Chapter 6.6]{anderson2012continuous}. 
Since the transition rate matrix $\boldsymbol{Q}$~\eqref{transition matrix P} is tridiagonal, its eigenvalues are simple, i.e. the multiplicity is 1, implying that the eigenvalues are continuous with respect to $\kappa$. 
Therefore, there exists $\bar \alpha = \sup_{\kappa \in [\underline K, \bar K]} \alpha(\kappa)$. Moreover, $C$ can be given by the norm of the eigenvectors for $\boldsymbol{Q}$. 
By \cite{acker1974absolute}, if all the eigenvalues of the $\boldsymbol{Q}$ are simple, the corresponding eigenvectors can be chosen absolutely continuous on $\kappa \in [\underline K, \bar K]$, hence the existence of $\bar C$. 
Let $C_1 = C > 0, C_2 = 2 \bar h \bar C > 0$ and $0 < \alpha = \bar \alpha < 1$, we conclude the result.  
\end{proof}

\subsection{Proof of Concentration Inequality} \label{proof of concentration inequality}
\subsubsection{Proof of Proposition~\ref{prop: concentration rescaling sub-Gaussian}} \label{proof prop  concentration rescaling sub-Gaussian}
\begin{proof}
Let $Z_n := f(\delta_n) (Y_n - e^{-\kappa^\ast \delta_n})$. 
Since $-\|f\|_\infty \leq Z_n \leq \|f\|_\infty$ and since $\mathbb{E}\big[Z_n\big| (\delta_n)_{n=1}^N\big] = 0$, we get that
$$\mathbb{E} \left[ \exp \left( \lambda Z_n \right)  \big| (\delta_n)_{n=1}^N \right] \leq \exp \left(\frac{1}{2} \lambda^2 \|f\|_\infty^2 \right).$$
Therefore, by the Markov inequality, for any $h > 0$,
\begin{align*}
\mathbb{P}^\ast \left( \sum_{n=1}^N Z_n > h \bigg| (\delta_n)_{n=1}^N \right) \leq \inf_{\lambda \in \mathbb{R} } \exp \left(\frac{N}{2} \lambda^2 \|f\|_\infty^2 -  \lambda h \right) = \exp \left( - \frac{h^2}{2 N \|f\|_\infty^2 }\right).
\end{align*}
In particular,
\begin{align*}
\mathbb{P}^\ast \left( \sum_{n=1}^N Z_n > \|f\|_\infty \sqrt{2N \ln \big( \tfrac{2}{\varepsilon} \big)}  \bigg| (\delta_n)_{n=1}^N \right) \leq \frac{\varepsilon}{2}.
\end{align*}
By applying the same argument to $(-Z_n)_{n=1}^N$, we obtain the reverse inequality and prove the required result conditional on $(\delta_n)_{n=1}^N$. Taking the tower property, we achieve the required claim. 
\end{proof}

\subsubsection{Proof of Proposition~\ref{prop: lower bound Fisher}}
\label{proof prop lower bound Fisher}
\begin{proof}

By \eqref{second derivative of the log-likelihood function}, we have 
\small
\begin{align*}
\frac{\mathrm{d}^2}{\mathrm{d} \kappa^2} \Tilde{\ell}_N (\kappa)  = - & \left( \sum_{n=1}^N  (1-Y_n) \delta_n^2 \frac{e^{- \kappa \delta_n}}{(1 - e^{- \kappa \delta_n})^2}
+ \delta_0^2 \frac{e^{- \kappa \delta_0}}{(1 - e^{- \kappa \delta_0})^2}\right)  \mathds{1}_{\kappa \leq \bar{K}}  \\
& \qquad - \left( \sum_{n=1}^N  (1-Y_n) \delta_n^2 \frac{e^{-\bar{K} \delta_n}}{(1 - e^{- \bar{K} \delta_n})^2} +  \delta_0^2  \frac{e^{-\bar{K} \delta_0}}{(1 - e^{- \bar{K} \delta_0})^2} \right) \mathds{1}_{\kappa > \bar{K}}.
\end{align*}
\normalsize
By observing that $x \mapsto \frac{e^{-x}}{(1-e^{-x})^2}$ is decreasing for any $x \geq 0$ and $(1-Y_n) \geq 0$ for all $n \in \mathbb N$,
\begin{align*}
-\frac{\mathrm{d}^2}{\mathrm{d} \kappa^2} \tilde{\ell}_N(\kappa) 
&\geq  \sum_{n=1}^N   (1-Y_n) \delta_n^2 \left( \frac{e^{- \kappa \delta_n}}{(1 - e^{- \kappa \delta_n})^2} \mathds{1}_{\kappa \leq \bar{K}} + \frac{e^{-\bar{K} \delta_n}}{(1 - e^{- \bar{K} \delta_n})^2}  \mathds{1}_{\kappa > \bar{K}}  \right) \\
& \geq \sum_{n=1}^N (1-Y_n) \delta_n^2 \left( \frac{e^{-\bar{K} \delta_n}}{(1 - e^{- \bar{K} \delta_n})^2}  \right) \, .
\end{align*}
Hence, for any policy $(\delta_n)_{n=1}^\infty$ taking values in $[\underline{\delta}, \bar{\delta}] \cup \{+\infty\}$, it holds that for any $\kappa > 0$,
\begin{align*}
-\frac{\mathrm{d}^2}{\mathrm{d} \kappa^2} \tilde{\ell}_N(\kappa)  \geq \sum_{n=1}^N (1-Y_n) \left(    \frac{\underline{\delta}^2e^{-\bar{K} \bar{\delta}}}{(1 - e^{- \bar{K} \bar{\delta}})^2}  \right).
\end{align*}
By Proposition \ref{prop: concentration rescaling sub-Gaussian}, it holds that with $\mathbb P^\ast$-probability at least $1-\varepsilon$
$$\left| \sum_{n=1}^N (Y_n - e^{-\kappa^\ast \delta_n}) \left(    \frac{\underline{\delta}^2e^{-\bar{K} \bar{\delta}}}{(1 - e^{- \bar{K} \bar{\delta}})^2}  \right) \right| \leq \left(    \frac{\underline{\delta}^2e^{-\bar{K} \bar{\delta}}}{(1 - e^{- \bar{K} \bar{\delta}})^2}  \right)  \sqrt{2N \ln \left( \tfrac{2}{\varepsilon} \right) }.$$
In particular, on this event,
\begin{align*}
& \inf_{\kappa > 0}  \Big( -\frac{\mathrm{d}^2}{\mathrm{d} \kappa^2} \tilde{\ell}_N(\kappa) \Big) \geq \sum_{n=1}^N (1-Y_n) \left(    \frac{\underline{\delta}^2e^{-\bar{K} \bar{\delta}}}{(1 - e^{- \bar{K} \bar{\delta}})^2}  \right) \\
&\qquad \geq \sum_{n=1}^N (1-e^{-\kappa^\ast \delta_n}) \left(    \frac{\underline{\delta}^2e^{-\bar{K} \bar{\delta}}}{(1 - e^{- \bar{K} \bar{\delta}})^2}  \right) - \left| \sum_{n=1}^N (Y_n - e^{-\kappa^\ast \delta_n}) \left(    \frac{\underline{\delta}^2e^{-\bar{K} \bar{\delta}}}{(1 - e^{- \bar{K} \bar{\delta}})^2}  \right) \right| \\
&\qquad \geq  \left(    \frac{\underline{\delta}^2e^{-\bar{K} \bar{\delta}}(1-e^{-\underline K\, \underline{\delta}}) }{(1 - e^{- \bar{K} \bar{\delta}})^2}  \right) N - \left(    \frac{\underline{\delta}^2e^{-\bar{K} \bar{\delta}}}{(1 - e^{- \bar{K} \bar{\delta}})^2}  \right)  \sqrt{2N \ln \left( \tfrac{2}{\varepsilon} \right) }.
\end{align*}
This proves the required statement. 
\end{proof}

\subsubsection{Proof of Proposition~\ref{prop: existence of estimator}}
\begin{proof}

Observe that as $\kappa \to \infty$, $\frac{\mathrm{d}}{\mathrm{d} \kappa} \tilde{\ell}_N(\kappa) \to - \infty$ and as $\kappa \to 0$, $\frac{\mathrm{d}}{\mathrm{d} \kappa} \tilde{\ell}_N(\kappa) \to + \infty$. Hence, the solution exists by continuity. The uniqueness follows from the fact that $ \frac{\mathrm{d}^2}{\mathrm{d} \kappa^2} \tilde{\ell}_N(\kappa) < 0$ for all $\kappa > 0$. 
\end{proof}

\subsubsection{Proof of Proposition~\ref{prop: upper bound score}}
\begin{proof}

Since $\kappa^{\ast} \in [\underline K, \bar K]$, by~\eqref{derivative of the log-likelihood function}, 
\begin{align*}
\frac{\mathrm{d}}{\mathrm{d} \kappa} \Tilde{\ell}_N (\kappa^\ast)  = & \Bigg[\sum_{n=1}^N \Bigg( - \delta_n Y_n + (1-Y_n) \delta_n \frac{e^{-\kappa^\ast \delta_n}}{1 - e^{-\kappa^\ast \delta_n}}\Bigg) + \delta_0 \Big( -1 + \frac{e^{-\kappa^\ast \delta_0}}{1 - e^{-\kappa^\ast \delta_0}} \Big)\Bigg] \\
= & \sum_{n=1}^N \frac{- \delta_n \mathds{1}_{\delta_n < + \infty} }{1 - e^{-\kappa^\ast \delta_n}} \Big( Y_n - e^{-\kappa^\ast \delta_n} \Big) + \delta_0 \Big( -1 + \frac{e^{-\kappa^\ast \delta_0}}{1 - e^{-\kappa^\ast \delta_0}} \Big) \,, 
\end{align*} 
where the last equality comes from the fact that $Y_n = 0$ a.s. when $\delta_n = +\infty$ as discussed in Remark~\ref{rm2}.
Let $f(\delta; \kappa^\ast) = \frac{ - \delta \mathds{1}_{\delta_n < + \infty}}{ 1 - e^{-\kappa^\ast \delta}}$, we know that $f(\delta)$ is bounded given $\delta \in [\underline \delta, \bar \delta] \cup \{ + \infty \}$ and $\sup_{\delta \in [\underline \delta, \bar \delta]} \left| f(\delta; \kappa^\ast) \right| = \left| f(\bar \delta; \kappa^\ast) \right|$. Therefore, by Proposition \ref{prop: concentration rescaling sub-Gaussian}, with $\mathbb P^\ast$-probability at least $1 - \varepsilon$, 
\begin{equation*}
\left| \frac{\mathrm{d}}{\mathrm{d} \kappa} \Tilde{\ell}_N (\kappa^\ast)  \right|  \leq  \left| f(\bar \delta; \kappa^{\ast})\right|  \sqrt{4N\ln(\tfrac{2}{\varepsilon})} + \delta_0 \Big| -1 + \frac{e^{-\kappa^\ast \delta_0}}{1 - e^{-\kappa^\ast \delta_0}} \Big| \,,
\end{equation*}
where $$
C = 2 \left| f(\bar \delta; \kappa^{\ast}) \right| , \quad c = \delta_0 \Big| -1 + \frac{e^{-\kappa^\ast \delta_0}}{1 - e^{-\kappa^\ast \delta_0}} \Big|. 
$$
\end{proof}

\subsubsection{Proof of Theorem~\ref{thm: concentration}} \label{proof theorem thm: concentration}

\begin{proof}
By the mean value theorem, there exists $\lambda \in (0,1)$ such that for $\tilde \kappa = \lambda \kappa_N + (1-\lambda) \kappa^\ast$, 
\begin{align*}
0 = \frac{\mathrm{d}}{\mathrm{d} \kappa} \tilde{\ell}_N(\kappa_N) =\frac{\mathrm{d}}{\mathrm{d} \kappa} \tilde{\ell}_N( \kappa^\ast) + \frac{\mathrm{d}^2}{\mathrm{d} \kappa^2} \tilde{\ell}_N (\tilde \kappa) (\kappa_N - \kappa^\ast)\,.
\end{align*}
Therefore, 
$$ \left| \kappa_N - \kappa^\ast \right| =  \Big( - \frac{\mathrm{d}^2}{\mathrm{d} \kappa^2} \tilde \ell_N(\tilde \kappa)\Big)^{-1} \big| \frac{\mathrm{d}}{\mathrm{d} \kappa} \tilde \ell_N( \kappa^\ast) \big| \leq \Big( \inf_{\kappa > 0} \big(- \frac{\mathrm{d}^2}{\mathrm{d} \kappa^2} \tilde \ell_N(\kappa) \big)\Big)^{-1} \big| \frac{\mathrm{d}}{\mathrm{d} \kappa} \tilde \ell_N( \kappa^\ast) \big|\,.$$
By Proposition \ref{prop: lower bound Fisher} and Proposition \ref{prop: upper bound score}, there exists constants $C,c, C', c' \geq 0$ such that it holds with $\mathbb P^\ast$-probability at least $1-2\varepsilon$ for any $\varepsilon \geq 0$, 
$$ 
\left| \kappa_N - \kappa^\ast \right| \leq  \frac{C'  \sqrt{N \ln \big( \tfrac{2}{\varepsilon} \big)} + c'}{cN - C  \sqrt{N \ln \big( \tfrac{2}{\varepsilon} \big)}}\,.
$$
Let $N_0 = \frac{4C^2}{c^2}$. Then, if $N \geq N_0 \ln(\tfrac{2}{\varepsilon})$, 
\begin{align*}
cN - C  \sqrt{N \ln \big( \tfrac{2}{\varepsilon} \big)} &\geq \frac{cN}{2} + \left( \frac{c}{2} \sqrt{N_0 \ln \big( \tfrac{2}{\varepsilon} \big)} \right) \sqrt{N} - C  \sqrt{N \ln \big( \tfrac{2}{\varepsilon} \big)} = \frac{cN}{2}\,.
\end{align*}
Therefore, it holds with $\mathbb P^\ast$-probability at least $1-2\varepsilon$ such that for any $\varepsilon \geq 0$, 
$$ 
\left| \kappa_N - \kappa^\ast \right| \leq  \frac{2 C'}{c} N^{-1/2} \sqrt{ \ln \big( \tfrac{2}{\varepsilon} \big)} + \frac{2 c'}{c} N^{-1}\,.
$$

\end{proof}

\subsection{Proof of Corollary~\ref{cor}} \label{proof corollary}
\begin{proof}
By choosing ${N_0}$ to be sufficiently large, we can guarantee that if $N \geq N_0 \ln \big( \tfrac{2}{\varepsilon} \big)$, then
$N \geq \tilde{N_0} \ln \big( \tfrac{2 \pi^2 N^2}{3 \varepsilon} \big)$ where $\tilde{N_0}$ is a constant given in Theorem~\ref{thm: concentration}. 

In particular, for such $N$, Theorem~\ref{thm: concentration} holds with $\varepsilon_N = \tfrac{3 \varepsilon}{\pi^2 N^2}$. Let $A_N$ denote the corresponding event. We can see that
$$\mathbb P^\ast \Bigg( \bigcup_{N \geq N_0 \ln \big( \tfrac{2}{\varepsilon} \big)} A_N^c\Bigg) \leq \sum_{N \geq N_0 \ln \big( \tfrac{2}{\varepsilon} \big)} \mathbb P^\ast (A_N^c) \leq \sum_{N \geq N_0 \ln \big( \tfrac{2}{\varepsilon} \big)} 2 \left(\frac{3 \varepsilon}{\pi^2 N^2} \right) \leq \varepsilon.$$ 
This gives the required result. 
\end{proof}

\subsection{Proof of Corollary~\ref{cor2}} \label{proof of cor 2}
\begin{proof}
Let $C, c, N_0 \geq 0$ be the constants from Corollary~\ref{cor} and $N_0' = (\tfrac{c}{C})^2$, then it holds with $\mathbb P^\ast$-probability at least $1-\varepsilon$ such that for any $\varepsilon \geq 0$, 
\begin{equation*}
\begin{split}
\left| \kappa_N - \kappa^\ast \right| & \leq   CN^{-1/2} \sqrt{\ln(\tfrac{2N}{\varepsilon})} + cN^{-1}  \\
& \leq 2 C N^{-1/2} \sqrt{\ln(\tfrac{2N}{\varepsilon})} \qquad \text{for all } N \geq \max\left( N_0 \ln(\tfrac{2}{\varepsilon}), N_0' / \ln(\tfrac{2}{\varepsilon}) \right) \, .
\end{split}
\end{equation*}
For such $N$, we have 
\begin{equation*}
\kappa_N \leq \kappa^\ast + 2 C N^{-1/2} \sqrt{\ln(\tfrac{2N}{\varepsilon})} \,, 
\end{equation*}
and 
\begin{equation*}
\kappa_N \geq \kappa^\ast -  2 C N^{-1/2} \sqrt{\ln(\tfrac{2N}{\varepsilon})} \, .
\end{equation*}
Let $N_1' = \max \left( \bar K - \kappa^\ast, \kappa^\ast - \underline K\right)$ and $N_1 = (\tfrac{N_1'}{2 C})^2$. Then it holds with $\mathbb P^\ast$-probability at least $1-\varepsilon$ such that for any $\varepsilon \geq 0$, 
\begin{equation*}
\kappa_N \in[\underline K, \bar K] \quad \text{ for all } N 
\geq \max \left( \ln(\tfrac{2}{\varepsilon}) / (N_1 -1) , N_0 \ln(\tfrac{2}{\varepsilon}), N_0' / \ln(\tfrac{2}{\varepsilon}) \right) \, ,
\end{equation*}
which completes the proof. 
\end{proof}

\subsection{Proof of Proposition~\ref{X_NT bound in high prob}} \label{proof prop X_NT bound in high prob}
On the event that Corollary~\ref{cor} and Corollary~\ref{cor2} hold, we can see that
\small
\begin{equation*}
\begin{split}
X_{N_T} & \leq  \sum_{n=0}^{\lf  N' \rf} (\tau_{n+1} - \tau_n)  |\underline K - \bar K|^2  \\
& \qquad + \sum_{n = \lceil  N' \rceil}^{N_T} (\tau_{n+1} - \tau_n) \left( C n^{-1/2} \sqrt{ \ln \big( \tfrac{2n}{\varepsilon} \big)} + cn^{-1} \right)^2 \\
& \leq C'  \sum_{n=0}^{\lf  N' \rf} (\tau_{n+1} - \tau_n) + C  \sum_{n = \lceil  N' \rceil}^{N_T} (\tau_{n+1} - \tau_n) \left( \frac{\ln \big( \tfrac{2}{\varepsilon} \big)}{n} + \frac{\ln n}{n} \right) \,, 
\end{split}
\end{equation*}
\normalsize
where $N' = \max \Big(N_0 \ln(\tfrac{2}{\varepsilon}), N_0' / \ln(\tfrac{2}{\varepsilon}) \Big)$. 
By choosing $C$ to be sufficiently large, we have
\small
\begin{equation*}
\begin{split}
X_{N_T} & \leq C'(\tau_{1} - \tau_0) + C'  \sum_{n=1}^{\lf N'\rf} (\tau_{n+1} - \tau_n)  \\ 
& \quad - C  \sum_{n = 1}^{\lf  N' \rf} (\tau_{n+1} - \tau_n) \left( \frac{\ln \big( \tfrac{2}{\varepsilon} \big)}{n} + \frac{\ln n}{n} \right)  + C  \sum_{n=1}^{N_T} (\tau_{n+1} - \tau_n) \left( \frac{\ln \big( \tfrac{2}{\varepsilon} \big)}{n} + \frac{\ln n}{n} \right) \\ 
& \leq C'(\tau_{1} - \tau_0) + \sum_{n = 1}^{\lf N' \rf} (\tau_{n+1} - \tau_n)  \left(C' - C \left(\frac{\ln \big( \tfrac{2}{\varepsilon} \big)}{n} + \frac{\ln n}{n} \right) \right)  \\ 
& \quad + C  \sum_{n = 1}^{N_T} (\tau_{n+1} - \tau_n) \left( \frac{\ln \big( \tfrac{2}{\varepsilon} \big)}{n} + \frac{\ln n}{n} \right) \\
& \leq C(\tau_{1} - \tau_0) + C  \sum_{n = 1}^{N_T} (\tau_{n+1} - \tau_n) \left( \frac{\ln \big( \tfrac{2}{\varepsilon} \big)}{n} + \frac{\ln n}{n} \right) \, ,
\end{split}
\end{equation*}
\normalsize
where the last inequality comes from $C' - C \left(\frac{\ln \big( \tfrac{2}{\varepsilon} \big)}{n} + \frac{\ln n}{n} \right) \leq C' - C \frac{\ln \big( \tfrac{2}{\varepsilon} \big)}{n} \leq 0$ for any $1 \leq n \leq N'$ under a sufficiently large $C$.  

Next we need the following lemma which is proved in \cite[Lemma 3.1]{szpruch2024optimal}. 
\begin{lemma}
\label{lem}
Let $Y_n$ be an IID sub-exponential random variable with mean $0$ and $(\rho_n) \subseteq \sR$. Then there exists a constant $C$ such that for any $\varepsilon > 0$ and $N \in \sN$
$$\sP \left( \left| \sum_{n=1}^N Y_n \rho_n \right| \geq C \ln \big( \tfrac{2}{\varepsilon} \big)   \sqrt{\sum_{n=1}^N \rho_n^2}  \right)   \leq \varepsilon.$$
\end{lemma}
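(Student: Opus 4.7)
The plan is to prove this as a standard Bernstein-type concentration inequality for weighted sums of i.i.d.\ sub-exponential random variables, obtained via the Chernoff/MGF method. The right-hand tail $C \ln(2/\varepsilon) \sqrt{\sum \rho_n^2}$ is precisely the form one expects from a sub-Gaussian regime, and the job is to show that even though each $Y_n$ is only sub-exponential, the weighted sum enjoys a sub-Gaussian tail in the relevant range of $\varepsilon$, with the sub-exponential regime handled separately.

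First, I would recall the standard characterisation: since $Y_1$ is mean-zero sub-exponential there exist constants $\nu, b > 0$ (depending only on the distribution of $Y_1$) such that $\mathbb{E}[e^{\lambda Y_1}] \leq \exp(\nu^2 \lambda^2/2)$ for all $|\lambda| \leq 1/b$. By independence, for any $\lambda$ with $|\lambda|\|\rho\|_\infty \leq 1/b$,
\begin{equation*}
\mathbb{E}\Big[\exp\big(\lambda \textstyle\sum_{n=1}^N \rho_n Y_n\big)\Big] = \prod_{n=1}^N \mathbb{E}[e^{\lambda \rho_n Y_n}] \leq \exp\Big(\tfrac{\nu^2 \lambda^2}{2}\textstyle\sum_{n=1}^N \rho_n^2\Big).
\end{equation*}
Combining this with Markov's inequality and optimising over $\lambda \in (0, 1/(b\|\rho\|_\infty)]$ yields the two-regime bound
\begin{equation*}
\mathbb{P}\Big(\big|\textstyle\sum_{n=1}^N \rho_n Y_n\big| \geq t\Big) \leq 2\exp\Big(-c\min\Big(\tfrac{t^2}{\|\rho\|_2^2},\tfrac{t}{\|\rho\|_\infty}\Big)\Big),
\end{equation*}
for some constant $c > 0$ depending only on $(\nu, b)$; the $t^2/\|\rho\|_2^2$ branch comes from the unconstrained optimum $\lambda^\ast = t/(\nu^2\|\rho\|_2^2)$ when this lies in the admissible range, and the linear branch $t/\|\rho\|_\infty$ from the boundary choice $\lambda = 1/(b\|\rho\|_\infty)$.

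Next I would specialise to $t = C \ln(2/\varepsilon) \|\rho\|_2$ and analyse the two regimes. In the sub-Gaussian regime $t \leq \|\rho\|_2^2/\|\rho\|_\infty$, the minimum equals $t^2/\|\rho\|_2^2 = C^2 \ln^2(2/\varepsilon)$, giving an upper bound $2\exp(-cC^2 \ln^2(2/\varepsilon))$, which is bounded by $\varepsilon$ as soon as $cC^2 \ln(2/\varepsilon) \geq 1$; for $\varepsilon \in (0,1)$ this holds with $C$ depending only on $c$. In the sub-exponential regime $t > \|\rho\|_2^2/\|\rho\|_\infty$, the minimum equals $t/\|\rho\|_\infty = C \ln(2/\varepsilon) \|\rho\|_2/\|\rho\|_\infty$, and I would use the elementary inequality $\|\rho\|_2 \geq \|\rho\|_\infty$ to bound this below by $C\ln(2/\varepsilon)$, again yielding tail $\leq \varepsilon$ for $C$ large enough (independent of $N$ and $\rho$). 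Finally, for $\varepsilon \in [1,2)$ the inequality holds trivially since the right-hand side is $\geq 1$.

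The only subtle step is the transition between regimes: one must check that a single constant $C$ suffices uniformly over $N$ and all weight vectors $\rho$. This is the key reason for invoking $\|\rho\|_2\geq\|\rho\|_\infty$, which ensures the linear branch is no weaker than $e^{-cC\ln(2/\varepsilon)}$ regardless of how concentrated the weights are. Everything else is a routine Chernoff computation, and the entire argument is classical (see e.g.\ Vershynin's Bernstein inequality for sub-exponentials), so no novel machinery is required.
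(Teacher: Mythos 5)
Your argument is correct. The paper does not prove this lemma in-house; it defers to \cite[Lemma 3.1]{szpruch2024optimal}, so there is no in-paper proof to compare against. What you give is the standard Bernstein/Chernoff bound for a weighted sum of i.i.d.\ mean-zero sub-exponential random variables, carried through correctly: the MGF bound in the range $|\lambda|\|\rho\|_\infty \leq 1/b$, tensorisation over $n$, and optimisation of the Chernoff exponent over the constrained interval to produce the two-regime tail $2\exp\bigl(-c\min\bigl(t^2/\|\rho\|_2^2,\; t/\|\rho\|_\infty\bigr)\bigr)$. The one genuinely substantive point --- that $\|\rho\|_2 \geq \|\rho\|_\infty$, so that with $t = C\ln(2/\varepsilon)\|\rho\|_2$ the linear branch satisfies $t/\|\rho\|_\infty \geq C\ln(2/\varepsilon)$ --- is precisely what makes a single constant $C$ work uniformly over all weight vectors and all $N$, and you correctly identify it as the crux. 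Both regimes then collapse to $2\exp\bigl(-cC\ln(2/\varepsilon)\bigr)$ and $2\exp\bigl(-cC^2\ln^2(2/\varepsilon)\bigr)$, each $\leq \varepsilon$ for $\varepsilon\in(0,1]$ once $C$ is chosen large enough in terms of the sub-exponential parameters of $Y_1$, and the range $\varepsilon > 1$ is trivial. One cosmetic caveat that concerns the statement rather than your proof: written with $\geq$, the claim fails in the degenerate case $\rho \equiv 0$ with $\varepsilon < 1$, since the event becomes $\{0 \geq 0\}$ of probability one. As in most standard phrasings of Bernstein's inequality this is resolved by reading $>$, and it is immaterial in the paper's application where the weights $\rho_n$ are strictly positive.
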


By using the above result applying to $\varepsilon_N \propto \varepsilon/N^2$ and taking the countable union of the above events, we have
$$\sP \left( \left| \sum_{n=1}^N Y_n \rho_n \right| \leq C \ln \big( \tfrac{2N}{\varepsilon} \big) \sqrt{\sum_{n=1}^N \rho_n^2}   \quad \text{for all} \quad N \in \sN \right) \geq 1 - \varepsilon \, .$$

Now, we note that $(\tau_{n+1} - \tau_n - \frac{1}{\lambda^+ + \lambda^-})$ is sub-exponential with mean $0$.
Hence, on the event that Corollary \ref{cor} and Lemma \ref{lem} hold with $Y_n = \tau_{n+1} - \tau_n - \frac{1}{\lambda^+ + \lambda^-}$, we know that with probability at least $1-2\varepsilon$, it holds that
\small
\begin{equation} \label{order of NT}
\begin{split}
(\tau_{1} -  \tau_0) + & \sum_{n = 1}^{N_T} (\tau_{n+1}  - \tau_n) \left( \frac{\ln \big( \tfrac{2}{\varepsilon} \big)}{n} + \frac{\ln n}{n} \right) \\
& = \frac{1}{\lambda^+ + \lambda^-} \left( 1 + \sum_{n = 1}^{N_T} \left( \frac{\ln \big( \tfrac{2}{\varepsilon} \big)}{n}  + \frac{\ln n}{n} \right) \right) \\ 
& \qquad + \sum_{n = 1}^{N_T} (\tau_{n+1} - \tau_n - \frac{1}{\lambda^+ + \lambda^-}) \left( \frac{\ln \big( \tfrac{2}{\varepsilon} \big)}{n} + \frac{\ln n}{n} \right) + (\tau_{1} - \tau_0 - \frac{1}{\lambda^+ + \lambda^-})\\
& \leq  \frac{1}{\lambda^+ + \lambda^-} + \frac{1}{\lambda^+ + \lambda^-} \ln N_T \ln \big( \tfrac{2}{\varepsilon} \big) + \frac{1}{2(\lambda^+ + \lambda^-)} \ln^2 N_T  \\  
& \qquad + C \ln \big( \tfrac{2 (N_T+1)} {\varepsilon} \big) \sqrt{1 + \sum_{n = 1}^{N_T} \left( \frac{\ln \big( \tfrac{2}{\varepsilon} \big)}{n} + \frac{\ln n}{n} \right)^2} \\ 
& \leq  \frac{1}{\lambda^+ + \lambda^-} + \frac{1}{\lambda^+ + \lambda^-} \ln N_T \ln \big( \tfrac{2}{\varepsilon} \big) + \frac{1}{2(\lambda^+ + \lambda^-)} \ln^2 N_T  \\  
& \qquad + C \big(\ln (2 N_T) + \ln \big( \tfrac{2} {\varepsilon} \big) \big) \sqrt{3 + \left(\ln \left(\tfrac{2}{\varepsilon}\right) \right)^2 + 2 \ln \left(\tfrac{2}{\varepsilon}\right)} \\ 
& \leq \left( \frac{1}{2(\lambda^+ + \lambda^-)} + C \sqrt{3} \right) \ln^2 N_T + \left( \frac{1}{\lambda^+ + \lambda^-} + C \right) \ln N_T \ln \big( \tfrac{2}{\varepsilon} \big) \\
& \qquad + C (\sqrt{3} + 1 + \ln 2) \ln^2  \big( \tfrac{2}{\varepsilon} \big) + \left( \frac{1}{\lambda^+ + \lambda^-} + C \sqrt{3} \ln 2 \right) \, ,
\end{split}
\end{equation}
\normalsize
where the last inequality uses the fact that $\ln N_T \leq \ln^2 N_T$ for large $N_T$ and $\ln  \big( \tfrac{2}{\varepsilon} \big)  \leq \ln^2  \big( \tfrac{2}{\varepsilon} \big)$ for small $\varepsilon$. Note that the constant $C$ comes from Lemma~\ref{lem} which is independent of $\varepsilon$, 
hence the result. 

\bibliographystyle{plain}
\bibliography{Bibliography}

\end{document}